\documentclass[11pt]{report}

\usepackage{algorithmic}
\usepackage{arydshln}
\usepackage{natbib}
\usepackage{times}
\usepackage{bm}
\usepackage{verbatim}
\usepackage{stmaryrd}
\usepackage{lineno}
\usepackage{amsmath}
\usepackage{bbm}
\usepackage{enumerate}
\usepackage{amsthm}
\usepackage{comment}
\usepackage{graphicx, graphics}
\usepackage{multirow}
\usepackage{subfigure}
\usepackage{amssymb,uwthesis}

\usepackage{indentfirst}
\usepackage{extarrows}
\usepackage[colorlinks]{hyperref}  
\usepackage{bbm}
 \usepackage{color}
\usepackage{mathrsfs}

\makeatletter

\newcommand{\Rmnum}[1]{\expandafter\@slowromancap\romannumeral #1@}
\makeatother
%

%

\newtheorem{proposition}[theorem]{Proposition}
\numberwithin{equation}{section}

\newcommand{\be}{\begin{equation}}
\newcommand{\ee}{\end{equation}}

\providecommand{\abs}[1]{\vert#1\vert}
\providecommand{\norm}[1]{\Vert#1\Vert}

\newcommand{\fl}[1]{\lfloor{#1}\rfloor} 

\newcommand{\supp}{\mathop{\mathrm{supp}}}
\newcommand{\RNum}[1]{\uppercase\expandafter{\romannumeral #1\relax}}

\def\wm{\mu_1}

\def\nd{\bar{\varphi}} 
\def\fluc{\overline{h}} 
\def\cov0{V_0} 
\def\covnu{V_\nu} 
\def\covg{V} 
\def\LH{H} 
\def\STail{M}
\def\i{\imath} 
\def\consta{c_0}


\def\cF{\mathcal{F}}

\def\cM{\mathcal{M}}
\def\cN{\mathcal{N}}

\def\cI{\mathcal{I}}

\def\bE{\mathbb{E}}
\def\bN{\mathbb{N}}
\def\bP{\mathbb{P}}

\def\bR{\mathbb{R}}
\def\bZ{\mathbb{Z}}
\def\bC{\mathbb{C}}

\def\ind{\mathbf{1}}
 
\def\h{h}

\def\mE{\mathbf{E}}   

\def\mP{\mathbf{P}}
\def\m1{\mathbf{1}}

 \def\Vvv{{\rm\mathbb{V}ar}}  \def\Cvv{{\rm\mathbb{C}ov}} 
 
 \def\cJ{\mathcal{J}}  
 

\allowdisplaybreaks[1]

\includeonly{chap1,chap2,chap3,chap4
}            

\tracingstats=2                
\begin{document}
\title{Discrete Time Harness Processes}
\author{Yun Zhai}
\oraldate{June 4, 2015}
\profA{Timo Sepp\"al\"ainen, Co-Advisor, Professor, Mathematics}
\profB{Brian S. Yandell, Co-Advisor, Professor, Statistics}
\profC{Benedek Valk\'{o}, Associate Professor, Mathematics}
\profD{David F. Anderson, Associate Professor, Mathematics}
\profE{Philip Matchett Wood, Assistant Professor, Mathematics}

\degree{Doctor of Philosophy}
\dept{Statistics}
\thesistype{dissertation}
\beforepreface
\prefacesection{Abstract}
We study the invariant measures and fluctuation limits of discrete-time harness processes in one spatial dimension. We construct one essential ergodic (under spatial shifts) invariant measure of the increment process derived from harness process, and all other ergodic invariant measures can be obtained by adding constants. We also show that the weak limit of the one dimensional height fluctuations starting from the increments under several translation-invariant ergodic measures will obey Edwards-Wilkinson equation, and the finite-dimensional marginal convergence can be extended to a process level convergence. 

\prefacesection{Acknowledgements}
The finalization of my Ph.D.\ dissertation indicates a five-year odyssey full of unforgettable memories has nearly come to an end. I remember when I landed in the United States the very first time, five years was like an infinite time to me. Now putting the first letter in my thesis is just like a yesterday's thing. When I look back upon this wildest adventure in my life, I see obstacles of all kinds, heavy stress from study and research. I am glad that those challenges never destroyed me but rather helped me to mentally and techically prepare for the future.\par         

Here I would like to express the deepest appreciation to all those who have helped me during my Ph.D. study. Without their support, I could not have achieved where I am today. First, I want to give a special and heartfelt gratitude to my co-advisors Professor Timo Sepp{\"a}l{\"a}inen and Professor Brian Yandell, who are the most knowledgable, considerate and patient Ph.D.\ advisors I have ever seen.\par

As an advisor, Timo has been constantly happy to discuss whenever I get stuck in my research. He always provides me with good and accurate insights, which not just help me to solve the obstacles but also point out a clear direction of my research. Timo has also put a great deal of effort in assisting me to polish my dissertation. He read every word in each version and gave tons of useful suggestions on mathematics, organization, latex tips and even grammar. Aside from research, when I was teaching assistant the very first time, Timo took time to observe my class and taught me how to teach. He himself is a great teacher. I once took his large deviation course. I must say his lessons are not just lessons but artworks. With Timo, I have learned not just the beauty of probability theory or how to do research but also the conscientious attitude toward every task in front of us. For this, I can not thank him enough.\par

Brian, on the other hand, has supported me on many administrative matters. He is the one who helped me to build a solid bridge to the probability group in mathematics department and find a way in doing probability research ``legally'' as a statistics Ph.D.\ student. In addition, he has provided me with excellent guidance on Ph.D.\ regulations like choosing minor, credits requirement etc. In order to help me pass the preliminary exam smoothly, Brian also managed to oraganize a dry run for me to practice. I would like to thank Professor Brian Yandell for his consistent effort in making my Ph.D. life comfortable and well-organized.\par

I am very grateful to Professor Benedek Valk\'{o}, Professor David Anderson and Professor Philip Wood for being my thesis committee members. They brought up many interesting questions and insightful ideas which offered different aspects in looking at the problems and inspired my further thinking.\par      

Last but not least, I would like to thank my parents for their unconditional love, understanding and support. I would like to thank Jing Kong for listening all my complaints, cheering me up, cooking delicious meals and delighting my Ph.D.\ life. A special thanks goes to my friend Jin Qian who has shared his wisdom in any linear algebra problems I have.\par  
\prefacesection{Notation and Symbols}
Here is a collection of some notations we use throughout the thesis.
\begin{itemize}
\item{\makebox[4cm][l]{$\bZ$}} is the set of all integer numbers.
\item{\makebox[4cm][l]{$\bZ^+$}} is the set of all nonnegative integer numbers.
\item{\makebox[4cm][l]{$\bN$}} is the set of all positive integer numbers.
\item{\makebox[4cm][l]{$\bZ^d$}} is the d-dimensional integer lattice.
\item{\makebox[4cm][l]{$\bR$}} is the set of all real numbers.
\item{\makebox[4cm][l]{$\bR^+$}} is the set of all nonnegative real numbers.
\item{\makebox[4cm][l]{$\abs{x}$}} is either the absolute value if $x$ is a number or the Euclidean norm if 
\item[]{\makebox[4cm][l]{}} $x$ is a vector.
\item{\makebox[4cm][l]{$\imath$}} is the imaginary unit which equals to $\sqrt{-1}$.
\item{\makebox[4cm][l]{$C$}} is a positive finite constant, the value of which may vary from line to \item[]{\makebox[4cm][l]{}} line.
\item{\makebox[4cm][l]{$X\sim \mu$}} means the random variable $X$ has distribution $\mu$.
\item{\makebox[4cm][l]{$\sigma(X_1,X_2,\ldots,X_n)$}} is the $\sigma$-algebra generated by random variables $X_1, X_2,\ldots, X_n$.
\item{\makebox[4cm][l]{$\bP$}} is the generic probability function.
\item{\makebox[4cm][l]{$\bE$}} is the expectation under probability measure $\bP$.
\item{\makebox[4cm][l]{$\mP,\mE$}} are the probability measure and expectation of the random walk $X_t^i$ 
\item[]{\makebox[4cm][l]{}} coming from the dual representation of the harness processes.
\item{\makebox[4cm][l]{$\bE^\nu$}} is the expectation under initial distribution $\nu$. 
\end{itemize}



\afterpreface

\chapter{Introduction}\label{intro}
In statistical mechanics, for $1+1$ dimensional surface growth models, it is believed that even though the microscopic evolutions of different models may be different in general, macroscopic behaviors are often similar and usually can be categorized into two classes based on the macroscopic flux: the Edwards-Wilkinson (EW) and the Kardar-Parisi-Zhang (KPZ) universality classes (more details can be found in \cite{20}). A central model in the KPZ class is the well-known KPZ equation: 
\be h_t=vh_{xx}+\frac{1}{2}\lambda(h_x)^2+\sqrt{D}\dot{W},\ee
where $\dot{W}$ is the space-time white noise. It is predicted but not completely proved that the order of universal height fluctuation is $t^{1/3}$ where $t$ is the time parameter (see \cite{21} for a survey).\par

In the EW universality class, on the other hand, the limit of the height fluctuation can be described as the solution of the stochastic heat equation with additive noise (often called Edwards-Wilkinson (EW) equation (see \cite{31})):
\be Z_t=vZ_{xx}+\sqrt{D}\dot{W}.\label{i0}\ee 
And the order of macroscopic height fluctuation (scaling time and space by some functions of $n$) is expected to be $n^{1/4}$. This conjecture is supported by past work in independent random walks (\cite{36} and \cite{11}), independent random walks in static and dynamical random environment (RWRE) (\cite{37} and \cite{38}), random average process (RAP) (\cite{33}) and a recent model under continuous space and time setting from the Howitt-Warren flows (\cite{39}). In this paper, we consider a specific surface growth model called harness process, the one space dimensional version of which obeys EW universality.\par 

Harness processes were first named and studied by J. M. Hammersley around 1956 when he was looking at a problem on long-range misorientation in the crystalline structure of metals (see \cite{22}). Later on, \cite{23} has investigated the continuous-time harness processes where the weight vector is symmetric, unnormalized and infinite while the random noises are normally distributed. He has proved the convergence to the equilibrium state from the initial configuration under various conditions. A few years later, \cite{17} generalized his results for asymmetric but finite weight vector and non-gaussian random noises, and proved both the existence and uniqueness of the translation-invariant equilibrium state. Interestingly, he has also pointed out that if the weight is normalized, in one-dimensional case, the order of the height fluctuations is exactly $t^{1/4}$, but the order decreases to $(\log t)^{1/2}$ in two-dimensional case, and the height fluctuations become bounded for higher dimensions.\par

Under the same continuous time setting, \cite{3} have introduced the random walk representation of the harness processes, through which they constructed an invariant measure as the limit of the process starting from the flat configuration. They have shown that with Gaussian noises and finite support assumption on the transition kernel, for $d\ge 3$, the invariant measures of harness processes are Gaussian Gibbs fields (also called harmonic crystals), which has been studied in \cite{19} and \cite{18}. For lower dimensions ($d=1,2$), the invariant measure for the process itself on the entire $\bZ^d$ lattice may not exist in general, but still they have found the stationary measure for the process ``pinned at the origin'' ($h_t(0)\equiv 0$) or ``viewed from the height at the origin'' ($h_t(\cdot)-h_t(0)$). \cite{16} studied the influence of the tail distribution of the noises on the convergence of the harness process under the discrete time setting. He also gave the connection between the decay rates of the noise distribution and the limit distribution.\par 

In this thesis, we study the discrete-time version of harness process in one spatial dimension, which has some connections with independent random walks model in \cite{11} and one dimensional RAP model in \cite{33}. Chapter \ref{main-results} will first give a detailed description of the model (Section \ref{model-description}) and then discuss the main results ( Section \ref{invariant-measure-results} and Section \ref{fluctuation-limits-results}). Section \ref{invariant-measure-results} focuses on finding the invariant measure for the increment process derived from the harness model. We appeal to \cite{3}, provide an invariant measure as the distribution of the limit of an $L^2$-martingale and show that it is indeed the unique ergodic (spatially speaking) measure with finite first moment. Unlike the product form invariant distributions stated in \cite{11} and \cite{33}, the invariant distribution for the increment process in our case does have non-zero correlations (except in some special cases).  Section \ref{invariant-measure-results} will provide asymptotic results for the scaled height fluctuations. We will show that the fluctuation is subdiffusive ($O(n^{1/4})$), and the scaled hight fluctuations starting from i.i.d., invariantly distributed and strongly mixing initial increments will converge to two-parameter Gaussian processes in the sense of convergence of finite-dimensional distributions. More interestingly, the time marginal of the limit process in the second case is a fractional Brownian motion with Hurst parameter $1/4$. In addition, the process-level tightness of the convergence will be achieved. Chapter \ref{proofs} will cover all the proofs. Appendix \ref{potential-kernel} will discuss some useful properties of the potential kernel of one dimensional recurrent random walks. Appendix \ref{lclt} provides a proof of Local Central Limit Theorem (LCLT) and several applications.\par

\chapter{The harness process and the main results}\label{main-results}
\section{The model}\label{model-description}
For fixed dimension $d\in\bN$, the harness processes is a collection $\{\h_t: t\in\bZ^+\}$ where each $\h_t$ is a real-valued random height function on $\bZ^d$, the evolution of which obeys the following rule, 
\be
\h_{t+1}(i)=\sum_{k\in\bZ^d}w(k)\h_t(i+k)+\xi_{t+1}(i),\quad i\in\bZ^d,t\in \bZ^+,\label{i1}\ee
where $\{w(k)\}_{k\in\bZ^d}$ is a fixed weight vector with the following properties
\begin{align}
&0\le w(k) <1\mbox{, for all }k\in\bZ^d\mbox{, }\sum_{k\in\bZ^d}w(k)=1\mbox{ and the support }\supp(w)=\{k\in\bZ^d: w(k)>0\}\notag\\
&\mbox{is finite.}\label{weight-assump-1}
\end{align}
The mean (vector) of $w$ is denoted by $\wm=\sum_{k\in \bZ^d}kw (k)$. In dimension $d=1$, we write the variance as
\be \sigma_1^2=\sum_{k\in\bZ}(k-\wm)^2w (k).\ee
Assumption \eqref{weight-assump-1} implies that $0<\sigma_1^2<\infty$. 
\begin{align}
&\{\xi_t(k)\}_ {k\in\bZ^d, t\in\bZ}\mbox{ are assumed to be i.i.d.\ random noise variables with mean zero and variance }\notag\\
&\Vvv\bigl(\xi_0(0)\bigr)=\sigma_\xi^2<\infty.\label{xi-assump-1}
\end{align}

Roughly speaking, \eqref{i1} can be viewed as a discrete version of the EW equation in \eqref{i0}. Note that the evolution of random average process (RAP) is quite similar to \eqref{i1} except two differences: it does not have the noise term $\xi$ and the weight vector $\{w(k)\}_{k\in\bZ^d}$ is a random vector called random environment (see \cite{1}).\par 

We will think of the weight vector $\{w(k)\}_{k\in\bZ^d}$ as the transition probability of a discrete-time random walk on $\bZ^d$. We will denote this transition kernel by  
\be p(i,j)=w(j-i),\quad i,j\in \bZ^d,\label{21-d1}\ee 
and multistep transition probabilities by
\be p^k(i,j)=\sum_{i_1,i_2,\ldots,i_{k-1}\in\bZ^d}p(i,i_1)p(i_1,i_2)\cdots p(i_{k-1},j),  \quad i,j\in \bZ^d,k\in\bZ^+,\ee
where $p^0(i,j)=\ind\{i=j\}$, $p^1(i,j)=p(i,j)$. The random walk on $\bZ^d$ with transition probability $p(i,j)$ and initial position $i$ is denoted by $\{X_t^i\}_{t\in\bZ^+}$.\par 

For future use, we denote another transition kernel
\be q(i,j)=\sum_{z\in\bZ^d}w(z)w(j-i+z),\quad i,j\in\bZ^d.\label{21-d2}\ee
We use $\{Y_t^i\}_{t\in\bZ^+}$ to represent the random walk with transition probability $q(i,j)$ and starting point $i$. Notice that $Y_t^i$ is a symmetric random walk, the distribution of which is the same as $\tilde{X}_t^{i}-X_t^{0}$ (see the proof of Lemma \ref{lmm2-2}) where $\tilde{X}_t^{i}$ and $X_t^{0}$ are independently distributed random walks with transition probability $p(\cdot, \cdot)$. The multistep transitions
\be q^k(i,0)=\sum_{z\in\bZ^d}p^k(i,z)p^k(0,z),\quad i\in\bZ^d, k\in\bZ^+.\label{multistep-q}\ee
Under assumption \eqref{weight-assump-1}, $q$ will also be finitely supported and nondegenerate.\par

As an analogue of the Harris graphical construction in \cite{3}, the harness process $\{\h_t\}_{t\in\bZ^+}$ has the following random walk representation.
\begin{lemma}\label{lmm2-1}
For all $t\in\bZ^+$, $i\in\bZ^d$,
\be \h_t(i)=\mE\left[\h_0(X_t^{i})\right]+\sum_{k=1}^t\mE\left[\xi_k(X_{t-k}^{i})\right].\label{21-l1}\ee 
\end{lemma}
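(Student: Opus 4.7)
The plan is to prove the identity by induction on $t$, exploiting the Markov property of the random walk $\{X_t^i\}$.

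For the base case $t=0$, the random walk is stuck at its starting point, so $X_0^i = i$ almost surely and the sum over $k$ is empty; both sides of \eqref{21-l1} reduce to $\h_0(i)$.

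For the inductive step, assume \eqref{21-l1} holds at time $t$ for every starting point $j$. Rewriting the evolution rule \eqref{i1} via \eqref{21-d1} gives
\begin{equation*}
\h_{t+1}(i) \;=\; \sum_{j\in\bZ^d} p(i,j)\,\h_t(j) \;+\; \xi_{t+1}(i).
\end{equation*}
Substituting the induction hypothesis for $\h_t(j)$ produces
\begin{equation*}
\h_{t+1}(i) \;=\; \sum_{j} p(i,j)\,\mE\bigl[\h_0(X_t^{j})\bigr] \;+\; \sum_{k=1}^{t} \sum_{j} p(i,j)\,\mE\bigl[\xi_k(X_{t-k}^{j})\bigr] \;+\; \xi_{t+1}(i).
\end{equation*}
The key manipulation is then the one-step Chapman--Kolmogorov identity: for any bounded $f$,
\begin{equation*}
\sum_{j\in\bZ^d} p(i,j)\,\mE\bigl[f(X_s^{j})\bigr] \;=\; \mE\bigl[f(X_{s+1}^{i})\bigr],
\end{equation*}
which follows from conditioning the walk $X^i$ on its first step. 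Applying this with $s=t$ and $f=\h_0$ converts the first sum into $\mE[\h_0(X_{t+1}^i)]$, and applying it with $s=t-k$ and $f=\xi_k$ converts the $k$-th summand in the middle term into $\mE[\xi_k(X_{t+1-k}^{i})]$. Finally, $\xi_{t+1}(i)=\mE[\xi_{t+1}(X_0^{i})]$ supplies the $k=t+1$ term that is missing from the sum, yielding the formula at time $t+1$.

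There is no real obstacle here; the only thing to be careful about is bookkeeping of the time index shift (from $t-k$ to $t+1-k$) and recognizing that the new noise term $\xi_{t+1}(i)$ exactly fills in the last summand. The random noises $\xi_s$ are independent of the walk $X^i$, so all expectations above make sense as unambiguous integrals against the walk's law, and the finiteness of $\supp(w)$ ensures the sums over $j$ are finite so no convergence issue arises in the interchange of sum and expectation.
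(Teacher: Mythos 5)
Your proposal is correct and is essentially the paper's own argument: the paper proves \eqref{21-l1} by iterating the one-step expansion $\h_t(i)=\mE\left[\h_{t-1}(X_1^{i})\right]+\xi_t(i)$ and using the tower property of the walk, which is exactly your inductive step with the Chapman--Kolmogorov identity, just unrolled from time $t$ downward instead of stated as a formal induction. No gaps.
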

Notice that the initial state $\{\h_0(i): i\in\bZ^d\}$ and the noise variables $\{\xi_k(i): i\in\bZ^d, k\in\bN\}$ remain random in the expectation above. We assume that $\{\h_0(i)\}_{i\in\bZ^d}$ is independent of $\{\xi_k(i)\}_{i\in\bZ^d, k\in\bN}$.\par

For surface growth models, on the macroscopic and deterministic scale, the height equation should be a Hamilton-Jacobi equation: $\frac{\partial v}{\partial t}+H(\nabla v)=0$. The slope satisfies conservation law, and the function $H$ is called the flux. If the flux is linear, then the model falls into EW class. If the flux is strictly convex or concave, then the system is in KPZ class. For harness process, by applying \eqref{21-l1}, we can show that the flux is linear. To be specific, the macroscopic height function $h_t$ is simply translated by speed $b=-\wm$.\par
\begin{theorem}\label{hydrodynamic-limit}
Assume \eqref{weight-assump-1} and \eqref{xi-assump-1}. Suppose $\{\h^n_t(i): t\in\bZ^+, i\in\bZ^d\}_{n\in\bN}$ is a sequence of independent harness processes and $\frac{1}{n}\h^n_0(\fl{nx})$ converges in probability to a continuous function $u(x)$ with $u(0)=0$ uniformly on any bounded set as $n$ goes to infinity, i.e.
\be \lim_{n\to\infty}\bP\left(\sup_{\abs{x}\le R}\left|\frac{1}{n}\h^n_0(\fl{nx})-u(x)\right|>\epsilon\right)=0,\quad\forall\epsilon,R>0.\label{hydro-limit-assum}\ee
Then, for all $x\in\bR^d$,
\be \frac{1}{n}\h^n_{\fl{nt}}(\fl{nx})\overset{p}{\to}u(x-bt),\mbox{ as }n\to\infty.\label{hydro-limit}\ee
\end{theorem}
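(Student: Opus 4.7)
The plan is to apply Lemma \ref{lmm2-1} to decompose
\[
\h^n_{\fl{nt}}(\fl{nx}) \;=\; A_n + B_n, \qquad A_n:=\mE\bigl[\h^n_0(X_{\fl{nt}}^{\fl{nx}})\bigr],\ \ B_n:=\sum_{k=1}^{\fl{nt}}\mE\bigl[\xi_k(X_{\fl{nt}-k}^{\fl{nx}})\bigr],
\]
and to show separately that $A_n/n\to u(x-bt)$ and $B_n/n\to 0$ in probability. The speed $b=-\wm$ enters through the fact that the random walk $X^{\fl{nx}}_{\fl{nt}}$ drifts to $\fl{nx}+\fl{nt}\wm$, so $X^{\fl{nx}}_{\fl{nt}}/n$ concentrates near $x+t\wm=x-bt$, which is precisely the argument of $u$ after the translation.

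For the noise piece I would write out $B_n=\sum_{k,j}p^{\fl{nt}-k}(\fl{nx},j)\,\xi_k(j)$ and use that the $\xi_k(j)$ are i.i.d.\ with mean zero and variance $\sigma_\xi^2$. Then $\bE[B_n]=0$ and
\[
\Var(B_n)\;=\;\sigma_\xi^2\sum_{k=1}^{\fl{nt}}\sum_{j\in\bZ^d}p^{\fl{nt}-k}(\fl{nx},j)^2\;\le\;\sigma_\xi^2\,\fl{nt},
\]
using $\sum_j p^m(i,j)^2\le\sum_j p^m(i,j)=1$. Chebyshev immediately gives $B_n/n\to 0$ in $L^2$, hence in probability.

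For $A_n/n$ I would exploit that by \eqref{weight-assump-1} the weight vector $w$ has finite support, so every increment of $X_t^i$ is bounded in modulus by $K:=\max\{|k|:w(k)>0\}$. In particular $Z_n:=X_{\fl{nt}}^{\fl{nx}}/n$ lies deterministically in the compact set $[-R,R]^d$ with $R:=|x|+Kt+1$ for all large $n$, and by the weak LLN for the walk, $Z_n\to x+t\wm=x-bt$ in walk-probability. Fix $\epsilon>0$ and put
\[
G_n:=\Bigl\{\sup_{|y|\le R}\bigl|\tfrac{1}{n}\h^n_0(\fl{ny})-u(y)\bigr|\le\epsilon\Bigr\};
\]
assumption \eqref{hydro-limit-assum} gives $\bP(G_n)\to 1$. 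Since $X_{\fl{nt}}^{\fl{nx}}=\fl{nZ_n}$ with $|Z_n|\le R$, on $G_n$ we have $\bigl|\tfrac{1}{n}\h^n_0(X_{\fl{nt}}^{\fl{nx}})-u(Z_n)\bigr|\le\epsilon$ for every walk outcome, so averaging over the walk gives $|A_n/n-\mE[u(Z_n)]|\le\epsilon$ on $G_n$. By uniform continuity of $u$ on $[-R,R]^d$ together with $Z_n\to x-bt$ in walk-probability and bounded convergence, $\mE[u(Z_n)]\to u(x-bt)$, and the desired convergence of $A_n/n$ follows.

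The main obstacle is the last argument, which has to disentangle two layers of randomness: the random configuration $\{\h^n_0,\xi^n_k\}$ and the walk expectation $\mE$ computed conditionally on that configuration. The finite-range hypothesis on $w$ is essential because it eliminates the tail behaviour of $X^{\fl{nx}}_{\fl{nt}}$: it confines the walk deterministically to a compact region on which the assumed uniform convergence $\tfrac{1}{n}\h^n_0(\fl{n\cdot})\to u$ can be applied pointwise under the walk expectation. Without the compact-range property one would need moment assumptions on $\h^n_0$ outside bounded sets to absorb large deviations of the walk.
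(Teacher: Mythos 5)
Your proposal is correct and follows essentially the same route as the paper: the dual representation splits the height into an initial-condition term and a noise term, the noise term is killed by the variance bound $\sigma_\xi^2\fl{nt}$ and Chebyshev, and the initial term is handled by confining $X^{\fl{nx}}_{\fl{nt}}/n$ to a compact set via the finite support of $w$, applying the assumed uniform convergence there, and using the LLN together with continuity of $u$ for the remaining deterministic part. No gaps.
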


The limit in \eqref{hydro-limit} is called ``hydrodynamic limit" of the process. And $v(x,t)=u(x-bt)$ is the unique solution of the linear transport equation 
\be v_t+bv_x=0,\mbox{ with initial condition }v(x,0)=u(x). \label{transport-equation}\ee
This is the dynamics of the macroscopic harness process. The lines $x(t)=x+bt$ are called the characteristics of \eqref{transport-equation}. This hydrodynamic limit suggests that the harness process should obey EW universality.\par 

From now on, we restrict to dimension $d=1$ and further assume the probability vector $\{w(i)\}_{i\in\bZ}$ to have span 1, i.e.  
\be\max\{k\in\bZ^+:\exists\ell\in\bZ,\quad s.t.\quad\supp(w)\subset \ell+k\bZ\}=1.\label{weight-assump-2}\ee

The new assumption \eqref{weight-assump-2} guarantees that the transition kernel $q(i,j)$ will also have span 1. We summarize the properties of $q$ below.
\begin{lemma}\label{lmm2-2}
Assume $d=1$, \eqref{weight-assump-1} and \eqref{weight-assump-2}. $q$-walk is symmetric (and hence recurrent), irreducible and has span 1. The mean and variance of the one-step transition are
 \be \sum_{x\in\bZ}xq(0,x)=0, \quad\sum_{x\in\bZ}x^2q(0,x)=2\sigma_1^2.\ee
\end{lemma}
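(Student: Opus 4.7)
The plan is to exploit the probabilistic representation sketched in the paragraph before the lemma: the $q$-walk equals the difference of two independent $p$-walks. A one-step computation, with $\tilde X_1^i$ and $X_1^0$ independent and distributed by the $p$-kernel, gives
\[
\bP(\tilde X_1^i - X_1^0 = j) \;=\; \sum_{z\in\bZ} w(z)\,w(j-i+z) \;=\; q(i,j),
\]
and induction yields $Y_t^i \overset{d}{=} \tilde X_t^i - X_t^0$. Every assertion in the lemma will follow from this identification together with short direct manipulations of the sum defining $q$.

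For symmetry, substituting $y = z+k$ in $q(0,k) = \sum_z w(z)w(z+k)$ immediately produces $q(0,-k)$. For the moment formulas, independence of $\tilde X_1^0$ and $X_1^0$ yields $\bE[\tilde X_1^0 - X_1^0] = \wm - \wm = 0$ and $\Vvv(\tilde X_1^0 - X_1^0) = 2\sigma_1^2$; equivalently, one can reindex the defining sum of $q$ directly. Recurrence then drops out of Chung--Fuchs, since a mean-zero finite-variance random walk on $\bZ$ is recurrent.

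For span $1$ and irreducibility, I would first observe that $\supp q(0,\cdot) = \supp(w) - \supp(w)$, because $q(0,k)>0$ iff some $z$ and $z+k$ both lie in $\supp(w)$. Unpacking \eqref{weight-assump-2}, the span of $w$ equals the largest $k$ with $\supp(w)-\supp(w)\subset k\bZ$, which is exactly $\gcd(\supp(w)-\supp(w))$, so span $1$ of $w$ transfers to $q$. Irreducibility then follows because $\supp q(0,\cdot)$ is finite, symmetric under negation, and has gcd $1$, so the subgroup of $\bZ$ reachable from $0$ by finitely many $q$-steps is all of $\bZ$. The only step requiring a moment of care is this translation between the paper's definition of span and the gcd condition, but it is a routine definition-chase and I do not anticipate a genuine obstacle.
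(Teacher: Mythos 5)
Your proof is correct and follows essentially the same route as the paper's: identify the $q$-walk as $\tilde X_t - X_t$ for independent $p$-walks, get symmetry and the moment formulas by reindexing or by the independence representation, deduce recurrence from mean zero, and deduce irreducibility and span $1$ from $\supp q(0,\cdot)=\supp(w)-\supp(w)$ together with the symmetry of that set and a gcd/B\'ezout argument. The only cosmetic differences are that the paper cites Spitzer's theorem on the equivalence of mean zero and recurrence (rather than invoking Chung--Fuchs directly) and that it cites B\'ezout's identity explicitly where you phrase the same step as ``the symmetric generating set with gcd $1$ generates $\bZ$''; your explicit identification of the paper's definition of span with $\gcd(\supp(w)-\supp(w))$ is a small clarification of a step the paper leaves implicit.
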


\section{Invariant measures}\label{invariant-measure-results}

The proofs for the results in this section can be found in Section \ref{invariant-measure-proofs}.\par

Because of the nonexistence of invariant distributions of the harness process $\h_t$ in one space dimension (see \cite{34}), in this section, we will mainly forcus on the construction and the uniqueness of the ergodic (spatially speaking) invariant measures of the increment process $\{\eta_t(x):x\in\bZ\}_{t\in\bZ^+}$ which is defined below.
\be
\eta_t(i)=\h_t(i)-\h_t(i-1), \quad i\in\bZ,t\in\bZ^+.\label{22-0}\ee  
From the dynamics of harness processes \eqref{i1}, we can derive the evolution of the increments $\eta_t$.
\be\eta_{t+1}(i)=\sum_{k\in\bZ}w(k)\eta_t(i+k)+\xi_{t+1}(i)-\xi_{t+1}(i-1),\quad i\in\bZ,t\in \bZ^+.\label{increment-evolution}\ee
For the invariant distributions of the general increment processes in higher dimensions ($d\ge 2$), please see \cite{34}.\par 

We would like to set up some basic terminology before we move on to any specific result. Let $\cM$ be the space of probability measures on $\bR^\bZ$. A measure $\nu\in\cM$ is said to be 
invariant for the process $\eta_t$ defined in \eqref{22-0} if $\eta_0\sim \nu$ implies $\eta_1\sim\nu$. The convex set of all invariant measures of $\eta_t$ is denoted by $\cI$. Let $\{\theta_x\}_{x\in\bZ}$ be the set of shift operators in space. As an example, for $\eta\in \bR^\bZ$, $(\theta_x \eta)(i)=\eta(i+x)$, $\forall i\in\bZ$. A measure $\nu\in\cM$ is said to be shift invariant in space if $\nu(\theta_xA)=\nu(A)$ for all Borel sets $A\subseteq \bR^\bZ$ and $x\in\bZ$. The collection of all shift invariant measures in $\cM$ is denoted by $\cJ$. A Borel set $B\subseteq \bR^\bZ$ is invariant if $\theta_xB=B$ for all $x\in\bZ$. A shift-invariant measure $\nu\in\cJ$ is ergodic if $\nu(B)=0$ or $1$ for every invariant Borel set $B\subseteq\bR^\bZ$.\par  

For the construction of the invariant measures of $\eta_t$, we first define a harness process $\{\h_{[s,t]}(i):i\in\bZ\}_{t\ge s}$ starting at time $s$ with a flat configuration, i.e.\ $\h_{[s,s]}(i)= 0$, $\forall i\in\bZ$. Then, from the dual representation \eqref{21-l1}, at time $t>s$, the heights $\h_{[s,t]}(\cdot)$ can be represented as
\be \h_{[s,t]}(i)=\sum_{j\in\bZ}\sum_{k=s+1}^{t}\xi_k(j)p^{t-k}(i,j), \quad i\in\bZ, t>s. \label{22-1} \ee

In addition, we can also define a harness process $\h^\varsigma_{[s,t]}$ starting at time $s$ with configuration $\varsigma$, i.e.\ $\h^\varsigma_{[s,s]}(i)= \varsigma(i)$, $\forall i\in\bZ$. Then, we can also write $\h^\varsigma_{[s,t]}$ as
\be \h^\varsigma_{[s,t]}(i)=\sum_{j\in\bZ}\sum_{k=s+1}^{t}\xi_k(j)p^{t-k}(i,j)+\sum_{j\in\bZ}\varsigma(j)p^{t-s}(i,j), \quad i\in\bZ,t>s.\label{22-2}\ee

We can show that
\begin{theorem}\label{thm2-1}
Assume $d=1$, \eqref{weight-assump-1}, \eqref{xi-assump-1} and \eqref{weight-assump-2}.
\begin{enumerate}
\item For each fixed $i\in \bZ$, and $t\in \bZ^+$, the process $\{\h_{[t-s,t]}(i)-\h_{[t-s,t]}(i-1): s\in\bZ^+\}$ is an $L^2$-martingale with respect to the filtration $(\cF_s)_{s\ge0}$ where $\cF_s=\sigma\bigl(\xi_{t-s+1}(\cdot),\ldots,\xi_t(\cdot)\bigr)$. Furthermore, the martingale $\h_{[t-s,t]}(i)-\h_{[t-s,t]}(i-1)$ converges both almost surely and in $L^2$-norm as $s\to\infty$. We denote the limit by $\Delta_t(i)$, $i\in\bZ$.
\item $\{\Delta_t\}_{t\in\bZ^+}$ is a stationary Markov process. The distribution of $\Delta_0$ is an ergodic (space-wise) invariant measure for the increment process $\eta_t(\cdot)$.
\end{enumerate}
\end{theorem}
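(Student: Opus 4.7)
The plan is to work with the explicit formula \eqref{22-1} and reindex by $m=t-k$, giving
\[
\h_{[t-s,t]}(i)-\h_{[t-s,t]}(i-1) = \sum_{m=0}^{s-1}\sum_{j\in\bZ}\xi_{t-m}(j)\bigl[p^{m}(i,j)-p^{m}(i-1,j)\bigr].
\]
As $s$ grows by one we add a single term involving only the noise variables $\xi_{t-s}(\cdot)$, which are independent of $\cF_s$; this added term is a sum of independent zero-mean random variables with finite variance (the $m$-th kernel is finitely supported). Hence the sequence is a zero-mean $L^2$-martingale with respect to $(\cF_s)_{s\ge 0}$.

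For $L^2$-boundedness I compute
\[
\Vvv\bigl(\h_{[t-s,t]}(i)-\h_{[t-s,t]}(i-1)\bigr)=\sigma_\xi^2\sum_{m=0}^{s-1}\sum_{j\in\bZ}\bigl(p^{m}(i,j)-p^{m}(i-1,j)\bigr)^2.
\]
Expanding the square and using \eqref{multistep-q} together with translation invariance of $p$, the inner $j$-sum equals $2\bigl(q^m(0,0)-q^m(0,1)\bigr)$. By Lemma \ref{lmm2-2}, $q$ is symmetric, irreducible, recurrent, span-$1$ with finite variance, so the potential-kernel theory for one-dimensional recurrent walks (Appendix \ref{potential-kernel}) gives the finiteness of
\[
a(1)=\sum_{m=0}^{\infty}\bigl(q^m(0,0)-q^m(0,1)\bigr).
\]
The martingale is therefore $L^2$-bounded, and Doob's theorem provides almost sure and $L^2$-convergence to a limit, which we call $\Delta_t(i)$.

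For part (2), pass $s\to\infty$ in the one-step recursion $\h_{[t-s,t+1]}(i)=\sum_k w(k)\h_{[t-s,t]}(i+k)+\xi_{t+1}(i)$, subtracted at $i$ and $i-1$. The $L^2$-convergence from part (1) yields
\[
\Delta_{t+1}(i)=\sum_{k}w(k)\Delta_t(i+k)+\xi_{t+1}(i)-\xi_{t+1}(i-1),
\]
which is exactly the increment evolution \eqref{increment-evolution}. Since $\Delta_t$ is $\sigma(\xi_k(\cdot):k\le t)$-measurable and $\xi_{t+1}$ is independent of that $\sigma$-field, $\{\Delta_t\}$ is Markov with the correct transition; temporal stationarity and invariance of the law of $\Delta_0$ under the $\eta_t$-dynamics then follow from the i.i.d.\ structure of the noises in time.

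Spatial ergodicity comes from the explicit representation: because $p^{m}(i,j)-p^{m}(i-1,j)$ depends on $i,j$ only through $j-i$, each $\Delta_0(i)$ is the same measurable functional of the noise field shifted spatially by $i$. The joint noise law is an i.i.d.\ product measure on $\bR^{\bZ\times\bZ_{\le 0}}$, hence ergodic under purely spatial translations, and ergodicity is preserved under this factor map, giving ergodicity of the law of $\Delta_0$ under $\{\theta_x\}$. The main technical obstacle is the $L^2$-bound for the martingale, which reduces to finiteness of $a(1)$ for the $q$-walk; this is exactly where the conclusions of Lemma \ref{lmm2-2} (and thereby the one-dimensional, span-$1$, finite-variance hypotheses) are decisively used.
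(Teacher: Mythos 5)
Your proposal is correct and follows essentially the same route as the paper: the explicit noise representation gives the martingale property, the second moment reduces to $2\sigma_\xi^2\sum_m\bigl(q^m(0,0)-q^m(1,0)\bigr)$ whose finiteness is the potential-kernel fact (the paper proves a quantitative $s^{-1/2}$ tail bound in Lemma \ref{lmm2-6} but also notes the Spitzer/Appendix alternative you cite), and part (2) is obtained exactly as in the paper by passing to the limit in the one-step recursion and viewing $\Delta_0(i)$ as a fixed functional of the spatially shifted i.i.d.\ noise field, whose ergodicity passes to the factor.
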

The representation \eqref{22-1} suggests that the process $\Delta_t$ can be written as
\be \Delta_t(i)=\sum_{j\in\bZ}\sum_{k=0}^{\infty}\xi_{t-k}(j)\left[p^k(i,j)-p^k(i-1,j)\right],\quad i\in\bZ,t\in\bZ^+.\label{initial-increments-rep}\ee
By \eqref{initial-increments-rep}, one can easily check that the process $\{\Delta_t\}_{t\in\bZ^+}$ obeys evolution \eqref{increment-evolution} and hence itself is an increment process of a harness process.\par
 
Let us denote the distribution of $\Delta_0$ by $\pi_0\in \cI\cap\cJ$. Since $\pi_0$ is ergodic, it is one extreme point of $\cI$. The mean and covariances of $\pi_0$ are described below.
\begin{proposition}\label{crlly2-1}
Assume $d=1$, \eqref{weight-assump-1}, \eqref{xi-assump-1} and \eqref{weight-assump-2}. $\pi_0$ has mean zero and covariance $\cov0(\cdot,\cdot)$ given by 
\be
\cov0(i,j)=\sigma_\xi^2[a(i-j-1)+a(i-j+1)-2a(i-j)],\quad i,j\in\bZ,\label{t37-1}\ee
where $a(x)$ is the potential kernel,
\be
a(x)=\sum_{k=0}^\infty [q^k(0,0)-q^k(x,0)],\quad x\in\bZ,\label{t37-2}\ee
and the associated transition kernel $q$ is defined in \eqref{21-d2}.
\end{proposition}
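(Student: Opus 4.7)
The plan is to compute directly from the series representation \eqref{initial-increments-rep} of $\Delta_0(i)$, which by Theorem \ref{thm2-1} converges in $L^2$. Since each $\xi_{-k}(j)$ has mean zero and the series converges in $L^2$, continuity of the $L^2$ inner product gives $\E[\Delta_0(i)] = 0$ immediately, and $\cov0(i,j) = \E[\Delta_0(i)\Delta_0(j)]$.

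For the covariance, I will expand the product of the two convergent series and use the fact that the noises $\{\xi_{-k}(j)\}$ are i.i.d.\ mean-zero with variance $\sigma_\xi^2$, so $\E[\xi_{-k}(m)\xi_{-\ell}(n)] = \sigma_\xi^2 \mathbf{1}\{k=\ell, m=n\}$. This collapses the sum to
\[
\cov0(i,j) = \sigma_\xi^2 \sum_{k=0}^\infty \sum_{m\in\bZ} \bigl[p^k(i,m) - p^k(i-1,m)\bigr]\bigl[p^k(j,m) - p^k(j-1,m)\bigr].
\]
Next I will use the translation invariance of the kernel, $p^k(a,b) = p^k(0,b-a)$, to write each spatial sum $\sum_m p^k(a,m)p^k(b,m)$ as $\sum_z p^k(0,z)p^k(b-a,z) = q^k(b-a,0)$, recognizing formula \eqref{multistep-q}. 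Expanding the product and setting $r = i-j$ yields
\[
\cov0(i,j) = \sigma_\xi^2 \sum_{k=0}^\infty \bigl[2q^k(r,0) - q^k(r-1,0) - q^k(r+1,0)\bigr],
\]
where I have also used symmetry of the $q$-walk from Lemma \ref{lmm2-2} so that $q^k(-x,0) = q^k(x,0)$.

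Finally I will rewrite each term by adding and subtracting $q^k(0,0)$ to bring in the potential kernel: the summand equals
\[
\bigl[q^k(0,0) - q^k(r-1,0)\bigr] + \bigl[q^k(0,0) - q^k(r+1,0)\bigr] - 2\bigl[q^k(0,0) - q^k(r,0)\bigr],
\]
and summing termwise in $k$ produces $a(r-1) + a(r+1) - 2a(r)$. Using $a(x) = a(-x)$ (symmetry of $q$) this equals $a(i-j-1) + a(i-j+1) - 2a(i-j)$, which is the claimed formula.

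The only subtle point to address carefully is the legitimacy of the termwise summation leading to the potential kernel: the individual sums $\sum_k q^k(0,0)$ and $\sum_k q^k(r,0)$ diverge (the $q$-walk is recurrent in one dimension by Lemma \ref{lmm2-2}), so one must regroup terms before passing to the limit, or alternatively invoke the convergence of $a(x)$ for one-dimensional recurrent walks with finite variance proved in Appendix \ref{potential-kernel}. I expect this interchange to be the main technical point; the rearrangement above is valid because each finite partial sum $\sum_{k=0}^N$ of the second-difference expression can be rewritten with $q^k(0,0)$ added and subtracted exactly as shown, and each of the three resulting partial sums converges as $N\to\infty$ by the established existence of $a(\cdot)$.
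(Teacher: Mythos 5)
Your proof is correct and follows essentially the same route as the paper: it passes to the covariance via $L^2$ convergence, uses independence of the noises together with $\sum_m p^k(a,m)p^k(b,m) = q^k(b-a,0)$ to collapse the double series to $\sigma_\xi^2\sum_{k}\bigl[2q^k(i-j,0)-q^k(i-j-1,0)-q^k(i-j+1,0)\bigr]$, and then regroups into second differences of the potential kernel. The one small stylistic difference is that the paper computes the covariance of the finite-$s$ truncations $\h_{[t-s,t]}(\cdot)-\h_{[t-s,t]}(\cdot-1)$ first (Lemma \ref{lmm2-4}) and only then sends $s\to\infty$, so the rearrangement-before-summation issue you flag at the end is automatically handled at the level of finite partial sums; your resolution of it — noting that each partial sum of the second-difference series equals $a_N(r-1)+a_N(r+1)-2a_N(r)$ with $a_N(x)\to a(x)$ — is exactly the right justification and matches what the truncation approach gives implicitly.
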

Notice that the convergence of the infinite series in \eqref{t37-2} is guaranteed by assumption \eqref{weight-assump-1} and \eqref{weight-assump-2} (see either Lemma \ref{lmm2-6} in Section \ref{invariant-measure-proofs} below or P28.8 in \cite{4}).\par

Now we can see that the invariant measure $\pi_0$ is not degenerate since $a(0)=0$ and $a(x)>0$ for all $x\neq 0$ due to Lemma \ref{lmm3-1} in the proof of Theorem \ref{thm2-1}. The potential kernel $a(x)$ has been well studied in \cite{4}. And some of the useful results are listed in Appendix \ref{potential-kernel}. From the properties of the potential kernel $a(x)$, we can make a few comments on the covariance function $\cov0(0,x)$.
\begin{corollary}\label{rmk2-4}
Assume $d=1$, \eqref{weight-assump-1}, \eqref{xi-assump-1} and \eqref{weight-assump-2}. 
 \begin{enumerate}
\item The spectral density function (see definition below, the details can be found in Chapter 4 of \cite{32}) of $\cov0(0,x)$ can be written as 
\be f(\theta)=\frac{\sigma_\xi^2}{\pi}\cdot\frac{1-\cos(\theta)}{1-\sum_{k\in\bZ}q(0,k)e^{\i k\theta}};\label{characteristic-function}\ee
\item There exist constants $A, c>0$ such that 
\be \abs{\cov0(0,x)}\le Ae^{-c\abs{x}}, \quad \forall x\in\bZ;\label{cov-bound}\ee
\item  \be \sum_{k\in\bZ} \cov0(0,k)=\frac{\sigma_\xi^2}{\sigma_1^2},\label{series-of-covariances}\ee  
The series in \eqref{series-of-covariances} is called the series of covariances, and it converges absolutely.
\end{enumerate}
\end{corollary}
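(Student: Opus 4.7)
The plan is to deduce all three claims from a single spectral representation of $\cov0$, which itself rests on a classical Fourier formula for the potential kernel.

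For (1), the starting point would be the standard integral representation for the potential kernel of a one-dimensional, symmetric, recurrent, span-$1$ random walk (Spitzer, P28 of \cite{4}):
$$a(x)=\frac{1}{2\pi}\int_{-\pi}^{\pi}\frac{1-\cos(x\theta)}{1-\phi_q(\theta)}\,d\theta,\qquad \phi_q(\theta):=\sum_{k\in\bZ}q(0,k)e^{\i k\theta},$$
whose integrand is bounded near $\theta=0$ because the denominator vanishes to order two there. Plugging into \eqref{t37-1} and simplifying $a(x-1)+a(x+1)-2a(x)$ by the identity $\cos((x-1)\theta)+\cos((x+1)\theta)=2\cos(x\theta)\cos\theta$ collapses the numerator to $2\cos(x\theta)(1-\cos\theta)$. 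This produces $\cov0(0,x)=\int_{-\pi}^{\pi}e^{\i x\theta}f(\theta)\,d\theta$ with $f$ exactly as in \eqref{characteristic-function}, proving (1).

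For (2), I would exploit that the finite support of $w$ makes $\phi_q$ an entire function on $\bC$. By Lemma \ref{lmm2-2} and the span-$1$ hypothesis, $\phi_q(\theta)=1$ on the real interval $(-\pi,\pi]$ only at $\theta=0$, and the Taylor expansion $\phi_q(\theta)=1-\sigma_1^2\theta^2+O(\theta^4)$ shows the zero there has order exactly $2$. Since $1-\cos\theta$ also has a double zero at $0$, the quotient $f$ extends to a holomorphic, $2\pi$-periodic function on some strip $|\operatorname{Im}\theta|<c_0$. Shifting the $\theta$-contour by $\pm\i c'$ with $0<c'<c_0$ (sign opposite to $\operatorname{sgn}(x)$) produces the decay factor $e^{-c'|x|}$, while the vertical boundary segments cancel because $e^{\pm\i x\pi}=(-1)^x$ and $f(\pi+\i t)=f(-\pi+\i t)$ by periodicity. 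This yields \eqref{cov-bound}.

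Part (3) is then immediate: by (2) the series $\sum_{x\in\bZ}\cov0(0,x)$ converges absolutely, so $f$ is continuous on $[-\pi,\pi]$ and Fourier inversion gives $\sum_{x\in\bZ}\cov0(0,x)=2\pi f(0)$. A Taylor expansion at $\theta=0$ using $1-\cos\theta=\theta^2/2+O(\theta^4)$ and $1-\phi_q(\theta)=\sigma_1^2\theta^2+O(\theta^4)$ resolves the indeterminate form to give $f(0)=\sigma_\xi^2/(2\pi\sigma_1^2)$, and hence $\sum_x\cov0(0,x)=\sigma_\xi^2/\sigma_1^2$. The main obstacle is the analytic-continuation step in (2): one has to pin down that $\theta=0$ is the unique real zero of $1-\phi_q$ on $(-\pi,\pi]$ and that it is exactly of second order, which uses both the span-$1$ hypothesis (to preclude other roots of unity) and the finite support of $w$ (to make $\phi_q$ entire). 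Once $f$ is known to live on a complex strip, the contour shift in (2) and the evaluation of $f(0)$ in (3) are routine.
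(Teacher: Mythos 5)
Your proposal is correct and runs along essentially the same lines as the paper's (which proves this corollary by citing two appendix lemmas about the potential kernel and the analyticity of the spectral density, also based on Spitzer's Fourier integral formula for $a(x)$). The derivation of \eqref{characteristic-function} from $a(x)=\frac{1}{2\pi}\int_{-\pi}^{\pi}\frac{1-\cos(x\theta)}{1-\phi_q(\theta)}\,d\theta$ via the cosine identity is exactly the right computation; the strip-of-analyticity argument (finite support of $q$ makes $\phi_q$ a trigonometric polynomial, span $1$ forces the only real zero of $1-\phi_q$ on $(-\pi,\pi]$ to be $\theta=0$, and the variance being positive and finite forces that zero to be exactly second order) is the substance of what the paper's appendix Lemma \ref{lmmA-3} supplies; and evaluating $2\pi f(0)=\sigma_\xi^2/\sigma_1^2$ is the content of Lemma \ref{lmmA-4}. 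One small mechanical slip: with the paper's convention $\cov0(0,x)=\int_{-\pi}^\pi e^{\i x\theta}f(\theta)\,d\theta$, for $x>0$ the decaying contour shift is in the $+\i c'$ direction (same sign as $x$), not the opposite sign as you wrote; this does not affect the result since $f$ is even and $\cov0(0,x)=\cov0(0,-x)$, but it is worth getting straight.
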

\begin{definition}
A function $f$ defined on $(-\pi,\pi]$ is the unique spectral density of a stationary process $\{X_t\}_{t\in\bZ}$ with covariance $\covg(\cdot,\cdot)$ if
\begin{itemize}
\item $f(\theta)\ge 0$ for all $\theta\in (-\pi,\pi]$,
\item $\covg(0,k)=\int_{-\pi}^\pi e^{\i k\theta}f(\theta) d\theta$, for all $k\in\bZ$.
\end{itemize}
\end{definition}

Now let us further investigate the invariant measure $\pi_0$. First, let us give the following definitions.
\begin{definition}\label{def2-3}
A mean-zero real-valued stochastic process $\{\eta(x)\}_{x\in\bZ}$ that is stationary in the wide sense (covariance-stationary)  is called linearly regular if the space
$$\LH(-\infty,-\infty)=\bigcap_{x}\LH(-\infty, x)$$
is trivial, where $\LH(a,b)$ is the mean square closed linear hull of $\{\eta(y): a\le y \le b\}$, i.e.\ $\LH(a,b)$ is the minimal closed set in $L^2(\bP)$ that contains the linear span of  $\{\eta(y): a\le y \le b\}$.
\end{definition} 
More details about the space $\LH(a,b)$ can be found in \cite{15} (see Chapter I.5).
\begin{definition}\label{def2-4}
A mean-zero real-valued stochastic process $\{\eta(x)\}_{x\in\bZ}$ that is stationary in the wide sense (covariance-stationary) is called completely linearly regular if
$$\rho(x)=\sup\limits_{\substack{\phi_1\in \LH(x,\infty), \phi_2\in \LH(-\infty, 0)\\ \|\phi_1\|_2=\|\phi_2\|_2=1}}|\bE \left[\phi_1\phi_2\right]|\to 0, \quad\mbox{as }x\to \infty.$$
$\{\rho(x)\}_{x\in\bZ^+}$ are called the coefficients of complete linear regularity.
\end{definition}

The linear regularity condition has been introduced and well-studied in \cite{15}, and it plays an important role in the prediction theory of stationary random processes (see \cite{29} for detail). Here we will show that $\pi_0$ is indeed completely linearly regular.
\begin{theorem}\label{thm2-2}
 Assume $d=1$, \eqref{weight-assump-1}, \eqref{xi-assump-1} and \eqref{weight-assump-2}. The $\pi_0$-distributed process $\{\eta(x)\}_{x\in\bZ}$ is completely linearly regular with linear regularity coefficient
$$\rho(x)=o(x^{-n}),\quad\mbox{for all }n\in \bZ^+.$$
\end{theorem}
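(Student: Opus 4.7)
My plan is to reduce the claim to a classical result from the prediction theory of covariance-stationary processes, applied to the explicit spectral density given in Corollary~\ref{rmk2-4}(a). The first step is to verify that
$$f(\theta) = \frac{\sigma_\xi^2}{\pi} \cdot \frac{1-\cos\theta}{1 - \hat q(\theta)}, \qquad \hat q(\theta) := \sum_{k\in\bZ} q(0,k)\,e^{\i k\theta},$$
is strictly positive and real-analytic on the whole circle. Both numerator and denominator are real-analytic trigonometric polynomials. By the span-$1$ assumption and Lemma~\ref{lmm2-2}, the only zero of $1 - \hat q(\theta)$ on $(-\pi,\pi]$ is at $\theta = 0$. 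Using the symmetry of $q$ together with $\sum_k k^2 q(0,k) = 2\sigma_1^2$ from Lemma~\ref{lmm2-2}, one gets $1 - \hat q(\theta) = \sigma_1^2 \theta^2 + O(\theta^4)$ near $\theta = 0$, which matches the $\theta^2/2$ behaviour of $1-\cos\theta$. The apparent singularity at $\theta = 0$ is therefore removable, and $f$ extends to a strictly positive, real-analytic function on $(-\pi,\pi]$.

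Once $f$ is real-analytic and positive, $\log f$ is real-analytic as well, and its Fourier coefficients $\{c_k\}_{k\in\bZ}$ decay exponentially. The second step is to invoke the classical theorem from stationary prediction theory (Ibragimov--Rozanov, Helson--Sarason) stating that the linear maximal correlation $\rho(x)$ between $\LH(-\infty,0)$ and $\LH(x,\infty)$ is controlled by the smoothness of $\log f$. The bridge is Szeg\H{o}'s factorization $f(\theta) = |\phi(e^{\i\theta})|^2$ with $\phi$ outer: the Taylor coefficients of $\phi$ and of $1/\phi$ inherit the decay rate of $\{c_k\}$, and a Hankel-operator representation of $\rho(x)$ then gives a bound of the form $\rho(x) \lesssim \sum_{|k|\ge x}|c_k|$. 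Since the $c_k$ decay exponentially, so does $\rho(x)$, which is considerably stronger than the stated $o(x^{-n})$ for every $n\in\bZ^+$.

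The hard part is Step~2: transferring the analyticity of the spectral density into quantitative geometric information about the two half-line subspaces of $L^2(\bP)$. This passes through Szeg\H{o} factorization, norm estimates on the outer factor and its reciprocal in a smoothness scale matched to $\log f$, and the operator-theoretic identification of $\rho(x)$ as the norm of a shifted Hankel operator with symbol built from $\bar\phi/\phi$. Rather than reprove this machinery from scratch, I would cite the sharp quantitative statement from the prediction-theory literature; all of its hypotheses are exactly what Step~1 provides, and the exponential decay of $\{c_k\}$ then immediately supplies the super-polynomial decay of $\rho(x)$ claimed in the statement.
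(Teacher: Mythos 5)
Your proposal is correct and takes essentially the same route as the paper: verify that the spectral density \eqref{characteristic-function} extends to a strictly positive, smooth function on the circle, then cite a classical prediction-theory result converting that regularity into decay of $\rho(x)$. The paper invokes Theorem~8 of \cite{15} (quoted as Lemma~\ref{lmm3-3}) with $P=1$ and $w=f$ for each order $r$ to get $\rho(x)=o(x^{-n})$ for all $n$; your Szeg\H{o}-factorization/Hankel-operator sketch aims at the stronger exponential rate but is the same reduction to a known criterion from the Ibragimov--Rozanov circle of ideas.
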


More interestingly, if we set the noise $\xi$ to be Gaussian in \eqref{initial-increments-rep}, the result can be stronger.
\begin{definition}\label{def2-5}
A stationary stochastic process $\{\eta(x)\}_{x\in\bZ}$ is called completely regular if 
$$\varrho(x)=\sup\limits_{\substack{\phi_1\in L^2(\cF_{x}^\infty), \phi_2\in L^2(\cF_{-\infty}^0)\\ \|\phi_1\|_2=\|\phi_2\|_2=1}}|\Cvv(\phi_1,\phi_2)|\to 0, \quad\mbox{as }x\to \infty.$$
where $\cF_{m}^n=\sigma\{\eta(x): m\le x\le n\}$, and $\{\varrho(x)\}_{x\in\bZ^+}$ are called the coefficients of complete regularity.
\end{definition}

\begin{corollary}\label{crlly2-2}
Assume $d=1$, \eqref{weight-assump-1}, \eqref{xi-assump-1}, \eqref{weight-assump-2} and $\{\xi_t(i)\}_{i\in\bZ,t\in\bZ}$ have i.i.d.\ Gaussian distribution. Then $\pi_0$ is a centered Gaussian field (also called Gauss measure) with the covariance function $\cov0(\cdot,\cdot)$. $\pi_0$-distributed process $\{\eta(x)\}_{x\in\bZ}$ is stationary and completely regular with regularity coefficient 
$$\varrho(x)=o(x^{-n}),\quad\mbox{for all }n\in \bZ^+.$$
\end{corollary}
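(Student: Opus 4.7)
The plan is to first upgrade $\pi_0$ from being characterized by its covariance alone to being a centered Gaussian field, and then reduce the complete-regularity estimate for $\varrho$ to the complete-linear-regularity estimate already provided by Theorem~\ref{thm2-2} by invoking the Kolmogorov--Rozanov theorem for stationary Gaussian sequences.

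For Gaussianity of $\pi_0$, I return to the representation \eqref{initial-increments-rep}. Under the assumption that $\{\xi_k(j)\}$ are i.i.d.\ Gaussian, each truncated partial sum
\[
\Delta_0^{(N)}(i)=\sum_{\abs{j}\le N}\sum_{k=0}^{N}\xi_{-k}(j)\bigl[p^k(i,j)-p^k(i-1,j)\bigr]
\]
is a finite linear combination of jointly Gaussian variables, hence Gaussian, and the same holds jointly for any finite tuple $(i_1,\dots,i_m)$. Theorem~\ref{thm2-1} ensures $L^2$-convergence (and thus convergence in distribution) as $N\to\infty$, and the class of centered Gaussian vectors is closed under weak limits. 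Consequently $\pi_0$ is a centered Gaussian field, whose distribution is fully determined by the covariance $\cov0(\cdot,\cdot)$ already computed in Proposition~\ref{crlly2-1}. Stationarity in $x$ is inherited from the membership $\pi_0\in\cJ$ established in Theorem~\ref{thm2-1}.

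For the regularity rate, I invoke the Kolmogorov--Rozanov theorem: for a stationary jointly Gaussian sequence, the maximal-correlation coefficient between two $\sigma$-algebras generated by disjoint portions of the process coincides with its linear analogue, in which the supremum is restricted to the closed $L^2$-linear hulls of those two portions. Translated into the notation of the paper, this says $\varrho(x)=\rho(x)$ for the Gaussian process $\{\eta(x)\}_{x\in\bZ}$. Combining this identity with the bound $\rho(x)=o(x^{-n})$ from Theorem~\ref{thm2-2} immediately yields $\varrho(x)=o(x^{-n})$ for every $n\in\bZ^+$.

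The only real bookkeeping step is to align Definitions~\ref{def2-4} and \ref{def2-5} with the usual maximal-correlation formulation. In Definition~\ref{def2-4}, $\phi_1,\phi_2$ lie in mean-zero linear spaces with unit $L^2$ norm, so $\abs{\bE[\phi_1\phi_2]}=\abs{\Cvv(\phi_1,\phi_2)}$, and in Definition~\ref{def2-5} one may harmlessly restrict to mean-zero $\phi_1,\phi_2$ since recentering leaves the covariance invariant and does not increase the $L^2$-norm. After this identification the Kolmogorov--Rozanov identity applies verbatim and the corollary follows. The main obstacle is therefore not probabilistic work at all but the careful citation of the correct classical theorem and verification that its hypotheses match the present setup; once this is done, no further estimates beyond those already in Theorem~\ref{thm2-2} are required.
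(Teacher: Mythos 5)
Your proposal is correct and follows essentially the same route as the paper: Gaussianity of $\pi_0$ is obtained because $\Delta_0(\cdot)$ is the $L^2$-limit of jointly Gaussian finite sums, and the rate for $\varrho$ comes from the classical fact (Kolmogorov--Rozanov, cited in the paper via \cite{29}) that for stationary Gaussian sequences the complete-regularity coefficients coincide with the complete-linear-regularity coefficients, so Theorem \ref{thm2-2} gives $\varrho(x)=\rho(x)=o(x^{-n})$.
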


For the uniqueness of the ergodic (spatially speaking) invariant measure of the increment process $\{\eta_t\}_{t\in\bZ^+}$, we have the following theorem. 
\begin{theorem}{(Uniqueness)}\label{thm2-3}
Assume $d=1$, \eqref{weight-assump-1}, \eqref{xi-assump-1} and \eqref{weight-assump-2}. Let $\nu\in\cI\cap\cJ$ satisfy the following properties. $\nu$ is an ergodic measure, $\bE^\nu\left|\eta(0)\right|<\infty$ and $\bE^\nu\left[\eta(0)\right]=c$. Denote the distribution of $\{c+\Delta_0(x)\}_{x\in\bZ}$ by $\pi_c$. Then, 
$$\nu=\pi_c.$$  
\end{theorem}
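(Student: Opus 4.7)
The plan is a coupling proof. I will construct the $\nu$-stationary process $\{\eta_t\}_{t\ge 0}$ and the $\pi_c$-distributed harness process $\{c+\Delta_t\}_{t\ge 0}$ on a single probability space driven by shared noise, then show that their difference converges to zero in $L^1$, which forces $\nu=\pi_c$ after comparing their (fixed) finite-dimensional marginals. On the common probability space I take an i.i.d.\ noise family $\{\xi_s(j): s\in\bZ,\, j\in\bZ\}$ with $\eta_0\sim\nu$ independent of it; I drive $\eta_t$ forward in time by $\{\xi_s\}_{s\ge 1}$ via \eqref{increment-evolution}, and define $\Delta_t(i)$ from $\{\xi_s\}_{s\le t}$ via the series representation \eqref{initial-increments-rep}. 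Then $\eta_t\sim\nu$ for every $t$ by invariance, and $\Delta_t\sim\pi_0$ by Theorem~\ref{thm2-1}.

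Using the translation identity $p^k(i-1,j)=p^k(i,j+1)$ to rewrite \eqref{initial-increments-rep}, iterating \eqref{increment-evolution} forward, and subtracting, I get the decomposition
\[
\eta_t(i)-c-\Delta_t(i) \;=\; \Bigl[\sum_j p^t(i,j)\,\eta_0(j)-c\Bigr] \;-\; T_t(i),
\]
where $T_t(i)=\sum_{k\ge t}\sum_j p^k(i,j)\bigl(\xi_{t-k}(j)-\xi_{t-k}(j-1)\bigr)$ is the $k\ge t$ tail of the $\Delta_t$-series. By time-stationarity of the noise, $T_t(i)$ has the same distribution as the analogous tail of $\Delta_0(i)$, which vanishes in $L^2$ as $t\to\infty$ by the $L^2$-martingale convergence in Theorem~\ref{thm2-1}. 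The whole argument therefore reduces to proving that $\sum_j p^t(i,j)\,\eta_0(j)\to c$ in $L^1$.

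This last limit is the main technical obstacle. Setting $Y_j=\eta_0(j)-c$, I must show $\bE\bigl|\sum_j p^t(i,j)Y_j\bigr|\to 0$ for the shift-ergodic, integrable, mean-zero field $Y$. My approach combines two inputs. First, the local CLT (Appendix~\ref{lclt}) gives total-variation convergence $\sum_j |p^t(i,j)-\phi_t(j-i)|\to 0$, where $\phi_t$ is the discrete Gaussian kernel of mean $t\wm$ and variance $\sigma_1^2 t$; since $\bE|Y_0|<\infty$, the error contribution is controlled by $\bE|Y_0|\cdot\sum_j|p^t-\phi_t|\to 0$, reducing matters (after the shift $\ell=j-i-t\wm$ and shift-invariance of $Y$) to $\bE\bigl|\sum_\ell g_t(\ell)Y_\ell\bigr|\to 0$ with $g_t$ the centered Gaussian kernel. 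Second, Abel summation rewrites this as $\bE\bigl|\sum_\ell (g_t(\ell-1)-g_t(\ell))\,S_\ell\bigr|$ in terms of the ergodic partial sums $S_\ell=\sum_{k=0}^{\ell-1}Y_k$; combining the gradient bound $|g_t(\ell)-g_t(\ell-1)|\le C\,g_t(\ell)\,|\ell|/(\sigma_1^2 t)$, the Birkhoff consequence $\bE|S_\ell|/|\ell|\to 0$ (from ergodicity of $\nu$ and $Y\in L^1$), and the identity $\sum_\ell g_t(\ell)\ell^2\sim\sigma_1^2 t$, splitting the sum at $|\ell|=K$ produces a bound of the form $o_t(1)+O(\varepsilon)$, which closes the estimate on letting $t\to\infty$ and then $\varepsilon\to 0$.

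With the $L^1$-limit in hand, $\eta_t(i)-c-\Delta_t(i)\to 0$ in $L^1$ for each $i$, and coordinatewise convergence extends to finite collections of sites. For any $\{i_1,\ldots,i_m\}\subset\bZ$ and any bounded Lipschitz $F:\bR^m\to\bR$,
\[
\bE F\bigl(\eta_t(i_1),\ldots,\eta_t(i_m)\bigr) - \bE F\bigl(c+\Delta_t(i_1),\ldots,c+\Delta_t(i_m)\bigr) \;\to\; 0 \quad\text{as } t\to\infty;
\]
both sides are constant in $t$ (by invariance of $\nu$ and stationarity of $\Delta_t$), so they coincide for every $t$. This identifies all finite-dimensional marginals of $\nu$ with those of $\pi_c$, and therefore $\nu=\pi_c$.
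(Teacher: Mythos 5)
Your proposal is correct in outline but takes a genuinely different route from the paper's. The paper argues by contradiction with two ergodic invariant measures of equal mean: it first builds a jointly ergodic coupling via ergodic decomposition of the product measure (Lemma \ref{lmm3-4}), observes that under shared noise the difference field evolves deterministically as $\zeta_t=p^t\zeta_0$, and then proves $L^1(\nu)$-convergence of $\sum_y p^t(x,y)\zeta_0(y)$ to a constant by truncating $\zeta_0$, representing the truncated covariance via Herglotz's theorem, using $|\phi_X(\theta)|<1$ off $\theta=0$ to get an $L^2(\nu)$-Cauchy property, identifying the limit as a bounded $p$-harmonic (hence constant) function, and finally pinning the constant down by shift-invariance and ergodicity. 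You instead couple $\nu$ directly with the explicit stationary process $c+\Delta_t$ driven by the same noise; your decomposition $\eta_t(i)-c-\Delta_t(i)=\bigl[\sum_j p^t(i,j)\eta_0(j)-c\bigr]-T_t(i)$ is algebraically correct (the dynamical noise up to time $t$ cancels exactly against the first $t$ terms of \eqref{initial-increments-rep}), the tail $T_t(i)$ vanishes in $L^2$ by Lemma \ref{lmm2-6}, and the initial-data term is handled by the local CLT, summation by parts, and Birkhoff's theorem, using only ergodicity and integrability of $\nu$. This buys you a proof with no joining lemma, no truncation/spectral/harmonic-function machinery, and no need of the ergodicity of $\pi_c$ as an input; the paper's spectral argument, in exchange, needs only $|\phi_X(\theta)|<1$ rather than the local limit profile and proves the more abstract statement that any two ergodic invariant measures with a common mean coincide. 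A few of your estimates are stated too casually but are easily patched: the sup-norm LCLT must be combined with a routine tail bound to give the total-variation statement; the centering $t\wm$ is generally non-integer, so one shifts by $\lfloor t\wm\rfloor$ and absorbs the fractional part into the kernel; and the gradient bound $|g_t(\ell)-g_t(\ell-1)|\le C\,g_t(\ell)|\ell|/(\sigma_1^2 t)$ fails when $|\ell|\gg t$ (the ratio $g_t(\ell-1)/g_t(\ell)$ is unbounded there), so replace $g_t(\ell)$ by $\max\{g_t(\ell-1),g_t(\ell)\}$ or treat $|\ell|>t$ separately, where the Gaussian tail dominates the at-most-linear growth of $\bE|S_\ell|$; with these adjustments your split at $|\ell|=K$ closes as described, and the bounded-Lipschitz comparison of the (time-constant) finite-dimensional marginals finishes the identification $\nu=\pi_c$.
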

More results about the structure of $\cI$ can be found in \cite{34}.\par

\section{Limits for height fluctuations}\label{fluctuation-limits-results}
The proofs for the results in this section can be found in Section \ref{fluctuation-limit}.\par

In this section, we assume that the initial height function $\h_0: \bZ\to\bR$ is normalized by $h_0(0)=0$. The distribution of the initial increment process $\{\eta_0(x)\}_{x\in\bZ}$ is assumed to be shift invariant and ergodic. We denote the mean, the variance and the series of covariances of the initial increments by 
\be\mu_0=\bE\left[\eta_0(0)\right],\quad\sigma^2_0=\Vvv\left[\eta_0(0)\right],\quad \varsigma^2=\sum_{x\in\bZ}\Cvv\left[\eta_0(0),\eta_0(x)\right].\label{initial-increments}\ee 
The convergence of the series above will be guaranteed by condition (a), (b) and (c) in Theorem \ref{thm2-4} below. One can show that $\varsigma^2$ is the limit of $n^{-1}\Vvv\left[\eta_0(1)+\eta_0(2)+\cdots+\eta_0(n)\right]$ and hence nonnegative (see Lemma 1.1 in \cite{27}).\par 

We are interested in the fluctuation on the marcroscopic characteristic line $x(t)=bt$ with spatial scaling $\sqrt{n}$ (note that $b=-\wm$). We find that the magnitude of this fluctuation is $n^{1/4}$. To be more specific, we are studying the weak limit of the following subdiffusive-scaled fluctuation:
\be
 \fluc_n(t,r)=n^{-1/4}\left\{\h_{\lfloor nt\rfloor}\bigl(\lfloor r\sqrt{n}\rfloor +\lfloor ntb\rfloor\bigr)-\mu_0r\sqrt{n}\right\}.\label{2-1}\ee
From Lemma \ref{lmm2-1}, $\fluc_n(t,r)$ has the following dual representation 
\be
\fluc_n(t,r)=n^{-1/4}\left\{\mE\left[\h_0(X_{\lfloor nt\rfloor}^{y(n)})\right]+\sum_{k=1}^{\lfloor nt\rfloor}\mE\left[\xi_k(X_{\lfloor nt\rfloor -k}^{y(n)})\right]-\mu_0r\sqrt{n}\right\},\label{2-l1}\ee
where $y(n)=\lfloor ntb\rfloor +\lfloor r\sqrt{n}\rfloor$, and $\{X_k^i\}_{k\in\bZ^+}$ is a random walk on $\bZ$ starting from site $i\in\bZ$ with transition kernel $p(x,y)$ defined in \eqref{21-d1}. The expectation $\mE$ only acts on the random walk $X_\centerdot^{y(n)}$.\par

Our main work is to show that the process $\{\fluc_n(t,r)\}_{t\in\bR^+,r\in\bR}$ will converge weakly to the weak solution of an Edwards-Wilkinson equation \eqref{i0}. We will study the fluctuation limits under three circumstances: the initial increments $\{\eta_0(x):x\in\bZ\}$ are (a) i.i.d. (b) $\pi_0$-distributed, or (c) a strongly mixing stationary sequence. The strong mixing condition is defined below.
\begin{definition}\label{def2-6}
Let $\{\eta(i): i\in\bZ\}$ be a stochastic sequence and $\mathcal{F}_n^m=\sigma(\eta(i), n\le i\le m)$. We say that the sequence $\eta$ is strong mixing if $\alpha(n)\to 0$ as $n\to \infty$ where the strong mixing coefficient is
\be \alpha(n)=\sup_k \alpha (\mathcal{F}_{-\infty}^k, \mathcal{F}_{k+n}^\infty),\ee
where
\be \alpha(\mathcal{A},\mathcal{B})=\sup_{A\in\mathcal{A}, B\in\mathcal{B}}|\bP(A\cap B)-\bP(A)\bP(B)|\label{strong-mixing-def}\ee
for two sub-$\sigma$-algebras $\mathcal{A}$ and $\mathcal{B}$ on a probability space $(\Omega, \cF, P)$. 
\end{definition}
For the properties of strong mixing conditions(e.g.\ the differences and relations between strong mixing and completely regular), we refer to \cite{14}.\par

Now let us depict the limit process. Let us denote the centered Gaussian p.d.f and c.d.f with variance $\nu^2$ by
\be \varphi_{\nu^2}(x)=\frac{1}{\sqrt{2\pi\nu^2}}\exp\left(-\frac{x^2}{2\nu^2}\right)\mbox{ and }\Phi_{\nu^2}(x)=\int_{-\infty}^x\varphi_{\nu^2}(y)dy,\label{gamma1}\ee
and define the Gaussian process $\{Z(t,r): t\in\bR^+,r\in\bR\}$ to be the sum of two stochastic integrals
\be Z(t,r)= \frac{\sigma_{\xi}}{\sigma_1}\int\int_{[0,t]\times \bR}\varphi_{\sigma_1^2(t-s)}(r-x)dW(s,x)+\varsigma\int_\bR\varphi_{\sigma_1^2t}(r-x)B(x)dx,\ee
where $\{W(t,r):t\in\bR^+,r\in\bR\}$ is a two-parameter Brownian motion and $\{B(r): r\in\bR\}$ is a two-sided Brownian motion. $W$ and $B$ are independent. In fact, $Z(t,r)$ is also the unique mild solution (\cite{35}) of the following EW equation on $\bR^+\times \bR$:
\be \frac{\partial Z}{\partial t}=\frac{\sigma_1^2}{2}\frac{\partial^2Z}{\partial r^2}+\frac{\sigma_{\xi}}{\sigma_1}\dot{W}, \quad Z(0,r)=\varsigma B(r).\ee
The process $\{Z(t,r)\}_{t\in\bR^+,r\in\bR}$ has zero mean and covariance 
\be
\mathbb{E}\bigl[Z(s,q)Z(t,r)\bigr]=\frac{\sigma_\xi^2}{\sigma_1^2}\Gamma_1\bigl((t,r),(s,q)\bigr)+\varsigma^2\Gamma_2\bigl((t,r),(s,q)\bigr),\label{2-t13b}\ee 
where $\Gamma_1$, $\Gamma_2$ are given as follows. First define the function
\be \Psi_{\nu^2}(x)=\nu^2\varphi_{\nu^2}(x)-x\left(1-\Phi_{\nu^2}(x)\right).\label{gamma2}\ee
Then, the two functions $\Gamma_1$, $\Gamma_2$ are expressed as
\be \Gamma_1\bigl((s,q),(t,r)\bigr)=\Psi_{\sigma_1^2(t+s)}(r-q)-\Psi_{\sigma_1^2|t-s|}(r-q),\label{gamma3}\ee
and
\be \Gamma_2\bigl((s,q),(t,r)\bigr)=\Psi_{\sigma_1^2s}(-q)+\Psi_{\sigma_1^2t}(r)-\Psi_{\sigma_1^2(t+s)}(r-q).\label{gamma4}\ee 

\begin{theorem}\label{thm2-4}
Assume $d=1$, \eqref{weight-assump-1}, \eqref{xi-assump-1}, \eqref{weight-assump-2}, $\bE\left[\xi_0(0)^4\right]<\infty$, and that one of the following conditions is true.
\begin{enumerate}[(a)]
\item $\{\eta_0(x):x\in\bZ\}$ is an i.i.d.\ sequence with finite second moment;
\item $\{\eta_0(x):x\in\bZ\}$ obeys the invariant measure $\pi_0$ of the sequence $\{\Delta_0(x)\}_{x\in\bZ}$ defined in \eqref{initial-increments-rep};
\item $\{\eta_0(x):x\in\bZ\}$ is a strongly mixing stationary sequence, and there exists a $\delta>0$ such that $\bE|\eta_0(0)|^{2+\delta}<\infty$, and  the strong mixing coefficients of $\{\eta_0(x)\}_{x\in\bZ}$ satisfy 
\be\sum_{j=0}^\infty (j+1)^{2/\delta}\alpha(j)<\infty.\label{strong-mixing-coef-assump1}\ee
\end{enumerate}
Then, the series of covariances $\sum_{x\in\bZ}\Cvv(\eta_0(0),\eta_0(x))$ converges absolutely. The fluctuation process $\{\fluc_n(t,r)\}_{t\in\bR^+,r\in\bR}$ will converge weakly to the Gaussian process $\{Z(t,r)\}_{t\in\bR^+,r\in\bR}$ in the sense of finite dimensional distributions, i.e.\ for any fixed integer $N>0$, any pairs $(t_1,r_1), (t_2,r_2),\ldots,(t_N,r_N)\in \bR^+\times \bR$,   
\be\left(\fluc_n(t_1,r_1), \fluc_n(t_2,r_2),\ldots,\fluc_n(t_N,r_N)\right)\Rightarrow \left(Z(t_1,r_1), Z(t_2,r_2),\ldots,Z(t_N,r_N)\right),\quad\mbox{as }n\to\infty.\label{2-t13a}\ee
\end{theorem}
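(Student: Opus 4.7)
The plan is to exploit the random walk representation \eqref{2-l1} to split $\fluc_n(t,r)$ into two independent pieces. With $m=\lfloor nt\rfloor$, set
$F_n(t,r)=n^{-1/4}\sum_{k=1}^{m}\sum_{j\in\bZ}\xi_k(j)p^{m-k}(y(n),j)$
(the noise contribution) and
$G_n(t,r)=n^{-1/4}\bigl(\sum_{j}\h_0(j)p^{m}(y(n),j)-\mu_0 r\sqrt{n}\bigr)$
(the initial-condition contribution). Since $\{\h_0(i)\}$ and $\{\xi_k(j)\}$ are independent, so are $F_n$ and $G_n$ for each $n$, and joint convergence of finite-dimensional distributions splits into two parallel problems whose Gaussian limits add up to the decomposition $Z(t,r)=Z^{\mathrm{noise}}(t,r)+Z^{\mathrm{init}}(t,r)$ visible in the statement.

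For the noise piece, with the initial data held fixed, $F_n(t,r)$ is a linear functional of the i.i.d.\ array $\{\xi_k(j)\}$, so I apply the multivariate Lindeberg--Feller CLT jointly at $(t_1,r_1),\dots,(t_N,r_N)$. The cross-covariance between $F_n(t_i,r_i)$ and $F_n(t_\ell,r_\ell)$ equals
$n^{-1/2}\sigma_\xi^2\sum_{k}\sum_j p^{m_i-k}(y_i(n),j)p^{m_\ell-k}(y_\ell(n),j)$,
which by \eqref{21-d2} and \eqref{multistep-q} reduces to a sum of $q$-walk coincidence probabilities. Replacing $q^a$ by the Gaussian kernel $\varphi_{2\sigma_1^2 a}$ using the LCLT (Appendix \ref{lclt}) and approximating the sum in $k$ by a Riemann integral with $k=ns$ identifies the limit with $(\sigma_\xi^2/\sigma_1^2)\Gamma_1\bigl((t_i,r_i),(t_\ell,r_\ell)\bigr)$. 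The Lindeberg condition then follows from $\bE[\xi_0(0)^4]<\infty$ and the uniform LCLT bound $n^{-1/4}p^{m-k}(y(n),j)=O(n^{-1/2})$.

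For the initial piece, Abel summation on $\h_0(j)=\sum_{i=1}^{j}\eta_0(i)$ (with the symmetric formula for $j<0$) gives $\sum_j \h_0(j)p^m(y(n),j)=\sum_i \eta_0(i)\chi_n(i)$, where $\chi_n(i)=\bP\bigl(X^{y(n)}_m\ge i\bigr)$ for $i>0$ and $\chi_n(i)=-\bP\bigl(X^{y(n)}_m<i\bigr)$ for $i\le 0$. The identity $\sum_i \chi_n(i)=\mE[X^{y(n)}_m]=y(n)+m\mu_1=r\sqrt{n}+O(1)$, which uses $b=-\mu_1$, absorbs the centering $\mu_0 r\sqrt n$ up to $O(n^{-1/4})$, so $G_n(t,r)=n^{-1/4}\sum_{i\in\bZ}(\eta_0(i)-\mu_0)\chi_n(i)+o(1)$. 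After the scaling $i=\lfloor x\sqrt{n}\rfloor$, the LCLT gives $\chi_n(\lfloor x\sqrt{n}\rfloor)\to \psi_{t,r}(x):=\mathrm{sgn}(x)\bigl(1-\Phi_{\sigma_1^2 t}(|x-r|)\bigr)$ on compact sets, and integration by parts against the partial-sum process $S_n(x)=n^{-1/2}\sum_{i=1}^{\lfloor x\sqrt{n}\rfloor}(\eta_0(i)-\mu_0)$ rewrites $n^{-1/4}\sum_i(\eta_0(i)-\mu_0)\chi_n(i)$ as a Riemann--Stieltjes approximation of $-\int_\bR S_n(x)\,d\psi_{t,r}(x)$. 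I then invoke an invariance principle $S_n\Rightarrow \varsigma B$ with $B$ a two-sided Brownian motion: in case (a) this is Donsker; in case (b) it follows from the exponential covariance bound \eqref{cov-bound} of Corollary \ref{rmk2-4} and the complete linear regularity from Theorem \ref{thm2-2}; and in case (c) it is the Herrndorf/Peligrad invariance principle for strongly mixing sequences under \eqref{strong-mixing-coef-assump1}. Passing to the limit produces $\varsigma\int_\bR \varphi_{\sigma_1^2 t}(r-x)B(x)\,dx=Z^{\mathrm{init}}(t,r)$, and evaluating jointly at several points matches the covariance $\varsigma^2\Gamma_2$; combining with the independent noise piece gives \eqref{2-t13b} and hence \eqref{2-t13a}.

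The main obstacle is this last step, particularly under (c): converting functional convergence of $S_n$ to $L^2$-convergence of the weighted integral against the non-compactly-supported kernel $\psi_{t,r}'$. The two key ingredients are a uniform Gaussian tail estimate $|\chi_n(\lfloor x\sqrt n\rfloor)|\le C\exp\bigl(-c(x-r)^2\bigr)$ coming from the LCLT plus exponential control of the $p$-walk step distribution, and a uniform moment bound $\sup_n \bE[S_n(x)^2]\le C(1+|x|)$, which under mixing relies on \eqref{strong-mixing-coef-assump1} together with Ibragimov's covariance inequality and under (a),(b) is immediate. A Skorokhod coupling then reduces the limit passage to almost-sure statements, and dominated convergence with the exponential kernel bound closes the argument.
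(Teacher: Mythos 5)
Your overall architecture --- split $\fluc_n$ into an initial-data piece and an independent noise piece, handle the noise by a Lindeberg--Feller CLT with LCLT/Riemann-sum asymptotics for the covariance, handle the initial-data piece by a CLT for the weighted sum $\sum_i a_{n,i}(\eta_0(i)-\mu_0)$ --- matches the paper's decomposition \eqref{2-7} and the treatment of $\overline{F}_n$. For $\overline{S}_n$, however, you choose a genuinely different route (functional invariance principle for partial sums plus weighted Riemann--Stieltjes integration against the kernel $\psi_{t,r}'$), whereas the paper proves a one-shot CLT for linear statistics $\sum_i a_{n,i}(\eta_0(i)-\mu_0)$ via Cram\'er--Wold: truncation plus Lindeberg--Feller in case (a), Lindeberg--Feller applied to the underlying i.i.d.\ noise $\xi$ after substituting the moving-average representation \eqref{initial-increments-rep} in case (b), and Peligrad's weighted-sum CLT (Theorem \ref{LPCLT}) in case (c). Your functional route is conceptually appealing (it explains the two-sided Brownian motion in $Z^{\mathrm{init}}$ structurally), but it forces you to control a non-compactly-supported weight, so it costs you the extra truncation/moment arguments you sketch at the end; the paper's scalar route avoids this by bounding $\sum_i a_{n,i}^2$ and $\sum_i a_{n,i}a_{n,i+k}$ directly.

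The genuine gap is in case (b). You claim the invariance principle $S_n\Rightarrow\varsigma B$ for $\pi_0$-distributed increments ``follows from the exponential covariance bound \eqref{cov-bound} and the complete linear regularity from Theorem \ref{thm2-2}.'' That inference is not valid: complete linear regularity is a statement about best \emph{linear} predictors in $L^2$ and, for non-Gaussian noise, carries no control over higher-order or non-linear dependence, so by itself it does not imply strong mixing, $\rho$-mixing, nor any functional CLT. The exponential covariance decay supplies the correct asymptotic variance $\varsigma^2$ but again does not yield a CLT. The paper precisely avoids this pitfall by \emph{not} appealing to mixing for $\pi_0$: it substitutes the moving-average representation \eqref{initial-increments-rep}, rewrites $\overline{S}_n$ as a weighted sum of the i.i.d.\ $\xi_{-k}(j)$, and applies Lindeberg--Feller there. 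To repair your argument you would need to invoke a functional CLT for linear fields (using that $\eta_0$ is a two-sided moving average of the i.i.d.\ $\xi$-field with summable coefficients), not complete linear regularity. Two smaller points: your formula $\psi_{t,r}(x)=\mathrm{sgn}(x)\bigl(1-\Phi_{\sigma_1^2 t}(|x-r|)\bigr)$ is wrong when $0<x<r$ --- the limit of $\chi_n(\lfloor x\sqrt n\rfloor)$ for $x>0$ is $1-\Phi_{\sigma_1^2 t}(x-r)$ without the absolute value; and the theorem also asserts absolute convergence of the covariance series, which in case (c) requires the Ibragimov/Rio covariance bound (the paper's Lemma \ref{covariance-bound}) and is not addressed head-on in your sketch.
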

\begin{remark}
\begin{enumerate}
\item In \eqref{2-t13b}, we can see that the covariance of the limit process $Z$ has two parts, the $\Gamma_1$ part comes from the dynamical fluctuations (i.e.\ the randomness caused by the noise variables $\{\xi_k(x)\}_{k\in\bZ^+,x\in\bZ}$), while the $\Gamma_2$ part is contributed by the initial fluctuations (the randomness of the initial increments $\{\eta_0(x)\}_{x\in\bZ}$).
\item In case (a), $\varsigma^2=\sigma_0^2$. In case (b), from \eqref{series-of-covariances}, $\varsigma^2=\frac{\sigma_\xi^2}{\sigma_1^2}$.
\item In case (c), it is possible that $\varsigma^2=0$. If that happens, the randomness of the initial increments will not have any impact on the limit process.
\item If the noise terms $\{\xi_k(x)\}_{k\in\bZ,x\in\bZ}$ are normally distributed, then case (b) is covered by case(c) due to Corollary \ref{crlly2-2} and the fact that complete regularity is stronger than strong mixing, i.e.\ $\alpha(x)\le\varrho(x)$, $\forall x\in\bZ^+$ (see \cite{14}). 
\item In case (b), at $r=0$, the limit process $\{Z(t, 0)\}_{t\in\bR^+}$ is a fractional Brownian motion with Hurst parameter $1/4$. The covariance has the form
\be \bE \left[Z(s, 0)Z(t, 0)\right]=\frac{\sigma_\xi^2}{\sqrt{2\pi \sigma_1^2}}(\sqrt{s}+\sqrt{t}-\sqrt{|t-s|}).\ee
\end{enumerate}   
\end{remark}


Notice that the fluctuation process $\{\fluc_n(t,r)\}_{t\in\bR^+,r\in\bR}$ lives in a 2-parameter cadlag path space (continuous from right above and have limits from other directions). Let us denote this 2-parameter cadlag function space with Skorohod's topology by (see Definition \ref{skorohod-metric}, more details can be found in \cite{12}) 
\begin{align*}
D_2=&D_2(Q,\bR)\\
:=&\{f:Q\rightarrow \bR\mbox{ s.t. for }\forall (t_0,r_0)\in Q,\lim_{\begin{subarray}{l} (t,r)\in Q_{(t_0,r_0)}^i \\ (t,r)\rightarrow (t_0,r_0)\end{subarray}}f(t,r)\mbox{ exists for }i=1,2,3,4,\\
\mbox{and}&\lim_{\begin{subarray}{l} (t,r)\in Q_{(t_0,r_0)}^1 \\ (t,r)\rightarrow (t_0,r_0)\end{subarray}}f(t,r)=f(t_0,r_0)\}.
\end{align*}
where $Q=[0,T]\times [-R,R]$, and $Q_{(t_0,r_0)}^i$, $i=1,2,3,4$ are four quadrants of $Q$:
\begin{align*}
&Q_{(t_0,r_0)}^1:=\{(t,r)\in Q: t\ge t_0, r\ge r_0\}, Q_{(t_0,r_0)}^2:=\{(t,r)\in Q: t\ge t_0, r< r_0\},\\
&Q_{(t_0,r_0)}^3:=\{(t,r)\in Q: t< t_0, r< r_0\}, Q_{(t_0,r_0)}^4:=\{(t,r)\in Q: t< t_0, r\ge r_0\}.
\end{align*}

\begin{definition}\label{skorohod-metric}
Let $\Lambda$ be the set of all transformations $\lambda: Q \to Q$ of the form $\lambda(t,r)=(\lambda_1(t), \lambda_2(r))$ where both $\lambda_1$ and $\lambda_2$ are strictly increasing, continuous bijections. We define the Skorohod distance between $x,y\in D_2$ to be
$$d_S(x,y)=\inf_{\lambda\in\Lambda}\max(\norm{x-y\lambda},\norm{\lambda}),$$
where $\norm{x-y\lambda}=\sup_{u\in Q}\abs{x(u)-y(\lambda(u))}$ and $\norm{\lambda}=\sup_{u\in Q}\abs{\lambda(u)-u}$.
\end{definition}

We will show that under stronger assumptions on the moments of $\{\eta_0(x)\}_{x\in\bZ}$ and $\{\xi_k(x)\}_{k,x\in\bZ}$ and the strong mixing coefficients $\{\alpha(k)\}_{k\in\bZ^+}$, the weak convergence in finite dimensional distributions of $\fluc_n(\cdot,\cdot)$ in Theorem \ref{thm2-4} can be strengthened into a process level convergence.\par

\begin{theorem}\label{thm2-6}
Assume $d=1$, \eqref{weight-assump-1}, \eqref{xi-assump-1}, \eqref{weight-assump-2}, $\bE\left[\xi_0(0)^{12}\right]<\infty$, and that one of the following conditions is true.
\begin{enumerate}[(a)]
\item $\{\eta_0(x): x\in\bZ\}$ is an i.i.d.\ sequence with finite 12th moment;
\item $\{\eta_0(x):x\in\bZ\}$ has the distribution $\pi_0$ of the sequence $\{\Delta_0(x)\}_{x\in\bZ}$ defined in \eqref{initial-increments-rep};
\item $\{\eta_0(x):x\in\bZ\}$ is a strongly mixing stationary sequence, and there exists $\varepsilon_0>0$ such that $\bE\left[|\eta_0(0)|^{12+\varepsilon_0}\right]<\infty$ and the strong mixing coefficients $\{\alpha(x)\}_{x\in\bZ^+}$ satisfy 
\be\sum_{i=0}^{\infty}(i+1)^{10+132/\varepsilon_0}\alpha(i)<\infty.\label{strong-mixing-coef-assump2}\ee
\end{enumerate}
Then the fluctuation process $\{\fluc_n(t,r)\}_{t\in\bR^+,r\in\bR}$ converges weakly to $\{Z(t,r)\}_{t\in\bR^+,r\in\bR}$ on $D_2$ in the Skorokhod topology, i.e.\ $$\lim_{n\to\infty}\bE f(\fluc_n)= \bE f(Z)$$ 
for all Skorokhod-continuous bounded functions $f:D_2\rightarrow \bR$.
\end{theorem}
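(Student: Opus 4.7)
Theorem \ref{thm2-4} already delivers convergence of the finite-dimensional distributions of $\fluc_n$ to those of $Z$, so the remaining task is to prove tightness of $\{\fluc_n\}_{n\ge 1}$ as a sequence of random elements of $(D_2,d_S)$. The plan is to verify a Bickel--Wichura-type moment criterion for 2-parameter cadlag processes: produce $\alpha\ge 2$, $\beta>1$, and a constant $C$ independent of $n$ such that
\begin{equation*}
\bE\bigl[\,|\fluc_n(B)|^\alpha\,\bigr]\le C\bigl[(t_2-t_1)+(r_2-r_1)\bigr]^\beta
\end{equation*}
for every rectangle $B=(t_1,t_2]\times(r_1,r_2]\subset Q$, where $\fluc_n(B)=\fluc_n(t_2,r_2)-\fluc_n(t_1,r_2)-\fluc_n(t_2,r_1)+\fluc_n(t_1,r_1)$ is the planar block increment, and to supplement this with analogous one-sided moduli on the bottom and left edges of $Q$. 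The $12$-th moment hypotheses are calibrated so that the inequality can be pushed through with $\alpha$ slightly above $4$ and $\beta$ strictly above $1$.

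\textbf{Decomposition and dynamical part.} Via the dual representation \eqref{2-l1}, split $\fluc_n=\fluc_n^I+\fluc_n^D$ with
\begin{equation*}
\fluc_n^D(t,r)=n^{-1/4}\sum_{k=1}^{\fl{nt}}\sum_{j\in\bZ}\xi_k(j)\,p^{\fl{nt}-k}(y(n),j),\qquad \fluc_n^I(t,r)=n^{-1/4}\Bigl\{\sum_{j\in\bZ}\h_0(j)\,p^{\fl{nt}}(y(n),j)-\mu_0\,r\sqn\Bigr\}.
\end{equation*}
Since $\h_0$ and $\{\xi_k\}$ are independent, it suffices to prove tightness of each piece separately. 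For $\fluc_n^D$ the block increment $\fluc_n^D(B)$ is a sum of independent mean-zero variables $\xi_k(j)$ with deterministic coefficients given by iterated differences of the kernels $p^{m}(y,\cdot)$. A Rosenthal inequality at exponent slightly above $4$ reduces the estimate to controlling the $\ell^2$ and $\ell^{p}$ norms of those iterated differences; the LCLT estimates of Appendix \ref{lclt} — of the schematic form $p^m(y,j)-p^m(y',j)=O(|y-y'|/\sqrt m)$ and $p^{m_2}(y,j)-p^{m_1}(y,j)=O(|m_2-m_1|/m_1)$ in the relevant norms — then produce the required bound after summation over $k$ with product dependence on $(t_2-t_1)$ and $(r_2-r_1)$.

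\textbf{Initial part, three cases.} By Abel summation $\fluc_n^I(t,r)=n^{-1/4}\sum_{i\in\bZ}\eta_0(i)\,W_n^{t,r}(i)$ plus a deterministic piece that cancels in any block increment, where $W_n^{t,r}(i)$ is a tail sum of $p^{\fl{nt}}(y(n),\cdot)$. In case (a) this is a weighted sum of i.i.d.\ terms, and Rosenthal plus the LCLT bounds on $W_n^{t_2,r_2}-W_n^{t_2,r_1}-W_n^{t_1,r_2}+W_n^{t_1,r_1}$ close the argument. In case (b) the representation \eqref{initial-increments-rep} rewrites each $\eta_0(i)$ as an $L^2$-convergent linear combination of i.i.d.\ $\xi_{-k}(j)$; substituting, $\fluc_n^I$ again becomes a sum of independent variables (whose coefficients are absolutely summable by the exponential decay \eqref{cov-bound} in Corollary \ref{rmk2-4}), and the case (a) argument applies. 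In case (c) the i.i.d.\ Rosenthal inequality is replaced by its strongly mixing analogue at order $12$, whose error terms involve polynomially weighted sums of the mixing coefficients $\alpha(j)$ — precisely the series controlled by assumption \eqref{strong-mixing-coef-assump2}.

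\textbf{Main obstacle.} The technical crux is case (c): a two-parameter block increment must be controlled by a mixing-type Rosenthal inequality whose error structure couples the $L^{12+\varepsilon_0}$-moment of $\eta_0$ with the mixing coefficients, and one must verify that after this trade-off the dominant terms remain the LCLT-type $\ell^2$ and $\ell^{12}$ norms of $W_n^{t,r}$ and its differences. The specific exponent $10+132/\varepsilon_0$ in \eqref{strong-mixing-coef-assump2} is exactly what the mixing Rosenthal bound at $p=12$ requires in order to produce a block moment bound of product form $(t_2-t_1)(r_2-r_1)$ with the excess power needed for $\beta>1$. Once that is in hand, the one-dimensional edge moduli at $t=0$ (where $\fluc_n^D$ vanishes and $\fluc_n^I$ reduces to a single-parameter version of the same estimate) and the cancellation of the linear drift $-\mu_0 r\sqn$ in every block increment are routine.
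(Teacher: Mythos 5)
Your overall architecture (f.d.d.\ convergence from Theorem \ref{thm2-4} plus a moment-based tightness bound built on the decomposition into a dynamical part and an initial part, with Rosenthal-type inequalities and the mixing Rosenthal bound in case (c)) is in the same spirit as the paper, but the tightness criterion you chose has a genuine gap, in two respects. First, the displayed target $\bE|\fluc_n(B)|^\alpha\le C\bigl[(t_2-t_1)+(r_2-r_1)\bigr]^\beta$ with $\beta>1$ is not a Bickel--Wichura condition: that criterion needs the block moment controlled by a power $\beta>1$ of a \emph{finite measure} of the block (essentially the product $(t_2-t_1)(r_2-r_1)$), and the sum form gives no decay for long thin blocks. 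Even granting the product form you invoke later, your calibration ``$\alpha$ slightly above $4$'' is off: the best variance bound for a block increment of the dynamical part is of order $\min\bigl(|r_2-r_1|,\sqrt{t_2-t_1}\bigr)\le (r_2-r_1)^{1/2}(t_2-t_1)^{1/4}$, so an $\alpha$-th moment bound is of order $(r_2-r_1)^{\alpha/4}(t_2-t_1)^{\alpha/8}$, and to dominate it by $\bigl[(t_2-t_1)(r_2-r_1)\bigr]^{\beta}$ with $\beta>1$ you need $\alpha>8$; this is exactly why the paper works with $12$-th moments.

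Second, and more fundamentally, no continuum product-form block bound can hold uniformly in $n$ over \emph{all} rectangles: at fixed $n$ the process $\fluc_n$ is piecewise constant, and a rectangle of arbitrarily small area straddling a crossing of a time line $\{\fl{nt}=m\}$ and a space line $\{\fl{r\sqrt n}+\fl{ntb}=j\}$ has a block increment equal to a fixed mixed lattice difference of size $O(n^{-1/4})$, while the block measure tends to $0$. Hence any valid moment bound must carry an additive term of order $n^{-\alpha/4}$ (the paper's $n^{-3}$ in \eqref{tight-17}), and with such a term the Bickel--Wichura chaining no longer applies directly; one must either restrict the moment estimate to scales above an $n$-dependent cutoff and control the sub-lattice scales by a separate discreteness argument, or pass to a continuous interpolation. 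This is precisely what the paper does: it uses Proposition 2 of \cite{11} (Lemma \ref{lmm3-11}), proving the Kolmogorov-type bound \eqref{tight-2} for \emph{simple} increments with exponent $\kappa>2$ only for $|(t,r)-(s,q)|>n^{-\gamma}$ (Lemma \ref{lmm3-12}), and then handles $w_{\delta_n}$ by observing that on an $n^{-\gamma}\times n^{-\gamma}$ cell each of $\fl{nt}$, $\fl{ntb}$, $\fl{r\sqrt n}$ jumps at most once, so $\fluc_n$ takes at most $8$ values there (Lemma \ref{lmm3-14}). Your proposal omits this two-scale step entirely, and without it the argument does not close. A smaller inaccuracy: in case (b) the summability you need is not a consequence of the covariance bound \eqref{cov-bound}; the paper instead reduces the relevant quadratic form to the potential kernel via $a(\ell-1)+a(\ell+1)-2a(\ell)$ and Lemma \ref{lmmA-4}.
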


\chapter{Proofs}\label{proofs}
\section{Proofs of the initial preparations}
\begin{proof}[Proof of Lemma \ref{lmm2-1}]
According to the evolution \eqref{i1} and \eqref{21-d1},
\begin{align}
\h_t(i)=&\sum_{k\in\bZ^d} w(k)\h_{t-1}(i+k)+\xi_t(i)=\mE\left[\h_{t-1}(X_1^{i})\right]+\xi_t(i)\notag\\
=&\mE\left[\mE\left(\h_{t-2}(X_2^{i})\mid X_1^{i}\right)+\xi_{t-1}(X_1^{i})\right]+\xi_t(i)\notag\\
=&\mE\left[\h_{t-2}(X_2^{i})\right]+\mE\xi_{t-1}(X_1^{i})+\xi_t(i)\notag\\
=&\ldots\notag\\
=&\mE\left[\h_0(X_t^{i})\right]+\sum_{k=1}^t\mE\xi_k(X_{t-k}^{i}).\label{2-new}
\end{align}
\end{proof} 

\begin{proof}[Proof of Theorem \ref{hydrodynamic-limit}]
From the dual representation \eqref{21-l1},
$$\frac{1}{n}h^n_{\fl{nt}}(\fl{nx})=\frac{1}{n}\mE\left[\h^n_0(X_{\fl{nt}}^{\fl{nx}})\right]+\frac{1}{n}\sum_{k=1}^{\fl{nt}}\mE\left[\xi^n_k(X_{\fl{nt}-k}^{\fl{nx}})\right].$$
Thus, for all $\epsilon>0$, $x\in\bR^d$,
\begin{align}
&\bP\left(\left|\frac{1}{n}h^n_{\fl{nt}}(\fl{nx})-u(x-bt)\right|>\epsilon\right)\notag\\
&\le  \bP\left(\left|\frac{1}{n}\mE\left[\h^n_0(X_{\fl{nt}}^{\fl{nx}})\right]-u(x-bt)\right|+\left|\frac{1}{n}\sum_{k=1}^{\fl{nt}}\mE\left[\xi^n_k(X_{\fl{nt}-k}^{\fl{nx}})\right]\right|>\epsilon\right)\notag\\
&\le  \bP\left(\left|\frac{1}{n}\mE\left[\h^n_0(X_{\fl{nt}}^{\fl{nx}})\right]-u(x-bt)\right|>\frac{\epsilon}{2}\right)+\bP\left(\left|\frac{1}{n}\sum_{k=1}^{\fl{nt}}\mE\left[\xi^n_k(X_{\fl{nt}-k}^{\fl{nx}})\right]\right|>\frac{\epsilon}{2}\right).\label{hydro-limit-1}
\end{align}
For the first part in \eqref{hydro-limit-1}, 
\begin{align*}
&\bP\left(\left|\frac{1}{n}\mE\left[\h^n_0(X_{\fl{nt}}^{\fl{nx}})\right]-u(x-bt)\right|>\frac{\epsilon}{2}\right)=\bP\left(\left|\frac{1}{n}\sum_{i\in\bZ^d}p^{\fl{nt}}(\fl{nx},i)\h^n_0(i)-u(x-bt)\right|>\frac{\epsilon}{2}\right)\\
&\le\bP\Biggl(\biggl|\sum_{i\in\bZ^d}p^{\fl{nt}}(\fl{nx},i)\left[\h^n_0(i)/n-u(i/n)\right]\biggr|+\biggl|\sum_{i\in\bZ^d}p^{\fl{nt}}(\fl{nx},i)u(i/n)-u(x-bt)\biggr|>\frac{\epsilon}{2}\Biggr)\\
&\le\bP\left(\sum_{i\in\bZ^d}p^{\fl{nt}}(\fl{nx},i)\left|\h^n_0(i)/n-u(i/n)\right|>\frac{\epsilon}{4}\right)\\
&\quad\quad\quad\quad\quad\quad\quad\quad\quad+\ind\left\{\biggl|\sum_{i\in\bZ^d}p^{\fl{nt}}(\fl{nx},i)u(i/n)-u(x-bt)\biggr|>\frac{\epsilon}{4}\right\}\\
&\le \bP\left(\sup_{\abs{y}\le Mt+\abs{x}+1}\left|\h^n_0(\fl{ny})/n-u(y)\right|>\frac{\epsilon}{4}\right)+\ind\left\{\biggl|\mE \left[u\left(X_{\fl{nt}}^{\fl{nx}}/n\right)\right]-u(x-bt)\biggr|>\frac{\epsilon}{4}\right\},
\end{align*}
where the last inequality is because from the assumption \eqref{weight-assump-1}, we can find large enough constant $M>0$ such that $w(x)=0$ for all $\abs{x}>M$.\par
 
The condition \eqref{hydro-limit-assum} directly implies that 
$$\lim_{n\to\infty}\bP\left(\sup_{\abs{y}\le Mt+\abs{x}+1}\left|\h^n_0(\fl{ny})/n-u(y)\right|>\frac{\epsilon}{4}\right)=0.$$
And by LLN and the continuity of $u(x)$, one can easily show that
$$\lim_{n\to\infty}\ind\left\{\biggl|\mE \left[u\left(X_{\fl{nt}}^{\fl{nx}}/n\right)\right]-u(x-bt)\biggr|>\frac{\epsilon}{4}\right\}=0.$$
Therefore, we have proved that
\be\lim_{n\to\infty}\bP\left(\left|\frac{1}{n}\mE\left[\h^n_0(X_{\fl{nt}}^{\fl{nx}})\right]-u(x-bt)\right|>\frac{\epsilon}{2}\right)=0.\label{hydro-limit-2}\ee

For the second part in \eqref{hydro-limit-1}, by Markov Inequality, we have
\begin{align}
&\bP\left(\left|\frac{1}{n}\sum_{k=1}^{\fl{nt}}\mE\xi^n_k(X_{\fl{nt}-k}^{\fl{nx}})\right|>\frac{\epsilon}{2}\right) = \bP\left(\left|\sum_{k=1}^{\fl{nt}}\sum_{i\in\bZ^d}p^{\fl{nt}-k}(\fl{nx},i)\xi^n_k(i)\right|>\frac{\epsilon n}{2}\right)\notag\\ 
\le & \frac{4}{\epsilon^2n^2}\bE\left\{\left[\sum_{k=1}^{\fl{nt}}\sum_{i\in\bZ^d}p^{\fl{nt}-k}(\fl{nx},i)\xi^n_k(i)\right]^2\right\}= \frac{4\sigma_\xi^2}{\epsilon^2n^2}\sum_{k=1}^{\fl{nt}}\sum_{i\in\bZ^d}\left[p^{\fl{nt}-k}(\fl{nx},i)\right]^2\notag\\
= & \frac{4\sigma_\xi^2}{\epsilon^2n^2}\sum_{k=0}^{\fl{nt}-1}q^k(0,0)\le \frac{4\sigma_\xi^2\fl{nt}}{\epsilon^2n^2}\to 0,\quad\mbox{as }n\to\infty.\label{hydro-limit-3}
\end{align}
The transition kernels $p$ and $q$ are defined in \eqref{21-d1} and \eqref{21-d2} respectively.\par

Combine \eqref{hydro-limit-2} and \eqref{hydro-limit-3} together, we get
$$\lim_{n\to\infty}\bP\left(\left|\frac{1}{n}h^n_{\fl{nt}}(\fl{nx})-u(x-bt)\right|>\epsilon\right)=0,$$
which completes the proof of Theorem \ref{hydrodynamic-limit}.
\end{proof}

\begin{proof}[Proof of Lemma \ref{lmm2-2}]
Let $\tilde{X}_t^{0}$ and $X_t^{0}$ be two independently distributed random walks with transition probability $p$ in \eqref{21-d1}. We first check that the random walk $Y_t^0=\tilde{X}_t^{0}-X_t^{0}$ has the transition probability $q$.\par

For $\forall k\in \bZ^+, x,y \in \bZ$, 
\begin{align*}
&\mP(Y_{k+1}^0=y|Y_k^0=x)=\frac{\mP(Y_{k+1}^0=y,Y_k^0=x)}{\mP(Y_k^0=x)}=\frac{\mP\bigl(X_{k+1}^0-\tilde{X}_{k+1}^0=y,X_{k}^0-\tilde{X}_{k}^0=x\bigr)}{\mP(X_{k}^0-\tilde{X}_{k}^0=x)}\\
&=\frac{\sum_{u\in\bZ}\sum_{v\in\bZ}\mP\bigl(X_{k+1}^0=y+u,\tilde{X}_{k+1}^0=u,X_{k}^0=x+v,\tilde{X}_{k}^0=v\bigr)}{\sum_{v\in\bZ}\mP(X_{k}^0=x+v)\mP(\tilde{X}_{k}^0=v)}\\
&=\frac{\sum_{u\in\bZ}\sum_{v\in\bZ}w(y+u-x-v)w(u-v)\mP(X_{k}^0=x+v)\mP(\tilde{X}_{k}^0=v)}{\sum_{v\in\bZ}\mP(X_{k}^0=x+v)\mP(\tilde{X}_{k}^0=v)}\\
&\overset{z=u-v}{=}\frac{\sum_{v\in\bZ}\sum_{z\in\bZ}w(y-x+z)w(z)\mP(X_{k}^0=x+v)\mP(\tilde{X}_{k}^0=v)}{\sum_{v\in\bZ}\mP(X_{k}^0=x+v)\mP(\tilde{X}_{k}^0=v)}\\
&=\sum_{z\in\bZ}w(y-x+z)w(z)=q(x,y).
\end{align*}

The symmetry is because
$$q(x,y)=\sum_{z\in\bZ}w(y-x+z)w(z)\overset{u=z-x+y}{=}\sum_{u\in\bZ}w(u)w(u+x-y)=q(y,x).$$

The equivalence of mean zero and recurrence for one dimensional random walks can be found in \cite{4} (T3.1, page 33).\par

For span 1 and irreducibility,  we use B\'{e}zout's Identity and its corollary.
\begin{lemma}{B\'{e}zout's Identity}\label{bezout1}
Let $a_1,a_2,\ldots, a_n$ be integers, not all zero, let $d$ be their greatest common divisor, i.e.\ $d=\gcd(a_1,a_2,\ldots,a_n)$. Then there are integers $x_1,x_2,\ldots,x_n$ such that 
$$d=\sum_{i=1}^na_ix_i.$$
\end{lemma}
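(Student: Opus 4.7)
The plan is to prove B\'ezout's Identity by applying the well-ordering principle to the set of positive integer linear combinations of $a_1,\ldots,a_n$. First I would define
\[
S = \Bigl\{\sum_{i=1}^{n} a_i y_i : y_1,\ldots,y_n \in \mathbb{Z}\Bigr\} \cap \mathbb{Z}_{>0}.
\]
Since at least one $a_i$ is nonzero, taking $y_i = \mathrm{sgn}(a_i)$ and the other $y_j=0$ gives $|a_i|\in S$, so $S$ is nonempty. By well-ordering, $S$ has a smallest element, which I will call $d^*$, achieved by some witnesses $x_1,\ldots,x_n$.

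Next I would show $d^* \mid a_i$ for every $i$. Fix $i$ and apply the division algorithm to write $a_i = q\, d^* + r$ with $0 \le r < d^*$. Substituting $d^* = \sum_j a_j x_j$ gives
\[
r = a_i - q\sum_{j=1}^n a_j x_j = a_i(1 - q x_i) - q \sum_{j\neq i} a_j x_j,
\]
which is again an integer linear combination of $a_1,\ldots,a_n$. If $r>0$, this would place $r$ in $S$ with $r < d^*$, contradicting minimality. So $r=0$ and $d^* \mid a_i$.

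Finally, I need to identify $d^*$ with $d = \gcd(a_1,\ldots,a_n)$. From the previous step, $d^*$ is a common divisor of all $a_i$, so $d^* \le d$. Conversely, since $d \mid a_i$ for every $i$, $d$ divides the combination $\sum_i a_i x_i = d^*$, so $d \mid d^*$. Both are positive, hence $d = d^*$, and the witnesses $x_1,\ldots,x_n$ provide the required representation.

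There is no genuine obstacle here; the lemma is a classical number-theoretic fact and the only care needed is to avoid circularity, which the well-ordering argument sidesteps cleanly. An equally valid alternative would be induction on $n$, with the base case $n=2$ handled by the extended Euclidean algorithm and the inductive step via $\gcd(a_1,\ldots,a_n) = \gcd\bigl(\gcd(a_1,\ldots,a_{n-1}),\, a_n\bigr)$; I would choose the well-ordering route because it avoids splitting into cases and delivers the coefficients $x_i$ directly from the minimizer.
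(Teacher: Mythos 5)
Your proof is correct. The paper does not give a self-contained argument at all: it cites the $n=2$ case to a number-theory text and then remarks that the general case follows by induction, essentially the alternative route you sketch at the end of your proposal. Your well-ordering argument is genuinely different in that it proves the statement directly for all $n$ at once, without case-splitting or an external reference, and it produces the B\'ezout coefficients as the minimizer of the set $S$. The trade-off is exactly as you describe: the paper's approach is shorter on the page because it offloads the work, while yours is fully self-contained and uniform in $n$. Both are standard and both are fine for this classical lemma; there is no gap in what you wrote.
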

\begin{corollary}\label{bezout2}
Let $a_1,a_2,\ldots, a_n$ be integers, not all zero, let $d=\gcd(a_1,a_2,\ldots,a_n)$. Then
$$\{kd: k\in\bZ\}=\{\sum_{i=1}^na_ix_i: x_1,x_2,\ldots,x_n\in\bZ\}.$$
\end{corollary}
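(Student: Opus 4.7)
The plan is to establish the set equality by proving both inclusions separately, each a short argument that uses Lemma \ref{bezout1} in an essential but elementary way.

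First I would show the inclusion $\{kd:k\in\bZ\}\subseteq\{\sum_{i=1}^n a_ix_i:x_i\in\bZ\}$. By Bézout's identity (Lemma \ref{bezout1}), there exist integers $x_1^\ast,\ldots,x_n^\ast$ such that $d=\sum_{i=1}^n a_ix_i^\ast$. For any $k\in\bZ$, setting $x_i=kx_i^\ast$ gives $kd=\sum_{i=1}^n a_i(kx_i^\ast)$, which exhibits $kd$ as an integer linear combination of the $a_i$'s.

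Next I would show the reverse inclusion $\{\sum_{i=1}^n a_ix_i:x_i\in\bZ\}\subseteq\{kd:k\in\bZ\}$. Since $d=\gcd(a_1,\ldots,a_n)$, we have $d\mid a_i$ for every $i$, so we may write $a_i=d b_i$ with $b_i\in\bZ$. Hence for any integers $x_1,\ldots,x_n$,
\begin{equation*}
\sum_{i=1}^n a_ix_i=d\sum_{i=1}^n b_ix_i,
\end{equation*}
which lies in $\{kd:k\in\bZ\}$ with $k=\sum_{i=1}^n b_ix_i$.

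The two inclusions together give the claimed equality. There is no real obstacle here; the only substantive content is Bézout's identity itself, which is invoked as Lemma \ref{bezout1}, and the rest is a direct set-theoretic verification.
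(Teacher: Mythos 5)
Your proof is correct and complete. The paper itself does not give a self-contained argument: it cites a reference for the case $n=2$ and asserts that the general case follows by induction on $n$. Your approach is genuinely different and arguably preferable — you prove both inclusions directly for general $n$, using Lemma \ref{bezout1} only for the nontrivial inclusion $\{kd\}\subseteq\{\sum a_i x_i\}$ and the elementary divisibility fact $d\mid a_i$ for the other. This avoids both the external citation and the induction, and is shorter and more transparent than the route the paper gestures at.
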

The proof of the case $n=2$ can be found in \cite{10} (see Theorem 2-3 and its corollary on page 25), the multi-dimensional case ($n>2$) can be proved by using the result of the case $n=2$.\par

Note that $\supp(q)=\supp(p)-\supp(p)$. Let us denote all the elements in $\supp(p)-\supp(p)$ by
$$\supp(p)-\supp(p)=\{i-j\hspace{1mm}:\hspace{1mm}i,j\in\supp(p)\}\overset{def}{=}\{a_1,a_2,\ldots,a_m\}.$$
The irreducibility of $q$ is equivalent to $\{t_1a_1+t_2a_2+\ldots+t_ma_m\hspace{1mm}:\hspace{1mm}t_i\in\bZ_+\}=\bZ$. Moreover, we can easily see that $\{t_1a_1+t_2a_2+\ldots+t_ma_m\hspace{1mm}:\hspace{1mm}t_i\in\bZ_+\}=\{t_1a_1+t_2a_2+\ldots+t_ma_m\hspace{1mm}:\hspace{1mm}t_i\in\bZ\}$ due to the symmetry of $\supp(q)$.\par

From Corollary \ref{bezout2}, 
$$\{t_1a_1+t_2a_2+\ldots+t_ma_m\hspace{1mm}:\hspace{1mm}t_i\in\bZ\}=d\bZ,$$
where $d=\gcd(a_1,a_2,\ldots,a_m)$.\par

Since $p$ has span 1, $d=1$. Therefore, $q$ is irreducible. Also, since $0\in\supp(q)$, $d=1$ implies that $q$ also has span 1.\par 

The variance of the jump can be calculated by simply noticing $\Vvv(Y_1^0)=\Vvv(X_1^0)+\Vvv(\tilde{X}_1^0)=2\sigma_1^2$.\par

Thus, the proof of Lemma \ref{lmm2-2} is complete. 
\end{proof}

\section{Proofs of invariant distributions}\label{invariant-measure-proofs}
\subsection{The construction of the invariant distributions}
\begin{proof}[Proof of Theorem \ref{thm2-1}]
First, we would like to show that for all fixed $i\in \bZ$, and $t\in \bZ^+$, the process $\{\h_{[t-s,t]}(i)-\h_{[t-s,t]}(i-1): s\in\bZ^+\}$ is an $L^2$-martingale.\par

For all $0\le s\le r$, from \eqref{22-1},
\begin {align*}
\bE\left[\h_{[t-r,t]}(i)-\h_{[t-s,t]}(i)|\cF_s\right]=&\bE\left[\sum_{j\in \bZ}\sum_{k=t-r+1}^{t}\xi_k(j)p^{t-k}(i,j)-\sum_{j\in\bZ}\sum_{k=t-s+1}^{t}\xi_k(j)p^{t-k}(i,j)\biggl|\cF_s\right]\\
=&\bE\left[\sum_{j\in\bZ}\sum_{k=t-r+1}^{t-s}\xi_k(j)p^{t-k}(i,j)\biggl|\cF_s\right]=0.
\end{align*}
The last equation is because $\xi_{t-r+1}(\cdot), \xi_{t-r+2}(\cdot),\ldots,\xi_{t-s}(\cdot)$ are independent of $\cF_s$. After some simple manipulations on the equation above, we can show the martingale property of the process $\{\h_{[t-s,t]}(i)-\h_{[t-s,t]}(i-1)\}_{s\in\bZ^+}$. In order to check the $L^2$ boundedness, we first give an explicit formula for the $2$nd moment of $\h_{[t-s,t]}(i)-\h_{[t-s,t]}(i-1)$. Recall that the transition probability $q$ is defined in \eqref{21-d2}. 
\begin{lemma}\label{lmm2-4}
Assume \eqref{weight-assump-1} and \eqref{xi-assump-1}. For $-\infty < s \le t$, $i,j\in \bZ$, 
\be \bE\bigl[\h_{[s,t]}(i)\bigr]=0,\quad\bE\bigl[\h_{[s,t]}(i)\bigr]^2=\sigma_\xi^2\sum_{k=0}^{t-s-1}q^k(0,0);\label{l33a}\ee
\be \bE\left[\left(\h_{[s,t]}(j)-\h_{[s,t]}(i)\right)^2\right]=2\sigma_\xi^2\sum_{k=0}^{t-s-1}\bigl[q^k(0,0)-q^k(j-i,0)\bigr] , \quad j\not=i,\label{l33b}\ee
where $q^0(i,0)=\ind\{i=0\}$. 
\end{lemma}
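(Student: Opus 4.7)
The plan is to compute both quantities directly from the explicit representation \eqref{22-1},
\[
\h_{[s,t]}(i)=\sum_{j\in\bZ}\sum_{k=s+1}^{t}\xi_k(j)\,p^{t-k}(i,j),
\]
using the i.i.d.\ mean-zero structure of the noise $\{\xi_k(j)\}$ and the identity \eqref{multistep-q} that links sums of squares of $p^m$ to $q^m$.

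First I would handle the mean. Since $\bE[\xi_k(j)]=0$ and \eqref{22-1} expresses $\h_{[s,t]}(i)$ as an absolutely convergent (in $L^2$) linear combination of the noises, linearity of expectation immediately gives $\bE[\h_{[s,t]}(i)]=0$. For the second moment, independence across distinct $(k,j)$ pairs collapses the double sum into a sum of variances:
\[
\bE\bigl[\h_{[s,t]}(i)\bigr]^2=\sigma_\xi^2\sum_{k=s+1}^{t}\sum_{j\in\bZ}\bigl[p^{t-k}(i,j)\bigr]^2.
\]
Now by translation invariance, $\sum_{j}[p^{m}(i,j)]^2=\sum_{z}[p^{m}(0,z)]^2$, which by \eqref{multistep-q} with $i=0$ equals $q^{m}(0,0)$. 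After the index change $m=t-k$ (so $k=s+1$ corresponds to $m=t-s-1$ and $k=t$ to $m=0$), this yields $\sigma_\xi^2\sum_{m=0}^{t-s-1}q^m(0,0)$, as claimed.

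For the increment $\h_{[s,t]}(j)-\h_{[s,t]}(i)$, the same representation gives
\[
\h_{[s,t]}(j)-\h_{[s,t]}(i)=\sum_{\ell\in\bZ}\sum_{k=s+1}^{t}\xi_k(\ell)\bigl[p^{t-k}(j,\ell)-p^{t-k}(i,\ell)\bigr],
\]
and the same i.i.d.\ argument reduces the second moment to
\[
\sigma_\xi^2\sum_{k=s+1}^{t}\sum_{\ell\in\bZ}\bigl[p^{t-k}(j,\ell)-p^{t-k}(i,\ell)\bigr]^2.
\]
Expanding the square produces three terms; the two diagonal terms each contribute $q^{t-k}(0,0)$ by the same identity as before. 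For the cross term, a shift $\ell'=\ell-i$ and translation invariance give
\[
\sum_{\ell}p^{m}(j,\ell)\,p^{m}(i,\ell)=\sum_{\ell'}p^{m}(j-i,\ell')\,p^{m}(0,\ell')=q^{m}(j-i,0)
\]
by \eqref{multistep-q}. Assembling the three pieces and reindexing $m=t-k$ produces the stated formula $2\sigma_\xi^2\sum_{m=0}^{t-s-1}[q^m(0,0)-q^m(j-i,0)]$.

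The computation is essentially routine; the one point that deserves care is the bookkeeping in passing from $\sum_\ell [p^{m}(i,\ell)]^2$ and the cross sum $\sum_\ell p^{m}(j,\ell)p^{m}(i,\ell)$ to the right evaluations of $q^m$ via translation invariance. Once that identification is in place, the reindexing $k\mapsto m=t-k$ delivers both formulas immediately, and the boundary convention $q^0(i,0)=\ind\{i=0\}$ matches the contribution from the $k=t$ term in the original sum.
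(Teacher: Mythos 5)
Your proposal is correct and follows essentially the same route as the paper: expand via the representation \eqref{22-1}, use the i.i.d.\ mean-zero noise to collapse the cross terms, and identify $\sum_{\ell}p^{m}(j,\ell)p^{m}(i,\ell)=q^{m}(j-i,0)$ via \eqref{multistep-q} and translation invariance. The only cosmetic difference is that the paper computes the covariance $\bE[\h_{[s,t]}(i)\h_{[s,t]}(j)]$ separately and assembles \eqref{l33b} from $\bE[\h(j)]^2+\bE[\h(i)]^2-2\bE[\h(i)\h(j)]$, whereas you expand the squared difference directly; these are the same calculation.
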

\begin{proof}

$$\bE\left[\h_{[s,t]}(i)\right]=\bE\left[\sum_{j\in\bZ}\sum_{k=s+1}^{t}\xi_k(j)p^{t-k}(i,j)\right]=0.$$
\begin{align*}
\bE\left[\h_{[s,t]}(i)\right]^2=&\bE\left[\sum_{j\in\bZ}\sum_{k=s+1}^{t}\xi_k(j)p^{t-k}(i,j)\right]^2=\sigma_\xi^2\sum_{j\in\bZ}\sum_{k=s+1}^{t}p^{t-k}(i,j)^2\\
=&\sigma_\xi^2\sum_{k=s+1}^{t}q^{t-k}(0,0)=\sigma_\xi^2\sum_{k=0}^{t-s-1}q^k(0,0).
\end{align*}
Notice that
\be\bE\bigl[\h_{[s,t]}(j)-\h_{[s,t]}(i)\bigr]^2=\bE\bigl[\h_{[s,t]}(j)\bigr]^2+\bE\bigl[\h_{[s,t]}(i)\bigr]^2-2\bE\bigl[\h_{[s,t]}(i)\h_{[s,t]}(j)\bigr].\label{l33d}\ee
From \eqref{l33a}, 
\be\bE\bigl[\h_{[s,t]}(j)\bigr]^2=\bE\bigl[\h_{[s,t]}(i)\bigr]^2=\sigma_\xi^2\sum_{k=0}^{t-s-1}q^k(0,0).\label{l33e}\ee
For the last term in \eqref{l33d},
\begin{align}
&\bE\bigl[\h_{[s,t]}(i)\h_{[s,t]}(j)\bigr]=\bE\left\{\left[\sum_{k\in\bZ}\sum_{n=s+1}^{t}\xi_n(k)p^{t-n}(j,k)\right]\left[\sum_{k\in\bZ}\sum_{n=s+1}^{t}\xi_n(k)p^{t-n}(i,k)\right]\right\}\notag\\
=&\sigma_\xi^2\sum_{k\in\bZ}\sum_{n=s+1}^{t}p^{t-n}(j,k)p^{t-n}(i,k)=\sigma_\xi^2\sum_{n=s+1}^{t}q^{t-n}(j-i,0)=\sigma_\xi^2\sum_{k=0}^{t-s-1}q^k(j-i, 0).\label{l33c}
\end{align}
Plug \eqref{l33e} and \eqref{l33c} into \eqref{l33d}, we show that 
$$\bE\bigl[\h_{[s,t]}(j)-\h_{[s,t]}(i)\bigr]^2=2\sigma_\xi^2\sum_{k=0}^{t-s-1}\bigl[q^k(0,0)-q^k(j-i,0)\bigr].\qedhere$$
\end{proof}

One can show that the sum on the right hand side of \eqref{l33b} converges as $s$ goes to $-\infty$ under the assumption \eqref{weight-assump-2}. In fact, 
\begin{lemma}\label{lmm2-6}
Assume \eqref{weight-assump-1} and \eqref{weight-assump-2}. For $\forall i \in \bZ$, there exists a constant $\consta(i)<\infty$, s.t. 
\be
\sum_{k=s}^\infty\bigl[q^k(0,0)-q^k(i,0)\bigr]\le\consta(i)s^{-1/2}, \quad \forall s\in\bZ^+.\label{l34}\ee 
\end{lemma}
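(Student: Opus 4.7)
The plan is to pass to the Fourier (characteristic function) side. By Lemma \ref{lmm2-2}, the $q$-walk is symmetric, aperiodic (span $1$), has finite variance $2\sigma_1^2$, and has finitely supported jumps. Hence its characteristic function
$$\chfn(\theta):=\sum_{y\in\bZ}q(0,y)e^{\i y\theta}$$
is $C^\infty$, real-valued and even (by symmetry of $q$), satisfies $\chfn(0)=1$, $|\chfn(\theta)|<1$ for every $\theta\in(-\pi,\pi]\setminus\{0\}$, and admits the expansion $\chfn(\theta)=1-\sigma_1^2\theta^2+O(\theta^4)$ near zero. Fourier inversion yields $q^k(j,0)=\frac{1}{2\pi}\int_{-\pi}^\pi \chfn(\theta)^k e^{\i j\theta}\,d\theta$, and subtracting for $j=0$ and $j=i$ (the odd $\sin(i\theta)$ piece drops out by evenness of $\chfn$) gives
$$q^k(0,0)-q^k(i,0)=\frac{1}{2\pi}\int_{-\pi}^\pi \chfn(\theta)^k\bigl(1-\cos(i\theta)\bigr)\,d\theta.$$

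Next I would sum over $k\ge s$ and interchange sum and integral via dominated convergence. The partial sums $\bigl|\sum_{k=s}^N \chfn(\theta)^k\bigr|$ are bounded by $2/(1-|\chfn(\theta)|)$; multiplied by $1-\cos(i\theta)\le i^2\theta^2/2$ and combined with the lower bound $1-|\chfn(\theta)|\ge c\theta^2$ valid near $\theta=0$, this produces an $L^1$ majorant on $[-\pi,\pi]$. The exchange then gives
$$\sum_{k=s}^\infty\bigl[q^k(0,0)-q^k(i,0)\bigr]=\frac{1}{2\pi}\int_{-\pi}^\pi \frac{\chfn(\theta)^s\bigl(1-\cos(i\theta)\bigr)}{1-\chfn(\theta)}\,d\theta.$$

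I would estimate this integral by splitting at a small $\delta>0$, chosen so that $\chfn(\theta)\ge\tfrac12$ and $1-\chfn(\theta)\ge \tfrac{\sigma_1^2}{2}\theta^2$ on $|\theta|\le\delta$, and so that $\gamma:=\sup_{|\theta|>\delta}|\chfn(\theta)|<1$. On $\{|\theta|>\delta\}$ the integrand is bounded uniformly by $\gamma^s\cdot i^2\pi^2/\bigl(2(1-\gamma)\bigr)$, which decays exponentially and is therefore $O(i^2/\sqrt{s})$ for all $s\ge 1$. On $\{|\theta|\le\delta\}$, using $\chfn(\theta)^s\le e^{-cs\theta^2}$ (from $\log \chfn(\theta)\le -c\theta^2$ near zero) together with $1-\cos(i\theta)\le i^2\theta^2/2$ and $1-\chfn(\theta)\ge \tfrac{\sigma_1^2}{2}\theta^2$, the contribution is dominated by
$$\frac{i^2}{\sigma_1^2}\int_{-\infty}^{\infty} e^{-cs\theta^2}\,d\theta \;=\; O\!\left(\frac{i^2}{\sqrt{s}}\right).$$
Adding the two pieces gives the stated bound with a constant $\consta(i)=O(i^2+1)$.

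The main technical subtlety is the interchange of sum and integral, because $\chfn(\theta)^k$ can change sign when $\chfn(\theta)$ is negative and the ungrouped series $\sum_k \chfn(\theta)^k$ diverges at $\theta=0$. The key algebraic cancellation making everything work is that the factor $1-\cos(i\theta)$ supplies an extra $\theta^2$ which exactly kills the $1/\theta^2$ singularity of $1/(1-\chfn(\theta))$ at the origin; once this is noted, the $|\theta|\le\delta$ piece reduces to a Gaussian tail estimate that immediately produces the $s^{-1/2}$ rate.
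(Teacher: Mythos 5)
Your proof is correct, but it proceeds along a genuinely different route from the paper's. The paper argues probabilistically: it first shows $q^k(i,0)<q^k(0,0)$ (Lemma \ref{lmm3-1}), then uses the telescoping identity $q^s(0,0)=\sum_{k\ge s}\sum_i q(0,i)\bigl[q^k(0,0)-q^k(i,0)\bigr]$ together with the quoted uniform bound $q^s(0,x)\le Cs^{-1/2}$ (Lemma \ref{q-uniform-bound}, from Spitzer), so that positivity of the terms lets one peel off a single site $i$; the case $q(0,i)=0$ is handled by irreducibility, replacing $q(0,i)$ with $q^d(0,i)>0$ and the single $q^s(0,0)$ by $\sum_{k=s}^{s+d-1}q^k(0,0)$, giving $\consta(i)=dC/q^d(0,i)$. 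You instead pass to the Fourier side, derive the closed-form identity $\sum_{k\ge s}\bigl[q^k(0,0)-q^k(i,0)\bigr]=\frac{1}{2\pi}\int_{-\pi}^{\pi}\frac{\chfn(\theta)^s(1-\cos i\theta)}{1-\chfn(\theta)}\,d\theta$ (your dominated-convergence justification via the bounded partial sums and the $\theta^2$ cancellation is sound; note also that here $\chfn=|\phi_X|^2\ge 0$, so the sign issue you guard against never actually arises), and then split at a small $\delta$, using span $1$ to get $|\chfn(\theta)|<1$ off the origin and a Gaussian tail estimate near the origin. What each buys: the paper's argument is elementary, outsources the $s^{-1/2}$ decay to the cited Spitzer bound, and needs the irreducibility case split; yours is self-contained (it in effect reproves the local $s^{-1/2}$ bound), avoids the case distinction, and yields the more explicit constant $\consta(i)=O(1+i^2)$, with transparent dependence on $i$, whereas the paper's constant $dC/q^d(0,i)$ depends on $i$ less explicitly. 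Your integral representation is in fact the same device the paper uses later, in the proof of Lemma \ref{lmm3-9case2} (cf.\ \eqref{case2-eq2}--\eqref{case2-eq3}), just not in its proof of this lemma.
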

\begin{proof}
First, we give some useful properties of the transition probability $q$.
\begin{lemma}\label{lmm3-1}
Assume \eqref{weight-assump-1}. Then
\be q^k(i,0)< q^k(0,0),\quad q^{k+1}(0,0)\le q^k(0,0),\quad \forall k>0, i\neq 0.\label{qprop1}\ee
\end{lemma} 
\begin{proof}
From \eqref{multistep-q}, 
$$q^k(i,0)=\sum_{j\in\bZ}p^k(i,j)p^k(0,j)\le \sum_{j\in\bZ}\frac{1}{2}\left[\left(p^k(i,j)\right)^2+\left(p^k(0,j)\right)^2\right]=\sum_{j\in\bZ}\left(p^k(0,j)\right)^2=q^k(0,0),$$
where $p$ is the transition probability defined in \eqref{21-d1}. We can see that $q^k(i,0)=q^k(0,0)$ if and only if 
\be p^k(0,j-i)=p^k(0,j),\mbox{ for all } j\in\bZ.\label{qpprop2}\ee

Suppose that there exists $i\neq 0$ such that $q^k(i,0)=q^k(0,0)$. Notice that $q^k(0,0)>0$ due to $q(0,0)>0$. Hence, $q^k(i,0)>0$. Then, there exists $\ell\in\bZ$ such that $p^k(0,\ell-i)>0$, $p^k(0,\ell)>0$. According to \eqref{qpprop2}, we have  
$$p^k(0,\ell-mi)=p^k(0,\ell)>0,\mbox{ for all }m\in\bZ.$$
This contradicts the assumption that $p$ has finite range.\par

The second inequality can be proved by using the first one,
$$q^{k+1}(0,0)=\sum_{j\in\bZ}q^k(0,j)q(j,0)\le \sum_{j\in\bZ}q^k(0,0)q(j,0)=q^k(0,0).\qedhere$$
\end{proof}
As an analogue to Kolmogorov Backwards Equation, we can rewrite the probability of the random walk $Y_\centerdot^0$ returning to site 0 at time s as
\be
q^s(0,0)=\sum_{k=s}^\infty\sum_{i\in\bZ}q(0,i)\bigl[q^k(0,0)-q^k(i,0)\bigr].\label{b1}\ee
In fact, 
\begin{align*}
&\sum_{k=s}^\infty\sum_{i\in\bZ}q(0,i)\bigl[q^k(0,0)-q^k(i,0)\bigr]=\sum_{k=s}^\infty\left[\sum_{i\in\bZ}q(0,i)q^k(0,0)-\sum_{j\in\bZ}q(0,j)q^k(j,0)\right]\\
=&\sum_{k=s}^\infty\bigl[q^k(0,0)-q^{k+1}(0,0)\bigr]=q^s(0,0).
\end{align*}
Note that every term in the summation \eqref{b1} is nonnegative because of Lemma \ref{lmm3-1}.\par

For $q^s(0,0)$ in \eqref{b1}, we have the following bound.
\begin{lemma}\label{q-uniform-bound}
Assume \eqref{weight-assump-1}.
\be
\exists C>0, \quad s.t.\quad q^s(0,x)\le Cs^{-1/2},\quad\forall s\in\bZ^+,x\in\bZ.\label{b2}\ee
\end{lemma}
The proof of Lemma \ref{q-uniform-bound} can be found in \cite{4} (P7.6, page 72).\par

For $q(0,j)>0$,  from \eqref{b1} and \eqref{b2}, we have 
$$q(0,j)\sum_{k=s}^\infty\bigl[q^k(0,0)-q^k(j,0)\bigr]\le q^s(0,0)\le Cs^{-1/2}.$$
Let $\consta(j)=\frac{C}{q(0,j)}$. Then
$$\sum_{k=s}^\infty\bigl[q^k(0,0)-q^k(j,0)\bigr]\le\consta(j)s^{-1/2}.$$ 
For $q(0,j)=0$, since the random walk $Y_\centerdot^0$ is irreducible under assumption \eqref{weight-assump-2} (Lemma \ref{lmm2-2}), thus, $\exists d>1$, s.t. $q^d(0,j)\not=0$. Then, we can use the method above with $q^d(0,j)$ instead of $q(0,j)$. Again, by an analogue to Kolmogorov Backwards Equation, we have the following equation:
\be
\sum_{k=s}^\infty\sum_{i\in\bZ}q^d(0,i)\bigl[q^k(0,0)-q^k(i,0)\bigr]=\sum_{k=s}^{s+d-1}q^k(0,0).\label{b3}\ee
Combining \eqref{b2} and \eqref{b3}, we have 
$$q^d(0,j)\sum_{k=s}^\infty\bigl[q^k(0,0)-q^k(j,0)\bigr]\le \sum_{k=s}^{s+d-1}q^k(0,0) \le dCs^{-1/2}.$$
Let $\consta(j)=\frac{dC}{q^d(0,j)}$. Then
$$\sum_{k=s}^\infty\bigl[q^k(0,0)-q^k(j,0)\bigr]\le \consta(j)s^{-1/2}.$$ 
The proof of Lemma \ref{lmm2-6} is complete.
\end{proof}

Combine \eqref{l33b} and \eqref{l34} together, we can find a constant $C>0$ such that for all $i\in\bZ$ and $s,t\in\bZ^+$,
\be\bE\left[\left(\h_{[t-s,t]}(i)-\h_{[t-s,t]}(i-1)\right)^2\right]\le C.\label{mctc}\ee
Hence, we have shown that $\{\h_{[t-s,t]}(i)-\h_{[t-s,t]}(i-1): s\in\bZ^+\}$ is an $L^2$-martingale. By the Martingale convergence Theorem (see, e.g.\ Theorem 5.4.5 in \cite{2}), \eqref{mctc} implies the almost sure and $L^2$ convergence of $\h_{[t-s,t]}(i)-\h_{[t-s,t]}(i-1)$ as $s$ goes to $\infty$. Lemma \ref{lmm2-6} gives an $L^2$ speed of convergence.\par

Notice that $\h_{[t-s,t]}(i)-\h_{[t-s,t]}(i-1)=\sum_{j\in\bZ}\sum_{k=t-s+1}^{t}\xi_k(j)\left[p^{t-k}(i,j)-p^{t-k}(i-1,j)\right]$. Taking $s$ to infinity, we can represent the limit $\Delta_t(i)$ as
$$\Delta_t(i)=\sum_{j\in\bZ}\sum_{k=0}^{\infty}\xi_{t-k}(j)\left[p^k(i,j)-p^k(i-1,j)\right],\quad i\in\bZ,t\in\bZ^+.$$

The stationarity of $\Delta_t(\cdot)$ can be seen directly from the construction. And the Markov property can be derived from the Markov property of the harness processes. In fact, according to the setting \eqref{i1},
$$\h_{[t-s,t+1]}(i)-\h_{[t-s,t+1]}(i-1)=\sum_{j\in\bZ}w(j)\left[\h_{[t-s,t]}(i+j)-\h_{[t-s,t]}(i+j-1)\right]+\xi_{t+1}(i)-\xi_{t+1}(i-1).$$
Let $s\to\infty$, we have
\be\Delta_{t+1}(i)=\sum_{j\in\bZ}w(j)\Delta_{t}(i+j)+\xi_{t+1}(i)-\xi_{t+1}(i-1).\label{delta-1}\ee

Also from \eqref{delta-1}, we see that the evolution of the process $\{\Delta_t\}_{t\in\bZ^+}$ is the same as the increment dynamic \eqref{increment-evolution}. Therefore, $\Delta_{\centerdot}$ is an increment process and surely its distribution is the invariant measure of the increment process $\{\eta_t\}_{t\in\bZ^+}$ defined in \eqref{22-0}.\par 


Next, let us prove the ergodicity. Notice that from \eqref{22-1},
$$\h_{[t-s,t]}(x)-\h_{[t-s,t]}(x-1)=\sum_{j\in\bZ}\sum_{k=t-s+1}^{t}\xi_k(j+x)\left[p^{t-k}(0,j)-p^{t-k}(0,j+1)\right]=f_s(\theta_x\xi),\quad x\in\bZ,$$
where $f_s(\xi)=\sum_{j\in\bZ}\sum_{k=t-s+1}^{t}\xi_k(j)\left[p^{t-k}(0,j)-p^{t-k}(0,j+1)\right]$, and $\theta$ is the space-shift operator. \par

Let us denote $\bar{f}(\xi)=\limsup_{s\to\infty}f_s(\xi)$. Since $\lim_{s\to\infty}\h_{[t-s,t]}(i)-\h_{[t-s,t]}(i-1)=\Delta_t(i)$ a.s., $\bar{f}(\theta^i\xi)=\Delta_t(i)$ a.s. Also, according to the settings, $\{\xi_k(j): k\in\bZ, j\in\bZ\}$ are i.i.d. Therefore, by Theorem 7.1.3 in \cite{2}, we may conclude that the sequence $\Delta_t(\cdot)$ is ergodic under spatial translations.\par 

Thus, Theorem \ref{thm2-1} has been proved. 
\end{proof}

\begin{proof}[Proof of Proposition \ref{crlly2-1}]
$\bE[\Delta_t(i)]=0$ is due to the fact that $\Delta_t(i)$ is the $L^2$-limit of $\h_{[t-s,t]}(i)-\h_{[t-s,t]}(i-1)$ as $s\to \infty$.\par 

For the covariance, 
\begin{align*}
\bE \left[\Delta_t(i)\Delta_t(j)\right]=&\lim_{s\to\infty} \bE\left[\left(\h_{[t-s,t]}(i)-\h_{[t-s,t]}(i-1)\right)\left(\h_{[t-s,t]}(j)-\h_{[t-s,t]}(j-1)\right)\right]\\
=&\sigma_\xi^2\sum_{k=0}^{\infty}\left[2q^k(i-j,0)-q^k(i-j-1,0)-q^k(i-j+1,0)\right]\\
=&\sigma_\xi^2[a(i-j-1)+a(i-j+1)-2a(i-j)],\quad i,j\in\bZ,
\end{align*}
where the second equality comes from \eqref{l33c}.
\end{proof}

\begin{proof}[Proof of Corollary \ref{rmk2-4}]
\eqref{characteristic-function}, \eqref{cov-bound} are from Lemma \ref{lmmA-3} in the Appendix. And Lemma \ref{lmmA-4} implies \eqref{series-of-covariances}.
\end{proof}

\subsection{Properties of the invariant distributions}
\begin{proof}[Proof of Theorem \ref{thm2-2}]
This result is a direct application of Theorem 8 from \cite{15} (page 181, section V.6). The theorem is stated as a lemma below.
\begin{lemma}\label{lmm3-3}
A necessary and sufficient condition for 
$$\rho(x)=O(x^{-r-\beta}),\mbox{ where }0<\beta<1,$$
is that the spectral density $f(\lambda)$ permits a representation of the form 
$$f(\lambda)=|P(e^{i\lambda})|^2w(\lambda),$$
where $P(z)$ is a polynomial with zeros on $|z|=1$ and the function $w(\lambda)$ is strictly positive, i.e.\ $\inf_{\lambda\in(-\pi,\pi]}w(\lambda)>0$, and $r$ times differentiable with the $r$th derivative satisfying a H\"{o}lder condition of order $\beta$.
\end{lemma}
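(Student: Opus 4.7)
The plan is to establish this equivalence via spectral methods, splitting into the sufficient and necessary directions. The key identification is that of the $L^2$-closure of the process with the weighted Hilbert space $L^2((-\pi,\pi], f\,d\lambda)$, under which the past and future linear hulls $\LH(-\infty,0)$ and $\LH(x,\infty)$ correspond to closed subspaces generated by one-sided families of exponentials. The coefficient $\rho(x)$ then equals the cosine of the angle between these two subspaces and, after a suitable change of variable, can be written as the norm of a Hankel-type operator whose symbol is built from $f$. This reformulation is what lets smoothness of $f$ speak directly to the decay of $\rho$.

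For sufficiency, given $f(\lambda) = |P(e^{i\lambda})|^2 w(\lambda)$, I would apply the Szeg\H{o} outer factorization to the strictly positive smooth factor $w$ to obtain $w = |h|^2$ with $h$ outer in $H^2$. Strict positivity together with $w \in C^{r,\beta}$ force $\log w \in C^{r,\beta}$, so $h$ and $1/h$ inherit the same smoothness on the torus, and their Fourier coefficients decay at rate $O(n^{-r-\beta})$. The polynomial factor $|P|^2$, whose roots lie on the unit circle, corresponds to a finite non-invertible moving-average piece that accounts for a finite-dimensional obstruction but contributes nothing to long-range dependence. Combining these pieces through the Hankel operator norm bound (equivalently, by constructing explicit near-optimal linear predictors of the future from the past) yields $\rho(x) = O(x^{-r-\beta})$.

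For necessity, I would first use that $\rho(x) \to 0$ already implies the process is purely nondeterministic and hence admits a one-sided moving-average representation whose coefficient tail is controlled by $\rho$. This gives H\"{o}lder-type smoothness of $f$ off its zero set. The more delicate task is to identify that zero set: the prescribed polynomial decay of $\rho$ must be shown to force it to consist of finitely many points of finite order, which can then be gathered into the polynomial $P(z)$. The quotient $w = f/|P|^2$ is then strictly positive, and the smoothness argument applied in reverse yields $w \in C^{r,\beta}$ with the $r$-th derivative H\"{o}lder-$\beta$.

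The main obstacle is the sharp calibration between decay rate and smoothness class: matching $x^{-r-\beta}$ decay of $\rho$ to H\"{o}lder-$\beta$ smoothness of the $r$-th derivative of $w$ requires quantitative versions of the Adamyan--Arov--Krein theorem or Jackson-type estimates in $H^\infty$-approximation. Pinning down the polynomial $P$ in the necessity direction --- ruling out singular inner factors and verifying that the zeros of $f$ are finite in both number and multiplicity --- is the most delicate step, and is where I expect the bulk of the technical difficulty to lie.
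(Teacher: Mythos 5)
You should first note that the paper does not prove Lemma \ref{lmm3-3} at all: it is quoted verbatim as Theorem 8 of \cite{15} (Section V.6, page 181) and used as a black box in the proof of Theorem \ref{thm2-2}, where one simply takes $P(z)=1$ and $w(\lambda)=f(\lambda)$ because the spectral density \eqref{characteristic-function} is already smooth and bounded away from zero. So there is no in-paper argument to compare against; what you have written is an outline of the proof of the cited Ibragimov--Rozanov theorem itself.

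As an outline, your strategy is the standard one and is consistent with how the result is actually established: identify the process space with $L^2(f\,d\lambda)$, express $\rho(x)$ as the angle between past and future subspaces (equivalently a Hankel-type operator norm built from the Szeg\H{o} outer factor of $f$), and translate decay of $\rho$ into approximation rates by analytic functions and back via Jackson/Bernstein-type theorems. But as written it is a plan, not a proof: the two decisive quantitative steps are precisely the ones you defer. First, the calibration between $\rho(x)=O(x^{-r-\beta})$ and $C^{r,\beta}$ regularity of $w$ requires both the direct (Jackson-type) estimate and the converse (Bernstein-type) argument in the weighted setting, and asserting that ``Fourier coefficients of $h$ and $1/h$ decay at rate $O(n^{-r-\beta})$'' is not by itself enough --- polynomial Fourier decay of that order does not immediately yield the Hankel norm bound, nor conversely does it recover H\"{o}lder smoothness of the $r$-th derivative without the full approximation-theoretic machinery. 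Second, in the necessity direction you explicitly leave open the identification of the zero set of $f$ as finitely many zeros of finite order collected into $P$, and the exclusion of singular inner factors; this is the heart of that direction, not a technicality to be filled in later. Since the paper's route is simply to cite \cite{15}, the honest conclusion is that your proposal is a reasonable roadmap to the literature proof but does not itself constitute one.
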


In our case, according to \eqref{characteristic-function}, the spectral density function $f(\lambda)=\frac{\sigma_\xi^2}{\pi}\cdot\frac{1-\cos(\lambda)}{1-\sum_{k\in\bZ}q(0,k)e^{ik\lambda}}$. From the proof of Lemma \ref{lmmA-3}, we can see that $f(\lambda)$ is infinitely differentiable and $f(0)=\frac{\sigma_\xi^2}{2\pi\sigma_1^2}>0$ (hence strictly positive). Let $P(z)=1$ and $w(\lambda)=f(\lambda)$ in Lemma \ref{lmm3-3}, we finish the proof of Theorem \ref{thm2-2}.
\end{proof}

\begin{proof}[Proof of Corollary \ref{crlly2-2}]
Proving $\pi_0$ to be a Gaussian field is trivial due to the fact that $\pi_0$ is non-degenerate, $\Delta_t(\cdot)$ is the limit of $\h_{[t-s,t]}(\cdot)-\h_{[t-s,t]}(\cdot-1)$ and $\h_{[t-s,t]}(\cdot)$ are jointly Gaussian distributed.\par

For Gaussian processes, the coefficients of complete linear regularity are equal to the coefficients of complete regularity (see page 249 in \cite{29}), i.e.
$$\rho(x)=\varrho(x), \quad\forall x\ge 0.$$
By Theorem \ref{thm2-2}, the proof is complete.
\end{proof}

\begin{proof}[Proof of Theorem \ref{thm2-3}]
%
%
%

Suppose there exist two invariant (by time) and ergodic (under spatial translations) distributions with same finite mean for the increment process $\{\eta_t\}_{t\in\bZ^+}$. Let us denote them by $\pi^1,\pi^2\in\cI \cap \cJ$. Then we can define two initial increments: $\pi^1$-distributed $\{\eta^1_0(x):x\in\bZ\}$ and $\pi^2$-distributed $\{\eta^2_0(x):x\in\bZ\}$. Let us assume that $\eta^1_0$ and $\eta^2_0$ are coupled in the way that the difference process $\{\eta^1_0(x)-\eta^2_0(x)\}_{x\in\bZ}$ is also ergodic, and the increment process $\{\eta^1_t(x): x\in\bZ\}_{t\in\bZ^+}$ and $\{\eta^2_t(x):x\in\bZ\}_{t\in \bZ^+}$ evolve from initial increments $\eta^1_0$ and $\eta^2_0$ respectively with the same noise $\{\xi_t(i): t\in\bN, i\in\bZ\}$. The existence of such coupling method can be proved by the following lemma.
\begin{lemma}\label{lmm3-4}
For $i=1,2$, let $\Omega_i$ be a complete separable metric space with Borel $\sigma$-algebra $\mathcal{F}_i$, and $T_i$ be a measurable transformation on $(\Omega_i, \mathcal{F}_i)$. Let us suppose that for $i=1,2$, $\nu_i$ is an ergodic invariant measure on $(\Omega_i,\mathcal{F}_i)$ w.r.t.\ $T_i$. Then, there exists an ergodic invariant measure $\mu$ on the product space $(\Omega_1\times \Omega_2, \mathcal{F}_1\otimes\mathcal{F}_2)$ $w.r.t.$ $T_1\times T_2$, such that for all $A\in \mathcal{F}_1$, $B\in\mathcal{F}_2$, 
$$\mu(A\times \Omega_2)=\nu_1(A), \mu(\Omega_1\times B)=\nu_2(B).$$
\end{lemma}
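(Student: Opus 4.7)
The plan is to construct $\mu$ as a generic component in the ergodic decomposition of the product measure $\nu_1\times\nu_2$ with respect to the skew transformation $T_1\times T_2$. This product is $T_1\times T_2$-invariant but in general not ergodic; the idea is to show that its ergodic components automatically inherit the correct marginals $\nu_1$ and $\nu_2$ from the ergodicity of each $\nu_i$, so any such component may be selected as $\mu$.

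First I would invoke the ergodic decomposition theorem for a measurable transformation on a standard Borel space; the Polish product $\Omega_1\times\Omega_2$ qualifies, producing a probability measure $\pi$ on the Borel space of ergodic $T_1\times T_2$-invariant probability measures such that
\[
\nu_1\times\nu_2 \;=\; \int \mu_\alpha\, d\pi(\alpha).
\]
The coordinate projection $\mathrm{proj}_i:\Omega_1\times\Omega_2\to\Omega_i$ intertwines $T_1\times T_2$ with $T_i$, so pushing the decomposition forward gives
\[
\nu_i \;=\; (\mathrm{proj}_i)_*(\nu_1\times\nu_2) \;=\; \int (\mathrm{proj}_i)_*\mu_\alpha\, d\pi(\alpha),
\]
and each $(\mathrm{proj}_i)_*\mu_\alpha$ is a $T_i$-invariant probability measure on $\Omega_i$.

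The crucial step is to argue that ergodicity of $\nu_i$ forces $(\mathrm{proj}_i)_*\mu_\alpha=\nu_i$ for $\pi$-almost every $\alpha$. Fixing a countable convergence-determining family $\{f_n\}\subset C_b(\Omega_i)$, Birkhoff's theorem applied to the ergodic measure $\nu_i$ produces, for each $f_n$, a set $E_n$ of full $\nu_i$-measure on which the time averages of $f_n$ along $T_i$ converge to the constant $\int f_n\, d\nu_i$. Since $\nu_i=\int (\mathrm{proj}_i)_*\mu_\alpha\, d\pi(\alpha)$, each $E_n$ has full $(\mathrm{proj}_i)_*\mu_\alpha$-measure for $\pi$-a.e.\ $\alpha$; Birkhoff's theorem applied to $(\mathrm{proj}_i)_*\mu_\alpha$ then identifies its conditional invariant expectation of $f_n$ with the constant $\int f_n\, d\nu_i$, whence $\int f_n\, d(\mathrm{proj}_i)_*\mu_\alpha=\int f_n\, d\nu_i$ for all $n$ and the two measures coincide. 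Doing this for both $i=1,2$ and intersecting the two full-$\pi$-measure sets of admissible $\alpha$'s, any element of the intersection yields an ergodic joining $\mu:=\mu_\alpha$ with the required marginals.

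The main obstacle is the measure-theoretic bookkeeping required to apply the ergodic decomposition theorem at this level of generality (standard Borel underlying space with merely measurable $T_i$), which is handled by classical results such as Varadarajan's. No further new idea is needed beyond the conceptual observation that ergodicity of the two given marginals pins down every ergodic component of the product, so an ergodic joining exists even though the product itself typically fails to be ergodic.
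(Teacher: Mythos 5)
Your proposal is correct and starts from the same decomposition as the paper: both take the product measure $\nu_1\otimes\nu_2$, invoke the ergodic decomposition theorem for the standard Borel product space, and then argue that ergodicity of the two given marginals pins down the marginals of $\pi$-almost every ergodic component. Where you diverge is in how you establish that last step. The paper first observes that each ergodic component $\mu_\alpha$ of the product has $T_i$-ergodic coordinate marginals (because preimages of $T_i$-invariant sets under the projections are $(T_1\times T_2)$-invariant), so the pushed-forward decomposition of $\nu_i$ is itself an ergodic decomposition; then it appeals to the uniqueness (extreme-point) property of ergodic decompositions — an ergodic measure admits only the trivial decomposition $\delta_{\nu_i}$ — to conclude $(\mathrm{proj}_i)_*\mu_\alpha=\nu_i$ for a.e.\ $\alpha$. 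You instead run Birkhoff's theorem directly: pick a countable measure-determining family in $C_b(\Omega_i)$, use the ergodicity of $\nu_i$ to get full-measure sets on which time averages are the constants $\int f_n\,d\nu_i$, transfer these full-measure sets to a.e.\ component via the disintegration, and then apply Birkhoff once more to each component to read off $\int f_n\,d(\mathrm{proj}_i)_*\mu_\alpha=\int f_n\,d\nu_i$. Your route dispenses with both the marginal-ergodicity observation and the Choquet uniqueness argument, replacing them with a single hands-on Birkhoff computation; in effect you are reproving the relevant special case of uniqueness of ergodic decomposition from scratch. The trade-off is roughly a wash in length, but your argument is marginally more self-contained — everything reduces to Birkhoff and the a.e.\ transfer property of the disintegration — while the paper's is shorter if one is willing to cite marginal ergodicity and extreme-point uniqueness as known facts.
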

\begin{proof}
Note that the product measure $\nu=\nu_1\otimes\nu_2$ is an invariant measure on $(\Omega_1\times \Omega_2, \mathcal{F}_1\otimes\mathcal{F}_2)$ $w.r.t.$ $T_1\times T_2$. By the Ergodic Decomposition Theorem, there exists a probability measure $\rho_{\nu}$ on the set of ergodic measures $\mathcal{M}_e$ on  $(\Omega_1\times \Omega_2, \mathcal{F}_1\otimes\mathcal{F}_2)$ $w.r.t.$ $T_1\times T_2$, such that
$$\nu=\int_{\mathcal{M}_e}\pi\rho_{\nu}(d\pi),$$

Note that
\be\nu_1(\cdot)=\nu(\cdot\times \Omega_2)=\int_{\mathcal{M}_e}\pi(\cdot\times \Omega_2)\rho_{\nu}(d\pi),\label{ergodic-decomp-1}\ee
\be\nu_2(\cdot)=\nu(\Omega_1\times \cdot)=\int_{\mathcal{M}_e}\pi(\Omega_1\times \cdot)\rho_{\nu}(d\pi).\label{ergodic-decomp-2}\ee
And one can easily show that for all $\pi\in\mathcal{M}_e$, the marginals $\pi(\cdot\times \Omega_2)$ and $\pi(\Omega_1\times \cdot)$ are also ergodic. Thus, \eqref{ergodic-decomp-1} and \eqref{ergodic-decomp-2} are in fact ergodic decomposition of $\nu_1$ and $\nu_2$ respectively. Since $\nu_1$ and $\nu_2$ are ergodic, we have for $\rho_\nu$-almost every $\pi\in \supp(\rho_\nu)$,
\be \pi(\cdot\times \Omega_2)\equiv \nu_1(\cdot), \pi(\Omega_1\times \cdot)\equiv \nu_2(\cdot),\label{er1}\ee
where $\supp(\rho_\nu)=\{\pi\in \mathcal{M}_e: \mbox{ for }\forall\mbox{ open neighbourhood }N_\pi\subseteq \mathcal{M}_e\mbox{ of }\pi, \rho_\nu(N_\pi)>0\}$.\par

The proof for Lemma \ref{lmm3-4} is complete by picking up $\mu$ from $\supp(\rho_\nu)$ such that \eqref{er1} holds.
\end{proof}

Using \eqref{21-l1} and \eqref{22-0}, the increment processes $\eta^1_t$ and $\eta^2_t$ can have the following expressions
\be\eta^i_t(j)=\sum_{x\in\bZ}p^{t}(j,x)\eta^i_0(x)+\sum_{k=1}^t\sum_{x\in \bZ}p^{t-k}(j,x)\left[\xi_k(x)-\xi_k(x-1)\right],\quad j\in\bZ,t\in\bZ^+,i=1,2.\label{u-4}\ee

Let us denote the difference of the two increment processes by $\zeta_t(\cdot)$, i.e. 
\be \zeta_t(i)=\eta^1_t(i)-\eta^2_t(i), \quad i\in \bZ, t\in\bZ^+,\ee 
and the underlying ergodic distribution of $\{\zeta_0(i): i\in\bZ\}$ as $\nu$. Notice that $\bE^\nu\left[\zeta_0(x)\right]=0$ due to the assumption $\bE^{\pi^1}\left[\eta^1_0(x)\right]=\bE^{\pi^2}\left[\eta^2_0(x)\right]$.\par

From \eqref{u-4}, 
\be \zeta_t(i)=\sum_{x\in\bZ}p^{t}(i,x)\zeta_0(x), \quad i\in \bZ, t\in\bZ^+.\ee


For $x\in\bZ$, $t\in\bZ^+$, and $\zeta\in \bR^\bZ$, let us set 
$$\zeta^r=\{\zeta^r(i)=(-r)\vee(\zeta(i)\wedge r)\}_{i\in\bZ}, \quad r>0,$$
$$g_t(x,\zeta)=\sum_{y\in\bZ}p^{t}(x,y)\zeta(y), \quad g_t(x,\zeta, r)=\sum_{y\in\bZ}p^{t}(x,y)\zeta^r(y).$$
and the characteristic function of the transition $p$
$$\phi_X(\theta)=\sum_{y\in\bZ}p(0,y)e^{\i\theta y}, \quad \theta\in\bR.$$


First, we will show that for every fixed $r>0$, $g_t(x,\zeta,r)$ converges to a constant in $L^2(\nu)$ as $t\to\infty$. Then, we will prove that such convergence implies the convergence of $g_t(x,\zeta)$ in $L^1(\nu)$.\par

Note that the covariance $\covnu^r(x)=\bE^\nu[\zeta^r(0)\zeta^r(x)]$ is a positive definite sequence (i.e.\ $\sum_{x,y}\covnu^r(x-y)z_x\overline{z_y}\ge 0$, for any choice of finitely many complex numbers $\{z_n\}$). By Herglotz' Theorem (see Chapter \RNum{19}.6 in \cite{30}), there exists a bounded measure $\gamma^r$ on $[-\pi, \pi)$ such that
$$\covnu^r(x)=\int e^{-\i x\theta}\gamma^r(d\theta), \quad x\in\bZ.$$ 

Let $\{X_t\}_{t\in\bZ^+}$ and $\{\tilde{X}_t\}_{t\in\bZ^+}$ be two i.i.d.\ copies of the random walk with transition probability $p$. We use them to compute the covariance of $g_t(x,\zeta,r)$ and $g_s(x,\zeta,r)$ under measure $\nu$.
\begin{align}
\int g_t(x,\zeta,r)g_s(x,\zeta,r)\nu(d\zeta)=& \int \mE^{x}[\zeta^r(X_t)]\mE^{x}[\zeta^r(\tilde{X}_s)]\nu(d\zeta)\notag\\
=& \mE^{(x,x)}\int \zeta^r(X_t) \zeta^r(\tilde{X}_s) \nu(d\zeta)=\mE^{(x,x)} \left[\covnu^r(\tilde{X}_s-X_t)\right]\notag\\
=&\mE^{(x,x)}\int e^{-\i\theta(\tilde{X}_s-X_t)}\gamma^r(d\theta)=\int \overline{\mE^x e^{\i\theta\tilde{X}_s}}\cdot\mE^x e^{\i\theta X_t}\gamma^r(d\theta)\notag\\
=&\int [\overline{\phi_X(\theta)}]^s[\phi_X(\theta)]^t\gamma^r(d\theta).\label{cov-g-result1}
\end{align}
If we switch the position of $s$ and $t$ above, we can further get 
\be\int [\overline{\phi_X(\theta)}]^s[\phi_X(\theta)]^t\gamma^r(d\theta)=\int [\phi_X(\theta)]^s[\overline{\phi_X(\theta)}]^t\gamma^r(d\theta).\label{cov-g-result2}\ee

Apply \eqref{cov-g-result1} and \eqref{cov-g-result2}, we can get 
\begin{align}
&\int \left[g_t(x,\zeta,r)-g_s(x,\zeta,r)\right]^2 \nu(d\zeta) = \int \left[g_t(x,\zeta,r)^2-2g_t(x,\zeta,r)g_s(x,\zeta,r)+g_s(x,\zeta,r)^2\right] \nu(d\zeta) \notag\\
&=\int \left[\left|\phi_X(\theta)\right|^{2t}-2\overline{\phi_X(\theta)^s}\phi_X(\theta)^t+\left|\phi_X(\theta)\right|^{2s}\right]\gamma^r(d\theta)=\int \left |\phi_X(\theta)^t-\phi_X(\theta)^s\right|^2 \gamma^r(d\theta)\notag\\
&=\int_{\theta\neq 0} \left |\phi_X(\theta)^t-\phi_X(\theta)^s\right|^2 \gamma^r(d\theta).\label{u-6}
\end{align}

Notice that \eqref{weight-assump-2} makes sure that $|\phi_X(\theta)|<1$, $\forall \theta\in [-\pi,\pi)\setminus \{0\}$ (Lemma \ref{lmmB-1} in the Appendix). Thus, the integrand $\left |\phi_X(\theta)^t-\phi_X(\theta)^s\right|^2$ in \eqref{u-6} will converge to zero as $s, t\to\infty$ for $\theta\in [-\pi,\pi)\setminus \{0\}$. From Bounded Convergence Theorem, we may conclude that for any fixed $x\in\bZ$, $r>0$, $\{g_t(x,\zeta,r)\}_{t\in\bZ^+}$ is a Cauchy sequence in $L^2(\nu)$. Hence, there exists a $L^2(\nu)$ limit
\be g(x,\zeta,r)=\lim_{t\to\infty}g_t(x, \zeta,r),\quad x\in\bZ.\ee 

Next, we will prove that $g(x,\zeta,r)$ is nothing but a constant function of $x$ for $\nu$-almost every fixed $\zeta$.
\begin{lemma}\label{lmm3-5}
Under the conditions in Theorem \ref{thm2-3}, for all fixed $r>0$ and $\nu$-almost every fixed $\zeta$, there exists a constant $C(\zeta,r)$ such that
$$g(x,\zeta,r)\equiv C(\zeta,r),\mbox{ for all }x\in\bZ.$$
\end{lemma}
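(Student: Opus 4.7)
The plan is to show that $g_t(x,\cdot,r)-g_t(x+1,\cdot,r)\to 0$ in $L^2(\nu)$ as $t\to\infty$. Since the sequences $g_t(x,\cdot,r)$ and $g_t(x+1,\cdot,r)$ already converge in $L^2(\nu)$ to $g(x,\cdot,r)$ and $g(x+1,\cdot,r)$ respectively, this will force $g(x,\zeta,r)=g(x+1,\zeta,r)$ for $\nu$-almost every $\zeta$. A countable union over $x\in\bZ$ then yields: for $\nu$-a.e.\ $\zeta$, $g(\cdot,\zeta,r)$ is constant on $\bZ$, and I would simply set $C(\zeta,r):=g(0,\zeta,r)$.

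For the key estimate, I would extend the covariance computation that produced \eqref{cov-g-result1} to arbitrary starting points $x,x'\in\bZ$. Using $\mE^x[e^{\i\theta X_t}]=e^{\i\theta x}\phi_X(\theta)^t$, one gets
\be
\int g_t(x,\zeta,r)\,g_s(x',\zeta,r)\,\nu(d\zeta)\;=\;\int e^{\i\theta(x-x')}\phi_X(\theta)^t\overline{\phi_X(\theta)^s}\,\gamma^r(d\theta).
\ee
Specialising to $s=t$, combining the cases $x'=x$ and $x'=x+1$, and using that the diagonal term is independent of $x$ (it equals $\int|\phi_X(\theta)|^{2t}\gamma^r(d\theta)$), a short calculation produces
\be
\bigl\|g_t(x,\cdot,r)-g_t(x+1,\cdot,r)\bigr\|_{L^2(\nu)}^2\;=\;2\int\bigl(1-\cos\theta\bigr)|\phi_X(\theta)|^{2t}\,\gamma^r(d\theta).
\ee

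To pass to the limit under the integral, note that $|\zeta^r|\le r$ implies $\gamma^r([-\pi,\pi))=\covnu^r(0)\le r^2<\infty$, so the integrand is dominated by $2(1-\cos\theta)\le 4$. At $\theta=0$ the factor $(1-\cos\theta)$ already kills the integrand; for $\theta\in[-\pi,\pi)\setminus\{0\}$ Lemma \ref{lmmB-1} (which relies on the span-1 hypothesis \eqref{weight-assump-2}) gives $|\phi_X(\theta)|<1$, hence $|\phi_X(\theta)|^{2t}\to 0$. The Bounded Convergence Theorem therefore delivers convergence of the integral to $0$, completing the argument.

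There is no serious obstacle here. The only subtle point is that the spectral measure $\gamma^r$ may well charge $\theta=0$ (since $\nu$ need not be a product measure), but the one-step difference $g_t(x+1,\cdot,r)-g_t(x,\cdot,r)$ introduces the factor $e^{\i\theta}-1$, whose modulus squared $2(1-\cos\theta)$ annihilates any such atom; the strict spectral bound $|\phi_X|<1$ off the origin then handles the rest via dominated convergence.
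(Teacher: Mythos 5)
Your argument is correct, but it proves the lemma by a genuinely different route than the paper. The paper shows that $g(\cdot,\zeta,r)$ is bounded by $r$ and $p$-harmonic (by letting $s\to\infty$ in $g_{s+t}(x,\zeta,r)=\sum_y p^t(x,y)g_s(y,\zeta,r)$, yielding \eqref{harmonic-def}) and then invokes a Liouville-type statement, Lemma \ref{bounded-harmonic-function}, proved via the coupling of \cite{40} together with a span-1 connectivity argument. You instead stay entirely within the spectral framework already used to establish the Cauchy property \eqref{u-6}: extending \eqref{cov-g-result1} to two starting points gives $\int g_t(x,\cdot,r)g_s(x',\cdot,r)\,d\nu=\int e^{\i\theta(x-x')}\phi_X(\theta)^t\overline{\phi_X(\theta)}^{\,s}\gamma^r(d\theta)$ (the phase factors $e^{\i\theta x}$, $e^{-\i\theta x'}$ no longer cancel, which is exactly what you exploit), and taking $s=t$, $x'=x+1$ gives $\|g_t(x,\cdot,r)-g_t(x+1,\cdot,r)\|_{L^2(\nu)}^2=2\int(1-\cos\theta)\,|\phi_X(\theta)|^{2t}\gamma^r(d\theta)\to 0$ by bounded convergence, since $\gamma^r$ is finite, $|\phi_X(\theta)|<1$ off the origin by Lemma \ref{lmmB-1} (span 1), and the factor $1-\cos\theta$ kills any atom of $\gamma^r$ at $\theta=0$ — a point you correctly flag. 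Combined with the already-established $L^2(\nu)$ convergence $g_t(x,\cdot,r)\to g(x,\cdot,r)$ for each fixed $x$, and intersecting the countably many full-measure events over $x\in\bZ$ (you say ``union,'' but you mean intersection of the good sets), this yields constancy in $x$ with $C(\zeta,r)=g(0,\zeta,r)$. What each approach buys: yours avoids the bounded-harmonic-function lemma and its coupling proof altogether, needing only the strict spectral bound and stationarity of $\nu$ (which the surrounding argument already uses via Herglotz); the paper's route isolates a reusable, conceptually clean Liouville statement and does not require tracking the spectral measure further. Both share the same mild measure-theoretic caveat that $g(x,\cdot,r)$ is defined only up to $\nu$-null sets for each $x$, which is harmless since only countably many sites are involved.
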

\begin{proof}
Notice that 
$$\abs{g_t(x,\zeta,r)}\le \sum_{y\in\bZ}p^{t}(x,y)\abs{\zeta^r(y)}\le r, \quad t\in\bZ^+, x\in\bZ, \zeta\in \bR^\bZ.$$ 
Thus, for $\nu$-almost every fixed $\zeta$,
$$\abs{g(x,\zeta,r)}\le r,\quad x\in\bZ.$$
Also, letting $s\to\infty$ in $g_{s+t}(x,\zeta, r)=\sum_{y\in\bZ}p^{t}(x,y)g_s(y,\zeta, r)$ shows that 
\be g(x,\zeta, r)=\sum_{y\in\bZ}p^{t}(x,y)g(y,\zeta, r),\quad t\in\bZ^+.\label{harmonic-def}\ee
functions with property \eqref{harmonic-def} are called $p$-harmonic. So far we have shown that $g(x,\zeta,r)$ is a bounded $p$-harmonic function w.r.t.\ $x$. The proof is complete by the following lemma.
\end{proof}
\begin{lemma}\label{bounded-harmonic-function}
Assume \eqref{weight-assump-1} and \eqref{weight-assump-2}. Bounded $p$-harmonic functions are constants. 
\end{lemma}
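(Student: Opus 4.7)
My plan is to use a classical coupling argument that exploits the recurrence of the symmetric difference walk $q$ established in Lemma \ref{lmm2-2}. Let $h$ be a bounded $p$-harmonic function. I will show that $h(x)=h(y)$ for any two sites $x,y\in\bZ$, from which the conclusion is immediate.

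The key observation is that if $\{X_t^x\}$ and $\{\tilde X_t^y\}$ are two independent $p$-walks started at $x$ and $y$, then their difference has the distribution of a $q$-walk started at $x-y$ (this is exactly the identification used in the proof of Lemma \ref{lmm2-2}). Since $q$ is symmetric, irreducible, finite-range, and of span $1$ under \eqref{weight-assump-1} and \eqref{weight-assump-2}, it is recurrent, so the meeting time
\[
\tau=\inf\{t\ge 0:X_t^x=\tilde X_t^y\}
\]
is almost surely finite. Next I would define a new coupled process $(X_t^x,Y_t^y)$ by setting $Y_t^y=\tilde X_t^y$ for $t<\tau$ and $Y_t^y=X_t^x$ for $t\ge\tau$. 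The strong Markov property of the pair $(X^x,\tilde X^y)$ applied at $\tau$ shows that $\{Y_t^y\}_{t\ge 0}$ is again a $p$-walk started from $y$.

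Since $h$ is bounded and $p$-harmonic, the harmonic equation \eqref{harmonic-def} gives $h(x)=\bE[h(X_t^x)]$ and $h(y)=\bE[h(Y_t^y)]$ for every $t\ge 0$. But on the event $\{\tau\le t\}$ the coupling forces $X_t^x=Y_t^y$, so
\[
\bigl|h(x)-h(y)\bigr|=\bigl|\bE[h(X_t^x)-h(Y_t^y)]\bigr|\le 2\|h\|_\infty\,\bP(\tau>t).
\]
Letting $t\to\infty$ and using $\bP(\tau<\infty)=1$ yields $h(x)=h(y)$.

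The only step that requires care is verifying that the modified $Y^y$ is still a $p$-walk, but this is standard from the strong Markov property. Everything else reduces to facts already established: recurrence of $q$ comes from Lemma \ref{lmm2-2}, and boundedness of $h$ lets us turn the coupling inequality into the desired conclusion by dominated convergence. No additional assumptions beyond \eqref{weight-assump-1} and \eqref{weight-assump-2} are needed, and in particular the argument works whether or not $p$ itself has zero drift.
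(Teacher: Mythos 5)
Your proof is correct and rests on the same coupling idea as the paper, but you spell it out in full and apply it more uniformly. The paper's proof is terse: for $p(x,y)>0$ it defers to a coupling described in the cited reference, and for a general pair $x,y$ it chains through a finite path $x=x_0,\ldots,x_m=y$ with $p(x_i,x_{i+1})+p(x_{i+1},x_i)>0$ at each step --- that chaining is where \eqref{weight-assump-2} enters in the paper's argument. You instead couple two $p$-walks started from arbitrary $x$ and $y$ directly, observing that their difference is an irreducible recurrent $q$-walk (Lemma \ref{lmm2-2}) so the coalescence time $\tau$ is a.s.\ finite, which collapses the case split and dispenses with the path argument entirely. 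This is arguably the cleaner way to package the argument, since Lemma \ref{lmm2-2} already supplies everything needed. The one point that merits care --- and you flag it yourself --- is that the glued process $Y^y$ is again a $p$-walk from $y$; the strong Markov property applied at the stopping time $\tau$ to the pair $(X^x,\tilde X^y)$ handles this, exactly as you indicate. Your closing remark that the argument is indifferent to whether $p$ itself has zero drift is also correct: recurrence is a property of the symmetrized walk $q$, not of $p$, and that is precisely what the coupling exploits.
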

\begin{proof}
Suppose $h(x)$ is a $p$-harmonic function, i.e.\ $h(x)=\sum_{z\in\bZ}p(x,z)h(z)$, $x\in\bZ$. If $p(x,y)>0$, one can use the coupling described on page 69 of \cite{40} to show that $h(x)=h(y)$. If $p(x,y)=0$, from assumption \eqref{weight-assump-2}, we can find a path $x=x_0,x_1,x_2,\ldots,x_{m-1},x_m=y$ on $\bZ$ such that $p(x_i,x_{i+1})+p(x_{i+1},x_i)>0$, $i=0,1,\ldots,m-1$, and hence $h(x)=h(x_1)=\cdots=h(x_{m-1})=h(y)$.   
\end{proof}

Now we have shown that for $\nu$-almost every fixed $\zeta$, the limit $g(x,\zeta,r)$ is independent of $x$. Then we look at $g_t(x,\zeta)$.
\begin{align*}
\bE^\nu\left|g_t(x,\zeta)-g_s(x,\zeta)\right|\le &\bE^\nu\left|g_t(x,\zeta)-g_t(x,\zeta,r)\right|+\bE^\nu\left|g_s(x,\zeta)-g_s(x,\zeta,r)\right|\\
&\quad\quad+\bE^\nu\left|g_t(x,\zeta,r)-g_s(x,\zeta,r)\right|\\
\le & 2\bE^\nu\left|\zeta(0)-\zeta^r(0)\right|+\left\{\bE^\nu\left[g_t(x,\zeta,r)-g_s(x,\zeta,r)\right]^2\right\}^{1/2}.
\end{align*}
From the finite first moment assumption on $\pi^1$ and $\pi^2$, $\lim_{r\to\infty}\bE^\nu\left|\zeta(0)-\zeta^r(0)\right|=0$. Thus, $\{g_t(x,\cdot)\}$ is a Cauchy sequence in $L^1(\nu)$. Let us denote the limit
\be g(x,\zeta)=\lim_{t\to\infty}g_t(x,\zeta), \quad x\in\bZ.\ee

Since
 $$\bE^{\nu}\left|g(x,\zeta)-g(x,\zeta,r)\right|\le \bE^{\nu}\left|g(x,\zeta)-g_t(x,\zeta)\right|+\bE^\nu\left|\zeta(0)-\zeta^r(0)\right|+\bE^{\nu}\left|g_t(x,\zeta,r)-g(x,\zeta,r)\right|.$$
Letting $t\to\infty$ above shows that
\be\bE^{\nu}\left|g(x,\zeta)-g(x,\zeta,r)\right|\le\bE^\nu\left|\zeta(0)-\zeta^r(0)\right|\to 0,\quad\mbox{as }r\to \infty.\ee
This implies that for $\nu$-almost every fixed $\zeta$, 
\be g(0,\zeta)=g(x,\zeta),\quad \forall x\in\bZ.\label{constant-harmonic-function-g}\ee

On the other hand, by the translation-invariant property of $p^t(\cdot, \cdot)$,
$$g_t(x,\zeta)=\sum_{y\in\bZ}p^{t}(x,y)\zeta(y)=\sum_{y\in\bZ}p^{t}(0,y-x)\zeta(y)=\sum_{z\in\bZ}p^{t}(0,z)\zeta(z+x)=g_t(0,\theta_x \zeta).$$
Letting $t\to\infty$ leads to
\be g(x,\zeta)=g(0,\theta_x \zeta), \quad x\in\bZ,\quad\nu-a.s.\ee 
Combine this with \eqref{constant-harmonic-function-g}, we see that for $\nu$-almost all $\zeta$,
\be g(0,\zeta)=g(0,\theta_x \zeta),\quad\forall x\in\bZ.\label{u-10}\ee

By the ergodicity of $\nu$, we have
\be g(0,\zeta)=\int g(0,\zeta) \nu(d\zeta)=\lim_{t\to\infty}\int g_t(0,\zeta) \nu(d\zeta)=0,\quad \nu-a.s.\ee
Note that the second equation is due to the convergence of $g_t(0,\zeta)$ in $L^1(\nu)$.\par

Recall that $g_t(x,\zeta_0)=\zeta_t(x)=\eta^1_t(x)-\eta^2_t(x)$. Thus, we have proved that \be\eta^1_t(x)-\eta^2_t(x)\overset{L^1(\nu)}{\to}0.\ee

For any finite set $\Lambda=\{x_1,x_2,\ldots,x_m\}\subset \bZ$, let function $f: \bR^m\to\bR$ be any bounded Lipschitz function. We have
\begin{align*}
&\left|\bE^{\pi^1}f(\eta(\Lambda))-\bE^{\pi^2}f(\eta(\Lambda))\right|\le \bE^\nu\abs{f(\eta^1_t(\Lambda))-f(\eta^2_t(\Lambda))}\le C \bE^\nu \left\{\Bigl[\sum_{i=1}^m\bigl(\eta^1_t(x_i)-\eta^2_t(x_i)\bigr)^2\Bigr]^{1/2}\right\}\\
\le& C\sum_{i=1}^m\bE^\nu\bigl|\eta^1_t(x_i)-\eta^2_t(x_i)\bigr|\to 0,\quad\mbox{as }t\to\infty.
\end{align*}
This implies that the marginal distributions of $\pi^1$ and $\pi^2$ on $\Lambda$ are the same. And, thus, $\pi^1=\pi^2$ which contradicts the assumption that $\pi^1$ and $\pi^2$ are two different probability measures.\par

One can easily check that $\pi_c\in\cI\cap\cJ$ and it is ergodic with mean $c$. Thus, the proof of Theorem \ref{thm2-3} is complete.
\end{proof}

\section{Proofs of the distributional limits}\label{fluctuation-limit}
\subsection{Convergence of finite-dimensional distributions}
\begin{proof}[Proof of Theorem \ref{thm2-4}]
Let us define 
\begin{align}
&\overline{H}_n(t,r)=n^{-1/4}\left(\mE (X_{\lfloor nt\rfloor}^{y(n)})-r\sqrt{n}\right);\label{2-4}\\
&\overline{S}_n(t,r)=n^{-1/4}\sum_{i\in\mathbb{Z}}\bigl(\eta_0(i)-\mu_0\bigr)\left\{\ind_{\{i>0\}}\mP\bigl(i\le X_{\lfloor nt\rfloor}^{y(n)}\bigr)-\ind_{\{i\le 0\}}\mP\bigl(i>X_{\lfloor nt\rfloor}^{y(n)}\bigr)\right\};\label{2-5}\\
&\overline{F}_n(t,r)=n^{-1/4}\sum_{k=1}^{\lfloor nt\rfloor}\mE\left[\xi_k(X_{\lfloor nt\rfloor -k}^{y(n)})\right].\label{2-6}
\end{align}
Then we can rewrite $\fluc_n(t,r)$ as
\begin{lemma}\label{lmm3-7}
\be
\fluc_n(t,r)=\mu_0\overline{H}_n(t,r)+\overline{S}_n(t,r)+\overline{F}_n(t,r).\label{2-7}
\ee
\end{lemma}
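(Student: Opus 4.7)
The identity is purely algebraic, so the plan is to rewrite the term $\mE\left[\h_0(X_{\lfloor nt\rfloor}^{y(n)})\right]$ appearing in \eqref{2-l1} in terms of the increments $\eta_0$ and then collect the pieces into $\mu_0\overline{H}_n$ and $\overline{S}_n$. Write $X:=X_{\lfloor nt\rfloor}^{y(n)}$ for brevity. Note first that, since $\h_0(0)=0$ and $\eta_0(j)=\h_0(j)-\h_0(j-1)$, for every $i\in\bZ$ one has
\be \h_0(i)=\sum_{j\in\bZ}\eta_0(j)\bigl[\ind_{\{1\le j\le i\}}-\ind_{\{i+1\le j\le 0\}}\bigr].\label{pf-h-eta}\ee
Since the $p$-walk has finite-range jumps (assumption \eqref{weight-assump-1}), $X$ takes values in a bounded (random) subset of $\bZ$, so the sum in \eqref{pf-h-eta} has only finitely many nonzero terms and Fubini is trivial under $\mE$.

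Next, take the expectation of \eqref{pf-h-eta} at $i=X$ and exchange summations: for each $j>0$, $\mE\ind_{\{1\le j\le X\}}=\mP(j\le X)$, and for each $j\le 0$, $\mE\ind_{\{X+1\le j\le 0\}}=\mP(j>X)$. This gives
\be \mE\bigl[\h_0(X)\bigr]=\sum_{j\in\bZ}\eta_0(j)\Bigl\{\ind_{\{j>0\}}\mP(j\le X)-\ind_{\{j\le 0\}}\mP(j>X)\Bigr\}.\label{pf-exp-h}\ee

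Now split $\eta_0(j)=\mu_0+(\eta_0(j)-\mu_0)$. The contribution of the $\mu_0$-part to \eqref{pf-exp-h} is
\[ \mu_0\Bigl[\sum_{j\ge 1}\mP(X\ge j)-\sum_{j\le -1}\mP(X\le j)\Bigr]=\mu_0\,\mE[X],\]
by the standard summation-by-parts identity for integer-valued random variables (valid here because $X$ is bounded). The remaining $(\eta_0(j)-\mu_0)$ part is exactly the sum appearing in the definition \eqref{2-5} of $\overline{S}_n(t,r)$, times $n^{1/4}$. Substituting into \eqref{2-l1} yields
\[ \fluc_n(t,r)=n^{-1/4}\Bigl\{\mu_0\mE[X]+n^{1/4}\overline{S}_n(t,r)+\sum_{k=1}^{\lfloor nt\rfloor}\mE\bigl[\xi_k(X_{\lfloor nt\rfloor-k}^{y(n)})\bigr]-\mu_0 r\sqrt n\Bigr\},\]
and recognizing $n^{-1/4}\mu_0(\mE[X]-r\sqrt n)=\mu_0\overline{H}_n(t,r)$ and the last sum as $\overline{F}_n(t,r)$ gives the claim.

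There is no real obstacle here: the only point requiring care is the interchange of the expectation with the sum over $j$ in \eqref{pf-h-eta}, which is immediate from the fact that $X$ is bounded for each fixed $n$ thanks to the finite support of $w$. All other steps are bookkeeping.
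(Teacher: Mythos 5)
Your proposal is correct and follows essentially the same route as the paper's proof: write $\h_0$ as a telescoping sum of the increments $\eta_0$ (using $\h_0(0)=0$), exchange the sum with $\mE$ to turn indicators into the probabilities $\mP(j\le X)$ and $\mP(j>X)$, then split $\eta_0(j)=\mu_0+(\eta_0(j)-\mu_0)$ so the mean part collapses to $\mu_0\mE[X]$ (hence $\mu_0\overline{H}_n$) and the centered part is $\overline{S}_n$. You are just a bit more explicit than the paper about the summation-by-parts identity for $\mE[X]$ and the (harmless, since $X$ has bounded range for each fixed $n$) interchange of sum and expectation.
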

\begin{proof}
From \eqref{2-l1}, we just need to show that $n^{-1/4}\left\{\mE\left[\h_0(X_{\lfloor nt\rfloor}^{y(n)})\right]-\mu_0r\sqrt{n}\right\}=\mu_0\overline{H}_n(t,r)+\overline{S}_n(t,r)$.
\begin{align*}
&n^{-1/4}\left\{\mE\left[\h_0(X_{\lfloor nt\rfloor}^{y(n)})\right]-\mu_0r\sqrt{n}\right\}\\
=&n^{-1/4}\left\{\mE\left[\ind_{\{X_{\lfloor nt\rfloor}^{y(n)}>0\}}\sum_{i=1}^{X_{\lfloor nt\rfloor}^{y(n)}}\eta_0(i)-\ind_{\{X_{\lfloor nt\rfloor}^{y(n)}<0\}}\sum_{i=X_{\lfloor nt\rfloor}^{y(n)}+1}^0\eta_0(i)\right]-\mu_0r\sqrt{n}\right\}\\
=&n^{-1/4}\left\{\sum_{i>0}\eta_0(i)\mP\left(i\le X_{\lfloor nt\rfloor}^{y(n)}\right)-\sum_{i\le 0}\eta_0(i)\mP\left(i>X_{\lfloor nt\rfloor}^{y(n)}\right)-\mu_0r\sqrt{n}\right\}\\
=&\mu_0\overline{H}_n(t,r)+\overline{S}_n(t,r).
\end{align*}
The last equality can be reached by adding and subtracting $\mu_0$ from each term and doing some rearrangements.
\end{proof}
Note that
$$\mE(X_{\lfloor nt\rfloor}^{y(n)})=\wm\lfloor nt\rfloor +y(n)=\lfloor r\sqrt{n}\rfloor +O(1).$$
Thus, 
\be\mu_0\overline{H}_n(t,r)=O(n^{-1/4}),\label{H-uniform-bound}\ee
and $\lim_{n\to\infty}\mu_0\overline{H}_n(t,r)=0$ uniformly over $(t,r)$.\par

For $\overline{S}_n(t,r)$ and $\overline{F}_n(t,r)$, they are independent and we will treat them separately in Lemma \ref{lmm3-8} and Lemma \ref{lmm3-10}. We start with $\overline{S}_n$.\par

Let $\{S(t,r):t\in\bR^+,r\in\bR\}$ be a mean-zero Gaussian process with the following covariance:
\be\bE\bigl[S(t,r)S(s,q)\bigr]=\varsigma^2\Gamma_2\bigl((t,r),(s,q)\bigr),\quad t,s\in\bR^+, r,q\in\bR.\label{2-t6a}\ee
Then, for $\overline{S}_n(t,r)$, we have
\begin{lemma}\label{lmm3-8}
Under the conditions in Theorem \ref{thm2-4}, $\{\overline{S}_n(t,r)\}_{t\in\bR^+,r\in\bR}$ will converge weakly (in the sense of finite dimensional distributions) to the Gaussian process $\{S(t,r)\}_{t\in\bR^+,r\in\bR}$ as $n\to\infty$. 
\end{lemma}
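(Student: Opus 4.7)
The plan is to apply the Cram\'er--Wold device to reduce to the CLT for a single real-valued sum, then prove that CLT for sums of the form $\sum_i d_n(i)(\eta_0(i)-\mu_0)$ with slowly varying coefficients $d_n$, treating the three hypotheses (a)--(c) separately only after establishing a common variance limit. Fix a finite collection $(t_1,r_1),\ldots,(t_N,r_N)$ and scalars $\alpha_1,\ldots,\alpha_N$, and write
$$T_n=\sum_{j=1}^N\alpha_j\overline{S}_n(t_j,r_j)=\sum_{i\in\bZ}d_n(i)\bigl(\eta_0(i)-\mu_0\bigr),$$
where
$$d_n(i)=n^{-1/4}\sum_{j=1}^N\alpha_j\Bigl[\ind_{i>0}\mP\bigl(i\le X^{y_j(n)}_{\lfloor nt_j\rfloor}\bigr)-\ind_{i\le 0}\mP\bigl(i>X^{y_j(n)}_{\lfloor nt_j\rfloor}\bigr)\Bigr],$$
and $y_j(n)=\lfloor nt_jb\rfloor+\lfloor r_j\sqrt n\rfloor$. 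The LCLT of Appendix \ref{lclt} approximates each walk tail by $1-\Phi_{\sigma_1^2 nt_j}(i-r_j\sqrt n)$ uniformly on $|i|\lesssim\sqrt n\log n$, with super-polynomial decay beyond that window. Consequently $\sup_i|d_n(i)|=O(n^{-1/4})$, $\sum_i d_n(i)^2=O(1)$, and adjacent differences $|d_n(i)-d_n(i-1)|=O(n^{-3/4})$.

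The first task is the variance limit. Expand
$$\Vvv(T_n)=\sum_{i,\ell}d_n(i)d_n(\ell)\Cvv\bigl(\eta_0(i),\eta_0(\ell)\bigr),$$
and note that in all three cases the covariance series converges absolutely: case (b) by Corollary \ref{rmk2-4}, case (c) by the classical bound $|\Cvv(\eta_0(0),\eta_0(k))|\le C\alpha(k)^{\delta/(2+\delta)}\|\eta_0(0)\|_{2+\delta}^2$ combined with \eqref{strong-mixing-coef-assump1}, and case (a) trivially. Because $d_n$ is essentially constant on $O(1)$ scales, replacing $d_n(\ell)$ by $d_n(i)$ in the double sum and using absolute summability of covariances gives $\Vvv(T_n)=\varsigma^2\sum_i d_n(i)^2+o(1)$. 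The remaining Riemann sum, evaluated via the LCLT-approximation of $d_n$, then converges to $\varsigma^2\sum_{j,k}\alpha_j\alpha_k\Gamma_2((t_j,r_j),(t_k,r_k))$; the identification with $\Gamma_2$ is an exercise in Gaussian integration, reducing $\int_0^\infty[1-\Phi_{\nu^2}(u-r)]^2du+\int_{-\infty}^0[\Phi_{\nu^2}(u-r)]^2du$ and its cross terms to combinations of $\Psi_{\nu^2}$ via integration by parts.

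With the variance pinned down, I would verify asymptotic normality of $T_n$ case by case. In case (a), Lindeberg--Feller applies directly: since $\sup_i|d_n(i)|=O(n^{-1/4})$ and $\sum_i d_n(i)^2=O(1)$, the Lindeberg sum is bounded by a constant multiple of $\bE\bigl[(\eta_0(0)-\mu_0)^2\ind_{|\eta_0(0)-\mu_0|>\varepsilon n^{1/4}}\bigr]$, which vanishes by the finite second moment. In case (b), I would substitute the representation \eqref{initial-increments-rep} into $T_n$, exchange the two summations, and re-express $T_n$ as an infinite linear combination of the i.i.d.\ noises $\xi_{-k}(j)$; the new coefficients inherit square-summability from Lemma \ref{lmm2-6}, and Lindeberg--Feller again closes the argument. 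In case (c), I would invoke a CLT for weighted sums of a strongly mixing stationary sequence (Utev- or Peligrad-type), whose hypotheses follow from the $(2+\delta)$-moment assumption, the mixing-rate assumption \eqref{strong-mixing-coef-assump1}, and the negligibility condition $\sup_i|d_n(i)|/(\sum_i d_n(i)^2)^{1/2}\to 0$.

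The main obstacle is the variance computation: controlling the off-diagonal terms $\sum_{i\ne\ell}d_n(i)d_n(\ell)\Cvv(\eta_0(i),\eta_0(\ell))$ requires combining the slow variation of $d_n$ on scale $\sqrt n$ with the fast decay of covariances, uniformly in all three hypotheses; the subsequent identification of the Riemann-sum limit with the $\Psi$-expressions in $\Gamma_2$ is then a direct but slightly tedious Gaussian-calculus exercise. Once this variance limit is in place, the CLTs in cases (a)--(c) are largely standard applications of existing theorems under the hypotheses of Theorem \ref{thm2-4}.
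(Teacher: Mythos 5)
Your overall architecture coincides with the paper's: Cram\'er--Wold, the weighted-sum representation with coefficients $a_{n,i}$ (your $d_n(i)$), a variance limit obtained from slow variation of the coefficients plus absolute summability of the covariances and dominated convergence (this is exactly how the paper passes from \eqref{a-sum-limit}--\eqref{a-sum-bound} to \eqref{var-limit}), Lindeberg--Feller in cases (a) and (b) (with the noise representation \eqref{initial-increments-rep} in case (b)), and a Peligrad-type weighted CLT (\cite{13}) in case (c). Your case (a) Lindeberg bound is correct and slightly cleaner than the paper's truncation; your plan to run case (b) directly on the infinite $\xi$-array instead of truncating the $k$-sum at $\ell(n)$ (the paper's Lemma \ref{lmm3-9case2}) is viable in substance, but note that the classical Lindeberg--Feller theorem is stated for finite rows, so you would still need either a truncation step or a characteristic-function argument for infinite rows to make that literally correct.

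The genuine gap is in case (c), at the step where you derive absolute summability of $\sum_k\Cvv\bigl(\eta_0(0),\eta_0(k)\bigr)$ from the ``classical bound'' $|\Cvv(\eta_0(0),\eta_0(k))|\le C\,\alpha(k)^{\delta/(2+\delta)}\|\eta_0(0)\|_{2+\delta}^2$ together with \eqref{strong-mixing-coef-assump1}. This implication fails. Since the mixing coefficients are non-increasing, \eqref{strong-mixing-coef-assump1} only yields $\alpha(k)=o\bigl(k^{-1-2/\delta}\bigr)$, and because $\bigl(1+\tfrac{2}{\delta}\bigr)\cdot\tfrac{\delta}{2+\delta}=1$, the Davydov bound then gives merely $|\Cvv(\eta_0(0),\eta_0(k))|=o(k^{-1})$, which need not be summable. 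Concretely, $\alpha(k)\asymp k^{-1-2/\delta}(\log k)^{-a}$ with $1<a\le (2+\delta)/\delta$ satisfies \eqref{strong-mixing-coef-assump1} while $\sum_k\alpha(k)^{\delta/(2+\delta)}=\infty$. Since your entire variance computation (and hence the identification of the limit with $\varsigma^2\Gamma_2$, and the normalization in the weighted mixing CLT) rests on this summability, the step must be repaired: the paper does it with the sharper quantile-based covariance inequality (Lemma \ref{covariance-bound}) followed by a H\"older and summation-by-parts estimate of $\int_0^1\bigl(\sum_{\ell}\ind_{\{u\le\alpha(|\ell|)\}}\bigr)^{(2+\delta)/\delta}du$, which is calibrated exactly so that \eqref{strong-mixing-coef-assump1} suffices; see \eqref{slim-10}. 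With that substitution (and the minor caveat about the degenerate case $\varsigma^2\sum_{j,k}\alpha_j\alpha_k\Gamma_2=0$, which must be treated separately before normalizing by $\bar\sigma_n$), your outline goes through.
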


\begin{remark}\label{rmk3-1}
In the proof of the above lemma, we use the following alternative definition of $\Gamma_2$:
\begin{align}
\Gamma_2\bigl((s,q),(t,r)\bigr)=&\int_{-\infty}^0\bP(B_{\sigma_1^2s}>q-x)\bP(B_{\sigma_1^2t}>r-x)dx\notag\\
&\quad\quad+\int_0^\infty \bP(B_{\sigma_1^2s}\le q-x)\bP(B_{\sigma_1^2t}\le r-x)dx,\label{2-r7}
\end{align}
where $\{B_t\}_{t\in\bR^+}$ is a standard 1-dimensional Brownian motion.
\end{remark}

\begin{proof}
Notice that in order to show the convergence of finite-dimensional distributions of $\overline{S}_n(\cdot,\cdot)$, we only need to show that for each fixed $N\in\bN$, $\{(t_j,r_j)\in \bR^+\times \bR:j=1,\ldots,N\}$ and $\{\theta_j\in \bR:j=1,\ldots,N\}$, we have 
$$\sum_{j=1}^N\theta_j \overline{S}_n(t_j,r_j)\Rightarrow \sum_{j=1}^N\theta_j S(t_j,r_j),\quad \mbox{as }n\to\infty.$$ 

Note that 
\begin{align*}
\sum_{j=1}^N\theta_j \overline{S}_n(t_j,r_j)=&n^{-1/4}\sum_{j=1}^N\theta_j\sum_{i\in\mathbb{Z}}\bigl(\eta_0(i)-\mu_0\bigr)\Bigl\{\ind_{\{i>0\}}\mP\bigl(i\le X_{\lfloor nt_j\rfloor}^{\lfloor nt_jb\rfloor +\lfloor r_j\sqrt{n}\rfloor}\bigr)\\
&\quad\quad\quad-\ind_{\{i\le 0\}}\mP\bigl(i>X_{\lfloor nt_j\rfloor}^{\lfloor nt_jb\rfloor +\lfloor r_j\sqrt{n}\rfloor}\bigr)\Bigr\}\\
=&n^{-1/4}\sum_{i\in\mathbb{Z}}\bigl(\eta_0(i)-\mu_0\bigr)\sum_{j=1}^N\theta_j\Bigl\{\ind_{\{i>0\}}\mP\bigl(i\le X_{\lfloor nt_j\rfloor}^{\lfloor nt_jb\rfloor +\lfloor r_j\sqrt{n}\rfloor}\bigr)\\
&\quad\quad\quad-\ind_{\{i\le 0\}}\mP\bigl(i>X_{\lfloor nt_j\rfloor}^{\lfloor nt_jb\rfloor +\lfloor r_j\sqrt{n}\rfloor}\bigr)\Bigr\}.
\end{align*}

Let us denote 
\be a_{n,i}=n^{-1/4}\left\{\ind_{\{i>0\}}\sum_{j=1}^N\theta_j\mP\bigl(i\le X_{\lfloor nt_j\rfloor}^{\lfloor nt_jb\rfloor +\lfloor r_j\sqrt{n}\rfloor}\bigr)-\ind_{\{i\le 0\}}\sum_{j=1}^N\theta_j\mP\bigl(i>X_{\lfloor nt_j\rfloor}^{\lfloor nt_jb\rfloor +\lfloor r_j\sqrt{n}\rfloor}\bigr)\right\}.\label{mid-2}\ee

Then, 
\be\sum_{j=1}^N\theta_j \overline{S}_n(t_j,r_j)=\sum_{i\in\mathbb{Z}}a_{n,i}\bigl(\eta_0(i)-\mu_0\bigr). \label{mid-2b}\ee

Let us consider the three cases in Theorem \ref{thm2-4} separately.\par

Case (a):   If $\eta_0(x)$'s are i.i.d., let $\ell(n)$ be any increasing function of $n$ such that $\lim_{n\to\infty}\ell(n)=\infty$, we will show that only $\ell(n)\sqrt{n}$ number of terms matters in the above summation \eqref{mid-2b}. To be specific,
\begin{lemma}\label{lmm3-9}
\be\lim_{n\to \infty}\bE \Bigl|\sum_{|i|> \ell(n)\sqrt{n}}a_{n,i}\bigl(\eta_0(i)-\mu_0\bigr)\Bigr|^2=0.\label{mid-1}\ee
\end{lemma}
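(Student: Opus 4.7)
The plan is to exploit the i.i.d.\ structure of $\{\eta_0(i)\}_{i\in\bZ}$ to reduce \eqref{mid-1} to a deterministic tail-sum estimate, which is then controlled by a subgaussian bound on the $N$ random walks appearing in \eqref{mid-2}.

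First, orthogonality of the centered i.i.d.\ family $\{\eta_0(i)-\mu_0\}_{i\in\bZ}$ together with $\Vvv(\eta_0(0))=\sigma_0^2<\infty$ gives
$$\bE\Bigl|\sum_{|i|>\ell(n)\sqrt{n}}a_{n,i}\bigl(\eta_0(i)-\mu_0\bigr)\Bigr|^2=\sigma_0^2\sum_{|i|>\ell(n)\sqrt{n}}a_{n,i}^2,$$
so it suffices to show that the deterministic sum on the right tends to $0$ as $n\to\infty$.

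Next I would bound each $|a_{n,i}|$ by using that $\supp(w)\subset[-M,M]$ for some $M$ by \eqref{weight-assump-1}, so every one-step increment of each of the $N$ walks $X^{\lfloor nt_jb\rfloor+\lfloor r_j\sqrt{n}\rfloor}_{\lfloor nt_j\rfloor}$ is bounded by $M$. Hoeffding's inequality then yields
$$\mP\bigl(\bigl|X^{\lfloor nt_jb\rfloor+\lfloor r_j\sqrt{n}\rfloor}_{\lfloor nt_j\rfloor}-\mE X^{\lfloor nt_jb\rfloor+\lfloor r_j\sqrt{n}\rfloor}_{\lfloor nt_j\rfloor}\bigr|\ge s\bigr)\le 2\exp\Bigl(-\frac{s^2}{2nt_jM^2}\Bigr),$$
and since $b=-\wm$ the mean equals $\lfloor r_j\sqrt{n}\rfloor+O(1)$ uniformly in $j$. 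For $n$ large enough that $\ell(n)>2\max_{1\le j\le N}|r_j|+1$, every $i$ with $|i|>\ell(n)\sqrt{n}$ lies at distance at least $|i|/2$ from all $N$ means, so \eqref{mid-2} gives
$$|a_{n,i}|\le C_1 n^{-1/4}\exp\Bigl(-\frac{i^2}{8nt_{\max}M^2}\Bigr),\qquad t_{\max}=\max_{1\le j\le N}t_j,$$
for some constant $C_1$ depending only on $N$ and $\max_{j}|\theta_j|$.

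Finally, the substitution $u=i/\sqrt{n}$ turns the tail sum into a Riemann sum:
$$\sum_{|i|>\ell(n)\sqrt{n}}a_{n,i}^2\le 2C_1^2 n^{-1/2}\sum_{i>\ell(n)\sqrt{n}}e^{-i^2/(4nt_{\max}M^2)}\le C_2\int_{\ell(n)}^\infty e^{-u^2/(4t_{\max}M^2)}\,du,$$
which tends to $0$ as $\ell(n)\to\infty$ by integrability of the Gaussian. The $n^{-1/4}$ normalization in $a_{n,i}$ is precisely what pairs with the $\sqrt{n}$ lattice density to keep the bound of order $1$ so that the Gaussian tail takes over. I expect no serious obstacle: the only mildly delicate item is absorbing the $O(1)$ mean corrections uniformly in $j$, which is automatic once $n$ is large. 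In particular, no refined CLT or the LCLT of the appendix is needed here, since the Hoeffding subgaussian bound already decays fast enough.
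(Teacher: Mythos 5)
Your proposal is correct and takes essentially the same route as the paper: reduce, via independence of the $\eta_0(i)$ in case (a), to the deterministic tail sum $\sum_{|i|>\ell(n)\sqrt{n}}a_{n,i}^2$, then kill it with exponential tail bounds on the $N$ random walks, comparing against a Gaussian integral over $(\ell(n),\infty)$ that vanishes as $\ell(n)\to\infty$. The only cosmetic difference is that you invoke Hoeffding's inequality (legitimate, since \eqref{weight-assump-1} gives bounded increments) in place of the paper's two-regime large-deviation bounds \eqref{ldp1}--\eqref{ldp2}, which yields the same conclusion.
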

\begin{proof}
Notice that 
\begin{align*}
&\bE \Bigl|\sum_{|i|> \ell(n)\sqrt{n}}a_{n,i}\bigl(\eta_0(i)-\mu_0\bigr)\Bigr|^2= \sum_{|i|> \ell(n)\sqrt{n}} \bE \left[a_{n,i}^2\bigl(\eta_0(i)-\mu_0\bigr)^2\right]\\
&=n^{-1/2}\sigma_0^2\Biggl\{\sum_{i<-\ell(n)\sqrt{n}}\biggl[\sum_{j=1}^N\theta_j\mP\bigl(i>X_{\lfloor nt_j\rfloor}^{\lfloor nt_jb\rfloor +\lfloor r_j\sqrt{n}\rfloor}\bigr)\biggr]^2\\
&\quad\quad\quad\quad\quad\quad\quad\quad\quad\quad\quad\quad\quad\quad+\sum_{i>\ell(n)\sqrt{n}}\biggl[\sum_{j=1}^N\theta_j\mP\bigl(i\le X_{\lfloor nt_j\rfloor}^{\lfloor nt_jb\rfloor +\lfloor r_j\sqrt{n}\rfloor}\bigr)\biggr]^2\Biggr\}\\
&\le C n^{-1/2}\sum_{j=1}^N \theta_j^2\left[\sum_{i<-\ell(n)\sqrt{n}}\mP\bigl(i>X_{\lfloor nt_j\rfloor}^{\lfloor nt_jb\rfloor +\lfloor r_j\sqrt{n}\rfloor}\bigr)+\sum_{i>\ell(n)\sqrt{n}}\mP\bigl(i\le X_{\lfloor nt_j\rfloor}^{\lfloor nt_jb\rfloor +\lfloor r_j\sqrt{n}\rfloor}\bigr)\right].
\end{align*}

By standard large deviation theory, for any $\epsilon>0$, there exist constants $K_j>0$, $j=1,\ldots,N$ such that when $i<\lfloor r_j\sqrt{n}\rfloor$,
\be\mP\bigl(i>X_{\lfloor nt_j\rfloor}^{\lfloor nt_jb\rfloor +\lfloor r_j\sqrt{n}\rfloor}\bigr)\le\begin{cases} \exp\{-K_j(i-\lfloor r_j\sqrt{n}\rfloor)^2/nt_j\} & \text{if } \abs{i-\lfloor r_j\sqrt{n}\rfloor}\le nt_j\epsilon,\\  \exp\{-K_j\abs{i-\lfloor r_j\sqrt{n}\rfloor}\} & \text{if } \abs{i-\lfloor r_j\sqrt{n}\rfloor}> nt_j\epsilon,\end{cases}\label{ldp1}\ee
and when $i> \lfloor r_j\sqrt{n}\rfloor$,
\be\mP\bigl(i\le X_{\lfloor nt_j\rfloor}^{\lfloor nt_jb\rfloor +\lfloor r_j\sqrt{n}\rfloor}\bigr)\le\begin{cases} \exp\{-K_j(i-\lfloor r_j\sqrt{n}\rfloor)^2/nt_j\} & \text{if } \abs{i-\lfloor r_j\sqrt{n}\rfloor}\le nt_j\epsilon,\\  \exp\{-K_j\abs{i-\lfloor r_j\sqrt{n}\rfloor}\} & \text{if } \abs{i-\lfloor r_j\sqrt{n}\rfloor}> nt_j\epsilon.\end{cases}\label{ldp2}\ee

Hence, we can further bound the second moment of $\sum_{|i|> \ell(n)\sqrt{n}}a_{n,i}\bigl(\eta_0(i)-\mu_0\bigr)$ by
\begin{align*}
&\bE \Bigl|\sum_{|i|> \ell(n)\sqrt{n}}a_{n,i}\bigl(\eta_0(i)-\mu_0\bigr)\Bigr|^2 \le  Cn^{-1/2}\sum_{j=1}^N \theta_j^2\biggl[\sum_{m\in I_1(j)}e^{-K_jm^2/nt_j}+\sum_{m\in I_2(j)}e^{-K_j\abs{m}}\biggr]\\
&\le C \sum_{j=1}^N\theta_j^2\biggl[\int_{-\infty}^{-\ell(n)-r_j+1}e^{-K_jx^2/t_j}dx+\int_{\ell(n)-r_j-1}^{\infty}e^{-K_jx^2/t_j}dx+\frac{1}{\sqrt{n}}e^{-K_jnt_j\epsilon}\biggr]\to 0,\mbox{ as }n\to\infty,
\end{align*}
where $I_1(j)=[-nt_j\epsilon, -\ell(n)\sqrt{n}-\lfloor r_j\sqrt{n}\rfloor)\cup (\ell(n)\sqrt{n}-\lfloor r_j\sqrt{n}\rfloor,nt_j\epsilon]$, and $I_2(j)=(-\infty,-nt_j\epsilon) \cup (nt_j\epsilon,\infty)$.\par

Thus, the proof for Lemma \ref{lmm3-9} is complete.
\end{proof}

And for the main part $\sum_{|i|\le \ell(n)\sqrt{n}}a_{n,i}\bigl(\eta_0(i)-\mu_0\bigr)$, we will use the Lindeberg-Feller Central Limit Theorem to show the convergence.
\begin{theorem}\label{LF} (Lindeberg-Feller) For each $n>0$, assume that $\{X_{n,j}, j=1,2,\ldots,J(n)\}$ are independent, mean-zero, square-integrable random variables, and let $T_n=\sum_{j=1}^{J(n)}X_{n,j}$. Let us suppose the following two conditions hold:
\begin{enumerate}
\item $\lim_{n\to\infty} \sum_{j=1}^{J(n)} \bE(X_{n,j}^2)=\sigma^2$;
\item for all $\epsilon>0$, $\lim_{n\to\infty}\sum_{j=1}^{J(n)}\bE\left(X_{n,j}^2 \ind\{|X_{n,j}|\ge\epsilon\}\right)=0.$
 \end{enumerate}
Then, $T_n$ will converge weakly to a Gaussian random variable with mean $0$ and variance $\sigma^2$. 
\end{theorem}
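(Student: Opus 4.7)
The plan is to prove the Lindeberg--Feller CLT via the characteristic-function method and L\'evy's continuity theorem. Write $\phi_{n,j}(\theta)=\bE[e^{\i\theta X_{n,j}}]$ and $\sigma_{n,j}^2=\bE[X_{n,j}^2]$; by independence it suffices to show that $\prod_{j=1}^{J(n)}\phi_{n,j}(\theta)\to e^{-\sigma^2\theta^2/2}$ for every fixed $\theta\in\bR$. The whole argument rests on three ingredients: (i) uniform asymptotic negligibility of the variances, (ii) a Taylor-type bound on $\phi_{n,j}(\theta)$, and (iii) the comparison lemma $\bigl|\prod z_j-\prod w_j\bigr|\le\sum|z_j-w_j|$ valid for complex numbers of modulus at most one.

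First I would establish uniform asymptotic negligibility: $\max_j\sigma_{n,j}^2\to 0$. For any $\epsilon>0$,
$$\sigma_{n,j}^2\le \epsilon^2+\bE\bigl[X_{n,j}^2\ind\{|X_{n,j}|\ge\epsilon\}\bigr],$$
so $\max_j\sigma_{n,j}^2\le\epsilon^2+\sum_{j}\bE[X_{n,j}^2\ind\{|X_{n,j}|\ge\epsilon\}]$; condition (2) and $\epsilon\downarrow 0$ finish this step. Next, using the elementary inequality $|e^{\i u}-1-\i u+u^2/2|\le\min(u^2,|u|^3/6)$, I would write
$$\Bigl|\phi_{n,j}(\theta)-1+\tfrac12\theta^2\sigma_{n,j}^2\Bigr|\le\bE\bigl[\min\bigl(\theta^2 X_{n,j}^2,\tfrac16|\theta|^3|X_{n,j}|^3\bigr)\bigr].$$
Splitting the expectation at $\{|X_{n,j}|\ge\epsilon\}$ and summing over $j$ produces
$$\sum_{j}\Bigl|\phi_{n,j}(\theta)-1+\tfrac12\theta^2\sigma_{n,j}^2\Bigr|\le\theta^2\sum_{j}\bE\bigl[X_{n,j}^2\ind\{|X_{n,j}|\ge\epsilon\}\bigr]+\tfrac{|\theta|^3\epsilon}{6}\sum_{j}\sigma_{n,j}^2,$$
and letting $n\to\infty$ then $\epsilon\downarrow 0$ kills the right-hand side by conditions (1) and (2).

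Finally, I would set $z_j=\phi_{n,j}(\theta)$ and $w_j=\exp(-\theta^2\sigma_{n,j}^2/2)$ in the product-comparison lemma. Since $|e^{-x}-(1-x)|\le x^2/2$ for $x\ge 0$, the estimate $\sum_j|w_j-(1-\tfrac12\theta^2\sigma_{n,j}^2)|\le\tfrac{\theta^4}{8}(\max_j\sigma_{n,j}^2)\sum_j\sigma_{n,j}^2$ tends to zero by the negligibility step combined with condition (1). Adding this to the bound of the previous paragraph yields $\prod_j\phi_{n,j}(\theta)-\exp\bigl(-\tfrac12\theta^2\sum_j\sigma_{n,j}^2\bigr)\to 0$, and condition (1) then gives the limit $e^{-\sigma^2\theta^2/2}$. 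The main obstacle is the Taylor-remainder step: one must juggle the two bounds $u^2$ and $|u|^3$ so that the truncation at level $\epsilon$ simultaneously exploits the Lindeberg tail hypothesis (for the large-$|X_{n,j}|$ piece) and the boundedness of $\sum_j\sigma_{n,j}^2$ (for the small-$|X_{n,j}|$ piece); the other two steps are essentially bookkeeping.
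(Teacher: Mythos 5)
Your argument is correct: it is the standard characteristic-function proof of the Lindeberg--Feller theorem, with the three usual ingredients handled properly (uniform negligibility of $\max_j\sigma_{n,j}^2$ via the truncation $\sigma_{n,j}^2\le\epsilon^2+\bE[X_{n,j}^2\ind\{|X_{n,j}|\ge\epsilon\}]$, the third-order Taylor bound split at $\{|X_{n,j}|\ge\epsilon\}$, and the product-comparison lemma applied to $\phi_{n,j}(\theta)$ versus $e^{-\theta^2\sigma_{n,j}^2/2}$, followed by L\'evy's continuity theorem). Note that the paper does not prove this statement at all --- it quotes the classical Lindeberg--Feller theorem and uses it as a tool in the proof of Lemma \ref{lmm3-8} --- so there is no internal proof to compare against; your write-up supplies the standard textbook argument and it is sound.
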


Now let us first check the limit of $\bE\left[\sum_{|i|\le \ell(n)\sqrt{n}}a_{n,i}\bigl(\eta_0(i)-\mu_0\bigr)\right]^2$. Notice that
\begin{align}
&\bE\left[\sum_{|i|\le \ell(n)\sqrt{n}}a_{n,i}\bigl(\eta_0(i)-\mu_0\bigr)\right]^2 \notag\\
=& n^{-1/2}\sigma_0^2\sum_{-\ell(n)\sqrt{n}\le i \le 0}\left[\sum_{j_1,j_2=1}^N\theta_{j_1}\theta_{j_2}\mP\bigl(i>X_{\lfloor nt_{j_1}\rfloor}^{\lfloor nt_{j_1}b\rfloor +\lfloor r_{j_1}\sqrt{n}\rfloor}\bigr)\mP\bigl(i>X_{\lfloor nt_{j_2}\rfloor}^{\lfloor nt_{j_2}b\rfloor +\lfloor r_{j_2}\sqrt{n}\rfloor}\bigr)\right]\label{slim-1}\\
+& n^{-1/2}\sigma_0^2\sum_{0 < i \le \ell(n)\sqrt{n}}\left[\sum_{j_1,j_2=1}^N\theta_{j_1}\theta_{j_2}\mP\bigl(i\le X_{\lfloor nt_{j_1}\rfloor}^{\lfloor nt_{j_1}b\rfloor +\lfloor r_{j_1}\sqrt{n}\rfloor}\bigr)\mP\bigl(i\le X_{\lfloor nt_{j_2}\rfloor}^{\lfloor nt_{j_2}b\rfloor +\lfloor r_{j_2}\sqrt{n}\rfloor}\bigr)\right].\label{slim-2}
\end{align}
Let us consider the first part \eqref{slim-1}. Let $\STail>0$ be any fixed positive number such that $\STail<\ell(n)$. We can further break \eqref{slim-1} into two parts.
\begin{align}
 &n^{-1/2}\sigma_0^2\sum_{-\ell(n)\sqrt{n}\le i \le 0}\left[\sum_{j_1,j_2=1}^N\theta_{j_1}\theta_{j_2}\mP\bigl(i>X_{\lfloor nt_{j_1}\rfloor}^{\lfloor nt_{j_1}b\rfloor +\lfloor r_{j_1}\sqrt{n}\rfloor}\bigr)\mP\bigl(i>X_{\lfloor nt_{j_2}\rfloor}^{\lfloor nt_{j_2}b\rfloor +\lfloor r_{j_2}\sqrt{n}\rfloor}\bigr)\right]\notag\\
&= n^{-1/2}\sigma_0^2\sum_{-\STail\sqrt{n}\le i \le 0}\left[\sum_{j_1,j_2=1}^N\theta_{j_1}\theta_{j_2}\mP\bigl(i>X_{\lfloor nt_{j_1}\rfloor}^{\lfloor nt_{j_1}b\rfloor +\lfloor r_{j_1}\sqrt{n}\rfloor}\bigr)\mP\bigl(i>X_{\lfloor nt_{j_2}\rfloor}^{\lfloor nt_{j_2}b\rfloor +\lfloor r_{j_2}\sqrt{n}\rfloor}\bigr)\right]\label{slim-3}\\
&+ n^{-1/2}\sigma_0^2\sum_{-\ell(n)\sqrt{n}\le i < -\STail\sqrt{n}}\left[\sum_{j_1,j_2=1}^N\theta_{j_1}\theta_{j_2}\mP\bigl(i>X_{\lfloor nt_{j_1}\rfloor}^{\lfloor nt_{j_1}b\rfloor +\lfloor r_{j_1}\sqrt{n}\rfloor}\bigr)\mP\bigl(i>X_{\lfloor nt_{j_2}\rfloor}^{\lfloor nt_{j_2}b\rfloor +\lfloor r_{j_2}\sqrt{n}\rfloor}\bigr)\right].\label{slim-4}
\end{align}

For \eqref{slim-3}, we can rewrite it into integral form,
\begin{equation}
\label{slim-5}
\begin{aligned}
&n^{-1/2}\sigma_0^2\sum_{-\STail\sqrt{n}\le i \le 0}\left[\sum_{j_1,j_2=1}^N\theta_{j_1}\theta_{j_2}\mP\bigl(i>X_{\lfloor nt_{j_1}\rfloor}^{\lfloor nt_{j_1}b\rfloor +\lfloor r_{j_1}\sqrt{n}\rfloor}\bigr)\mP\bigl(i>X_{\lfloor nt_{j_2}\rfloor}^{\lfloor nt_{j_2}b\rfloor +\lfloor r_{j_2}\sqrt{n}\rfloor}\bigr)\right]\\
&= \sigma_0^2\sum_{j_1,j_2=1}^N\theta_{j_1}\theta_{j_2}\int_{-\STail-1}^0\ind_{\{x\ge -(\lfloor \STail\sqrt{n}\rfloor +1)/\sqrt{n}\}}\Bigl[\mP\bigl(\lceil \sqrt{n}x\rceil /\sqrt{n}>X_{\lfloor nt_{j_1}\rfloor}^{\lfloor nt_{j_1}b\rfloor +\lfloor r_{j_1}\sqrt{n}\rfloor}/\sqrt{n}\bigr)\\
&\quad\quad\quad\quad\quad\quad\quad\quad\quad\quad\quad\cdot \mP\bigl(\lceil \sqrt{n}x\rceil /\sqrt{n}>X_{\lfloor nt_{j_2}\rfloor}^{\lfloor nt_{j_2}b\rfloor +\lfloor r_{j_2}\sqrt{n}\rfloor}/\sqrt{n}\bigr)\Bigr]dx.
\end{aligned}
\end{equation}

Notice that from CLT, we have $\bigl(X_{\lfloor nt\rfloor}^{\lfloor ntb\rfloor +\lfloor r\sqrt{n}\rfloor}-r\sqrt{n}\bigr)/\sqrt{n}\Rightarrow B_{\sigma_1^2t}$. Thus, let $n\to\infty$ in \eqref{slim-5} and use Bounded Convergence Theorem, we have
\begin{equation}
\label{slim-6}
\begin{aligned}
&\lim_{n\to\infty}n^{-1/2}\sigma_0^2\sum_{-\STail\sqrt{n}\le i \le 0}\left[\sum_{j_1,j_2=1}^N\theta_{j_1}\theta_{j_2}\mP\bigl(i>X_{\lfloor nt_{j_1}\rfloor}^{\lfloor nt_{j_1}b\rfloor +\lfloor r_{j_1}\sqrt{n}\rfloor}\bigr)\mP\bigl(i>X_{\lfloor nt_{j_2}\rfloor}^{\lfloor nt_{j_2}b\rfloor +\lfloor r_{j_2}\sqrt{n}\rfloor}\bigr)\right]\\
&= \sigma_0^2\sum_{j_1,j_2=1}^N\theta_{j_1}\theta_{j_2}\int_{-\STail}^0\Bigl[\bP\left(B_{\sigma_1^2t_{j_1}}<x-r_{j_1}\right)\bP\left(B_{\sigma_1^2t_{j_2}}<x-r_{j_2}\right)\Bigr]dx.
\end{aligned}
\end{equation}

For the remaining part in \eqref{slim-4}, we will show that it is negligible as $M$ goes to $\infty$. Recall from \eqref{ldp1}, suppose $M>\max_{j}\{\abs{r_j}\}$. Then,
\begin{align}
&n^{-1/2}\sigma_0^2\sum_{-\ell(n)\sqrt{n}\le i < -\STail\sqrt{n}}\left[\sum_{j_1,j_2=1}^N\abs{\theta_{j_1}}\cdot\abs{\theta_{j_2}}\mP\bigl(i>X_{\lfloor nt_{j_1}\rfloor}^{\lfloor nt_{j_1}b\rfloor +\lfloor r_{j_1}\sqrt{n}\rfloor}\bigr)\mP\bigl(i>X_{\lfloor nt_{j_2}\rfloor}^{\lfloor nt_{j_2}b\rfloor +\lfloor r_{j_2}\sqrt{n}\rfloor}\bigr)\right]\notag\\
&\le Cn^{-1/2}\sum_{j=1}^N\sum_{i < -\STail\sqrt{n}}\mP\bigl(i>X_{\lfloor nt_j\rfloor}^{\lfloor nt_jb\rfloor +\lfloor r_j\sqrt{n}\rfloor}\bigr)\notag\\
&\le Cn^{-1/2}\sum_{j=1}^N\left[\sum_{\lfloor r_j \sqrt{n}\rfloor-nt_j\epsilon\le i < -\STail\sqrt{n}} e^{-K_j(i-\lfloor r_j\sqrt{n}\rfloor)^2/nt_j}+\sum_{i<\lfloor r_j \sqrt{n}\rfloor-nt_j\epsilon}e^{-K_j\abs{i-\lfloor r_j\sqrt{n}\rfloor}}\right]\notag\\
&\le C \sum_{j=1}^N \left[\int_{-\infty}^{-M-r_j+1}e^{-K_jx^2/t_j}dx+\frac{1}{\sqrt{n}}e^{-nK_jt_j\epsilon}\right].\label{slim-7}
\end{align}

Let $n\to\infty$ first and then $m\to\infty$ in \eqref{slim-3} and \eqref{slim-4}, and use \eqref{slim-6}, \eqref{slim-7}. We can see that 
\begin{align*}
&\lim_{n\to\infty}n^{-1/2}\sigma_0^2\sum_{-\ell(n)\sqrt{n}\le i \le 0}\left[\sum_{j_1,j_2=1}^N\theta_{j_1}\theta_{j_2}\mP\bigl(i>X_{\lfloor nt_{j_1}\rfloor}^{\lfloor nt_{j_1}b\rfloor +\lfloor r_{j_1}\sqrt{n}\rfloor}\bigr)\mP\bigl(i>X_{\lfloor nt_{j_2}\rfloor}^{\lfloor nt_{j_2}b\rfloor +\lfloor r_{j_2}\sqrt{n}\rfloor}\bigr)\right]\\
&=\sigma_0^2\sum_{j_1,j_2=1}^N\theta_{j_1}\theta_{j_2}\int_{-\infty}^0\Bigl[\bP\left(B_{\sigma_1^2t_{j_1}}<x-r_{j_1}\right)\bP\left(B_{\sigma_1^2t_{j_2}}<x-r_{j_2}\right)\Bigr]dx.
\end{align*}

By the same token, one can show that
\begin{align}
&\lim_{n\to\infty}n^{-1/2}\sigma_0^2\sum_{0 < i \le \ell(n)\sqrt{n}}\left[\sum_{j_1,j_2=1}^N\theta_{j_1}\theta_{j_2}\mP\bigl(i\le X_{\lfloor nt_{j_1}\rfloor}^{\lfloor nt_{j_1}b\rfloor +\lfloor r_{j_1}\sqrt{n}\rfloor}\bigr)\mP\bigl(i\le X_{\lfloor nt_{j_2}\rfloor}^{\lfloor nt_{j_2}b\rfloor +\lfloor r_{j_2}\sqrt{n}\rfloor}\bigr)\right]\notag\\
&=\sigma_0^2\sum_{j_1,j_2=1}^N\theta_{j_1}\theta_{j_2}\int_0^{\infty}\Bigl[\bP\left(B_{\sigma_1^2t_{j_1}}\ge x-r_{j_1}\right)\bP\left(B_{\sigma_1^2t_{j_2}}\ge x-r_{j_2}\right)\Bigr]dx.\label{slim-8}
\end{align}
Thus, we have shown that
$$\lim_{n\to\infty} \sum_{|i|\le \ell(n)\sqrt{n}}\bE\left[a_{n,i}\bigl(\eta_0(i)-\mu_0\bigr)\right]^2=\sum_{i,j=1}^N\theta_i \theta_j \sigma_0^2\Gamma_2\bigl((t_i,r_i),(t_j,r_j)\bigr).$$
For the second condition in Theorem \ref{LF}, we need to pick $\ell(n)$ in a smart way. Note that from \eqref{mid-2}, 
$$|a_{n,i}|\le n^{-1/4}\sum_{j=1}^N|\theta_j|\overset{def}{=}c_0n^{-1/4}.$$ 
Then,
\begin{align*}
&\sum_{|i|\le \ell(n)\sqrt{n}}\bE\left[a_{n,i}^2(\eta_0(i)-\mu_0\bigr)^2 \ind\{|a_{n,i}\bigl(\eta_0(i)-\mu_0\bigr)|\ge\epsilon\}\right]\\
&\le C\ell(n)\bE\left[(\eta_0(0)-\mu_0\bigr)^2 \ind\{|\bigl(\eta_0(0)-\mu_0\bigr)|\ge n^{1/4}\epsilon/c_0\}\right]. 
\end{align*}
Since $\eta_0(0)$ has finite 2nd moment, the expectation above will vanish as $n\to\infty$. We can pick $\ell(n)$ so that it grows slowly enough. For example 

$$\ell(n)=\left\{\bE\left[(\eta_0(0)-\mu_0\bigr)^2 \ind\{|\bigl(\eta_0(0)-\mu_0\bigr)|\ge n^{1/8}\}\right]\right\}^{-1/2}.$$ 

Therefore, in sum, we have shown that 
$$\sum_{|i|\le \ell(n)\sqrt{n}}a_{n,i}\bigl(\eta_0(i)-\mu_0\bigr)\Rightarrow \sum_{j=1}^N\theta_j S(t_j,r_j),\mbox{ as }n\to\infty.$$
 
Combining this with Lemma \ref{lmm3-9}, the first case has been proved.\par

Case (b):   Under the condition that $\{\eta_0(x)\}_{x\in\bZ}$ is $\pi_0$-distributed, according to \eqref{initial-increments-rep}, $\eta_0(\cdot)$ has the following representation.
\be \eta_0(i)=\sum_{j\in\bZ}\sum_{k=0}^{\infty}\xi_{-k}(j)\left[p^k(i,j)-p^k(i-1,j)\right],\quad i\in\bZ.\ee
Thus, we can rewrite $\sum_{j=1}^N\theta_j \overline{S}_n(t_j,r_j)$ into
$$\sum_{j=1}^N\theta_j \overline{S}_n(t_j,r_j)=\sum_{i\in\bZ}\sum_{j\in\bZ}a_{n,i}\sum_{k=0}^{\infty}\xi_{-k}(j)\left(p^k(i,j)-p^k(i-1,j)\right),$$
where $a_{n,i}$ is defined in \eqref{mid-2}.\par

Now let $\ell(n)$ be any increasing function of $n$ such that $\lim_{n\to\infty}n/\sqrt{\ell(n)}=0$. Similar to Case (a), we would like to show that
\begin{lemma}\label{lmm3-9case2}
\be\lim_{n\to \infty}\bE \Bigl|\sum_{i\in\bZ}\sum_{j\in\bZ}a_{n,i}\sum_{k=\ell(n)}^{\infty}\xi_{-k}(j)\left(p^k(i,j)-p^k(i-1,j)\right)\Bigr|^2=0.\label{case2-eq1}\ee
\end{lemma}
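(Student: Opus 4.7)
The plan is to use the independence of the noise variables to pass to an $L^2$-computation, then apply Parseval's identity on the torus to convert the resulting sum into a Fourier integral that admits explicit bounds. Since $\{\xi_{-k}(j)\}_{k\ge 1,j\in\bZ}$ are i.i.d.\ mean-zero with variance $\sigma_\xi^2$, the quantity on the left of \eqref{case2-eq1} equals
\begin{equation*}
\sigma_\xi^2\sum_{k=\ell(n)}^{\infty}\sum_{j\in\bZ}D_{n,k,j}^{2},\qquad D_{n,k,j}:=\sum_{i\in\bZ}a_{n,i}\bigl[p^{k}(i,j)-p^{k}(i-1,j)\bigr].
\end{equation*}
A shift of index gives $D_{n,k,j}=\sum_{i}c_{n,i}\,p^{k}(i,j)=(c_{n}\ast w^{\ast k})(j)$ with $c_{n,i}:=a_{n,i}-a_{n,i+1}$. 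Setting $\widehat{f}(\theta):=\sum_i f(i)e^{\i\theta i}$ and noting $|\widehat{w}(\theta)|^{2}=\widehat{q}(\theta)$ from \eqref{21-d2}, Parseval followed by summing the geometric series in $k$ yields
\begin{equation*}
\bE\bigl|\text{LHS of \eqref{case2-eq1}}\bigr|^{2}=\frac{\sigma_\xi^{2}}{2\pi}\int_{-\pi}^{\pi}\frac{|\widehat{c}_{n}(\theta)|^{2}\,\widehat{q}(\theta)^{\ell(n)}}{1-\widehat{q}(\theta)}\,d\theta.
\end{equation*}

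The key step is to estimate $\widehat{c}_n$. A direct computation from \eqref{mid-2} shows $c_{n,i}=n^{-1/4}\sum_{j=1}^{N}\theta_{j}\bigl[\mP(X^{(j)}_{\ast}=i)-\ind_{i=0}\bigr]$ with $X^{(j)}_{\ast}:=X_{\lfloor nt_j\rfloor}^{y_j(n)}$. From this formula three facts follow: (a) $\widehat{c}_n(0)=\sum_i c_{n,i}=0$, because probabilities sum to one; (b) $\sum_{i}|c_{n,i}|=O(n^{-1/4})$; and (c) $\sum_{i}|i|\,|c_{n,i}|\le Cn^{-1/4}\max_{j}\bE|X^{(j)}_{\ast}|=O(n^{1/4})$, since $\bE|X^{(j)}_{\ast}|=O(\sqrt n)$ (the drift $y_j(n)+\mu_1\lfloor nt_j\rfloor=\lfloor r_j\sqrt n\rfloor+O(1)$ and standard deviation $\sigma_1\sqrt{\lfloor nt_j\rfloor}$ are both $O(\sqrt n)$). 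Combining (a) and (c) via $|\widehat{c}_n(\theta)|=|\widehat{c}_n(\theta)-\widehat{c}_n(0)|\le|\theta|\sum_i|i|\,|c_{n,i}|$, and using (b) as an alternative uniform bound, gives
\begin{equation*}
|\widehat{c}_{n}(\theta)|^{2}\le C\min\bigl(n^{-1/2},\,\theta^{2}n^{1/2}\bigr).
\end{equation*}

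Finally, Lemma \ref{lmmB-1} applied to $\widehat{q}=|\widehat{p}|^{2}$ gives $1-\widehat{q}(\theta)\ge c\theta^{2}$ on $(-\pi,\pi]$, and hence $\widehat{q}(\theta)^{\ell(n)}\le e^{-c\theta^{2}\ell(n)}$. Plugging these bounds into the Fourier integral and splitting at $|\theta|=n^{-1/2}$: in the inner region $|\widehat c_n|^2/(1-\widehat q)\le Cn^{1/2}$ so that part contributes at most $Cn^{1/2}\int e^{-c\theta^2\ell(n)}\,d\theta=O(n^{1/2}/\sqrt{\ell(n)})$, while in the outer region $|\widehat c_n|^2/(1-\widehat q)\le Cn^{-1/2}\theta^{-2}$ and that part is super-polynomially small once $\ell(n)\gg n$. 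Thus the whole expression is $O(n^{1/2}/\sqrt{\ell(n)})$, which tends to $0$ under the hypothesis $n/\sqrt{\ell(n)}\to 0$. The main obstacle is fact (a): without the telescoping cancellation $\widehat{c}_n(0)=0$, the singularity $(1-\widehat{q}(\theta))^{-1}\sim\theta^{-2}$ at the origin would render the integral divergent, so the quadratic vanishing of $|\widehat{c}_n|^{2}$ at $\theta=0$ is essential to the entire argument.
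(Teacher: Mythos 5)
Your argument is correct, but it takes a genuinely different route from the paper's. Both proofs begin identically: the i.i.d.\ noise reduces the left-hand side of \eqref{case2-eq1} to $\sigma_\xi^2\sum_{k\ge\ell(n)}\sum_{j}D_{n,k,j}^2$. From there the paper stays essentially in physical space: it writes this as $\sum_{i_1,i_2}a_{n,i_1}a_{n,i_2}\sum_{k\ge\ell(n)}\bigl[2q^k(i_2-i_1,0)-q^k(i_2-i_1+1,0)-q^k(i_2-i_1-1,0)\bigr]$, bounds the inner sum (via the Fourier representation in \eqref{case2-eq2}) by $Cq^{\ell(n)}(0,0)\le C\ell(n)^{-1/2}$ using \eqref{b2}, and then invokes the separately proved uniform bound \eqref{a-sum-bound} on $\sum_i a_{n,i}a_{n,i+j}$ together with the fact that only $O(n)$ shifts $j$ contribute, arriving at $O(n/\sqrt{\ell(n)})$. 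You instead telescope in $i$ to $c_{n,i}=a_{n,i}-a_{n,i+1}$, whose explicit form $n^{-1/4}\sum_{j}\theta_j\bigl[\mP(X^{(j)}_{*}=i)-\ind\{i=0\}\bigr]$ delivers the two key facts $\widehat c_n(0)=0$ and $\sum_i|i|\,|c_{n,i}|=O(n^{1/4})$, and you run the entire estimate on the Fourier side, where the cancellation at $\theta=0$ removes the $\theta^{-2}$ singularity of $(1-\widehat q)^{-1}$ and the split at $|\theta|=n^{-1/2}$ gives $O(\sqrt{n/\ell(n)})$. Your route bypasses \eqref{a-sum-bound} and the support-counting step entirely, and its bound is sharper ($\sqrt{n/\ell(n)}$ versus $n/\sqrt{\ell(n)}$), so it would already suffice to assume $n/\ell(n)\to 0$ rather than $n/\sqrt{\ell(n)}\to 0$; the paper's route has the mild advantage that the quantities it leans on (\eqref{a-sum-limit}, \eqref{a-sum-bound}, the potential-kernel estimates) are needed again immediately afterwards in the variance computation. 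One small attribution point: the lower bound $1-\widehat q(\theta)\ge c\theta^2$ on all of $(-\pi,\pi]$ is not literally Lemma \ref{lmmB-1}, which only gives $|\phi_X(\theta)|<1$ for $\theta\neq 0$; you also need the second-moment Taylor expansion of $\widehat q$ at $\theta=0$ (as used in the proof of Lemma \ref{lmmA-3}), though this is standard under \eqref{weight-assump-1} and \eqref{weight-assump-2} and does not affect the validity of your argument.
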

\begin{proof}
\begin{align}
&\bE \Bigl|\sum_{i\in\bZ}\sum_{j\in\bZ}a_{n,i}\sum_{k=\ell(n)}^{\infty}\xi_{-k}(j)\left(p^k(i,j)-p^k(i-1,j)\right)\Bigr|^2\notag\\
&=\sigma_{\xi}^2\sum_{i_1\in\bZ}\sum_{i_2\in\bZ}a_{n,i_1}a_{n,i_2}\sum_{k=\ell(n)}^\infty  \sum_{j\in\bZ}\left(p^k(i_1,j)-p^k(i_1-1,j)\right)\left(p^k(i_2,j)-p^k(i_2-1,j)\right)\notag\\
&=\sigma_{\xi}^2\sum_{i_1\in\bZ}\sum_{i_2\in\bZ}a_{n,i_1}a_{n,i_2}\sum_{k=\ell(n)}^\infty  \left[2q^{k}(i_2-i_1,0)-q^{k}(i_2-i_1+1,0)-q^{k}(i_2-i_1-1,0)\right]\notag\\
&\le \sigma_{\xi}^2\sum_{j\in\bZ}\left|\sum_{k=\ell(n)}^\infty  \left[2q^{k}(j,0)-q^{k}(j+1,0)-q^{k}(j-1,0)\right]\right|\cdot \left|\sum_{i\in\bZ}a_{n,i}a_{n,i+j}\right|\notag\\
&=  \sigma_{\xi}^2\sum_{j\in\bZ}\left|\sum_{k=\ell(n)}^\infty \frac{1}{2\pi}\int_{-\pi}^\pi \phi_Y^k(\vartheta)\left(2e^{-\i j\vartheta}-e^{-\i (j+1)\vartheta}-e^{-\i (j-1)\vartheta}\right)d\vartheta\right|\cdot \left|\sum_{i\in\bZ}a_{n,i}a_{n,i+j}\right|\notag\\
&=  \frac{\sigma_{\xi}^2}{\pi}\sum_{j\in\bZ}\left|\int_{-\pi}^\pi \frac{\phi_Y^{\ell(n)}(\vartheta)\left(1-\cos\vartheta\right)}{1-\phi_Y(\vartheta)}e^{-\i j\vartheta}d\vartheta\right|\cdot \left|\sum_{i\in\bZ}a_{n,i}a_{n,i+j}\right|.\label{case2-eq2}
\end{align}
where $\phi_Y(\vartheta)=\sum_{j\in\bZ}q(0,j)e^{\i j\vartheta}$. Notice that the integrand in \eqref{case2-eq2} is a nonnegative and integrable function due to the fact that $\frac{\phi_Y^{\ell(n)}(\vartheta)\left(1-\cos\vartheta\right)}{1-\phi_Y(\vartheta)}$ is an analytic function (see the proof of Lemma \ref{lmmA-3}) and $\phi_Y(\vartheta)=\abs{\phi_X(\vartheta)}^2$ where $\phi_X(\vartheta)=\sum_{j\in\bZ}w(j)e^{\i j\vartheta}$. Thus, the integral in \eqref{case2-eq2} has the following bound.
\be\left|\int_{-\pi}^\pi \frac{\phi_Y^{\ell(n)}(\vartheta)\left(1-\cos\vartheta\right)}{1-\phi_Y(\vartheta)}e^{-\i j\vartheta}d\vartheta\right|\le C\int_{-\pi}^{\pi}\phi_Y^{\ell(n)}(\vartheta)d\vartheta=Cq^{\ell(n)}(0,0)\le \frac{C}{\sqrt{\ell(n)}}.\label{case2-eq3}\ee
where the last inequality is from \eqref{b2}.\par

For the last summation in \eqref{case2-eq2}, due to assumption \eqref{weight-assump-1}, we have $\#\{j\in\bZ: \sum_{i\in\bZ}a_{n,i}a_{n,i+j}\neq 0\}=O(n)$. Furthermore, we can show that
\begin{lemma}
For all $k\in\bZ$,
\be \lim_{n\to\infty}\sum_{i\in\bZ}a_{n,i}a_{n,i+k} = \sum_{j_1=1}^N\sum_{j_2=1}^N\theta_{j_1}\theta_{j_2}\Gamma_2\left((t_{j_1},r_{j_1}),(t_{j_2},r_{j_2})\right).\label{a-sum-limit}\ee
In addition, we can find a constant $A>0$, such that
\be \left|\sum_{i\in\bZ}a_{n,i}a_{n,i+k}\right|\le A,\quad \forall k\in\bZ, n\in\bN.\label{a-sum-bound}\ee
\end{lemma}

\begin{proof}
\begin{align}
&\sum_{i\in\bZ}a_{n,i}a_{n,i+k}\notag\\
&=n^{-1/2}\sum_{j_1=1}^N\sum_{j_2=1}^N\theta_{j_1}\theta_{j_2}\sum_{i>0, i+k>0}\mP\bigl(i\le X_{\lfloor nt_{j_1}\rfloor}^{\lfloor nt_{j_1}b\rfloor +\lfloor r_{j_1}\sqrt{n}\rfloor}\bigr)\mP\bigl(i+k\le X_{\lfloor nt_{j_2}\rfloor}^{\lfloor nt_{j_2}b\rfloor +\lfloor r_{j_2}\sqrt{n}\rfloor}\bigr)\notag\\
&-n^{-1/2}\sum_{j_1=1}^N\sum_{j_2=1}^N\theta_{j_1}\theta_{j_2}\sum_{i>0, i+k\le 0}\mP\bigl(i\le X_{\lfloor nt_{j_1}\rfloor}^{\lfloor nt_{j_1}b\rfloor +\lfloor r_{j_1}\sqrt{n}\rfloor}\bigr)\mP\bigl(i+k>X_{\lfloor nt_{j_2}\rfloor}^{\lfloor nt_{j_2}b\rfloor +\lfloor r_{j_2}\sqrt{n}\rfloor}\bigr)\notag\\
&-n^{-1/2}\sum_{j_1=1}^N\sum_{j_2=1}^N\theta_{j_1}\theta_{j_2}\sum_{i\le 0, i+k> 0}\mP\bigl(i>X_{\lfloor nt_{j_1}\rfloor}^{\lfloor nt_{j_1}b\rfloor +\lfloor r_{j_1}\sqrt{n}\rfloor}\bigr)\mP\bigl(i+k\le X_{\lfloor nt_{j_2}\rfloor}^{\lfloor nt_{j_2}b\rfloor +\lfloor r_{j_2}\sqrt{n}\rfloor}\bigr)\notag\\
&+n^{-1/2}\sum_{j_1=1}^N\sum_{j_2=1}^N\theta_{j_1}\theta_{j_2}\sum_{i\le 0, i+k\le 0}\mP\bigl(i>X_{\lfloor nt_{j_1}\rfloor}^{\lfloor nt_{j_1}b\rfloor +\lfloor r_{j_1}\sqrt{n}\rfloor}\bigr)\mP\bigl(i+k>X_{\lfloor nt_{j_2}\rfloor}^{\lfloor nt_{j_2}b\rfloor +\lfloor r_{j_2}\sqrt{n}\rfloor}\bigr).\notag
\end{align}
For the first term and the fourth term above, one can use the same technique we have used in proving \eqref{slim-8} to show that 
\begin{align*}
&\lim_{n\to\infty}n^{-1/2}\sum_{i>0, i+k>0}\mP\bigl(i\le X_{\lfloor nt_{j_1}\rfloor}^{\lfloor nt_{j_1}b\rfloor +\lfloor r_{j_1}\sqrt{n}\rfloor}\bigr)\mP\bigl(i+k\le X_{\lfloor nt_{j_2}\rfloor}^{\lfloor nt_{j_2}b\rfloor +\lfloor r_{j_2}\sqrt{n}\rfloor}\bigr)\\
&=\int_0^{+\infty}\bP(B_{\sigma_1^2t_{j_1}}\le r_{j_1}-x)\bP(B_{\sigma_1^2t_{j_2}}\le r_{j_2}-x)dx,
\end{align*}
and
\begin{align*}
&\lim_{n\to\infty}n^{-1/2}\sum_{i\le 0, i+k\le 0}\mP\bigl(i>X_{\lfloor nt_{j_1}\rfloor}^{\lfloor nt_{j_1}b\rfloor +\lfloor r_{j_1}\sqrt{n}\rfloor}\bigr)\mP\bigl(i+k>X_{\lfloor nt_{j_2}\rfloor}^{\lfloor nt_{j_2}b\rfloor +\lfloor r_{j_2}\sqrt{n}\rfloor}\bigr)\\
&=\int_{-\infty}^0\bP(B_{\sigma_1^2t_{j_1}}>r_{j_1}-x)\bP(B_{\sigma_1^2t_{j_2}}>r_{j_1}-x)dx.
\end{align*}

For the second term, 
\begin{align*}
&n^{-1/2}\sum_{i>0, i+k\le 0}\mP\bigl(i\le X_{\lfloor nt_{j_1}\rfloor}^{\lfloor nt_{j_1}b\rfloor +\lfloor r_{j_1}\sqrt{n}\rfloor}\bigr)\mP\bigl(i+k>X_{\lfloor nt_{j_2}\rfloor}^{\lfloor nt_{j_2}b\rfloor +\lfloor r_{j_2}\sqrt{n}\rfloor}\bigr)\\
&=n^{-1/2}\sum_{i=1}^{-k}\mP\bigl(i\le X_{\lfloor nt_{j_1}\rfloor}^{\lfloor nt_{j_1}b\rfloor +\lfloor r_{j_1}\sqrt{n}\rfloor}\bigr)\mP\bigl(i+k>X_{\lfloor nt_{j_2}\rfloor}^{\lfloor nt_{j_2}b\rfloor +\lfloor r_{j_2}\sqrt{n}\rfloor}\bigr)\\
&\le |k|n^{-1/2}\to 0, \quad\mbox{as }n\to\infty.
\end{align*}

By the same token, one can show that 
$$\lim_{n\to\infty}n^{-1/2}\sum_{i\le 0, i+k> 0}\mP\bigl(i>X_{\lfloor nt_{j_1}\rfloor}^{\lfloor nt_{j_1}b\rfloor +\lfloor r_{j_1}\sqrt{n}\rfloor}\bigr)\mP\bigl(i+k\le X_{\lfloor nt_{j_2}\rfloor}^{\lfloor nt_{j_2}b\rfloor +\lfloor r_{j_2}\sqrt{n}\rfloor}\bigr)=0.$$

In sum, we have shown that
\begin{align*}
&\lim_{n\to\infty}\sum_{i\in\bZ}a_{n,i}a_{n,i+k}\\
&=\sum_{j_1=1}^N\sum_{j_2=1}^N\theta_{j_1}\theta_{j_2}\biggl[\int_0^{+\infty}\bP(B_{\sigma_1^2t_{j_1}}\le r_{j_1}-x)\bP(B_{\sigma_1^2t_{j_2}}\le r_{j_2}-x)dx\\
&+\int_{-\infty}^0\bP(B_{\sigma_1^2t_{j_1}}>r_{j_1}-x)\bP(B_{\sigma_1^2t_{j_2}}>r_{j_1}-x)dx\biggr]=\sum_{j_1=1}^N\sum_{j_2=1}^N\theta_{j_1}\theta_{j_2}\Gamma_2\left((t_{j_1},r_{j_1}),(t_{j_2},r_{j_2})\right).
\end{align*}

For the inequality \eqref{a-sum-bound}, it is simply concluded from the limit \eqref{a-sum-limit} and the fact that 
$$\sum_{i\in\bZ}\abs{a_{n,i}a_{n,i+k}}\le \sum_{i\in\bZ}a_{n,i}^2,  \quad k\in\bZ.\qedhere$$
\end{proof}

Combine \eqref{case2-eq3} and \eqref{a-sum-bound} together, we can find a constant $C>0$ to further bound \eqref{case2-eq2}.
$$\bE \Bigl|\sum_{i\in\bZ}\sum_{j\in\bZ}a_{n,i}\sum_{k=\ell(n)}^{\infty}\xi_{-k}(j)\left(p^k(i,j)-p^k(i-1,j)\right)\Bigr|^2\le C\frac{n}{\sqrt{\ell(n)}}\to 0,\mbox{ as }n\to\infty.$$

Thus, the proof of Lemma \ref{lmm3-9case2} is complete.
\end{proof}

For the main part $\sum_{i\in\bZ}\sum_{j\in\bZ}a_{n,i}\sum_{k=0}^{\ell(n)-1}\xi_{-k}(j)\left(p^k(i,j)-p^k(i-1,j)\right)$, again we will use the Lindeberg Feller CLT to show the convergence. First, let us check the variance. Notice that
\begin{align*}
&\bE\left[\sum_{i\in\bZ}\sum_{j\in\bZ}a_{n,i}\sum_{k=0}^{\ell(n)-1}\xi_{-k}(j)\left(p^k(i,j)-p^k(i-1,j)\right)\right]^2\\
&=\sigma_{\xi}^2\sum_{j\in\bZ}\sum_{k=0}^{\ell(n)-1}  \left[2q^{k}(j,0)-q^{k}(j+1,0)-q^{k}(j-1,0)\right] \sum_{i\in\bZ}a_{n,i}a_{n,i+j}\\
&=\sigma_{\xi}^2\sum_{j\in\bZ}\left[a(j-1)+a(j+1)-2a(j)\right]\sum_{i\in\bZ}a_{n,i}a_{n,i+j}\\
&\quad\quad-\sigma_{\xi}^2\sum_{j\in\bZ}\sum_{k=\ell(n)}^{\infty}  \left[2q^{k}(j,0)-q^{k}(j+1,0)-q^{k}(j-1,0)\right] \sum_{i\in\bZ}a_{n,i}a_{n,i+j}.
\end{align*}
where $a(x)$ is defined in \eqref{t37-2}. By the absolute convergence of $\sum_{j\in\bZ}\left[a(j-1)+a(j+1)-2a(j)\right]$ (Lemma \ref{lmmA-4}) and the uniform boundedness of $\sum_{i\in\bZ}a_{n,i}a_{n,i+j}$ from \eqref{a-sum-bound}, we can use Absolute Convergence Theorem to show that
\begin{align*}
&\lim_{n\to\infty}\sigma_{\xi}^2\sum_{j\in\bZ}\left[a(j-1)+a(j+1)-2a(j)\right]\sum_{i\in\bZ}a_{n,i}a_{n,i+j}\\
&=\sigma_{\xi}^2\sum_{j\in\bZ}\left[a(j-1)+a(j+1)-2a(j)\right]\lim_{n\to\infty}\sum_{i\in\bZ}a_{n,i}a_{n,i+j}\\
&= \frac{\sigma_{\xi}^2}{\sigma_1^2}\sum_{j_1=1}^N\sum_{j_2=1}^N\theta_{j_1}\theta_{j_2}\Gamma_2\left((t_{j_1},r_{j_1}),(t_{j_2},r_{j_2})\right).
\end{align*}
where the last equality is from \eqref{a-sum-limit} and \eqref{l39a}.\par

Combine this with \eqref{case2-eq1}, we have shown that
\begin{align}
&\lim_{n\to\infty}\bE\left[\sum_{i\in\bZ}\sum_{j\in\bZ}a_{n,i}\sum_{k=0}^{\ell(n)-1}\xi_{-k}(j)\left(p^k(i,j)-p^k(i-1,j)\right)\right]^2\notag\\
&=\frac{\sigma_{\xi}^2}{\sigma_1^2}\sum_{j_1=1}^N\sum_{j_2=1}^N\theta_{j_1}\theta_{j_2}\Gamma_2\left((t_{j_1},r_{j_1}),(t_{j_2},r_{j_2})\right).\label{case2-eq4}
\end{align}

Lastly, we would like to check the Lindeberg condition. Let us denote 
$$U_{n,k}(j)=\xi_{-k}(j)\sum_{i\in\bZ}a_{n,i}\left[p^k(i,j)-p^k(i-1,j)\right].$$ 
For all $\epsilon>0$, 
\begin{align*}
&\sum_{j\in\bZ}\sum_{k=0}^{\ell(n)-1}\bE\left[U^2_{n,k}(j)\ind\left\{\left|U_{n,k}(j)\right|\ge\epsilon\right\}\right]\le \sum_{j\in\bZ}\sum_{k=0}^{\ell(n)-1}\left\{\bE\left[U^4_{n,k}(j)\right]\right\}^{1/2}\left\{\bP(\left|U_{n,k}(j)\right|\ge\epsilon)\right\}^{1/2}\\
&\le \frac{1}{\epsilon^2}\sum_{j\in\bZ}\sum_{k=0}^{\ell(n)-1}\bE\left[U^4_{n,k}(j)\right]=\frac{\bE\left[\xi^4_{0}(0)\right]}{\epsilon^2}\sum_{j\in\bZ}\sum_{k=0}^{\ell(n)-1}\left[\sum_{i\in\bZ}a_{n,i}\left(p^k(i,j)-p^k(i-1,j)\right)\right]^4.
\end{align*}
where the first inequality is from Cauchy–-Schwarz inequality, and the second is from Chebyshev's inequality.\par

Notice that from the definition of $a_{n,i}$ in \eqref{mid-2}, we can find a constant $C>0$ such that $\abs{a_{n,i}}\le Cn^{-1/4}$. Thus,
$$\sum_{i\in\bZ}\abs{a_{n,i}}\cdot\left|p^k(i,j)-p^k(i-1,j)\right|\le Cn^{-1/4}\sum_{i\in\bZ}\left|p^k(i,j)-p^k(i-1,j)\right|\le 2Cn^{-1/4}.$$ 
Therefore,
\begin{align}
&\sum_{j\in\bZ}\sum_{k=0}^{\ell(n)-1}\bE\left[U^2_{n,k}(j)\ind\left\{\left|U_{n,k}(j)\right|\ge\epsilon\right\}\right]\notag\\
&\le  \frac{4C^2\bE\left[\xi^4_{0}(0)\right]}{\epsilon^2n^{1/2}}\sum_{j\in\bZ}\sum_{k=0}^{\ell(n)-1}\left[\sum_{i\in\bZ}a_{n,i}\left(p^k(i,j)-p^k(i-1,j)\right)\right]^2\to 0,\quad\mbox{as }n\to\infty.\label{case2-eq5}
\end{align}
where the convergence of $\sum_{j\in\bZ}\sum_{k=0}^{\ell(n)-1}\left[\sum_{i\in\bZ}a_{n,i}\left(p^k(i,j)-p^k(i-1,j)\right)\right]^2$ is from \eqref{case2-eq4}.\par

Combine \eqref{case2-eq4} and \eqref{case2-eq5} together, we have shown that
\begin{align*}
&\sum_{i\in\bZ}\sum_{j\in\bZ}a_{n,i}\sum_{k=0}^{\ell(n)-1}\xi_{-k}(j)\left(p^k(i,j)-p^k(i-1,j)\right)\\
&\quad\quad\quad\quad\quad\quad\Rightarrow \cN\left(0, \frac{\sigma_{\xi}^2}{\sigma_1^2}\sum_{j_1=1}^N\sum_{j_2=1}^N\theta_{j_1}\theta_{j_2}\Gamma_2\left((t_{j_1},r_{j_1}),(t_{j_2},r_{j_2})\right)\right),\quad\mbox{as }n\to\infty.
\end{align*}

Combine this with Lemma \ref{lmm3-9case2}, the second case has been proved. \par

Case (c):   For the last case, we assume that the initial increments $\{\eta_0(x)\}_{x\in\bZ}$ are a strongly mixing stationary sequence such that $\exists$ $\delta>0$ s.t.\ $\bE|\eta_0(0)|^{2+\delta}<\infty$, and the strong mixing coefficients of $\eta_0(\cdot)$ satisfy $\sum_{j=0}^\infty (j+1)^{2/\delta}\alpha(j)<\infty$.\par

We first investigate the variance $\bar{\sigma}_n^2 = \Vvv\left[\sum_{i\in \bZ}a_{n,i}\bigl(\eta_0(i)-\mu_0\bigr)\right]$. Notice that
\be\bar{\sigma}_n^2=\sum_{j,k\in\bZ}a_{n,j}a_{n,k}\Cvv \left[\eta_0(j),\eta_0(k)\right]=\sum_{\ell\in \bZ}\Cvv \left[\eta_0(0),\eta_0(\ell)\right]\sum_{k\in\bZ}a_{n,k}a_{n,\ell+k}.\label{slim-9}\ee

To show the limit of $\bar{\sigma}_n^2$,  we need to show that the series of covariances $\sum_{k\in\bZ}\Cvv\left(\eta_0(0),\eta_0(k)\right)$ is absolutely convergent. In order to achieve that, we use the following lemma which is part of Theorem 1.1 in \cite{27}, 
\begin{lemma}\label{covariance-bound}
Suppose $X$ and $Y$ are two integrable real-valued r.v.'s. Let us assume that $XY$ is also integrable and denote $\alpha=\alpha\left(\sigma(X),\sigma(Y)\right)$ in \eqref{strong-mixing-def}. Then
\be |\Cvv(X,Y)|\le 4\int_0^\alpha Q_X(u)Q_Y(u)du,\ee
where $Q_X(u)$, $Q_Y(u)$ are the quantile functions of $|X|$ and $|Y|$ respectively $(i.e.\ Q_X(u)=\inf\{x\in\bR^+: \bP(|X|>x)\le u\}$, $0\le u\le 1)$.
\end{lemma}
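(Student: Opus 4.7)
My plan is to follow the classical layer-cake argument (due to Serfling and Rio): express the covariance as a double integral of covariances of indicator events in $\sigma(X)$ and $\sigma(Y)$, bound each integrand in two complementary ways, and change variables to the quantile scale. I would begin by decomposing $X = X^+ - X^-$ and $Y = Y^+ - Y^-$ and using bilinearity to write $\Cvv(X,Y)$ as a signed sum of four covariances of nonnegative random variables. These four terms account for the factor $4$ in the final bound: it suffices to show that each, say $\Cvv(X^+, Y^+)$, is at most $\int_0^\alpha Q_X(u) Q_Y(u)\,du$ in absolute value, since $X^\pm \le |X|$ implies $Q_{X^\pm} \le Q_X$.

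The layer-cake identity $X^+ = \int_0^\infty \ind\{X^+ > s\}\,ds$ and Fubini give
\begin{equation*}
\Cvv(X^+, Y^+) = \int_0^\infty\!\!\int_0^\infty \bigl[\bP(X^+ > s,\, Y^+ > t) - \bP(X^+ > s)\bP(Y^+ > t)\bigr]\,ds\,dt.
\end{equation*}
Since the events $\{X^+ > s\}$ and $\{Y^+ > t\}$ lie in $\sigma(X)$ and $\sigma(Y)$ respectively, the definition of the strong-mixing coefficient bounds the integrand by $\alpha$ in absolute value; the trivial estimate $|\Cvv(\ind_E, \ind_F)| \le \min\{\bP(E),\bP(F)\}$ combined with the inclusions $\{X^+ > s\} \subseteq \{|X| > s\}$ and $\{Y^+ > t\} \subseteq \{|Y| > t\}$ yields the complementary bound $\min\{\bP(|X|>s),\bP(|Y|>t)\}$. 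Hence the absolute value of the integrand is at most $\min\{\alpha,\,\bP(|X|>s),\,\bP(|Y|>t)\}$.

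To convert this to the quantile form, I would write $\min\{\alpha,p,q\} = \int_0^1 \ind\{u<\alpha\}\ind\{u<p\}\ind\{u<q\}\,du$ and apply Fubini, reducing matters to the identity $\int_0^\infty \ind\{\bP(|X|>s)>u\}\,ds = Q_X(u)$. This identity holds because the survival function $s\mapsto\bP(|X|>s)$ is nonincreasing and right-continuous with generalised inverse $Q_X$. Collecting the ingredients gives $|\Cvv(X^+, Y^+)| \le \int_0^\alpha Q_X(u)Q_Y(u)\,du$, and summing the four signed contributions from the first step yields the stated inequality.

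The step most in need of care is the passage from the indicator double integral to the quantile single integral: one must apply Fubini cleanly to the three-fold $\ind$ representation of the minimum and verify the quantile-inverse identity at points where the survival function has jumps. The latter is harmless because the jump set is at most countable and so contributes zero Lebesgue measure; otherwise the argument is routine, and the conceptual point is simply that the pointwise minimum of the mixing bound $\alpha$ and the trivial bounds $\bP(|X|>s), \bP(|Y|>t)$ is exactly the quantity whose double integral gives the product $Q_X(u)Q_Y(u)$ after change of variables.
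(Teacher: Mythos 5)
Your argument is correct. Note, however, that the paper does not prove this statement at all: it is quoted verbatim as part of Theorem 1.1 of \cite{27} (Rio), so there is no in-paper proof to compare against. What you have written is essentially the classical Hoeffding/layer-cake proof underlying Rio's covariance inequality, and every step checks out: the four-term decomposition via $X=X^+-X^-$, $Y=Y^+-Y^-$ is legitimate because $X^\pm Y^\pm\le|XY|$ keeps all four covariances well defined; the Hoeffding identity for each nonnegative pair follows from two applications of Tonelli (each double integral is finite, equal to $\bE[X^\pm Y^\pm]$ and $\bE[X^\pm]\bE[Y^\pm]$ respectively); the pointwise bound $\min\{\alpha,\bP(|X|>s),\bP(|Y|>t)\}$ is exactly right since $\{X^\pm>s\}\subseteq\{|X|>s\}$ and these events lie in $\sigma(X)$, $\sigma(Y)$; and the identity $\int_0^\infty\ind\{\bP(|X|>s)>u\}\,ds=Q_X(u)$ holds exactly, because right-continuity and monotonicity of the survival function force $\{s\ge0:\bP(|X|>s)>u\}=[0,Q_X(u))$ --- so your caution about jump points, while harmless, is not even needed. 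The only substantive difference from Rio's own argument is the bookkeeping of the constant: Rio avoids splitting into positive and negative parts and obtains the sharper bound $2\int_0^{2\alpha}Q_X(u)Q_Y(u)\,du$, from which the paper's form with the constant $4$ follows since $u\mapsto Q_X(u)Q_Y(u)$ is nonincreasing; your four-way split trades that refinement for a slightly simpler computation and lands directly on the stated bound, which is all the application in the paper requires.
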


Let us denote the quantile function of $\abs{\eta_0(0)-\mu_0}$ by $Q_{\eta}(u)$. Then
\begin{align}
&\sum_{\ell\in\bZ}\left|\Cvv \left[\eta_0(0), \eta_0(\ell)\right]\right|\le 4\sum_{\ell\in\bZ}\int_0^{\alpha(|\ell|)} \left[Q_{\eta}(u)\right]^2du\le 4\int_0^1 \sum_{\ell\in\bZ}\ind_{\{u\le\alpha(|\ell|)\}} \left[Q_{\eta}(u)\right]^2du\notag\\
&\le 4 \left[\int_0^1 \Bigl(\sum_{\ell\in\bZ}\ind_{\{u\le\alpha(|\ell|)\}}\Bigr)^{(2+\delta)/\delta}du\right]^{\delta/(2+\delta)}\cdot \left[\int_0^1\left(Q_{\eta}(u)\right)^{2+\delta}du\right]^{2/(2+\delta)}.\notag
\end{align}
Note that $\alpha(n)\searrow 0$ as $n\to\infty$. Thus,
\begin{align*}
\int_0^1 \Bigl(\sum_{\ell\in\bZ}\ind_{\{u\le\alpha(|\ell|)\}}\Bigr)^{(2+\delta)/\delta}du=&\sum_{j=0}^\infty \int_{\alpha(j+1)}^{\alpha(j)} (2j+1)^{(2+\delta)/\delta}du\\
=&\sum_{j=0}^\infty (2j+1)^{(2+\delta)/\delta}[\alpha(j)-\alpha(j+1)].
\end{align*}
And we have,
\begin{align*}
&\sum_{j=0}^n (2j+1)^{(2+\delta)/\delta}[\alpha(j)-\alpha(j+1)]\\
&= \alpha(0)-(2n+1)^{(2+\delta)/\delta}\alpha(n+1)+\sum_{j=1}^{n}\left[(2j+1)^{(2+\delta)/\delta}-(2j-1)^{(2+\delta)/\delta}\right]\alpha(j)\\
&\le \alpha(0)-(2n+1)^{(2+\delta)/\delta}\alpha(n+1)+C\sum_{j=1}^{n} (j+1)^{2/\delta}\alpha(j).
\end{align*}
Let $n$ go to $\infty$ above, 
$$\sum_{j=0}^\infty (2j+1)^{(2+\delta)/\delta}[\alpha(j)-\alpha(j+1)] \le \alpha(0)+C\sum_{j=1}^{\infty} (j+1)^{2/\delta}\alpha(j) <\infty.$$
Also, 
$$\int_0^1\left(Q_{\eta}(u)\right)^{2+\delta}du=\bE\left[|\eta_0(0)-\mu_0|^{2+\delta}\right]<\infty,$$
where the equality is because if $U$ is uniformly distributed on $(0,1)$, then $Q_{\eta}(U)$ has the same distribution as $\abs{\eta_0(0)-\mu_0}$. In fact, for $x\in\bR^+$, 
\begin{align}
\bP(Q_{\eta}(U)>x)=&\bP\left(\inf\{y\in\bR^+: \bP(\abs{\eta_0(0)-\mu_0}>y)\le U\}>x\right)\notag\\
=&\bP\left\{\bP(\abs{\eta_0(0)-\mu_0}>x)> U\right\}=\bP(\abs{\eta_0(0)-\mu_0}>x).\notag
\end{align}

Therefore, we have shown that 
\be \sum_{\ell\in\bZ}\left|\Cvv \left[\eta_0(0), \eta_0(\ell)\right]\right| \le C\biggl(\sum_{j=0}^{\infty} (j+1)^{2/\delta}\alpha(j)\biggr)^{\delta/(2+\delta)}\left(\bE\left[|\eta_0(0)-\mu_0|^{2+\delta}\right]\right)^{2/(2+\delta)}<\infty.\label{slim-10}\ee

Thus, from \eqref{a-sum-limit} and \eqref{a-sum-bound}, we can let $n$ go to $\infty$ in \eqref{slim-9} and apply Dominated Convergence Theorem to conclude that
\be \lim_{n\to\infty} \bar{\sigma}_n^2=\varsigma^2\sum_{j_1=1}^N\sum_{j_2=1}^N\theta_{j_1}\theta_{j_2}\Gamma_2\left((t_{j_1},r_{j_1}),(t_{j_2},r_{j_2})\right).\label{var-limit}\ee

Since $\lim_{n\to\infty} \bar{\sigma}_n^2=0$ directly implies that $\sum_{j=1}^N\theta_i \overline{S}_n(t_j,r_j)$ converges weakly to zero. For the rest, we assume that $\sum_{j_1=1}^N\sum_{j_2=1}^N\theta_{j_1}\theta_{j_2}\Gamma_2\left((t_{j_1},r_{j_1}),(t_{j_2},r_{j_2})\right)>0$, and use Theorem 2.2(c) (restated below) in \cite{13} to complete the proof. 
\begin{theorem}\label{LPCLT}
Let $\{b_{n,i}: -m_n\le i\le m_n, n\in \bZ^+\}$ be a triangular array of real numbers such that  
\be \limsup_{n\to\infty}\sum_{i\in\bZ}b_{n,i}^2<\infty,\label{LPCLT-condition1}\ee
\be \lim_{n\to\infty}\max_{i\in\bZ}\abs{b_{n,i}}=0.\label{LPCLT-condition2}\ee
 where $b_{n,i}=0$, if $|i|>m_n$.\par
Also, we assume that $\{\bar{\eta}(i): i\in\bZ\}$ is a centered, strongly mixing and non-degenerate (i.e.\ $\Vvv (\bar{\eta}(0))>0$) stationary sequence such that
\be \Vvv \left(\sum_{i=-m_n}^{m_n}b_{n,i}\bar{\eta}(i)\right)=1,\label{LPCLT-condition3}\ee
and there exists $\delta>0$ so that $\bE\left\{|\bar{\eta}(0)|^{2+\delta}\right\}<\infty$ and $\sum_{j=0}^\infty (j+1)^{2/\delta}\alpha(j)<\infty$.\par
Then, 
\be \sum_{i=-m_n}^{m_n}b_{n,i}\bar{\eta}(i)\Rightarrow \cN(0, 1),\quad\mbox{as }n\to\infty.\ee
\end{theorem}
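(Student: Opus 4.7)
The plan is Bernstein's classical big-block/small-block construction, using Rio's covariance bound to control the blocks and reducing to Lindeberg--Feller for nearly independent summands. Write $S_n := \sum_{i=-m_n}^{m_n} b_{n,i}\bar\eta(i)$. Choose integer scales $p_n,q_n\to\infty$ with $q_n/p_n\to 0$, partition the support of $b_{n,\cdot}$ into alternating big blocks $B_j$ of length $p_n$ and small blocks $G_j$ of length $q_n$, and set $U_j := \sum_{i\in B_j} b_{n,i}\bar\eta(i)$, $V_j := \sum_{i\in G_j} b_{n,i}\bar\eta(i)$, so that $S_n = \sum_j U_j + \sum_j V_j$, with $k_n$ big blocks in total.

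First I show $\sum_j V_j\to 0$ in $L^2$. Expanding $\Vvv(\sum_j V_j)$ and bounding each covariance $\Cvv(\bar\eta(i),\bar\eta(j))$ via Lemma \ref{covariance-bound}, the same H\"older computation that produced \eqref{slim-10} (from $\bE|\bar\eta(0)|^{2+\delta}<\infty$ and the mixing summability) yields $\sum_{\ell\in\bZ} |\Cvv(\bar\eta(0),\bar\eta(\ell))|<\infty$. Combined with the fact that the $\ell^2$-mass of the weights on small-block indices is $O(q_n/p_n)\cdot\sum_i b_{n,i}^2\to 0$, this gives $\Vvv(\sum_j V_j)\to 0$, and hence $\Vvv(\sum_j U_j)\to 1$.

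Next I couple $(U_1,\dots,U_{k_n})$ with independent copies $(\tilde U_1,\dots,\tilde U_{k_n})$ of the same marginals. Since consecutive big blocks are separated by a gap of width $q_n$, the standard alpha-mixing coupling inequality (Berbee / Bradley) produces such a coupling with $\bP\bigl(\sum_j U_j\ne \sum_j \tilde U_j\bigr)\le C k_n\alpha(q_n)$. Picking $q_n$ so that $k_n\alpha(q_n)\to 0$ (possible because $k_n=O(\sum_i b_{n,i}^2/p_n)$ and $\alpha(q_n)\to 0$) reduces the problem to proving a CLT for $\sum_j \tilde U_j$. I then apply Lindeberg--Feller: variances sum to $\Vvv(\sum_j U_j)\to 1$, and for the Lindeberg condition it suffices, by Markov, to show $\sum_j \bE|U_j|^{2+\delta'}\to 0$ for some $0<\delta'<\delta$. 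A Rosenthal-type moment inequality for strongly mixing partial sums, valid under the given summability of $\alpha$, bounds $\bE|U_j|^{2+\delta'}$ by a constant times $\bigl(\sum_{i\in B_j} b_{n,i}^2\bigr)^{(2+\delta')/2}$; summing over $j$ and factoring out $\bigl(\max_i b_{n,i}^2\bigr)^{\delta'/2}$ from each term yields the vanishing.

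The main obstacle is the Rosenthal-type $L^{2+\delta'}$ bound for weighted partial sums of a strongly mixing sequence. The hypothesis $\sum_j (j+1)^{2/\delta}\alpha(j)<\infty$ is tailored precisely so that such an inequality holds (this is the heart of the Doukhan--Massart--Rio theory), and choosing $\delta'\in(0,\delta)$ small enough that this inequality applies, yet with $\delta'>0$ so that the factor $\bigl(\max_i b_{n,i}^2\bigr)^{\delta'/2}$ actually drives the Lindeberg sum to zero, is the delicate balancing point. The big-block/small-block reduction, the Rio covariance estimates, and the coupling step are by now routine once this moment inequality is in hand.
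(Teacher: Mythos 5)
The paper does not prove Theorem~\ref{LPCLT}; it is explicitly a restatement of Theorem~2.2(c) of the cited reference \cite{13} (Peligrad--Utev), invoked as a black box in the proof of Lemma~\ref{lmm3-8}. Your Bernstein big-block/small-block outline is the right shape for proving such a result from scratch, but two of the bridges you invoke are not available in the form you state, and a third balancing issue you flag is in fact not resolvable by the bare sketch.

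First, Berbee's coupling lemma is a $\beta$-mixing tool, not an $\alpha$-mixing one. Under strong ($\alpha$-)mixing there is no coupling with $\bP(U\ne\tilde U)\le C\,\alpha$; Bradley's $\alpha$-coupling only controls $\bP(|U-\tilde U|>c)$ with a bound that blows up as $c\to 0$ and carries moment factors. The standard replacement is Ibragimov's covariance inequality for bounded functions, applied iteratively to the complex exponentials $e^{\mathrm{i}tU_j}$, giving
\begin{equation*}
\Bigl|\bE\exp\Bigl(\mathrm{i}t\sum_{j}U_j\Bigr)-\prod_{j}\bE\exp(\mathrm{i}tU_j)\Bigr|\le C\,(k_n-1)\,\alpha(q_n),
\end{equation*}
after which one proves the CLT for the independent product via Lindeberg--Feller. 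Second, your claim that the $\ell^2$ mass of the weights on small-block indices is automatically $O(q_n/p_n)\sum_i b_{n,i}^2$ is false for a fixed blocking: the hypotheses allow, e.g., $b_{n,i}\propto(|i|\,\log m_n)^{-1/2}$, which puts a non-vanishing fraction of the $\ell^2$ mass on a window of length $q_n$ near the origin. The repair is the averaging-over-shifts trick --- the \emph{average} over the $p_n+q_n$ cyclic shifts of the blocking is exactly $\frac{q_n}{p_n+q_n}\sum_i b_{n,i}^2$, so some shift achieves at most that --- but this step is missing. Finally, as you yourself note, one must simultaneously force $k_n\,\alpha(q_n)\to 0$ and $p_n\max_i b_{n,i}^2\to 0$; since $k_n$ scales like $m_n/p_n$ and the hypotheses give no control on $m_n$ beyond the $\ell^2$/$\ell^\infty$ conditions on the weights, it is not established that a compatible choice of $(p_n,q_n)$ exists. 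This is precisely why Peligrad--Utev's actual proof uses more than a naive blocking argument, and your sketch leaves the central difficulty open.
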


In our case, we can let $b_{n,i}=a_{n,i}/\bar{\sigma}_n$, $\bar{\eta}(i)=\eta_0(i)-\mu_0$, $i\in\bZ$. To use Theorem \ref{LPCLT}, it is enough to show that $\{b_{n,i}\}$ satisfies conditions \eqref{LPCLT-condition1} and\eqref{LPCLT-condition2}.\par 

For condition \eqref{LPCLT-condition1}, from \eqref{a-sum-limit} and \eqref{var-limit}, we see that
$$\lim_{n\to\infty} \sum_{i\in\bZ}b_{n,i}^2=\frac{1}{\varsigma^2}<\infty.$$

For condition \eqref{LPCLT-condition2}, from \eqref{mid-2}, 
$$\abs{a_{n,i}}\le n^{-1/4}\sum_{j=1}^N\abs{\theta_j}, \quad i\in\bZ.$$
Therefore,
$$\max_{i\in\bZ}\abs{b_{n,i}}\le \frac{1}{n^{1/4}\bar{\sigma}_n}\sum_{j=1}^N\abs{\theta_j}\to 0,\quad\mbox{as }n\to\infty.$$

Apply Theorem \ref{LPCLT}, we have
$$\frac{1}{\bar{\sigma}_n}\sum_{i\in \bZ}a_{n,i}\bigl(\eta_0(i)-\mu_0\bigr)\Rightarrow \cN(0,1).$$

Combine this with \eqref{var-limit}, we may conclude that
$$\sum_{j=1}^N\theta_i \overline{S}_n(t_j,r_j)\Rightarrow \cN\left(0,\varsigma^2\sum_{j_1=1}^N\sum_{j_2=1}^N\theta_{j_1}\theta_{j_2}\Gamma_2\left((t_{j_1},r_{j_1}),(t_{j_2},r_{j_2})\right)\right).$$

Thus, the third case has been proved and the proof of Lemma \ref{lmm3-8} is complete.
\end{proof}

Now we turn to the remaining term $\overline{F}_n(t,r)$ in \eqref{2-7}, let us define another mean-zero Gaussian process $\{F(t,r): t\in\bR^+,r\in\bR\}$ which is independent of process $\{S(t,r)\}_{t\in\bR^+,r\in\bR}$ and has covariance 
\be
\mathbb{E}\bigl[F(t,r)F(s,q)\bigr]=\frac{\sigma_\xi^2}{\sigma_1^2}\Gamma_1\bigl((t,r),(s,q)\bigr),\quad t,s\in\bR^+,r,q\in\bR.\label{2-t9a}\ee
We can show that 
\begin{lemma}\label{lmm3-10}
Under the assumptions in Theorem \ref{thm2-4}, $\{\overline{F}_n(t,r)\}_{t\in\bR^+,r\in\bR}$ will converge weakly (in the sense of finite dimensional distributions) to the Gaussian process $ \{F(t,r)\}_{t\in\bR^+,r\in\bR}$ as $n$ goes to $\infty$.
\end{lemma}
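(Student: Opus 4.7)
My plan is to rewrite $\overline{F}_n(t,r)=n^{-1/4}\sum_{k=1}^{\lfloor nt\rfloor}\sum_{j\in\bZ}\xi_k(j)p^{\lfloor nt\rfloor-k}(y(n),j)$, express an arbitrary linear combination $T_n=\sum_{\ell=1}^N\theta_\ell\overline{F}_n(t_\ell,r_\ell)$ as $\sum_{k,j}c_{n,k,j}\xi_k(j)$ with deterministic coefficients
\begin{equation*}
c_{n,k,j}=n^{-1/4}\sum_{\ell=1}^N\theta_\ell\,\ind\{1\le k\le\lfloor nt_\ell\rfloor\}\,p^{\lfloor nt_\ell\rfloor-k}(y_\ell(n),j),
\end{equation*}
and then apply the Lindeberg-Feller CLT (Theorem~\ref{LF}) to this array of independent summands, using the fourth-moment hypothesis $\bE[\xi_0(0)^4]<\infty$. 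By the Cram\'er-Wold device this yields convergence of finite-dimensional distributions of $\overline{F}_n$, and joint convergence with $\overline{S}_n$ is automatic because the two fields depend on disjoint sets of random inputs (initial data versus dynamic noise); combined with Lemma~\ref{lmm3-8} and the vanishing of $\mu_0\overline{H}_n$ noted in \eqref{H-uniform-bound}, this will complete the proof of Theorem~\ref{thm2-4}.

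For the variance, independence of the $\xi_k(j)$'s gives $\Vvv(T_n)=\sigma_\xi^2\sum_{\ell_1,\ell_2}\theta_{\ell_1}\theta_{\ell_2}A_n(\ell_1,\ell_2)$ where
\begin{equation*}
A_n(\ell_1,\ell_2)=n^{-1/2}\sum_{k=1}^{\lfloor n(t_{\ell_1}\wedge t_{\ell_2})\rfloor}\mP\bigl(X^{y_{\ell_1}(n)}_{\lfloor nt_{\ell_1}\rfloor-k}=\tilde X^{y_{\ell_2}(n)}_{\lfloor nt_{\ell_2}\rfloor-k}\bigr),
\end{equation*}
with $X$ and $\tilde X$ independent $p$-walks. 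Since $b=-\wm$ the drift term cancels, leaving $(r_{\ell_1}-r_{\ell_2})\sqrt{n}+O(1)$ as the mean of the difference walk, whose variance after $\lfloor nt_{\ell_1}\rfloor+\lfloor nt_{\ell_2}\rfloor-2k$ steps is $(n(t_{\ell_1}+t_{\ell_2})-2k)\sigma_1^2+O(1)$. The LCLT from Appendix~\ref{lclt} therefore yields
\begin{equation*}
\mP\bigl(X^{y_{\ell_1}(n)}_{\lfloor nt_{\ell_1}\rfloor-k}=\tilde X^{y_{\ell_2}(n)}_{\lfloor nt_{\ell_2}\rfloor-k}\bigr)=\frac{1}{\sqrt{n}}\,\varphi_{\sigma_1^2(t_{\ell_1}+t_{\ell_2}-2k/n)}(r_{\ell_1}-r_{\ell_2})+o(n^{-1/2}),
\end{equation*}
uniformly for $k/n\in[0,(t_{\ell_1}\wedge t_{\ell_2})-\delta]$. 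A standard Riemann-sum conversion together with the elementary identity $\partial_{\nu^2}\Psi_{\nu^2}(x)=\tfrac12\varphi_{\nu^2}(x)$, which when integrated produces exactly $\Gamma_1$ via \eqref{gamma3}, gives $A_n(\ell_1,\ell_2)\to\sigma_1^{-2}\Gamma_1\bigl((t_{\ell_1},r_{\ell_1}),(t_{\ell_2},r_{\ell_2})\bigr)$, matching \eqref{2-t9a}.

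For the Lindeberg condition it suffices by Chebyshev and the fourth-moment hypothesis to show $\sum_{k,j}c_{n,k,j}^4\to 0$. Applying Cauchy-Schwarz twice I obtain $c_{n,k,j}^4\le Cn^{-1}\sum_{\ell=1}^N\theta_\ell^4[p^{\lfloor nt_\ell\rfloor-k}(y_\ell(n),j)]^4$, and then the pointwise bound
\begin{equation*}
\sum_{j\in\bZ}[p^a(x,j)]^4\le\bigl[\max_j p^a(x,j)\bigr]^2\sum_j[p^a(x,j)]^2=\bigl[\max_j p^a(x,j)\bigr]^2\,q^a(0,0)\le C(a\vee 1)^{-3/2},
\end{equation*}
obtained by combining Lemma~\ref{q-uniform-bound} with the analogous LCLT estimate $\max_j p^a(x,j)=O(a^{-1/2})$ for the $p$-walk, makes the resulting sum over $k$ convergent uniformly in $n$. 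Hence $\sum_{k,j}c_{n,k,j}^4=O(n^{-1})\to 0$, finishing Lindeberg-Feller. The main technical obstacle is the uniformity of the LCLT expansion near the singular endpoint $k\approx\lfloor n(t_{\ell_1}\wedge t_{\ell_2})\rfloor$, where $t_{\ell_1}+t_{\ell_2}-2k/n\to|t_{\ell_1}-t_{\ell_2}|$ and the Gaussian density $\varphi_{\sigma_1^2(\cdots)}(r_{\ell_1}-r_{\ell_2})$ can blow up when $t_{\ell_1}=t_{\ell_2}$ and $r_{\ell_1}=r_{\ell_2}$; I would handle this by splitting the sum at $k=\lfloor n(t_{\ell_1}\wedge t_{\ell_2})\rfloor-\lfloor n^{\alpha}\rfloor$ for some $\alpha\in(0,1)$, applying LCLT on the bulk, and bounding the boundary block crudely by $\mP(X=\tilde X)\le\max_j p^{\lfloor nt_{\ell_1}\rfloor-k}(\cdot,j)=O((\lfloor nt_{\ell_1}\rfloor-k+1)^{-1/2})$, whose contribution to $A_n$ is $O(n^{(\alpha-1)/2})\to 0$.
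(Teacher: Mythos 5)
Your approach is essentially the paper's: apply the Cram\'er--Wold device, rewrite the linear combination $T_n$ as a sum of independent terms built from the dynamic noise, compute the asymptotic variance via a local CLT and a Riemann-sum passage to the integral $\frac{1}{2\sigma_1^2}\int_{\sigma_1^2|t-s|}^{\sigma_1^2(t+s)}\varphi_u(r-q)\,du$ (your identity $\partial_{\nu^2}\Psi_{\nu^2}=\tfrac12\varphi_{\nu^2}$ is the right closed form), and verify Lindeberg via the fourth-moment hypothesis. Two implementation choices differ and are worth noting. You treat every $\xi_k(j)$ as its own summand, whereas the paper aggregates over spatial sites into $V_{n,k}=n^{-1/4}\sum_{j>\ell(k)}\theta_j\sum_x\xi_k(x)p^{\lfloor nt_j\rfloor-k}(\cdot,x)$; your finer decomposition makes the Lindeberg check cleaner, and your bound $\sum_{k,j}c_{n,k,j}^4=O(n^{-1})$ via $\sum_j[p^a(\cdot,j)]^4\le[\max_j p^a]^2 q^a(0,0)=O(a^{-3/2})$ is slightly more direct than the paper's Cauchy--Schwarz plus Markov route through $\{\bE V^4\}^{1/2}\{\bP(|V|\ge\epsilon)\}^{1/2}$. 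For the off-diagonal covariance ($t_{\ell_1}\neq t_{\ell_2}$), you propose applying the LCLT directly to the coincidence probability $\mP(X_{m_1}=\tilde X_{m_2})$; note that $X_{m_1}-\tilde X_{m_2}$ is a sum of $m_1$ increments with law $w$ and $m_2$ with law $w(-\cdot)$, so it is not an i.i.d.\ walk, and the appendix LCLT (for the symmetric $q$-walk) does not immediately apply --- you would either need to factor the probability as $\sum_z p^{m_1}(y_1,z)p^{m_2}(y_2,z)$ and apply the LCLT to each factor, or follow the paper, which conditions on $X_{\lfloor nt\rfloor-\lfloor ns\rfloor}$ so that the remaining common steps form an honest $q$-walk, then passes to the limit with bounded convergence using $q^k(0,\cdot)\le Ck^{-1/2}$. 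The paper's conditioning trick avoids your truncation near the singular endpoint altogether, but your truncation estimate $O(n^{(\alpha-1)/2})$ is also correct. Both routes land on $\sigma_1^{-2}\Gamma_1$ as required by \eqref{2-t9a}.
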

\begin {remark}\label{rmk3-2}
Here we gave two equivalent expressions for the $\Gamma_1$ function defined in \eqref{gamma3} which may be used in the following context.
\be
\Gamma_1\bigl((s,q),(t,r)\bigr)=\int_{-\infty}^\infty \Bigl[\bP(B_{\sigma_1^2s}\le q-x)\bP(B_{\sigma_1^2t}>r-x)-\bP(B_{\sigma_1^2s}\le q-x,B_{\sigma_1^2t}>r-x)\Bigr]\,\mathrm{d}x,\label{2-10a}\ee
and
\be\Gamma_1\bigl((s,q),(t,r)\bigr)=\frac{1}{2}\int_{\sigma_1^2|t-s|}^{\sigma_1^2(t+s)}\frac{1}{\sqrt{2\pi v}}\exp\bigl\{-\frac{1}{2v}(r-q)^2\bigr\}dv,\label{2-10b}\ee
for $s,t\in\bR^+$ and $q,r\in\bR$.
\end{remark}

\begin{proof}
Note that $\overline{F}_n(t,r)$ can be rewritten as
\be\overline{F}_n(t,r)=n^{-1/4}\sum_{k=1}^{\lfloor nt\rfloor}\mE\left[\xi_k(X_{\lfloor nt\rfloor -k}^{y(n)})\right]=n^{-1/4}\sum_{k=1}^{\lfloor nt\rfloor}\sum_{x\in\mathbb{Z}}\xi_k(x)\mP\bigl(X_{\lfloor nt\rfloor -k}^{y(n)}=x\bigr),\label{add-1}\ee 
Recall that $X_\centerdot^i$ is defined to be a random walk starting from site $i$ with transition probability $p$ defined in \eqref{21-d1}.\par
 
Thus, we immediately have $\mathbb{E}\overline{F}_n(t,r)=0$.\par

Now let us take a look at the covariance. Suppose $X_\centerdot^{\lfloor ntb\rfloor +\lfloor r\sqrt{n}\rfloor}$ and $X_\centerdot^{\lfloor nsb\rfloor +\lfloor q\sqrt{n}\rfloor}$ are two independent random walks with transition probability $p$.\par

For the case $s=t$, $r\not= q$, let us denote $x_n=\lfloor r\sqrt{n}\rfloor -\lfloor q\sqrt{n}\rfloor$. Then,
\begin{align}
\mathbb{E}\bigl[\overline{F}_n(t,r)\overline{F}_n(t,q)\bigr]=&n^{-1/2}\sum_{k=1}^{\lfloor nt\rfloor}\sum_{x\in\mathbb{Z}}\sigma_\xi^2\mP\bigl(X_{\lfloor nt\rfloor -k}^{\lfloor ntb\rfloor +\lfloor r\sqrt{n}\rfloor}=x\bigr)\mP\bigl(X_{\lfloor nt\rfloor -k}^{\lfloor ntb\rfloor +\lfloor q\sqrt{n}\rfloor}=x\bigr)\notag\\
=&n^{-1/2}\sigma_\xi^2\sum_{k=1}^{\lfloor nt\rfloor}q^{\lfloor nt\rfloor -k}(x_n,0)=n^{-1/2}\sigma_\xi^2\sum_{k=0}^{\lfloor nt\rfloor-1}q^k(0,x_n).\label{F-cov-exp}
\end{align}
The second equality above is because $q$ can be viewed as the transition probability of $X_\centerdot^{\lfloor ntb\rfloor +\lfloor r\sqrt{n}\rfloor}-X_\centerdot^{\lfloor ntb\rfloor +\lfloor q\sqrt{n}\rfloor}$ (see the proof of Lemma \ref{lmm2-2}).\par

Combine \eqref{F-cov-exp} with \eqref{2-l11} in the Appendix,
\begin{align*}
\lim_{n\to\infty}\mathbb{E}\bigl[\overline{F}_n(t,r)\overline{F}_n(t,q)\bigr]=&\frac{\sigma_\xi^2}{2\sigma_1^2}\int_0^{2\sigma_1^2t}\frac{1}{\sqrt{2\pi v}}\exp\bigl\{-\frac{(r-q)^2}{2v}\bigr\}dv\\
=&\frac{\sigma_\xi^2}{\sigma_1^2}\Gamma_1\bigl((t,q),(t,r)\bigr).
\end{align*}

For the case $s\not= t$, we suppose $s<t$ and let $x_n=X_{\lfloor nt\rfloor-\lfloor ns\rfloor}^{\lfloor ntb\rfloor +\lfloor r\sqrt{n}\rfloor}-\lfloor nsb\rfloor -\lfloor q\sqrt{n}\rfloor$. Then, the covariance can be written as
\begin{align*}
\mathbb{E}\bigl[\overline{F}_n(t,r)\overline{F}_n(s,q)\bigr]=&n^{-1/2}\sigma_\xi^2\sum_{k=1}^{\lfloor ns\rfloor}\mP\bigl(X_{\lfloor nt\rfloor-k}^{\lfloor ntb\rfloor +\lfloor r\sqrt{n}\rfloor}-X_{\lfloor ns\rfloor-k}^{\lfloor nsb\rfloor +\lfloor q\sqrt{n}\rfloor}=0\bigr)\\
=&n^{-1/2}\sigma_\xi^2\bE\biggl[\sum_{k=1}^{\lfloor ns\rfloor}\mP\bigl(X_{\lfloor nt\rfloor-k}^{\lfloor ntb\rfloor +\lfloor r\sqrt{n}\rfloor}-X_{\lfloor ns\rfloor-k}^{\lfloor nsb\rfloor +\lfloor q\sqrt{n}\rfloor}=0 \big| X_{\lfloor nt\rfloor-\lfloor ns\rfloor}^{\lfloor ntb\rfloor +\lfloor r\sqrt{n}\rfloor}\bigr)\biggr]\\
=&n^{-1/2}\sigma_\xi^2\bE\biggl[\sum_{k=1}^{\lfloor ns\rfloor}q^{\lfloor ns\rfloor-k}\left(X_{\lfloor nt\rfloor-\lfloor ns\rfloor}^{\lfloor ntb\rfloor +\lfloor r\sqrt{n}\rfloor}-\lfloor nsb\rfloor -\lfloor q\sqrt{n}\rfloor,0\right)\biggr]\\
=&n^{-1/2}\sigma_\xi^2\bE\biggl[\sum_{k=0}^{\lfloor ns\rfloor-1}q^{k}\left(0,x_n\right)\biggr].
\end{align*}
Note that the 3rd equality is because of the Markov property of random walks. \par

Similar to the case $s=t$, we will use Corollary \ref{crllyB-1} in the Appendix to derive the limit. By CLT, we have 
$$n^{-1/2}x_n\Rightarrow B_{\sigma_1^2|t-s|}+(r-q),\quad\mbox{as } n\to\infty.$$ 

We can pick random variables $\{\hat{X}_{\lfloor nt\rfloor-\lfloor ns\rfloor}^{\lfloor ntb\rfloor +\lfloor r\sqrt{n}\rfloor}\}_{n\in\bN}$ such that $\hat{X}_{\lfloor nt\rfloor-\lfloor ns\rfloor}^{\lfloor ntb\rfloor +\lfloor r\sqrt{n}\rfloor}\overset{d}{=}X_{\lfloor nt\rfloor-\lfloor ns\rfloor}^{\lfloor ntb\rfloor +\lfloor r\sqrt{n}\rfloor}$, $n\in\bN$ and $n^{-1/2}\hat{x}_n=n^{-1/2}\left(\hat{X}_{\lfloor nt\rfloor-\lfloor ns\rfloor}^{\lfloor ntb\rfloor +\lfloor r\sqrt{n}\rfloor}-\lfloor nsb\rfloor -\lfloor q\sqrt{n}\rfloor\right)\overset{a.s.}{\to} B_{\sigma_1^2|t-s|}+(r-q)$ as $n\to\infty$ (see Theorem 3.2.2 in \cite{2}).\par

Then, by \eqref{2-l11},
$$\lim_{n\to\infty}n^{-1/2}\sum_{k=0}^{\lfloor ns\rfloor-1}q^{k}\left(0,\hat{x}_n\right)=\frac{1}{2\sigma_1^2}\int_0^{2\sigma_1^2s}\frac{1}{\sqrt{2\pi v}}\exp\Bigl\{-\frac{(B_{\sigma_1^2|t-s|}+r-q)^2}{2v}\Bigr\}dv,\quad a.s.$$

Also, by \eqref{b2}, 
$$n^{-1/2}\sum_{k=0}^{\lfloor ns\rfloor-1}q^{k}\left(0,\hat{x}_n\right)\le n^{-1/2}\left(1+\sum_{k=1}^{\lfloor ns\rfloor-1}Ck^{-1/2}\right)=O(1).$$

Hence, we can apply Dominated Convergence Theorem and get
\begin{align*}
&\lim_{n\to\infty}\mathbb{E}\bigl[\overline{F}_n(t,r)\overline{F}_n(s,q)\bigr]=\lim_{n\to\infty}n^{-1/2}\sigma_\xi^2\bE\biggl\{\sum_{k=0}^{\lfloor ns\rfloor-1}q^{k}\left(0,\hat{x}_n\right)\biggr\}\\
&=\sigma_\xi^2\mathbb{E}\biggl[\frac{1}{2\sigma_1^2}\int_0^{2\sigma_1^2s}\frac{1}{\sqrt{2\pi v}}\exp\Bigl\{-\frac{(B_{\sigma_1^2|t-s|}+r-q)^2}{2v}\Bigr\}dv\biggr]\\
&=\frac{\sigma_\xi^2}{2\sigma_1^2}\int_{-\infty}^{+\infty}\int_0^{2\sigma_1^2s}\frac{1}{\sqrt{2\pi v}}\exp\bigl\{-\frac{(x+r-q)^2}{2v}\bigr\}\frac{1}{\sqrt{2\pi \sigma_1^2|t-s|}}\exp\bigl\{-\frac{x^2}{2\sigma_1^2|t-s|}\bigr\}dvdx\\
&=\frac{\sigma_\xi^2}{2\sigma_1^2}\int_0^{2\sigma_1^2s}\frac{1}{\sqrt{2\pi (v+\sigma_1^2|t-s|)}}\exp\bigl\{-\frac{(r-q)^2}{2(v+\sigma_1^2|t-s|)}\bigr\}dv\\
&=\frac{\sigma_\xi^2}{2\sigma_1^2}\int_{\sigma_1^2|t-s|}^{\sigma_1^2(t+s)}\frac{1}{\sqrt{2\pi v}}\exp\bigl\{-\frac{(r-q)^2}{2v}\bigr\}dv= \frac{\sigma_\xi^2}{\sigma_1^2}\Gamma_1\bigl((t,r),(s,q)\bigr).
\end{align*}

So far we have shown that,
\be\lim_{n\to\infty}\mathbb{E}\bigl[\overline{F}_n(t,r)\overline{F}_n(s,q)\bigr]=\frac{\sigma_\xi^2}{\sigma_1^2}\Gamma_1\bigl((t,r),(s,q)\bigr),\quad (t,r),(s,q)\in\mathbb{R}_+\times\mathbb{R}. \label{F-1}\ee

Again, the next step will be applying Lindeberg Feller Central Limit Theorem to complete the proof.\par

For any fixed $N\in\bN$, $\{(t_j,r_j)\in \bR^+\times \bR:j=1,\ldots,N\}$, and $\{\theta_j\in \bR:j=1,\ldots,N\}$, without losing generality, let us suppose that $t_1\le t_2\le\ldots\le t_N$. We will rewrite $\sum_{j=1}^N\theta_j \overline{F}_n(t_j,r_j)$ into a sum of independent random variables. Notice
\begin{align*}
\sum_{j=1}^N\theta_j \overline{F}_n(t_j,r_j)=&n^{-1/4}\sum_{j=1}^N\theta_j\sum_{k=1}^{\lfloor nt_j\rfloor}\mE\left[\xi_k(X_{\lfloor nt_j\rfloor -k}^{\lfloor nt_jb\rfloor +\lfloor r_j\sqrt{n}\rfloor})\right]\\
=&n^{-1/4}\sum_{j=1}^N\theta_j\sum_{k=1}^{\lfloor nt_j\rfloor}\sum_{x\in\bZ}\xi_k(x)p^{\lfloor nt_j\rfloor -k}\left(\lfloor nt_jb\rfloor +\lfloor r_j\sqrt{n}\rfloor,x\right)\\
=&\sum_{\ell=0}^{N-1}\sum_{k=\lfloor nt_{\ell}\rfloor+1}^{\lfloor nt_{\ell+1}\rfloor}n^{-1/4}\sum_{j=\ell+1}^N\theta_j\sum_{x\in\bZ}\xi_k(x)p^{\lfloor nt_j\rfloor -k}\left(\lfloor nt_jb\rfloor +\lfloor r_j\sqrt{n}\rfloor,x\right)\\
=&\sum_{k=1}^{\lfloor nt_{N}\rfloor}n^{-1/4}\sum_{j=\ell(k)+1}^N\theta_j\sum_{x\in\bZ}\xi_k(x)p^{\lfloor nt_j\rfloor -k}\left(\lfloor nt_jb\rfloor +\lfloor r_j\sqrt{n}\rfloor,x\right).
\end{align*}
where we denote $t_0=0$ and $\ell(k)=i$ iff $\lfloor nt_i\rfloor+1\le k \le \lfloor nt_{i+1}\rfloor$.\par 

Let us also denote 
$$V_{n,k}=n^{-1/4}\sum_{j=\ell(k)+1}^N\theta_j\sum_{x\in\bZ}\xi_k(x)p^{\lfloor nt_j\rfloor -k}\left(\lfloor nt_jb\rfloor +\lfloor r_j\sqrt{n}\rfloor,x\right),\quad k\in\{1,2,\ldots,\lfloor nt_{N}\rfloor\}.$$
$$\sum_{j=1}^N\theta_j \overline{F}_n(t_j,r_j)=\sum_{k=1}^{\lfloor nt_{N}\rfloor}V_{n,k}.$$
The random variables $\{V_{n,k}\}_{1\le k \le \lfloor nt_{N}\rfloor}$ are independent. Due to \eqref{F-1}, we can directly check that the first condition in Lindeberg-Feller CLT (Theorem \ref{LF}) holds
\be\lim_{n\to\infty}\sum_{k=1}^{\lfloor nt_{N}\rfloor}\bE V_{n,k}^2=\sum_{i,j=1}^N\theta_i \theta_j\frac{\sigma_\xi^2}{\sigma_1^2}\Gamma_1\bigl((t_i,r_i),(t_j,r_j)\bigr).\ee

Now let us check the second condition.  For every fixed $\epsilon>0$,
\begin{align}
&\sum_{k=1}^{\lfloor nt_{N}\rfloor}\bE\left(V_{n,k}^2 \ind\{|V_{n,k}|\ge\epsilon\}\right)\label{F-3}\\
&\le n^{-1/2}\sum_{k=1}^{\lfloor nt_{N}\rfloor}C\sum_{j=\ell(k)+1}^N\theta_j^2\bE\left\{\left[\sum_{x\in\bZ}\xi_k(x)p^{\lfloor nt_j\rfloor -k}\left(\lfloor nt_jb\rfloor +\lfloor r_j\sqrt{n}\rfloor,x\right)\right]^2\ind\{|V_{n,k}|\ge\epsilon\}\right\}\notag\\
&\le   Cn^{-1/2}\sum_{k=1}^{\lfloor nt_{N}\rfloor}\sum_{j=\ell(k)+1}^N\theta_j^2\left\{\bE\left[\sum_{x\in\bZ}\xi_k(x)p^{\lfloor nt_j\rfloor -k}\left(\lfloor nt_jb\rfloor +\lfloor r_j\sqrt{n}\rfloor,x\right)\right]^4\right\}^{1/2}\left\{\bP(|V_{n,k}|\ge\epsilon)\right\}^{1/2}.\label{F-2}
\end{align}

For the moment in the last inequality above, note that by assumption, $\xi$ has finite 4th moment. We have
\begin{align*}
&\bE\left[\sum_{x\in\bZ}\xi_k(x)p^{\lfloor nt_j\rfloor -k}\left(\lfloor nt_jb\rfloor +\lfloor r_j\sqrt{n}\rfloor,x\right)\right]^4\\
&\le C\sum_{x,y\in\bZ}\left[p^{\lfloor nt_j\rfloor -k}\left(\lfloor nt_jb\rfloor +\lfloor r_j\sqrt{n}\rfloor,x\right)\right]^2\left[p^{\lfloor nt_j\rfloor -k}\left(\lfloor nt_jb\rfloor+\lfloor r_j\sqrt{n}\rfloor,y\right)\right]^2\\
&\quad\quad\quad\quad+ C\sum_{z\in\bZ}\left[p^{\lfloor nt_j\rfloor -k}\left(\lfloor nt_jb\rfloor +\lfloor r_j\sqrt{n}\rfloor,z\right)\right]^4\\
&\le C\left[q^{\lfloor nt_j\rfloor -k}\left(0,0\right)\right]^2+Cq^{\lfloor nt_j\rfloor -k}\left(0,0\right)\sum_{z\in\bZ}\left[p^{\lfloor nt_j\rfloor -k}\left(\lfloor nt_jb\rfloor +\lfloor r_j\sqrt{n}\rfloor,z\right)\right]^2\\
&\le C\left[q^{\lfloor nt_j\rfloor -k}\left(0,0\right)\right]^2\le  \begin{cases}\frac{C}{\lfloor nt_j\rfloor -k}, & \mbox{if } k< \lfloor nt_j\rfloor,\\ C, & \mbox{if } k= \lfloor nt_j\rfloor.\end{cases}
\end{align*}
where the last inequality is from \eqref{b2}.\par

For the last term in \eqref{F-2}, we can use Markov inequality,
\begin{align*}
\bP(|V_{n,k}|\ge\epsilon)\le &\frac{1}{n\epsilon^4}\bE\left\{\sum_{j=\ell(k)+1}^N\theta_j\sum_{x\in\bZ}\xi_k(x)p^{\lfloor nt_j\rfloor -k}\left(\lfloor nt_jb\rfloor +\lfloor r_j\sqrt{n}\rfloor,x\right)\right\}^4\\
\le & \frac{C}{n\epsilon^4}\sum_{j=\ell(k)+1}^N\theta_j^2\bE\left[\sum_{x\in\bZ}\xi_k(x)p^{\lfloor nt_j\rfloor -k}\left(\lfloor nt_jb\rfloor +\lfloor r_j\sqrt{n}\rfloor,x\right)\right]^4\\
\le &\frac{C}{n\epsilon^4}\sum_{j=\ell(k)+1}^N\theta_j^2 \frac{1}{(\lfloor nt_j\rfloor -k)\vee 1}.
\end{align*}

Thus, \eqref{F-3} can be further bounded by 
\begin{align*}
&\sum_{k=1}^{\lfloor nt_{N}\rfloor}\bE\left(V_{n,k}^2 {\ind}\{|V_{n,k}|\ge\epsilon\}\right)\\
&\le  \frac{C}{\epsilon^2n}\sum_{k=1}^{\lfloor nt_{N}\rfloor}\sum_{j=\ell(k)+1}^N\frac{1}{(\lfloor nt_j\rfloor -k)^{1/2}\vee 1}\left[\sum_{i=\ell(k)+1}^N\frac{1}{(\lfloor nt_i\rfloor -k)\vee 1}\right]^{1/2}\\
&\le \frac{C}{\epsilon^2n}\sum_{k=1}^{\lfloor nt_{N}\rfloor}\left[\sum_{j=\ell(k)+1}^N\frac{1}{(\lfloor nt_j\rfloor -k)^{1/2}\vee 1}\right]^2\le  \frac{C}{\epsilon^2n}\sum_{k=1}^{\lfloor nt_{N}\rfloor}\sum_{j=\ell(k)+1}^N\frac{1}{(\lfloor nt_j\rfloor -k)\vee 1}\\
&=  \frac{C}{\epsilon^2n}\sum_{j=1}^{N}\sum_{k=1}^{\lfloor nt_j\rfloor}\frac{1}{(\lfloor nt_j\rfloor -k)\vee 1}\le \frac{C}{\epsilon^2n}\sum_{j=1}^{N}\log \{(nt_j)\vee 1\}\to 0, \quad\mbox{as } n\to \infty.
\end{align*}

We have checked the second condition in Theorem \ref{LF}, and hence the proof for Lemma \ref{lmm3-10} is complete.
\end{proof}

By Lemma \ref{lmm3-8}, Lemma \ref{lmm3-10} and the independence of $\{\overline{S}_n(t,r)\}_{t\in\bR^+,r\in\bR}$ and $\{\overline{F}_n(t,r)\}_{t\in\bR^+,r\in\bR}$, the proof of Theorem \ref{thm2-4} is complete. 
\end{proof}

\subsection{Process-level tightness}
\begin{proof}[Proof of Theorem \ref{thm2-6}]
For simplicity, let us replace $Q$ with $[0,1]^2$. Theorem 2 in \cite{12} gives a necessary and sufficient condition for the weak convergence of a $D_2$-valued process $X_n$:
\begin{enumerate}
\item Convergence of finite-dimensional distributions: For every finite set $\{(t_i,r_i)\}_{i=1}^N\subset [0,1]^2$, we have
$$\bigl(X_n(t_1,r_1),\ldots,X_n(t_N,r_N)\bigr)\Rightarrow \bigl(X(t_1,r_1),\ldots,X(t_N,r_N)\bigr),\quad \mbox{as }n\to\infty;$$
\item Tightness: $\forall \epsilon>0$, 
$$\lim_{\delta\to 0}\limsup_{n\to\infty}\bP\{w'_\delta(X_n)\ge \epsilon\}=0,$$
where the modulus $w'_\delta$ is defined as
$$w'_\delta(x)=\inf_\Delta\max_{G\in\Delta}\sup_{(t,r),(s,q)\in G}|x(t,r)-x(s,q)|,$$
in which $\Delta$ is any partition of $[0,1]^2$ formed by finitely many lines parallel to the coordinate axes such that any element $G$ of $\Delta$ is a left-closed, right-open rectangle with diameter at least $\delta$. 
\end{enumerate} 

We have already proved the marginal convergence in Theorem \ref{thm2-4}. For the tightness part, instead of using $w'_\delta$, we use the following modulus:
\be w_\delta(x)=\sup_{\begin{subarray}{l} (t,r),(s,q)\in [0,1]^2\\ \|(t,r)-(s,q)\|<\delta\end{subarray}}|x(t,r)-x(s,q)|.\label{tight-1}\ee
One can easily show that for every fixed $0<\delta<1$, $w'_{\delta/2} (x)\le w_\delta(x)$, $\forall x\in D_2$. Thus, it is sufficient to show that $$\lim_{\delta\to 0}\limsup_{n\to\infty}\bP\{w_\delta(X_n)\ge \epsilon\}=0,\quad \forall \epsilon>0,$$
which can be proved by checking the following sufficient conditions given in \cite{11}(Proposition 2):
\begin{lemma}\label{lmm3-11}
Suppose $\{X_n\}$ is a sequence of $D_2$-valued processes such that for all $n>0$, there exists a decreasing sequence $\delta_n\searrow 0$ s.t.
\begin{enumerate}
\item there exist $\beta>0$, $\kappa>2$, and $C>0$ such that for all large enough $n$,
\be \bE(|X_n(t,r)-X_n(s,q)|^\beta)\le C|(t,r)-(s,q)|^\kappa\label{tight-2}\ee
holds for all $(t,r),(s,q)\in [0,1]^2$ with Euclidean distance $|(t,r)-(s,q)|> \delta_n$;
\item $\forall\epsilon,\gamma>0$, there exists $n_0>0$ such that  for all $n\ge n_0$,
\be \bP\{w_{\delta_n}(X_n)>\epsilon\}< \gamma.\label{tight-3}\ee 
\end{enumerate}
Then, for every fixed $\epsilon,\gamma>0$, there exist $0<\delta<1$ and integer $n_0>0$ such that  
$$\bP\{w_{\delta}(X_n)\ge\epsilon\}\le\gamma, \quad\forall n\ge n_0.$$
\end{lemma}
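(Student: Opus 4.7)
The plan is a Kolmogorov--Chentsov type chaining argument, modified to handle the fact that the moment bound \eqref{tight-2} is valid only at scales exceeding $\delta_n$. The idea is to split contributions by scale: oscillations at distances below $\delta_n$ are absorbed directly into $w_{\delta_n}(X_n)$ and controlled by \eqref{tight-3}, while oscillations at distances in $[\delta_n,\delta]$ are bounded by dyadic chaining using \eqref{tight-2}.

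First, I would introduce the dyadic grids $D_k=\{(i2^{-k},j2^{-k}):0\le i,j\le 2^k\}$, and for each $n$ let $k_n$ be the smallest integer with $2^{-k_n}\le\delta_n$, so that $2^{-k}\ge 2^{-k_n+1}>\delta_n$ for every $k<k_n$. For a target scale $\delta>\delta_n$, set $k_\delta=\lceil\log_2(1/\delta)\rceil$. Define the grid oscillation
$$\Omega_k(X_n)=\max\bigl\{|X_n(u)-X_n(v)|:u,v\in D_k,\ |u-v|\le 2\sqrt{2}\cdot 2^{-k}\bigr\}.$$
For $k<k_n$, every admissible pair satisfies $|u-v|>\delta_n$, so \eqref{tight-2}, Markov's inequality, and a union bound over the $O(4^k)$ such pairs yield $\bP\{\Omega_k(X_n)\ge\lambda\}\le C\cdot 2^{-k(\kappa-2)}\lambda^{-\beta}$.

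Next, pick $\alpha\in(0,(\kappa-2)/\beta)$, possible since $\kappa>2$, and set $\lambda_k=c\cdot 2^{-k\alpha}$. Summing the geometric tail from $k=k_\delta$ to $k_n-1$ gives
$$\bP\bigl\{\Omega_k(X_n)\ge c\,2^{-k\alpha}\text{ for some }k_\delta\le k<k_n\bigr\}\le C'c^{-\beta}\delta^{\kappa-2-\alpha\beta}.$$
On the complementary event, any pair $u,v\in[0,1]^2$ with $|u-v|<\delta$ can be routed through dyadic approximations: choose $u^{(k_n)},v^{(k_n)}\in D_{k_n}$ within distance $\delta_n$ of $u,v$, and connect them by telescoping across successive dyadic refinements $D_{k_\delta}\to D_{k_n}$. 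The chain contributes at most $C''c\delta^\alpha$ via $\sum_{k\ge k_\delta}2^{-k\alpha}\le C'''\delta^\alpha$, while the residual distances $|u-u^{(k_n)}|,|v-v^{(k_n)}|\le\delta_n$ contribute at most $2w_{\delta_n}(X_n)$, yielding $w_\delta(X_n)\le 2w_{\delta_n}(X_n)+C''c\delta^\alpha$ on this event.

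Finally, given $\epsilon,\gamma>0$, I would pick $c$ first large and then $\delta$ small so that $C''c\delta^\alpha<\epsilon/2$ and $C'c^{-\beta}\delta^{\kappa-2-\alpha\beta}<\gamma/2$; this is possible because $\kappa-2-\alpha\beta>0$. Hypothesis \eqref{tight-3} provides $n_1$ with $\bP\{w_{\delta_n}(X_n)>\epsilon/4\}<\gamma/2$ for $n\ge n_1$; combining the two bounds for all $n\ge n_1$ large enough that $\delta_n<\delta$ yields $\bP\{w_\delta(X_n)\ge\epsilon\}\le\gamma$. The main obstacle is the careful matching of the two scales: each link in the dyadic chain must have length strictly greater than $\delta_n$ so that \eqref{tight-2} applies, while the final approximation error must be at most $\delta_n$ so that it is absorbed by $w_{\delta_n}(X_n)$. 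The definition of $k_n$ makes these conditions compatible, and since $\kappa>2$ the exponent $\kappa-2-\alpha\beta$ remains positive for some admissible $\alpha$, which is precisely the room needed to drive the tail bounds to zero.
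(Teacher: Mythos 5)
The paper does not actually prove this lemma --- it is cited as Proposition~2 of \cite{11} --- so your proof supplies an argument the paper omits. Your strategy is the correct one: treat distances above the cutoff $\delta_n$ by a Kolmogorov--Chentsov dyadic chain using the moment bound \eqref{tight-2}, and absorb the sub-$\delta_n$ residuals into $w_{\delta_n}(X_n)$, which \eqref{tight-3} controls.

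There is, however, a gap precisely at the interface of the two scales. You take $k_n$ to be the smallest integer with $2^{-k_n}\le\delta_n$, and your tail estimate bounds $\Omega_k$ only for $k_\delta\le k<k_n$; this restriction is forced on you, since at level $k_n$ the minimum nonzero grid spacing is $2^{-k_n}\le\delta_n$, so \eqref{tight-2} need not apply to pairs in $D_{k_n}$ and hence not to $\Omega_{k_n}$. But the chain you describe runs $D_{k_\delta}\to D_{k_n}$, and its last link, from $u^{(k_n-1)}$ to $u^{(k_n)}$, is a pair of points in $D_{k_n}$ controlled only by $\Omega_{k_n}$, which you have not bounded. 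If instead you stop the telescope at $u^{(k_n-1)}$, the terminal approximation error satisfies only $|u-u^{(k_n-1)}|\le \tfrac{\sqrt{2}}{2}\,2^{-(k_n-1)}=\sqrt{2}\,2^{-k_n}\le\sqrt{2}\,\delta_n$, which can exceed $\delta_n$, so it is not directly dominated by $w_{\delta_n}(X_n)$ either. Consequently the inequality $w_\delta(X_n)\le 2w_{\delta_n}(X_n)+C''c\delta^\alpha$ asserted on the good event does not follow from the estimates you have in hand. The patch is routine --- note that $w_{c\delta_n}(X_n)\le(\lceil c\rceil+1)\,w_{\delta_n}(X_n)$ by inserting intermediate points at spacing $<\delta_n$, then either stop the telescope one level early and absorb the $\sqrt{2}\,\delta_n$ residual as $O(w_{\delta_n}(X_n))$, or build a safety margin into the definition of $k_n$ so that every telescope link stays at scale $>\delta_n$ while the terminal approximation stays at scale $O(\delta_n)$ --- but it has to be made explicit, together with the accompanying adjustment of the threshold $\epsilon/4$ in the application of \eqref{tight-3}. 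A smaller bookkeeping point: the constant $2\sqrt{2}\cdot 2^{-k}$ in your definition of $\Omega_k$ is not quite large enough for the initial link, since $|u^{(k_\delta)}-v^{(k_\delta)}|$ can be as large as $(2+\sqrt{2})\,2^{-k_\delta}>2\sqrt{2}\cdot 2^{-k_\delta}$; enlarge that constant.
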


Now let us check the first tightness condition. We set $\beta=12$ and $\delta_n=n^{-\gamma}$.
\begin{lemma}\label{lmm3-12}
Assume the assumptions in Theorem \ref{thm2-6}.
There exists constant $C>0$ s.t.\ for all sufficiently large $n$, 
\be \bE(|\fluc_n(t,r)-\fluc_n(s,q)|^{12})\le C|(t,r)-(s,q)|^\kappa\ee
holds for all $t,s,r,q\in [0,1]$ with $|(t,r)-(s,q)|>n^{-\gamma}$, where $\kappa$ and $\gamma$ can be any fixed numbers satisfying $2<\kappa<3$ and $0<\gamma\le \frac{3}{\kappa}$.
\end{lemma}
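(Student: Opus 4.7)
The plan is to exploit the decomposition $\fluc_n(t,r)=\mu_0\overline{H}_n(t,r)+\overline{S}_n(t,r)+\overline{F}_n(t,r)$ from Lemma \ref{lmm3-7} and bound each piece separately using a moment inequality adapted to its dependence structure, then combine via Minkowski. The $\overline{H}_n$-term is essentially trivial: from \eqref{H-uniform-bound}, $|\overline{H}_n(t,r)|=O(n^{-1/4})$ uniformly in $(t,r)$, so its 12th-order contribution is $O(n^{-3})$, and the constraint $|(t,r)-(s,q)|>n^{-\gamma}$ combined with $\gamma\le 3/\kappa$ gives $n^{-3}\le|(t,r)-(s,q)|^\kappa$. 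This is exactly what forces the upper bound $\gamma\le 3/\kappa$ in the statement.

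For the dynamical term $\overline{F}_n(t,r)-\overline{F}_n(s,q)$, we use \eqref{add-1} to write this as $n^{-1/4}\sum_{k,x}\xi_k(x)\,c_{n,k,x}(t,r,s,q)$ where $c_{n,k,x}(t,r,s,q)=\ind_{\{k\le\fl{nt}\}}p^{\fl{nt}-k}(y(n,t,r),x)-\ind_{\{k\le\fl{ns}\}}p^{\fl{ns}-k}(y(n,s,q),x)$. Since the $\xi_k(x)$ are i.i.d.\ mean-zero with finite 12th moment, Rosenthal's inequality reduces the task to controlling $n^{-3}\bigl(\sum_{k,x}c_{n,k,x}^2\bigr)^6$ and $n^{-3}\sum_{k,x}c_{n,k,x}^{12}$. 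The $\ell^2$-sum is computable in closed form via \eqref{F-cov-exp} and \eqref{2-l11}, and produces precisely a quantity comparable to $\Gamma_1\bigl((t,r),(s,q)\bigr)+|\Gamma_1\bigl((t,r),(t,r)\bigr)-\Gamma_1\bigl((s,q),(s,q))|$, which a routine Taylor expansion shows is $O(|(t,r)-(s,q)|)$. Raising to the 6th power yields a $|(t,r)-(s,q)|^6$ factor, which is far better than the required $|(t,r)-(s,q)|^\kappa$ with $\kappa<3$. The $\ell^{12}$-term, using the uniform bound $|c_{n,k,x}|\le Cn^{-1/2}$ from the LCLT (P7.6 of \cite{4}, cf.\ Lemma \ref{q-uniform-bound}) and $\sum_x c_{n,k,x}^2\le C[q^{\fl{nt}-k}(0,0)+q^{\fl{ns}-k}(0,0)]$, is dominated by the $\ell^2$-bound up to the extra $n^{-3}$ factor, which is again absorbed by $|(t,r)-(s,q)|>n^{-\gamma}$.

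For the initial-condition term $\overline{S}_n(t,r)-\overline{S}_n(s,q)=\sum_i b_{n,i}(\eta_0(i)-\mu_0)$, where $b_{n,i}$ is built from differences of the CLT tail probabilities appearing in \eqref{2-5}, the argument splits into the three cases. In case (a), Rosenthal's inequality applies verbatim, and one only needs the $\ell^2$- and $\ell^{12}$-bounds on $b_{n,i}$, which follow by a variant of the large-deviation argument \eqref{ldp1}--\eqref{ldp2} and a computation analogous to \eqref{slim-6}. In case (b), we substitute the $\xi$-series representation \eqref{initial-increments-rep} for $\eta_0(i)$; this converts $\overline{S}_n(t,r)-\overline{S}_n(s,q)$ into a sum over independent $\xi_{-k}(j)$ and reduces the problem to the same Rosenthal-type calculation as for $\overline{F}_n$, with the extra step \eqref{case2-eq2}--\eqref{case2-eq3} to handle the geometric decay in $k$. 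In case (c) we invoke a Rosenthal-type inequality for strongly mixing stationary sequences (Shao's or Doukhan--Louhichi); the moment hypothesis $\bE|\eta_0(0)|^{12+\varepsilon_0}<\infty$ and the decay \eqref{strong-mixing-coef-assump2} are designed precisely so that this inequality gives $\bE|\sum_i b_{n,i}(\eta_0(i)-\mu_0)|^{12}\le C(\sum_i b_{n,i}^2)^6+C\sum_i|b_{n,i}|^{12+\varepsilon_0}$.

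The main obstacle is bookkeeping in case (c): one has to trace the exponent $10+132/\varepsilon_0$ in \eqref{strong-mixing-coef-assump2} back through the Rosenthal-type inequality for mixing sequences, using the covariance bound of Lemma \ref{covariance-bound} and its higher-order analogues, to verify that $\sum_i b_{n,i}^2$ and $\sum_i|b_{n,i}|^{12+\varepsilon_0}$ can both be cashed in for a $|(t,r)-(s,q)|^\kappa$ factor with $\kappa>2$. The cases (a) and (b) are comparatively clean, and the $\overline{F}_n$-piece is essentially decided by a CLT-scale computation for the random walk $q$.
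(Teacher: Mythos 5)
Your overall architecture is the same as the paper's: the decomposition of Lemma \ref{lmm3-7}, Minkowski, a Rosenthal-type 12th-moment inequality in each of the three cases (the paper uses the elementary bound \eqref{12th-moment-bound} for (a), (b) and for $\overline{F}_n$, and Rio's quantile-based inequality, Lemma \ref{lmm3-13}, for (c)), and absorption of the leftover $n^{-3}$ through $|(t,r)-(s,q)|>n^{-\gamma}$ with $\gamma\kappa\le 3$. The genuine gap is in the coefficient estimates that actually produce the modulus $|(t,r)-(s,q)|^{\kappa}$ with $\kappa>2$. For $\overline{S}_n(t,r)-\overline{S}_n(s,q)$ the tools you cite --- the large-deviation bounds \eqref{ldp1}--\eqref{ldp2} and a computation ``analogous to \eqref{slim-6}'' --- control the size of the weights $b_{n,i}$ at a single space--time point, but carry no information about the difference between two nearby points: by themselves they only give $|b_{n,i}|\le Cn^{-1/4}$ and $\sum_i b_{n,i}^2=O(1)$, hence $\bE|\overline{S}_n(t,r)-\overline{S}_n(s,q)|^{12}=O(1)$, with no smallness in $|t-s|$ or $|r-q|$, which cannot yield any H\"older exponent, let alone $\kappa>2$. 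The ingredient that does the work in the paper is the joint-event/telescoping identity \eqref{tight-7}--\eqref{tight-10}: writing the weight difference as $\mP(A_{1,i})-\mP(A_{2,i})$ and using the Markov property, one gets $\sum_m[\mP(A_{1,m})+\mP(A_{2,m})]=\mE\bigl|X^0_{\fl{nt}-\fl{ns}}+\fl{ntb}-\fl{nsb}+\fl{r\sqrt n}-\fl{q\sqrt n}\bigr|\le C[(\sqrt{|t-s|}+|r-q|)\sqrt{n}+1]$, and this single quantity is what all three cases (a), (b), (c) are reduced to in \eqref{tight-6b}. Your proposal never identifies this estimate (or any substitute coupling/$L^1$ bound on $\sum_i\bigl|\mP\bigl(i\le X_{\fl{nt}}^{\fl{ntb}+\fl{r\sqrt n}}\bigr)-\mP\bigl(i\le X_{\fl{ns}}^{\fl{nsb}+\fl{q\sqrt n}}\bigr)\bigr|$), and without it the $\overline{S}_n$-part fails in all three cases.

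The $\overline{F}_n$-part has a related, milder defect. You propose to evaluate the coefficient $\ell^2$-sum by passing to the limit via \eqref{F-cov-exp} and \eqref{2-l11} and Taylor-expanding $\Gamma_1$; but the lemma is a finite-$n$ inequality that must hold uniformly over all pairs at distance $>n^{-\gamma}$, so a limit statement is not enough --- one needs the quantitative random-walk bounds the paper uses, namely $q^k(0,0)\le Ck^{-1/2}$ from \eqref{b2} for the block $\fl{ns}\le k<\fl{nt}$ and the linear growth of the potential kernel $a(x)$ in \eqref{tight-13}--\eqref{tight-15}. Moreover your claimed order for the variance increment, $O(|(t,r)-(s,q)|)$, is wrong: the correct order is $O(\sqrt{|t-s|}+|r-q|)$ (the limit field is only H\"older $1/4$ in time), which after raising to the sixth power gives $|t-s|^{3}+|r-q|^{6}$ as in \eqref{tight-17} --- still sufficient for $\kappa<3$, and precisely the reason the exponent in the lemma is capped strictly below $3$.
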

\begin{proof}
Recall from \eqref{2-7}, $$\fluc_n(t,r)=\mu_0\overline{H}_n(t,r)+\overline{S}_n(t,r)+\overline{F}_n(t,r),\quad (t,r)\in\bR^+\times\bR.$$ 
By Minkowski Inequality,
\begin{align}
&\left[\bE\left(|\fluc_n(t,r)-\fluc_n(s,q)|^{12}\right)\right]^{1/12}\le |\mu_0|\left[\bE\left(|\overline{H}_n(t,r)-\overline{H}_n(s,q)|^{12}\right)\right]^{1/12}\notag\\
&\hspace{20mm}+\left[\bE\left(|\overline{S}_n(t,r)-\overline{S}_n(s,q)|^{12}\right)\right]^{1/12}+\left[\bE\left(|\overline{F}_n(t,r)-\overline{F}_n(s,q)|^{12}\right)\right]^{1/12}.\label{tight-4}
\end{align}

For the first term on the right of \eqref{tight-4}, recall from \eqref{H-uniform-bound}, $\overline{H}_n(t,r)$ has the uniform bound 
$$\left|\overline{H}_n(t,r)\right|\le Cn^{-1/4},\quad\forall (t,r)\in\bR^+\times \bR.$$ Therefore,
\be|\mu_0|\left[\bE\left(|\overline{H}_n(t,r)-\overline{H}_n(s,q)|^{12}\right)\right]^{1/12}\le Cn^{-1/4}.\label{tight-n5}\ee

For the second term on the right of \eqref{tight-4}, from \eqref{2-5},  
\begin{align}
&\overline{S}_n(t,r)-\overline{S}_n(s,q)\notag\\
&=n^{-1/4}\sum_{i>0}\bigl(\eta_0(i)-\mu_0\bigr)\left[\mP\bigl(i\le X_{\lfloor nt\rfloor}^{\lfloor ntb\rfloor +\lfloor r\sqrt{n}\rfloor}\bigr)-\mP\bigl(i\le X_{\lfloor ns\rfloor}^{\lfloor nsb\rfloor +\lfloor q\sqrt{n}\rfloor}\bigr)\right]\notag\\
&\quad-n^{-1/4}\sum_{i\le 0}\bigl(\eta_0(i)-\mu_0\bigr)\left[\mP\bigl(i>X_{\lfloor nt\rfloor}^{\lfloor ntb\rfloor +\lfloor r\sqrt{n}\rfloor}\bigr)-\mP\bigl(i>X_{\lfloor ns\rfloor}^{\lfloor nsb\rfloor +\lfloor q\sqrt{n}\rfloor}\bigr)\right]\notag\\
&=n^{-1/4}\sum_{i<0}\bigl(\eta_0(-i)-\mu_0\bigr)\left[\mP\bigl(X_{\lfloor nt\rfloor}^i\ge -\lfloor ntb\rfloor -\lfloor r\sqrt{n}\rfloor\bigr)-\mP\bigl(X_{\lfloor ns\rfloor}^i\ge -\lfloor nsb\rfloor -\lfloor q\sqrt{n}\rfloor\bigr)\right]\notag\\
&\quad -n^{-1/4}\sum_{i\ge 0}\bigl(\eta_0(-i)-\mu_0\bigr)\left[\mP\bigl(X_{\lfloor nt\rfloor}^i<-\lfloor ntb\rfloor -\lfloor r\sqrt{n}\rfloor\bigr)-\mP\bigl(X_{\lfloor ns\rfloor}^i<-\lfloor nsb\rfloor -\lfloor q\sqrt{n}\rfloor\bigr)\right].\label{tight-5}
\end{align}
Denote the events
\begin{align*}
A_{1,i}=&\left\{X_{\lfloor nt\rfloor}^i\ge -\lfloor ntb\rfloor -\lfloor r\sqrt{n}\rfloor, X_{\lfloor ns\rfloor}^i< -\lfloor nsb\rfloor -\lfloor q\sqrt{n}\rfloor\right\},\\
A_{2,i}=&\left\{X_{\lfloor nt\rfloor}^i< -\lfloor ntb\rfloor -\lfloor r\sqrt{n}\rfloor, X_{\lfloor ns\rfloor}^i\ge -\lfloor nsb\rfloor -\lfloor q\sqrt{n}\rfloor\right\}.
\end{align*}
Then, we can rewrite \eqref{tight-5} as
\be\overline{S}_n(t,r)-\overline{S}_n(s,q)=n^{-1/4}\sum_{i\in \bZ}\bigl(\eta_0(-i)-\mu_0\bigr)\left[\mP\bigl(A_{1,i}\bigr)-\mP\bigl(A_{2,i}\bigr)\right].\label{tight-6}\ee

We give an intermediate bound for the 12th moment of $\overline{S}_n(t,r)-\overline{S}_n(s,q)$ that work for all three cases (a), (b) and (c) in Theorem \ref{thm2-6}.
\begin{lemma}\label{initial-increments-bounds}
Assume that the initial increments $\{\eta_0(x)\}_{x\in\bZ}$ satisfy either (a), (b) or (c) in Theorem \ref{thm2-6}. Then $\exists$ $C>0$ s.t.
\be \bE \left\{\left[\overline{S}_n(t,r)-\overline{S}_n(s,q)\right]^{12}\right\}\le Cn^{-3}\left\{1+\sum_{m\in\bZ}\left[\mP\bigl(A_{1,m}\bigr)+\mP\bigl(A_{2,m}\bigr)\right]\right\}^6,\quad \forall n\in\bN.\label{tight-6b}\ee
\end{lemma}
\begin{proof}
We first state a lemma which will be used several times in the following context.\par
\begin{lemma}
Let $I$ be an index set. Suppose $\{X_i\}_{i\in I}$ is an i.i.d.\ sequence with finite 12th moment, and $\{a_i\}_{i\in I}$ is an bounded fixed sequence, i.e.\ $\exists$ constant $M>1$ s.t.\ $\abs{a_i}\le M$, $\forall i\in I$. Then, there exists constant $C<\infty$, such that
\be \bE\left[\left(\sum_{i\in I}a_iX_i\right)^{12}\right]\le C\left(1+\sum_{i\in I}a_i^2\right)^6.\label{12th-moment-bound}\ee 
\end{lemma}
\begin{proof}
\begin{align*}
&\bE\left[\left(\sum_{i\in I}a_iX_i\right)^{12}\right]=\sum_{i_1,i_2,\ldots,i_{12}\in I}a_{i_1}a_{i_2}\cdots a_{i_{12}}\bE\left[X_{i_1}X_{i_2}\cdots X_{i_{12}}\right]\\
&\le C \sum_{k=1}^6\sum_{\begin{subarray}{l}\ell_1+\ell_2+\ldots+\ell_k=12\\ \ell_j\ge 2,j=1,2,\ldots,k\end{subarray}} \prod_{j=1}^k\left(\sum_{i\in I}a_i^{\ell_j}\right)\le CM^{10} \sum_{k=1}^6\sum_{\begin{subarray}{l}\ell_1+\ell_2+\ldots+\ell_k=12\\ \ell_j\ge 2,j=1,2,\ldots,k\end{subarray}} \left(\sum_{i\in I}a_i^2\right)^k\\
&\le C\sum_{k=1}^6\left(\sum_{i\in I}a_i^2\right)^k\le C\left(1+\sum_{i\in I}a_i^2\right)^6.
\end{align*}
\end{proof}

Case (a): Assume that $\{\eta_0(x)\}_{x\in\bZ}$ are i.i.d.\ with 12th finite moment. Notice that $$\left|\mP\bigl(A_{1,i}\bigr)-\mP\bigl(A_{2,i}\bigr)\right|\le 1, \quad\forall i\in\bZ.$$ 
By \eqref{12th-moment-bound},
\begin{align}
&\bE \left\{\left[\overline{S}_n(t,r)-\overline{S}_n(s,q)\right]^{12}\right\}=n^{-3}\bE \left\{\sum_{i\in \bZ}\bigl(\eta_0(-i)-\mu_0\bigr)\left[\mP\bigl(A_{1,i}\bigr)-\mP\bigl(A_{2,i}\bigr)\right]\right\}^{12}\notag\\
&\le Cn^{-3}\left\{1+\sum_{m\in \bZ}\left[\mP\bigl(A_{1,m}\bigr)-\mP\bigl(A_{2,m}\bigr)\right]^2\right\}^6\notag\le Cn^{-3}\left\{1+\sum_{m\in\bZ}\left[\mP\bigl(A_{1,m}\bigr)+\mP\bigl(A_{2,m}\bigr)\right]\right\}^6.\notag
\end{align}

Case (b): Assume $\{\eta_0(x): x\in\bZ\}$ is $\pi_0$-distributed. From \eqref{tight-6} and \eqref{initial-increments-rep}, we have
\begin{align*}
&\overline{S}_n(t,r)-\overline{S}_n(s,q)\\
&=n^{-1/4}\sum_{j\in\bZ}\sum_{k=0}^{\infty}\xi_{-k}(j)\sum_{i\in \bZ}\left[p^k(0,j+i)-p^k(0,j+i+1)\right]\cdot\left[\mP\bigl(A_{1,i}\bigr)-\mP\bigl(A_{2,i}\bigr)\right].
\end{align*} 
Note
\begin{align*}
&\sum_{i\in \bZ}\left|p^k(0,j+i)-p^k(0,j+i+1)\right|\cdot\left|\mP\bigl(A_{1,i}\bigr)-\mP\bigl(A_{2,i}\bigr)\right|\\
&\le \sum_{i\in \bZ}\left[p^k(0,j+i)+p^k(0,j+i+1)\right]=2,\quad \forall j\in\bZ,k\in\bZ^+.
\end{align*}
Again, from \eqref{12th-moment-bound},
\begin{align*}
&\bE \left\{\left[\overline{S}_n(t,r)-\overline{S}_n(s,q)\right]^{12}\right\}\\
&\le C n^{-3} \left\{1+\sum_{j\in\bZ}\sum_{k=0}^{\infty}\left(\sum_{i\in \bZ}\left[p^k(0,j+i)-p^k(0,j+i+1)\right]\left[\mP\bigl(A_{1,i}\bigr)-\mP\bigl(A_{2,i}\bigr)\right]\right)^{2}\right\}^6.
\end{align*}

Furthermore,
\begin{align}
&\sum_{j\in\bZ}\sum_{k=0}^{\infty}\left(\sum_{i\in \bZ}\left[p^k(0,j+i)-p^k(0,j+i+1)\right]\left[\mP\bigl(A_{1,i}\bigr)-\mP\bigl(A_{2,i}\bigr)\right]\right)^{2}\notag\\
&=\sum_{j\in\bZ}\sum_{k=0}^{\infty}\sum_{i_1,i_2\in \bZ}\left[p^k(0,j+i_1)-p^k(0,j+i_1+1)\right]\cdot\left[p^k(0,j+i_2)-p^k(0,j+i_2+1)\right]\notag\\
&\quad\quad\quad\quad\quad\quad\quad\cdot\left[\mP\bigl(A_{1,i_1}\bigr)-\mP\bigl(A_{2,i_1}\bigr)\right]\left[\mP\bigl(A_{1,i_2}\bigr)-\mP\bigl(A_{2,i_2}\bigr)\right]\notag\\
&=\sum_{i\in \bZ}\sum_{\ell\in\bZ}\sum_{j\in\bZ}\sum_{k=0}^{\infty}\left[p^k(0,j+i)-p^k(0,j+i+1)\right]\cdot\left[p^k(0,j+i+\ell)-p^k(0,j+i+\ell+1)\right]\notag\\
&\quad\quad\quad\quad\quad\quad\quad\cdot\left[\mP\bigl(A_{1,i}\bigr)-\mP\bigl(A_{2,i}\bigr)\right]\cdot\left[\mP\bigl(A_{1,i+\ell}\bigr)-\mP\bigl(A_{2,i+\ell}\bigr)\right]\notag\\
&=\sum_{i\in \bZ}\sum_{\ell\in\bZ}\left[a(\ell-1)+a(\ell+1)-2a(\ell)\right] \cdot \left[\mP\bigl(A_{1,i}\bigr)-\mP\bigl(A_{2,i}\bigr)\right]\cdot\left[\mP\bigl(A_{1,i+\ell}\bigr)-\mP\bigl(A_{2,i+\ell}\bigr)\right]\notag\\
&\le \sum_{\ell\in\bZ}\left[a(\ell-1)+a(\ell+1)-2a(\ell)\right] \sum_{i\in\bZ}\frac{1}{2}\left\{\left[\mP\bigl(A_{1,i}\bigr)-\mP\bigl(A_{2,i}\bigr)\right]^2+\left[\mP\bigl(A_{1,i+\ell}\bigr)-\mP\bigl(A_{2,i+\ell}\bigr)\right]^2\right\}\notag\\
&= \frac{1}{\sigma_1^2}\sum_{i\in\bZ}\left[\mP\bigl(A_{1,i}\bigr)-\mP\bigl(A_{2,i}\bigr)\right]^2\le  \frac{1}{\sigma_1^2}\sum_{i\in\bZ}\left|\mP\bigl(A_{1,i}\bigr)-\mP\bigl(A_{2,i}\bigr)\right|\le \frac{1}{\sigma_1^2}\sum_{i\in\bZ}\left[\mP\bigl(A_{1,i}\bigr)+\mP\bigl(A_{2,i}\bigr)\right].\notag
\end{align}
where the potential kernel $a(x)$ is defined in \eqref{t37-2} and the last equality is from Lemma \ref{lmmA-4} in the Appendix.\par

Therefore, we can find constant $C<\infty$, such that

$$\bE \left\{\left[\overline{S}_n(t,r)-\overline{S}_n(s,q)\right]^{12}\right\}\le Cn^{-3}\left\{1+\sum_{i\in\bZ}\left[\mP\bigl(A_{1,i}\bigr)+\mP\bigl(A_{2,i}\bigr)\right]\right\}^6.$$

Case (c):
Assume $\{\eta_0(x)\}_{x\in\bZ}$ is a strongly mixing stationary sequence satisfying the following condition. There exists $\varepsilon_0>0$ such that $\bE\left[|\eta_0(0)|^{12+\varepsilon_0}\right]<\infty$ and the strong mixing coefficients $\{\alpha(x)\}_{x\in\bZ^+}$ should have $\sum_{i=0}^{\infty}(i+1)^{10+132/\varepsilon_0}\alpha(i)<\infty$.\par

We will use the following bound borrowed from \cite{27} (see Theorem 2.2 and the derivation of equation C.6).
\begin{lemma}\label{lmm3-13}
Let $m>0$ be an integer and $\{X_i\}_{i\in\bN}$ be a sequence of centered real valued random variables with finite momemts of order $2m$. Let $S_n=\sum_{k=1}^nX_k$. Then there exists two positive constants $a_m,b_m<\infty$ such that 
\begin{align}
\bE \left(S_n^{2m}\right)\le& a_m\left(\int_0^1\sum_{k=1}^n[\alpha^{-1}(u)\wedge n]Q_k^2(u)du\right)^m\notag\\
&\quad\quad+ b_m\sum_{k=1}^n\int_0^1[\alpha^{-1}(u)\wedge n]^{2m-1}Q_k^{2m}(u)du,\notag
\end{align}
where $Q_k(u)$ is the quantile function of $|X_k|$, $\alpha^{-1}(u)=\sum_{i\ge 0}\ind{\{u< \alpha(i)\}}$ and $\{\alpha(k)\}_{k\ge 0}$ are the strong mixing coeffients of $\{X_i\}_{i\in\bN}$.\par
Furthermore, in general, for $r>p\ge 1$, suppose $\{X_i\}_{i\in\bN}$ have finite $r$th moment. Then, there exists a constant $c_p<\infty$ such that
\begin{align}
&\sum_{k=1}^n\int_0^1[\alpha^{-1}(u)\wedge n]^{p-1}Q_k^{p}(u)du \notag\\
&\le c_p \left(\sum_{i=0}^n(i+1)^{(pr-2r+p)/(r-p)}\alpha(i)\right)^{1-p/r}\sum_{k=1}^n\left(\bE|X_k|^r\right)^{p/r}.\label{tight-6c}
\end{align}
\end{lemma}

Let us denote $k_n=\#\{i\in\bZ: \mP\bigl(A_{1,i}\bigr)-\mP\bigl(A_{2,i}\bigr)\neq 0\}$. Then $k_n=O(n)$ due to assumption \eqref{weight-assump-1}. Based on Lemma \ref{lmm3-13} above, let $Q_i(u)$ be the quantile function of $\bigl|\eta_0(-i)-\mu_0\bigr|\cdot\left|\mP\bigl(A_{1,i}\bigr)-\mP\bigl(A_{2,i}\bigr)\right|$, $i\in\bZ$ and $m=6$, we have
\begin{align*} 
&\bE \left\{\left[\overline{S}_n(t,r)-\overline{S}_n(s,q)\right]^{12}\right\}\\
&\le Cn^{-3}\left[\left(\sum_{i\in\bZ}\int_0^1[\alpha^{-1}(u)\wedge k_n]Q_i^2(u)du\right)^6+\sum_{j\in\bZ}\int_0^1[\alpha^{-1}(u)\wedge k_n]^{11}Q_j^{12}(u)du\right].
\end{align*}

Let $p=2, r=12$ in \eqref{tight-6c}, we can get an upper bound for $\sum_{i\in\bZ}\int_0^1[\alpha^{-1}(u)\wedge k_n]Q_i^2(u)du$,
\begin{align*}
&\sum_{i\in\bZ}\int_0^1[\alpha^{-1}(u)\wedge k_n]Q_i^2(u)du\\
&\le c_{2} \left(\sum_{i=0}^{k_n}(i+1)^{1/5}\alpha(i)\right)^{5/6}\sum_{j\in\bZ}\left(\bE|\eta_0(-j)-\mu_0\bigr|^{12}\right)^{1/6}\left[\mP\bigl(A_{1,j}\bigr)-\mP\bigl(A_{2,j}\bigr)\right]^2\\
&\le C \left(\sum_{i=0}^{k_n}(i+1)^{1/5}\alpha(i)\right)^{5/6}\sum_{j\in\bZ}\left[\mP\bigl(A_{1,j}\bigr)+\mP\bigl(A_{2,j}\bigr)\right]\le C \sum_{j\in\bZ}\left[\mP\bigl(A_{1,j}\bigr)+\mP\bigl(A_{2,j}\bigr)\right].
\end{align*}
By the same token, let $p=12$, $r=12+\varepsilon_0$ in \eqref{tight-6c}, we can show that
\begin{align*}
&\sum_{j\in\bZ}\int_0^1[\alpha^{-1}(u)\wedge k_n]^{11}Q_j^{12}(u)du\\
&\le C \left(\sum_{i=0}^{k_n}(i+1)^{10+132/\varepsilon_0}\alpha(i)\right)^{\varepsilon_0/(12+\varepsilon_0)}\sum_{j\in\bZ}\left[\mP\bigl(A_{1,j}\bigr)+\mP\bigl(A_{2,j}\bigr)\right]\\
&\le C \sum_{j\in\bZ}\left[\mP\bigl(A_{1,j}\bigr)+\mP\bigl(A_{2,j}\bigr)\right].
\end{align*}  
Hence, 
\begin{align*}
&\bE \left\{\left[\overline{S}_n(t,r)-\overline{S}_n(s,q)\right]^{12}\right\}\\
&\le Cn^{-3}\left\{\left(\sum_{j\in\bZ}\left[\mP\bigl(A_{1,j}\bigr)+\mP\bigl(A_{2,j}\bigr)\right]\right)^6+\sum_{j\in\bZ}\left[\mP\bigl(A_{1,j}\bigr)+\mP\bigl(A_{2,j}\bigr)\right]\right\}\\
&\le Cn^{-3}\left\{1+\sum_{m\in\bZ}\left[\mP\bigl(A_{1,m}\bigr)+\mP\bigl(A_{2,m}\bigr)\right]\right\}^6. 
\end{align*}

The proof of Lemma \ref{initial-increments-bounds} is complete.
%
\end{proof}
 
Now let us bound the summation $\sum_{m\in\bZ}\left[\mP\bigl(A_{1,m}\bigr)+\mP\bigl(A_{2,m}\bigr)\right]$. 
\begin{lemma}
$\exists$ $C<\infty$, s.t.
\be \sum_{m\in\bZ}\left[\mP\bigl(A_{1,m}\bigr)+\mP\bigl(A_{2,m}\bigr)\right]\le C \left[\sqrt{(t-s)n}+|r-q|\sqrt{n}+1\right].\label{tight-10}\ee
\end{lemma}
\begin{proof}
Suppose $t\ge s$. Note that
\begin{align}
\sum_{m\in\bZ}\mP\bigl(A_{1,m}\bigr)=&\sum_{m\in\bZ}\mP\bigl(X_{\lfloor nt\rfloor}^0\ge -\lfloor ntb\rfloor -\lfloor r\sqrt{n}\rfloor-m, X_{\lfloor ns\rfloor}^0< -\lfloor nsb\rfloor -\lfloor q\sqrt{n}\rfloor-m\bigr)\notag\\
=&\sum_{m\in\bZ}\sum_{\ell>m}\mP\bigl(X_{\lfloor ns\rfloor}^0=-\lfloor nsb\rfloor -\lfloor q\sqrt{n}\rfloor-\ell\bigr)\notag\\
&\quad\quad\quad\cdot\mP\bigl(X_{\lfloor nt\rfloor-\lfloor ns\rfloor}^0\ge \lfloor nsb\rfloor-\lfloor ntb\rfloor +\lfloor q\sqrt{n}\rfloor-\lfloor r\sqrt{n}\rfloor-m+\ell\bigr)\notag\\
\overset{k=\ell-m}{=}&\sum_{m\in\bZ}\sum_{k>0}\mP\bigl(X_{\lfloor ns\rfloor}^0=-\lfloor nsb\rfloor -\lfloor q\sqrt{n}\rfloor-k-m\bigr)\notag\\
&\quad\quad\quad\cdot\mP\bigl(X_{\lfloor nt\rfloor-\lfloor ns\rfloor}^0\ge \lfloor nsb\rfloor-\lfloor ntb\rfloor +\lfloor q\sqrt{n}\rfloor-\lfloor r\sqrt{n}\rfloor+k\bigr)\notag\\
=&\sum_{k>0}\mP\bigl(X_{\lfloor nt\rfloor-\lfloor ns\rfloor}^0\ge \lfloor nsb\rfloor-\lfloor ntb\rfloor +\lfloor q\sqrt{n}\rfloor-\lfloor r\sqrt{n}\rfloor+k\bigr).\label{tight-7}
\end{align} 
Similarly, one can show that
\be\sum_{m\in\bZ}\mP\bigl(A_{2,m}\bigr)=\sum_{k\le 0}\mP\bigl(X_{\lfloor nt\rfloor-\lfloor ns\rfloor}^0< \lfloor nsb\rfloor-\lfloor ntb\rfloor +\lfloor q\sqrt{n}\rfloor-\lfloor r\sqrt{n}\rfloor+k\bigr).\label{tight-8}\ee
Combine \eqref{tight-7} and \eqref{tight-8} together, 
\begin{align}
&\sum_{m\in\bZ}\left[\mP\bigl(A_{1,m}\bigr)+\mP\bigl(A_{2,m}\bigr)\right]\notag\\
&=\sum_{k>0}\mP\bigl(X_{\lfloor nt\rfloor-\lfloor ns\rfloor}^0-\lfloor nsb\rfloor+\lfloor ntb\rfloor -\lfloor q\sqrt{n}\rfloor+\lfloor r\sqrt{n}\rfloor\ge k\bigr)\notag\\
&\quad\quad+\sum_{k< 0}\mP\bigl(X_{\lfloor nt\rfloor-\lfloor ns\rfloor}^0-\lfloor nsb\rfloor+\lfloor ntb\rfloor -\lfloor q\sqrt{n}\rfloor+\lfloor r\sqrt{n}\rfloor\le k\bigr)\notag\\
&=\sum_{k>0}\mP\bigl(\bigl|X_{\lfloor nt\rfloor-\lfloor ns\rfloor}^0-\lfloor nsb\rfloor+\lfloor ntb\rfloor -\lfloor q\sqrt{n}\rfloor+\lfloor r\sqrt{n}\rfloor\bigr|\ge k\bigr)\notag\\
&=\mE\left| X_{\lfloor nt\rfloor-\lfloor ns\rfloor}^0-\lfloor nsb\rfloor+\lfloor ntb\rfloor -\lfloor q\sqrt{n}\rfloor+\lfloor r\sqrt{n}\rfloor\right|\notag\\
&\le \mE\left| X_{\lfloor nt\rfloor-\lfloor ns\rfloor}^0-\lfloor nsb\rfloor+\lfloor ntb\rfloor\right|+|r-q|\sqrt{n}+1\notag\\
&\le  C \left[\sqrt{(t-s)n}+|r-q|\sqrt{n}+1\right].\notag
\end{align}
\end{proof}

Combine \eqref{tight-6b} and \eqref{tight-10} together, we can get the following bound for the second term in \eqref{tight-4}:
\be \left[\bE\left(|\overline{S}_n(t,r)-\overline{S}_n(s,q)|^{12}\right)\right]^{1/12}\le Cn^{-1/4}\left[\left(\sqrt{|t-s|}+|r-q|\right)\sqrt{n}+1\right]^{1/2}.\label{tight-11}\ee

For the third term on the right of \eqref{tight-4}, from \eqref{add-1}, suppose $t\ge s$, 
\begin{align*}
&\overline{F}_n(t,r)-\overline{F}_n(s,q)\\
&=n^{-1/4}\sum_{k=1}^{\lfloor ns\rfloor}\sum_{x\in\mathbb{Z}}\xi_k(x)\left[\mP\bigl(X_{\lfloor nt\rfloor -k}^{\lfloor ntb\rfloor +\lfloor r\sqrt{n}\rfloor}=x\bigr)-\mP\bigl(X_{\lfloor ns\rfloor -k}^{\lfloor nsb\rfloor +\lfloor q\sqrt{n}\rfloor}=x\bigr)\right]\\
&\quad\quad+n^{-1/4}\sum_{k=\lfloor ns\rfloor+1}^{\lfloor nt\rfloor}\sum_{x\in\mathbb{Z}}\xi_k(x)\mP\bigl(X_{\lfloor nt\rfloor -k}^{\lfloor ntb\rfloor +\lfloor r\sqrt{n}\rfloor}=x\bigr).
\end{align*}
From \eqref{12th-moment-bound},
\begin{align}
&\bE\left\{\left[\overline{F}_n(t,r)-\overline{F}_n(s,q)\right]^{12}\right\}\notag\\
&\le Cn^{-3}\Biggl\{1+\sum_{k=1}^{\lfloor ns\rfloor}\sum_{x\in\mathbb{Z}}\left[\mP\bigl(X_{\lfloor nt\rfloor -k}^{\lfloor ntb\rfloor +\lfloor r\sqrt{n}\rfloor}=x\bigr)-\mP\bigl(X_{\lfloor ns\rfloor -k}^{\lfloor nsb\rfloor +\lfloor q\sqrt{n}\rfloor}=x\bigr)\right]^2\notag\\
&\quad\quad\quad\quad+\sum_{k=\lfloor ns\rfloor+1}^{\lfloor nt\rfloor}\sum_{x\in\mathbb{Z}}\mP\bigl(X_{\lfloor nt\rfloor -k}^{\lfloor ntb\rfloor +\lfloor r\sqrt{n}\rfloor}=x\bigr)^2\Biggr\}^6.\label{tight-11b}
\end{align}
For the two summations on the right of \eqref{tight-11b},
\begin{align}
&\sum_{k=1}^{\lfloor ns\rfloor}\sum_{x\in\mathbb{Z}}\left[\mP\bigl(X_{\lfloor nt\rfloor -k}^{\lfloor ntb\rfloor +\lfloor r\sqrt{n}\rfloor}=x\bigr)-\mP\bigl(X_{\lfloor ns\rfloor -k}^{\lfloor nsb\rfloor +\lfloor q\sqrt{n}\rfloor}=x\bigr)\right]^2+\sum_{k=\lfloor ns\rfloor+1}^{\lfloor nt\rfloor}\sum_{x\in\mathbb{Z}}\mP\bigl(X_{\lfloor nt\rfloor -k}^{\lfloor ntb\rfloor +\lfloor r\sqrt{n}\rfloor}=x\bigr)^2\notag\\
&=\sum_{k=1}^{\lfloor nt\rfloor}\mP\bigl(X_{\lfloor nt\rfloor -k}^{\lfloor ntb\rfloor +\lfloor r\sqrt{n}\rfloor}-\tilde{X}_{\lfloor nt\rfloor -k}^{\lfloor ntb\rfloor +\lfloor r\sqrt{n}\rfloor}=0\bigr)+\sum_{k=1}^{\lfloor ns\rfloor}\mP\bigl(X_{\lfloor ns\rfloor -k}^{\lfloor nsb\rfloor +\lfloor q\sqrt{n}\rfloor}-\tilde{X}_{\lfloor ns\rfloor -k}^{\lfloor nsb\rfloor +\lfloor q\sqrt{n}\rfloor}=0\bigr)\notag\\
&\quad\quad-2\sum_{k=1}^{\lfloor ns\rfloor}\mP\bigl(X_{\lfloor nt\rfloor -k}^{\lfloor ntb\rfloor +\lfloor r\sqrt{n}\rfloor}-\tilde{X}_{\lfloor ns\rfloor -k}^{\lfloor nsb\rfloor +\lfloor q\sqrt{n}\rfloor}=0\bigr)\notag\\
&=\sum_{k=0}^{\lfloor nt\rfloor-1}q^k(0,0)+\sum_{k=0}^{\lfloor ns\rfloor -1}q^k(0,0)-2\mE\left\{\sum_{k=0}^{\lfloor ns\rfloor-1} q^k\bigl(X_{\lfloor nt\rfloor -\lfloor ns\rfloor}^{\lfloor ntb\rfloor +\lfloor r\sqrt{n}\rfloor}-\lfloor nsb\rfloor -\lfloor q\sqrt{n}\rfloor,0\bigr)\right\}\notag\\
&=\sum_{k=\lfloor ns\rfloor}^{\lfloor nt\rfloor-1}q^k(0,0)+2\mE\left\{\sum_{k=0}^{\lfloor ns\rfloor-1}\left[q^k(0,0)-q^k\bigl(X_{\lfloor nt\rfloor -\lfloor ns\rfloor}^{\lfloor ntb\rfloor +\lfloor r\sqrt{n}\rfloor}-\lfloor nsb\rfloor -\lfloor q\sqrt{n}\rfloor,0\bigr)\right]\right\}.\label{tight-13}
\end{align}
From \eqref{b2}, the first term in \eqref{tight-13} can be bounded by
\be\sum_{k=\lfloor ns\rfloor}^{\lfloor nt\rfloor-1}q^k(0,0)\le \sum_{k=\lfloor ns\rfloor}^{\lfloor nt\rfloor-1}\frac{C}{\sqrt{k}}\le\sum_{k=1}^{\lfloor nt\rfloor-\lfloor ns\rfloor}\frac{C}{\sqrt{k}}\le C\left[1+\sqrt{(t-s)n}\right].\label{tight-14}\ee
By inequality \eqref{qprop1}, the second term in \eqref{tight-13} is bounded by  
\begin{align}
&\mE\left\{\sum_{k=0}^{\lfloor ns\rfloor-1}\left[q^k(0,0)-q^k\bigl(X_{\lfloor nt\rfloor -\lfloor ns\rfloor}^{\lfloor ntb\rfloor +\lfloor r\sqrt{n}\rfloor}-\lfloor nsb\rfloor -\lfloor q\sqrt{n}\rfloor,0\bigr)\right]\right\}\notag\\
&\le\mE \left[a\left(X_{\lfloor nt\rfloor -\lfloor ns\rfloor}^{\lfloor ntb\rfloor +\lfloor r\sqrt{n}\rfloor}-\lfloor nsb\rfloor -\lfloor q\sqrt{n}\rfloor\right)\right]\le C\mE\left|X_{\lfloor nt\rfloor -\lfloor ns\rfloor}^{\lfloor ntb\rfloor +\lfloor r\sqrt{n}\rfloor}-\lfloor nsb\rfloor -\lfloor q\sqrt{n}\rfloor\right|\notag\\
&\le C\left[(\sqrt{|t-s|}+|r-q|)\sqrt{n}+1\right],\label{tight-15}
\end{align}
where $a(x)$ is defined in \eqref{t37-2} and the second inequality is due to \eqref{nl38}.\par

Combine \eqref{tight-14} and \eqref{tight-15}, we can get a bound for \eqref{tight-13}. And therefore the third term on the right of \eqref{tight-4} can be bounded by
\be \left[\bE\left(|\overline{F}_n(t,r)-\overline{F}_n(s,q)|^{12}\right)\right]^{1/12}\le Cn^{-1/4}\left[\left(\sqrt{|t-s|}+|r-q|\right)\sqrt{n}+1\right]^{1/2}.\label{tight-16}\ee

As a conclusion from \eqref{tight-n5}, \eqref{tight-11} and \eqref{tight-16}, there exists constant $C<\infty$ such that
\begin{align}
\bE\left(|\fluc_n(t,r)-\fluc_n(s,q)|^{12}\right)\le& Cn^{-3}\left[\left(\sqrt{|t-s|}+|r-q|\right)\sqrt{n}+1\right]^{6}\notag\\
\le& C\left(|t-s|^3+|r-q|^6+n^{-3}\right).\label{tight-17}
\end{align}
This bound has exactly the same form as the one found in \cite{11}.\par

Since $t,s,r,q\in [0,1]$ are bounded and $2<\kappa<3$, we can further bound \eqref{tight-17} by
\be\bE\left(|\fluc_n(t,r)-\fluc_n(s,q)|^{12}\right)\le C\left(|t-s|^\kappa+|r-q|^\kappa+n^{-3}\right).\label{tight-18}\ee

Note that $\delta_n^\kappa=n^{-\gamma\kappa}\ge n^{-3}$ and $|(t,r)-(s,q)|> \delta_n$, we can get
 \be\bE\left(|\fluc_n(t,r)-\fluc_n(s,q)|^{12}\right)\le C\left(|t-s|^\kappa+|r-q|^\kappa\right),\label{tight-19}\ee
which proves Lemma \ref{lmm3-12}.
\end{proof}

The second tightness condition can be verified as following.\par

 \begin{lemma}\label{lmm3-14}
Under the settings in Lemma \ref{lmm3-12}, for any fixed $1<\gamma<3/2$ and $\epsilon>0$,  
$$\lim_{n\to\infty}\bP\left(\sup_{\begin{subarray}{l} (t,r),(s,q)\in [0,1]^2\\ \|(t,r)-(s,q)\|<n^{-\gamma}\end{subarray}}|\fluc_n(t,r)-\fluc_n(s,q)|>\epsilon\right)=0.$$
\end{lemma}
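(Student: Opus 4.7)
The plan is to exploit the essentially piecewise-constant structure of $\fluc_n$ in $(t,r)$, reducing the oscillation on any ball of radius $n^{-\gamma}$ to a maximum over $O(n^{3/2})$ nearest-neighbor differences on a refined lattice, then applying the $12$th-moment bound established inside the proof of Lemma \ref{lmm3-12} together with Markov's inequality and a union bound.

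Writing
\[
\fluc_n(t,r)=n^{-1/4}\h_{\lfloor nt\rfloor}\bigl(\lfloor r\sqrt n\rfloor+\lfloor ntb\rfloor\bigr)-\mu_0 r n^{1/4},
\]
I observe that the height piece depends on $(t,r)\in[0,1]^2$ only through the three integer-valued maps $t\mapsto\lfloor nt\rfloor$, $t\mapsto\lfloor ntb\rfloor$ and $r\mapsto\lfloor r\sqrt n\rfloor$; the only truly continuous dependence is the linear drift $-\mu_0 r n^{1/4}$, whose oscillation on any ball of radius $n^{-\gamma}$ is at most $|\mu_0|n^{1/4-\gamma}=o(1)$ since $\gamma>1>1/4$. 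I will superimpose the three discontinuity sets $\{k/n:0\le k\le n\}$, $\{m/(n|b|):m\in\bZ\}\cap[0,1]$ and $\{j/\sqrt n:0\le j\le\sqrt n\}$ to form a rectangular lattice $L_n\subset[0,1]^2$, which has $O(n^{3/2})$ vertices and $O(n^{3/2})$ nearest-neighbor edges $\mathcal{E}_n$, each of Euclidean length at most $2n^{-1/2}$.

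For $\gamma>1$ and $n$ large enough that $n\cdot n^{-\gamma}$, $n|b|\cdot n^{-\gamma}$ and $\sqrt n\cdot n^{-\gamma}$ are all less than $1$, each of the three integer maps above differs by at most $\pm 1$ between any two points $(t,r),(s,q)$ of a common ball of radius $n^{-\gamma}$. Hence such a pair can be joined by a three-segment axis-parallel path crossing at most three grid lines of $L_n$, giving
\[
|\fluc_n(t,r)-\fluc_n(s,q)|\le|\mu_0|n^{1/4-\gamma}+3\max_{\{u,v\}\in\mathcal{E}_n}|\fluc_n(u)-\fluc_n(v)|.
\]
The crucial point is that the bound \eqref{tight-17} in the proof of Lemma \ref{lmm3-12} is derived unconditionally in $(t,r),(s,q)$---the hypothesis $|(t,r)-(s,q)|>n^{-\gamma}$ is invoked only at the very last step, to dominate $n^{-3}$. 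Applied to an adjacent pair $\{u,v\}\in\mathcal{E}_n$, for which $|t_u-t_v|\le 1/n$ and $|r_u-r_v|\le 1/\sqrt n$, it yields $\bE|\fluc_n(u)-\fluc_n(v)|^{12}\le Cn^{-3}$.

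Markov's inequality together with a union bound over the $O(n^{3/2})$ edges of $\mathcal{E}_n$ then produces
\[
\bP\Bigl(\max_{\{u,v\}\in\mathcal{E}_n}|\fluc_n(u)-\fluc_n(v)|>\epsilon/6\Bigr)\le C(\epsilon)\,n^{3/2}\cdot n^{-3}\longrightarrow 0,
\]
which combined with $|\mu_0|n^{1/4-\gamma}\to 0$ will complete the proof. I expect the main (though minor) obstacle to be the elementary but slightly finicky geometric verification that a ball of radius $n^{-\gamma}$ with $\gamma>1$ really intersects only $O(1)$ cells of $L_n$ uniformly in $b$, in particular that the $\lfloor ntb\rfloor$ transitions---possibly incommensurate with $\{k/n\}$ when $b$ is irrational---can be absorbed into the same three-segment path. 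Once this is granted, the rest is a routine Markov--union-bound argument riding on the moment estimate already produced for Lemma \ref{lmm3-12}.
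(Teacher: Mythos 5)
Your argument is correct and reaches the conclusion by a somewhat different route than the paper. Both proofs rest on the same two structural facts: $\fluc_n$ is piecewise constant in $(t,r)$ up to the deterministic drift $-\mu_0 r n^{1/4}$, with discontinuities only along the lines $t=k/n$, $t=m/(n|b|)$ and $r=j/\sqrt n$; and the twelfth-moment estimate \eqref{tight-17} holds with no restriction on $|(t,r)-(s,q)|$, so that at the mesh scale $(1/n,1/\sqrt n)$ it gives $\bE|\fluc_n(u)-\fluc_n(v)|^{12}\le Cn^{-3}$. The paper tiles $[0,1]^2$ by $O(n^{2\gamma})$ boxes of side $\approx n^{-\gamma}$, uses that $\overline S_n$ and $\overline F_n$ each take at most $8$ values per box, and unions over boxes to obtain $O(n^{2\gamma-3})$, which is why $\gamma<3/2$ appears in this lemma. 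You instead union directly over the $O(n)\times O(\sqrt n)=O(n^{3/2})$ adjacent-cell pairs of the natural discontinuity lattice, bounding the oscillation on any $n^{-\gamma}$-ball by at most three cell crossings plus the drift error $O(n^{1/4-\gamma})$, which gives $O(n^{3/2}\cdot n^{-3})=O(n^{-3/2})$; this needs only $\gamma>1$ (so each floor map jumps at most once across the ball) and is sharper and $\gamma$-independent. Your bookkeeping is the more economical one, though the end result is the same since $\gamma<3/2$ is in any case forced by the first tightness condition in Lemma \ref{lmm3-12}. Two small points worth tidying: the jumps you telescope are really jumps of $g_n=\fluc_n+\mu_0 r n^{1/4}$, so writing them as $\fluc_n(u)-\fluc_n(v)$ for representatives $u,v$ of adjacent cells adds a further drift term $\le|\mu_0|n^{1/4}|r_u-r_v|$, which is controlled by choosing $u,v$ near the shared boundary and only rescales the constant multiplying $n^{1/4-\gamma}$; and a two-segment axis-parallel path already crosses at most three grid lines (one per vertical family, one horizontal), while your worry about $\{m/(n|b|)\}$ being incommensurate with $\{k/n\}$ is harmless --- all that matters is that consecutive lines of each family be spaced more than $2n^{-\gamma}$ apart, which holds for $n$ large whenever $\gamma>1$.
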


\begin{proof}
Let us define the interval $I(k):=[(k-1)n^{-\gamma},(k+1)n^{-\gamma}]\cap [0,1]$. For all fixed $\epsilon>0$, 
\begin{align}
&\bP\left(\sup_{\begin{subarray}{l} (t,r),(s,q)\in [0,1]^2\\ \|(t,r)-(s,q)\|<n^{-\gamma}\end{subarray}}|\fluc_n(t,r)-\fluc_n(s,q)|>\epsilon\right)\notag\\
&\le\bP\left(\bigcup_{k_1=1}^{\lfloor n^\gamma\rfloor}\bigcup_{k_2=1}^{\lfloor n^\gamma\rfloor}\left\{\sup_{ t\in I(k_1), r\in I(k_2)}|\fluc_n(t,r)-\fluc_n(k_1n^{-\gamma},k_2n^{-\gamma})|\ge\frac{\epsilon}{2}\right\}\right)\notag\\
&\le\sum_{k_1=1}^{\lfloor n^\gamma\rfloor}\sum_{k_2=1}^{\lfloor n^\gamma\rfloor}\bP\left(\sup_{t\in I(k_1), r\in I(k_2)}|\fluc_n(t,r)-\fluc_n(k_1n^{-\gamma},k_2n^{-\gamma})|\ge\frac{\epsilon}{2}\right).\label{tight-20}
\end{align}
Similar as before, we can break $\fluc_n(\cdot,\cdot)$ into three parts. Since 
\begin{align*}
&|\fluc_n(t,r)-\fluc_n(k_1n^{-\gamma},k_2n^{-\gamma})|\\
&\le|\mu_0\overline{H}_n(t,r)-\mu_0\overline{H}_n(k_1n^{-\gamma},k_2n^{-\gamma})|+|\overline{S}_n(t,r)-\overline{S}_n(k_1n^{-\gamma},k_2n^{-\gamma})|\\
&\quad\quad+|\overline{F}_n(t,r)-\overline{F}_n(k_1n^{-\gamma},k_2n^{-\gamma})|,
\end{align*}
one can further bound \eqref{tight-20} by three terms.
\begin{align}
&\bP\left(\sup_{\begin{subarray}{l} (t,r),(s,q)\in [0,1]^2\\ \|(t,r)-(s,q)\|<n^{-\gamma}\end{subarray}}|\fluc_n(t,r)-\fluc_n(s,q)|>\epsilon\right)\notag\\
&\le\sum_{k_1=1}^{\lfloor n^\gamma\rfloor}\sum_{k_2=1}^{\lfloor n^\gamma\rfloor}\bP\left(\sup_{t\in I(k_1), r\in I(k_2)}|\mu_0\overline{H}_n(t,r)-\mu_0\overline{H}_n(k_1n^{-\gamma},k_2n^{-\gamma})|\ge\frac{\epsilon}{6}\right)\label{tight-21}\\
&\quad\quad+\sum_{k_1=1}^{\lfloor n^\gamma\rfloor}\sum_{k_2=1}^{\lfloor n^\gamma\rfloor}\bP\left(\sup_{t\in I(k_1), r\in I(k_2)}|\overline{S}_n(t,r)-\overline{S}_n(k_1n^{-\gamma},k_2n^{-\gamma})|\ge\frac{\epsilon}{6}\right)\label{tight-22}\\
&\quad\quad+\sum_{k_1=1}^{\lfloor n^\gamma\rfloor}\sum_{k_2=1}^{\lfloor n^\gamma\rfloor}\bP\left(\sup_{t\in I(k_1), r\in I(k_2)}|\overline{F}_n(t,r)-\overline{F}_n(k_1n^{-\gamma},k_2n^{-\gamma})|\ge\frac{\epsilon}{6}\right).\label{tight-23}
\end{align}

For the first term \eqref{tight-21}, we have observed that $\overline{H}_n(t,r)$ can be uniformly bounded by $Cn^{-1/4}$. Thus, for large enough $n$, $$\bP\left(\sup_{t\in I(k_1), r\in I(k_2)}|\mu_0\overline{H}_n(t,r)-\mu_0\overline{H}_n(k_1n^{-\gamma},k_2n^{-\gamma})|\ge\frac{\epsilon}{6}\right)=0.$$
As a result, the summation in \eqref{tight-21} vanishes as $n\to\infty$.\par

For the second term \eqref{tight-22} and third term \eqref{tight-23}, recall from \eqref{tight-6},
\begin{align}
&\overline{S}_n(t,r)-\overline{S}_n(k_1n^{-\gamma},k_2n^{-\gamma})\notag\\
&=n^{-1/4}\sum_{i\in \bZ}\bigl(\eta_0(-i)-\mu_0\bigr)\left[\mP\left(A_{1,i}(t,r,k_1n^{-\gamma},k_2n^{-\gamma})\right)-\mP\left(A_{2,i}(t,r,k_1n^{-\gamma},k_2n^{-\gamma})\right)\right],\label{tight-28}
\end{align}
where
\begin{align*}
A_{1,i}(t,r,s,q)=&\left\{X_{\lfloor nt\rfloor}^i\ge -\lfloor ntb\rfloor -\lfloor r\sqrt{n}\rfloor, X_{\lfloor ns \rfloor}^i< -\lfloor nsb\rfloor -\lfloor q\sqrt{n}\rfloor\right\},\\
A_{2,i}(t,r,s,q)=&\left\{X_{\lfloor nt\rfloor}^i< -\lfloor ntb\rfloor -\lfloor r\sqrt{n}\rfloor, X_{\lfloor  ns\rfloor}^i\ge -\lfloor  nsb\rfloor -\lfloor q\sqrt{n}\rfloor\right\}.
\end{align*}
And 
\begin{align*}
\overline{F}_n(t,r)-\overline{F}_n(k_1n^{-\gamma},k_2n^{-\gamma})=&n^{-1/4}\sum_{k=1}^{\lfloor nt\rfloor}\sum_{x\in\mathbb{Z}}\xi_k(x)\mP\bigl(X_{\lfloor nt\rfloor -k}^{\lfloor ntb\rfloor +\lfloor r\sqrt{n}\rfloor}=x\bigr)\\
&\quad\quad-n^{-1/4}\sum_{k=1}^{\lfloor k_1n^{1-\gamma}\rfloor}\sum_{x\in\mathbb{Z}}\xi_k(x)\mP\bigl(X_{\lfloor k_1n^{1-\gamma}\rfloor -k}^{\lfloor k_1n^{1-\gamma}b\rfloor +\lfloor k_2n^{1/2-\gamma}\rfloor}=x\bigr).
\end{align*}

 Note that for large enough $n$, and any $t\in I(k_1)$, $r\in I(k_2)$, 
\begin{align*}
|nt-k_1n^{1-\gamma}|=&n|t-k_1n^{-\gamma}|\le n^{1-\gamma}<1/2,\\
|ntb-k_1n^{1-\gamma}b|=&n|b|\cdot|t-k_1n^{-\gamma}|\le |b|n^{1-\gamma}<1/2,\\
|r\sqrt{n}-k_2n^{1/2-\gamma}|=&n^{1/2}|r-k_2n^{-\gamma}|\le n^{1/2-\gamma}<1/2.
\end{align*}
This means that for $t\in I(k_1)$ and $r\in I(k_2)$, each one of $\lfloor nt\rfloor$, $\lfloor ntb\rfloor$ and $\lfloor r\sqrt{n}\rfloor$ can only have at most one jump. For example, $\lfloor nt\rfloor$ can only jump from $\lfloor k_1n^{1-\gamma}\rfloor-1$ to $\lfloor k_1n^{1-\gamma}\rfloor$ or from $\lfloor k_1n^{1-\gamma}\rfloor$ to $\lfloor k_1n^{1-\gamma}\rfloor+1$. As a result, $\overline{S}_n(t,r)$ and $\overline{F}_n(t,r)$ can only have at most 8 values on $I(k_1)\times I(k_2)$. \par

Suppose that $\overline{S}_n(t,r)$ has exactly $m_0\le 8$ values on the interval $I(k_1)\times I(k_2)$. Let us pick one point from each value of $\overline{S}_n(t,r)$ on $I(k_1)\times I(k_2)$ and denote them by $(t_i,r_i)$, $i=1,\ldots,m_0$. Then,
\begin{align*}
&\bP\left(\sup_{t\in I(k_1), r\in I(k_2)}|\overline{S}_n(t,r)-\overline{S}_n(k_1n^{-\gamma},k_2n^{-\gamma})|\ge\frac{\epsilon}{6}\right)\\
&\le \sum_{i=1}^{m_0}\bP\left(|\overline{S}_n(t_i,r_i)-\overline{S}_n(k_1n^{-\gamma},k_2n^{-\gamma})|\ge\frac{\epsilon}{6}\right)\le C\sum_{i=1}^{m_0}\frac{\bE|S_n(t_i,r_i)-S_n(k_1n^{-\gamma},k_2n^{-\gamma})|^{12}}{n^3}\\
&\le C\sum_{i=1}^{m_0}\frac{\left[\left(\sqrt{|t_i-k_1n^{-\gamma}|}+|r_i-k_2n^{-\gamma}|\right)\sqrt{n}+1\right]^{6}}{n^{3}}\le C\frac{\left[\left(n^{-\gamma/2}+n^{-\gamma}\right)\sqrt{n}+1\right]^{6}}{n^{3}}\le Cn^{-3}.
\end{align*}
where the second inequality is from Markov inequality, and the third inequality comes from \eqref{tight-11}.\par
 
Therefore, for any $1<\gamma<3/2$, we have
\begin{align}
&\sum_{k_1=1}^{\lfloor n^\gamma\rfloor}\sum_{k_2=1}^{\lfloor n^\gamma\rfloor}\bP\left(\sup_{\begin{subarray}{l} t\in I(k_1)\\ r\in I(k_2)\end{subarray}}|\overline{S}_n(t,r)-\overline{S}_n(k_1n^{-\gamma},k_2n^{-\gamma})|\ge\frac{\epsilon}{6}\right)\notag\\
&\le C\sum_{k_1=1}^{\lfloor n^\gamma\rfloor}\sum_{k_2=1}^{\lfloor n^\gamma\rfloor}n^{-3}\le Cn^{2\gamma-3}\to 0,\quad\mbox{as }n\to\infty.\notag
\end{align}

From inequality \eqref{tight-16}, one can use the same method to show that 
$$\sum_{k_1=1}^{\lfloor n^\gamma\rfloor}\sum_{k_2=1}^{\lfloor n^\gamma\rfloor}\bP\left(\sup_{\begin{subarray}{l} t\in I(k_1)\\ r\in I(k_2)\end{subarray}}|\overline{F}_n(t,r)-\overline{F}_n(k_1n^{-\gamma},k_2n^{-\gamma})|\ge\frac{\epsilon}{6}\right)\le Cn^{2\gamma-3}\to 0,\quad\mbox{as }n\to\infty.$$ 

In sum, we have proved that \eqref{tight-21}, \eqref{tight-22} and \eqref{tight-23} will vanish when $n$ goes to $\infty$, and hence Lemma \ref{lmm3-14} has been proved.  
\end{proof}
The proof for Theorem \ref{thm2-6} is complete.
\end{proof}

%

\appendix       
\chapter{Potential Kernel}\label{potential-kernel}
Let $q(x,y)$ be the transition kernel defined in \eqref{21-d2}. For the following, we assume \eqref{weight-assump-1} and \eqref{weight-assump-2}. P28.8 in \cite{4} shows that the potential kernel 
$$a(x)=\sum_{k=0}^\infty [q^k(0,0)-q^k(x,0)]$$
is well-defined for every $x\in\bZ$. And it has the following properties.
\begin{lemma}\label{lmmA-1}
The potential kernel $a(x)$ is an even function with order $|x|$ as $x\to \infty$. To be specific,
\be \lim_{x\to +\infty}\frac{a(x)}{x}=\frac{1}{2\sigma_1^2}.\label{nl38}\ee 
\end{lemma}
\begin{proof}
See P28.4 in \cite{4} (page 345, Chapter \uppercase\expandafter{\romannumeral7}).
\end{proof}

\begin{lemma}\label{lmmA-2}
For $k\in\bZ$, we have
\be
\lim_{x\to +\infty}\bigl[a(x+k)-a(x)\bigr]=\frac{k}{2\sigma_1^2};\label{l38a}\ee
\be
\lim_{x\to -\infty}\bigl[a(x+k)-a(x)\bigr]=-\frac{k}{2\sigma_1^2}.\label{l38b}\ee
\end{lemma}

\begin{proof}
See P29.2 in \cite{4} (page 354, Chapter \uppercase\expandafter{\romannumeral7}).
\end{proof}

\begin{lemma}\label{lmmA-3}
The potential kernel $a(x)$ satisfies the following equations:
\begin{enumerate}[(a)]
\item
\be
\sum_{j\in\bZ}q(i,j)a(k-j)=a(k-i)+\ind\{i=k\},\quad i,k\in\bZ,\label{l39b}\ee
\item
\be
a(x-1)+a(x+1)-2a(x)=\frac{1}{\pi}\int_{-\pi}^{\pi}\frac{1-\cos(\theta)}{1-\phi_Y(\theta)}e^{\i x\theta} d\theta,\quad x\in\bZ,\label{l39c}\ee
where $\phi_Y(\theta)=\sum_{x\in\bZ}q(0,x)e^{\i\theta x}$.
\item There exist positive constant $A, c<\infty$ such that for all $x\in \bZ$,
\be 
\abs{a(x-1)+a(x+1)-2a(x)}\le Ae^{-c\abs{x}}. \label{l39e}\ee
\end{enumerate}
\end{lemma}
\begin{remark}\label{rmkA-1}
Note that we can also write \eqref{l39c} in terms of the characteristic function of transition $p$. Let us denote $\phi_X(\theta)=\sum_{x\in\bZ}p(0,x)e^{\i\theta x}$. Then 
\be
a(x-1)+a(x+1)-2a(x)=\frac{1}{\pi}\int_{-\pi}^{\pi}\frac{1-\cos(\theta)}{1-|\phi_X(\theta)|^2}e^{\i x\theta} d\theta,\quad x\in\bZ.\label{l39d}\ee
\end{remark}
\begin{proof}
For the first equation, see T28.1 in \cite{4} (page 352, Chapter \uppercase\expandafter{\romannumeral7}).\par

The second equation is a simple application of the inversion formula. The detail can be found in the proof of P29.5 in \cite{4} (page 355, Chapter \uppercase\expandafter{\romannumeral7}).\par

For the third part, let $h(\theta)=\frac{1-\cos(\theta)}{1-\phi_Y(\theta)}$, $\theta\in [-\pi,\pi]$. From \eqref{l39c}, we can see that $h$ is a real function. Also, $h$ is a periodic funcion with period $2\pi$. We will show that $h$ is indeed analytic.\par


Let us naturally extend the function $h$ to the complex plane, i.e.
$$h(z)=\frac{1-\cos(z)}{1-\phi_Y(z)},
\quad z\in\bC.$$

Since $1-\cos(z)$ and $1-\phi_Y(z)$ are entire functions (analytic over the whole complex plane), $h(z)$ is meromorphic on the whole complex plane and point $z=0$ is its pole.\par

Also note that $q$ has span 1, thus we can show that for $\theta\in[-\pi,\pi]$, $\phi_Y(\theta)=1$ if and only if $\theta=0$ (see Lemma \ref{lmmB-1} in the Appendix).\par 

Therefore, $h(z)$ is analytic on $[-\pi,\pi]\setminus\{0\}$, and hence we only need to show that $z=0$ is a removable singularity.\par  

Note that $1-\cos(z)$ and $1-\phi_Y(z)$ have the following Taylor expansion:
\begin{align*}
1-\cos(z)=&\frac{1}{2}z^2+\sum_{k=2}^{\infty}\frac{(-1)^{k+1}}{(2k)!}z^{2k},\quad z\in\bC.\\
1-\phi_Y(z)=&\sigma_1^2z^2+\sum_{k=2}^\infty\frac{(-1)^{k+1}m_{2k}}{(2k)!}z^{2k},
\quad z\in\bC.
\end{align*}
where $m_{k}=\sum_{x\in\bZ}x^{k}q(0,x)$ (note that $m_{2k-1}=0$ since $q$ is symmetric).\par
 
Thus, we have the following limit,
\begin{align*}
\lim_{z\to 0}h(z)=&\lim_{z\to 0}\frac{1-\cos(z)}{1-\phi_Y(z)}=\lim_{z\to 0}\frac{\frac{1}{2}z^2+\sum_{k=2}^{\infty}\frac{(-1)^{k+1}}{(2k)!}z^{2k}}{\sigma_1^2z^2+\sum_{k=2}^\infty\frac{(-1)^{k+1}m_{2k}}{(2k)!}z^{2k}}\\
=&\lim_{z\to 0}\frac{\frac{1}{2}+\sum_{k=2}^{\infty}\frac{(-1)^{k+1}}{(2k)!}z^{2k-2}}{\sigma_1^2+\sum_{k=2}^\infty\frac{(-1)^{k+1}m_{2k}}{(2k)!}z^{2k-2}}=\frac{1}{2\sigma_1^2}.
\end{align*}

Therefore, $h(z)$ is analytic. And by \eqref{l39c}, we can think of $a(x-1)+a(x+1)-2a(x)$ as the $x$th Fourier coefficient of $h$. From results in Fourier Analysis (see Proposition 1.2.20 in \cite{9}), we conclude that the decay of $a(x-1)+a(x+1)-2a(x)$ is exponentially fast.\par

Thus, the proof for Lemma \ref{lmmA-3} is complete.
\end{proof}

\begin{lemma}\label{lmmA-4}
The series $\sum_{j\in\bZ}\bigl[a(j-1)+a(j+1)-2a(j)\bigr]$ is absolutely convergent and
\be
\sum_{j\in\bZ}\bigl[a(j-1)+a(j+1)-2a(j)\bigr]=\frac{1}{\sigma_1^2}.\label{l39a}\ee
\end{lemma}
\begin{proof}

The first part is a direct result from \eqref{l39e}.\par

For the second part, for any fixed $M,N>0$,
$$\sum_{j=-M}^{N}\bigl[a(j-1)+a(j+1)-2a(j)\bigr]=a(N+1)-a(N)+a(-M-1)-a(-M).$$
Let $M,N\to\infty$, and by \eqref{l38a} and \eqref{l38b}, we have
\begin{align*}
\sum_{j\in\bZ}\bigl[a(j-1)+a(j+1)-2a(j)\bigr]=&\lim_{N\to\infty}\bigl[a(N+1)-a(N)\bigr]+\lim_{M\to\infty}\bigl[a(-M-1)-a(-M)\bigr]\\
=&\frac{1}{2\sigma_1^2}+\frac{1}{2\sigma_1^2}=\frac{1}{\sigma_1^2}. \qedhere
\end{align*}
\end{proof}

\chapter{Local Central Limit Theorem}\label{lclt}
For the following, let us consider a transition probability $p(x,y)$ for a discrete-time random walk $\{X_t\}_{t\in\bZ^+}$ on $\bZ$. Throughout this section, we assume $p$ to be translate invariant, has finite range and span 1, i.e.
\be p(0,x)=p(y,x+y), \quad\forall x,y\in\bZ,\label{p-assump-1}\ee
\be \#\supp(p)<\infty,\label{p-assump-2}\ee
\be \max\{k\in\bZ^+:\exists\ell\in\bZ,\quad s.t.\quad\supp(p)\subset \ell+k\bZ\}=1,\label{p-assump-3}\ee
where $supp(p)=\{x\in\bZ: p(0,x)>0\}$.\par 

For convenience, we denote $p(x)=p(0,x)$. We also denote the mean and variance by $$\wm=\sum_{x\in\bZ}xp(x),\quad \sigma_1^2=\sum_{x\in\bZ}(x-\wm)^2p(x).$$

The following Local Central Limit theorem generalizes Theorem 2.3.5 in \cite{6} to the case $\wm\neq 0$.\par 
\begin{theorem}\label{thmB-1}
(Local Central Limit Theorem) Assume \eqref{p-assump-1}, \eqref{p-assump-2} and \eqref{p-assump-3}. There exists a constant $C<\infty$, such that
\be\left|p^t(x)-\nd^t(x)\right|\le \frac{C}{t},\quad \forall x\in\bZ, t\in\bZ^+.\label{nt1-1}\ee  
where 
\be \nd^t(x)=\frac{1}{\sqrt{2\pi\sigma_1^2t}}\exp\left\{-\frac{(x-t\wm)^2}{2t\sigma_1^2}\right\}.\ee
\end{theorem}

As applications to Theorem \ref{thmB-1}, we list two corollaries. The first corollary is an generalization of Theorem 2.3.6 in \cite{6}. The second corollary is stated in Lemma 4.2 in \cite{1}.\par
\begin{corollary}\label{crllyB-0}
Assume \eqref{p-assump-1}, \eqref{p-assump-2} and \eqref{p-assump-3}. Let $\nabla$ denote the differences in the $x$ variable,
$$\nabla p^{t}(x)=p^t(x+1)-p^t(x), \quad \nabla \nd^{t}(x)=\nd^t(x+1)-\nd^t(x).$$
There exists a constant $C<\infty$, such that
\be \left| \nabla p^{t}(x)-\nabla \nd^{t}(x)\right|\le \frac{C}{t^{3/2}}, \quad \forall x\in\bZ, t\in\bZ^+.\ee
\end{corollary}

\begin{corollary}\label{crllyB-1}
For a mean 0, span 1 random walk $S_n$ on $\bZ$ with finite variance $\sigma^2$, $a\in \bR$ and points $a_n\in\bZ$ s.t. $\lim_{n\to\infty}a_n/\sqrt{n}=a$, then
\be
\lim_{n\to\infty}\frac{1}{\sqrt{n}}\sum_{k=0}^{\lfloor nt\rfloor-1}\bP(S_k=a_n)=\frac{1}{\sigma^2}\int_0^{\sigma^2t}\frac{1}{\sqrt{2\pi v}}\exp\bigl\{-\frac{a^2}{2v}\bigr\}dv.\label{2-l11}\ee
\end{corollary}

\begin{proof}[Proof of Theorem \ref{thmB-1}]

The characteristic function of the transition $p$ is $\phi(\theta)=\sum_{k\in\bZ}p(k)e^{\i k\theta}$. Since $p$ has finite range, $\phi(\theta)$ is the sum of finitely many exponential functions and hence, analytic. We can also define a ``normalized'' characteristic function $\tilde{\phi}(\theta)=\phi(\theta)e^{-\i\theta \wm}=\sum_{k\in\bZ}p(k)e^{\i (k-\wm)\theta}$, which is also analytic. \par

Let us first provide a lemma which gives bounds for the characteristic function $\tilde{\phi}(\theta)$.
\begin{lemma}\label{lmmB-1}
Under the assumptions in Theorem \ref{thmB-1},
\begin{enumerate}[(a)]
\item For every fixed $\epsilon>0$, 
\be\sup\{|\tilde{\phi}(\theta)|:\theta\in [-\pi,\pi], |\theta|\ge \epsilon\}<1.\label{nl1-1}\ee
\item There is a constant $b>0$ such that
\be |\tilde{\phi}(\theta)|\le 1-b\theta^2,\quad \forall \theta\in[-\pi,\pi].\label{nl1-2}\ee
In particular, for $r>0$,
\be |\tilde{\phi}(\theta)|^r\le [1-b\theta^2]^r\le \exp\{-br\theta^2\},\quad \forall \theta\in[-\pi,\pi].\label{nl1-3}\ee 
\end{enumerate}
\end{lemma}
\begin{proof}

For part (a), since $|\tilde{\phi}(\theta)|=|\phi(\theta)|$, and by continuity and compactness, we only need to show that $|\phi(\theta)|<1$ for any $\theta\in [-\pi,\pi]\setminus \{0\}$, which is proved by Theorem 3.5.1 in \cite{2}.\par 

For part (b),  note that $\tilde{\phi}(\theta)$ has the following Taylor expansion:
\be\tilde{\phi}(\theta)=1-\frac{\sigma_1^2\theta^2}{2}+\tilde{h}(\theta),\label{nl1-4}\ee
where $\tilde{h}(\theta)=O(|\theta|^3)$ as $\theta\to 0$. \par

We can pick an $\epsilon_1>0$ such that for all $|\theta|<\epsilon_1$, $|\tilde{h}(\theta)|\le \frac{\sigma_1^2\theta^2}{4}$. Let $\epsilon_0=\epsilon_1\wedge \frac{\sqrt{2}}{\sigma_1}$. Then, for $\forall |\theta|<\epsilon_0$, 
$$|\tilde{\phi}(\theta)|\le |1-\frac{\sigma_1^2\theta^2}{2}|+|\tilde{h}(\theta)|\le 1-\frac{\sigma_1^2\theta^2}{2}+\frac{\sigma_1^2\theta^2}{4}=1-\frac{\sigma_1^2\theta^2}{4}.$$

For every $\pi\ge |\theta|\ge \epsilon_0$,  by the result in part (a), let $1-a=\sup\{|\tilde{\phi}(\theta)|:\theta\in [-\pi,\pi], |\theta|\ge \epsilon_0\}<1$. Then $0<a\le 1$ and
$$|\tilde{\phi}(\theta)|\le 1-a \le 1-\frac{a}{\pi^2}\theta^2.$$

Let $b=\frac{\sigma_1^2}{4}\wedge \frac{a}{\pi^2}$ and we are done.
\end{proof}

Next, let us give an approximation to $\left[\tilde{\phi}\left(\theta/\sqrt{t}\right)\right]^t$.

\begin{lemma}\label{lmmB-3}
Assume the assumptions in Theorem \ref{thmB-1}, there exist $\epsilon>0$ and $c<\infty$ such that for all positive integers $t$ and all $|\theta|<\epsilon\sqrt{t}$, 
\begin{enumerate} [(a)]
\item We define $\tilde{g}(\theta,t)$ and $\tilde{F}_t(\theta)$ as following:
\be\left[\tilde{\phi}\left(\frac{\theta}{\sqrt{t}}\right)\right]^t=\exp\left\{-\frac{\sigma_1^2\theta^2}{2}+\tilde{g}(\theta,t)\right\}=[1+\tilde{F}_t(\theta)]\exp\left\{-\frac{\sigma_1^2\theta^2}{2}\right\}.\label{nl2-1}\ee
\item 
\be|\tilde{g}(\theta,t)|\le \left(\frac{\sigma_1^2\theta^2}{4}\right)\wedge\left(\frac{c|\theta|^3}{t^{1/2}}\right).\label{nl2-3}\ee
\item 
\be|\tilde{F}_t(\theta)|\le \exp\left\{\frac{\sigma_1^2\theta^2}{4}\right\}+1.\label{nl2-4}\ee
\end{enumerate}
\end{lemma}
\begin{proof}

For part (a),  by the continuity of $\tilde{\phi}$, there exists $\delta>0$ such that $|\tilde{\phi}(\theta)-1|\le \frac{1}{2}$, for all $|\theta|\le \delta$. And thus, $\tilde{g}(\theta,t)$ and $\tilde{F}_t(\theta)$ are well-defined if the inequality $|\theta|<\delta\sqrt{t}$ holds.\par

For part (b),  from \eqref{nl1-4} and Taylor expansion, we get
\begin{align*}
\log \tilde{\phi}(\theta)=&\log \left(1-[\frac{1}{2}\sigma_1^2\theta^2-\tilde{h}(\theta)]\right)\\
=&-\frac{1}{2}\sigma_1^2\theta^2+\tilde{h}(\theta)-\frac{1}{2}[\frac{1}{2}\sigma_1^2\theta^2-\tilde{h}(\theta)]^2+O(|\theta|^6)\\
=&-\frac{1}{2}\sigma_1^2\theta^2+\tilde{h}(\theta)-\frac{1}{8}\sigma_1^4\theta^4+O(|\theta|^5).
\end{align*}

Hence, 
$$t\log \tilde{\phi}\left(\frac{\theta}{\sqrt{t}}\right)=-\frac{1}{2}\sigma_1^2\theta^2+t\cdot\tilde{h}\left(\frac{\theta}{\sqrt{t}}\right)-\frac{\sigma_1^4\theta^4}{8t}+t\cdot O\left(\frac{|\theta|^5}{t^{5/2}}\right).$$

Compare it with $t\log \tilde{\phi}\left(\frac{\theta}{\sqrt{t}}\right)=-\frac{\sigma_1^2\theta^2}{2}+\tilde{g}(\theta,t)$, we can get an estimate for $\tilde{g}(\theta,t)$,
$$\tilde{g}(\theta,t)=t\cdot\tilde{h}\left(\frac{\theta}{\sqrt{t}}\right)-\frac{\sigma_1^4\theta^4}{8t}+t\cdot O\left(\frac{|\theta|^5}{t^{5/2}}\right)=t\cdot\tilde{h}\left(\frac{\theta}{\sqrt{t}}\right)+t\cdot O\left(\frac{|\theta|^4}{t^2}\right)=t\cdot o\left(\frac{|\theta|^2}{t}\right).$$

Thus, there exists $0<\epsilon_1<\delta$, such that for all $t>0$ and all $|\theta|<\epsilon_1\sqrt{t}$,
\be |\tilde{g}(\theta,t)|\le \frac{1}{4}\sigma_1^2 t\cdot \frac{\theta^2}{t}=\frac{1}{4}\sigma_1^2\theta^2.\label{nl2-5}\ee

Moreover, since $\tilde{h}\left(\frac{\theta}{\sqrt{t}}\right)=O\left(\frac{|\theta|^3}{t^{3/2}}\right)$, $\tilde{g}(\theta,t)=t\cdot O\left(\frac{|\theta|^3}{t^{3/2}}\right)$. We can find $0<\epsilon_2<\delta$ and $0<c<\infty$, such that for all positive integer $t$ and all $|\theta|<\epsilon_2\sqrt{t}$,
\be |\tilde{g}(\theta,t)|\le ct\cdot \frac{|\theta|^3}{t^{3/2}}=\frac{c|\theta|^3}{t^{1/2}}.\label{nl2-6}\ee

Take $\epsilon=\epsilon_1\wedge\epsilon_2$, \eqref{nl2-3} is achieved.\par

Part (c) is a straightforward result from part (b) by using the fact that $|e^z|\le e^{|z|}$.
\end{proof}

The next lemma studies the error term of the normal approximation for the multi-step transition probability $p^t(x)$.\par

\begin{lemma}\label{lmmB-4}
Assume the assumptions in Theorem \ref{thmB-1}. Let us define $b_t(x,r)$ by the equation
\be p^t(x)=\nd^t(x)+b_t(x,r)+\frac{1}{2\pi \sqrt{t}}\int_{|s|\le r}e^{-\frac{\i xs}{\sqrt{t}}}e^{\i\sqrt{t}\wm s}e^{-\frac{\sigma_1^2s^2}{2}}\tilde{F}_t(s)ds,\label{nl3-1}\ee
where $\nd^t(x)=\frac{1}{\sqrt{2\pi\sigma_1^2t}}\exp\left\{-\frac{(x-t\wm)^2}{2t\sigma_1^2}\right\}$. Then, there exist $\epsilon>0$, $0<c<\infty$ and $\zeta>0$ such that 
\be |b_t(x,r)|\le c t^{-1/2}e^{-\zeta r^2},\quad \forall 0\le r \le \epsilon\sqrt{t}.\label{nl3-2}\ee
\end{lemma}

\begin{proof}
We set $\epsilon$ to be the same as in Lemma \ref{lmmB-3}.\par

By the inversion formula, 
\begin{align}
p^t(x)=&\frac{1}{2\pi}\int_{-\pi}^\pi [\phi(\theta)]^te^{-\i x\theta}d\theta \overset{s=\sqrt{t}\theta}{=}\frac{1}{2\pi\sqrt{t}}\int_{-\sqrt{t}\pi}^{\sqrt{t}\pi} \left[\phi\left(\frac{s}{\sqrt{t}}\right)\right]^te^{-\frac{\i x}{\sqrt{t}}s}ds\notag\\
=&\frac{1}{2\pi\sqrt{t}}\int_{-\sqrt{t}\pi}^{\sqrt{t}\pi} \left[\tilde{\phi}\left(\frac{s}{\sqrt{t}}\right)\right]^te^{\i\sqrt{t}\wm s}e^{-\frac{\i x}{\sqrt{t}}s}ds.\label{nl3-3}
\end{align}

From \eqref{nl1-1}, there exists $\beta_1>0$ such that  
$$|\tilde{\phi}(\theta)|\le e^{-\beta_1},\quad \pi\ge |\theta|\ge \epsilon.$$

Let us split the integral \eqref{nl3-3} into two parts:
\begin{align*}
p^t(x)=&\frac{1}{2\pi\sqrt{t}}\int_{\epsilon\sqrt{t}\le |s|\le \pi\sqrt{t}} \left[\tilde{\phi}\left(\frac{s}{\sqrt{t}}\right)\right]^te^{\i\sqrt{t}\wm s}e^{-\frac{\i x}{\sqrt{t}}s}ds\\
&\quad\quad+\frac{1}{2\pi\sqrt{t}}\int_{|s|<\epsilon\sqrt{t} } \left[\tilde{\phi}\left(\frac{s}{\sqrt{t}}\right)\right]^te^{\i\sqrt{t}\wm s}e^{-\frac{\i x}{\sqrt{t}}s}ds.
\end{align*}

Note that the first integral in the above equation has the following bound.
\begin{align*}
&\left|\frac{1}{2\pi\sqrt{t}}\int_{\epsilon\sqrt{t}\le |s|\le \pi\sqrt{t}} \left[\tilde{\phi}\left(\frac{s}{\sqrt{t}}\right)\right]^te^{\i\sqrt{t}\wm s}e^{-\frac{\i x}{\sqrt{t}}s}ds\right|\le\frac{1}{2\pi\sqrt{t}}\int_{\epsilon\sqrt{t}\le |s|\le \pi\sqrt{t}}\left|\tilde{\phi}\left(\frac{s}{\sqrt{t}}\right)\right|^tds\\
&\le \frac{1}{2\pi\sqrt{t}}\int_{\epsilon\sqrt{t}\le |s|\le \pi\sqrt{t}}\left|\tilde{\phi}\left(\frac{s}{\sqrt{t}}\right)\right|^tds \le \frac{1}{2\pi\sqrt{t}}\int_{\epsilon\sqrt{t}\le |s|\le \pi\sqrt{t}}e^{-\beta_1 t}ds= \frac{\pi-\epsilon}{\pi}e^{-\beta_1 t}.
\end{align*}

By using the inversion formula, we can similarly split $\nd^t(x)$ into two parts:
\begin{align*}
\nd^t(x)=&\frac{1}{2\pi}\int_{-\infty}^{+\infty}e^{-\i x\theta}e^{\i t\wm \theta-\frac{1}{2}t\sigma_1^2\theta^2}d\theta\overset{s=\sqrt{t}\theta}{=}\frac{1}{2\pi\sqrt{t}}\int_{-\infty}^{+\infty}e^{-\frac{\i xs}{\sqrt{t}}}e^{\i\sqrt{t}\wm s-\frac{1}{2}\sigma_1^2s^2}ds\\
=&\frac{1}{2\pi\sqrt{t}}\int_{|s|\ge \epsilon\sqrt{t}}e^{-\frac{\i xs}{\sqrt{t}}}e^{\i\sqrt{t}\wm s-\frac{1}{2}\sigma_1^2s^2}ds+\frac{1}{2\pi\sqrt{t}}\int_{|s|< \epsilon\sqrt{t}}e^{-\frac{\i xs}{\sqrt{t}}}e^{\i\sqrt{t}\wm s-\frac{1}{2}\sigma_1^2s^2}ds.
\end{align*}

Again, same as before, the first term goes to zero exponentially fast.
\begin{align}
\left|\frac{1}{2\pi\sqrt{t}}\int_{|s|\ge \epsilon\sqrt{t}}e^{-\frac{\i xs}{\sqrt{t}}}e^{\i\sqrt{t}\wm s-\frac{1}{2}\sigma_1^2s^2}ds\right|\le& \frac{1}{2\pi\sqrt{t}}\int_{|s|\ge \epsilon\sqrt{t}}e^{-\frac{1}{2}\sigma_1^2s^2}ds\overset{s=\sqrt{t}\theta}{=}\frac{1}{2\pi}\int_{|\theta|\ge\epsilon}e^{-\frac{1}{2}\sigma_1^2t\theta^2}d\theta\notag\\
=&\frac{1}{\pi}\int_\epsilon^{+\infty}e^{-\frac{1}{2}\sigma_1^2t\theta^2}d\theta\le\frac{1}{\pi}\int_\epsilon^{+\infty}\frac{\theta}{\epsilon}\cdot e^{-\frac{1}{2}\sigma_1^2t\theta^2}d\theta\notag\\
=&\frac{1}{\pi\epsilon\sigma_1^2t}e^{-\frac{1}{2}\sigma_1^2\epsilon^2t}.\label{nl3-4}
\end{align}

Thus, we can pick $\beta_2>0$ such that 
$$\frac{1}{2\pi\sqrt{t}}\int_{|s|\ge \epsilon\sqrt{t}}e^{-\frac{\i xs}{\sqrt{t}}}e^{\i\sqrt{t}\wm s-\frac{1}{2}\sigma_1^2s^2}ds=O\left(e^{-\beta_2t}\right).$$

Let $\beta=\beta_1\wedge\beta_2$. Then 
\begin{align*}
p^t(x)-\nd^t(x)=&O\left(e^{-\beta t}\right)+\frac{1}{2\pi\sqrt{t}}\int_{|s|<\epsilon\sqrt{t} } \left\{\left[\tilde{\phi}\left(\frac{s}{\sqrt{t}}\right)\right]^t-e^{-\frac{1}{2}\sigma_1^2s^2}\right\}e^{\i\sqrt{t}\wm s}e^{-\frac{\i x}{\sqrt{t}}s}ds\\
=&O\left(e^{-\beta t}\right)+\frac{1}{2\pi\sqrt{t}}\int_{|s|<\epsilon\sqrt{t}} \tilde{F}_t(s)e^{-\frac{1}{2}\sigma_1^2s^2}e^{\i\sqrt{t}\wm s}e^{-\frac{\i x}{\sqrt{t}}s}ds.
\end{align*}

By simply choosing $\zeta$ to be strictly less than $\beta/\epsilon^2$, we prove the result for $r=\epsilon\sqrt{t}$. For $0\le r < \epsilon\sqrt{t}$, we use the estimate \eqref{nl2-4}.
\begin{align}
&\left|\int_{r<|s|<\epsilon\sqrt{t}} \tilde{F}_t(s)e^{-\frac{1}{2}\sigma_1^2s^2}e^{\i\sqrt{t}\wm s}e^{-\frac{\i x}{\sqrt{t}}s}ds\right|\le\int_{r<|s|<\epsilon\sqrt{t}} |\tilde{F}_t(s)|e^{-\frac{1}{2}\sigma_1^2s^2}ds\notag\\
&\le\int_{r<|s|<\epsilon\sqrt{t}} \left[e^{\frac{\sigma_1^2s^2}{4}}+1\right] e^{-\frac{1}{2}\sigma_1^2s^2}ds\le 2\int_r^{+\infty} e^{-\frac{\sigma_1^2s^2}{4}}ds.\label{nl3-5}
\end{align}

In order to get a bound better than the one in \eqref{nl3-4}, we use the following inequality (see Formula 7.1.13 in \cite{8}).
\begin{lemma}\label{lmmB-5}
For any $r\ge 0$, we have
\be \frac{1}{r+\sqrt{r^2+2}}<e^{r^2}\int_{r}^\infty e^{-s^2}ds\le\frac{1}{r+\sqrt{r^2+\frac{4}{\pi}}}.\label{need-to-be-decided}\ee
\end{lemma}

We get 
$$\left|\int_{r<|s|<\epsilon\sqrt{t}} \tilde{F}_t(s)e^{-\frac{1}{2}\sigma_1^2s^2}e^{\i\sqrt{t}\wm s}e^{-\frac{\i x}{\sqrt{t}}s}ds\right|\le \frac{4e^{-\frac{\sigma_1^2r^2}{4}}}{\sigma_1^2r/2+\sigma_1\sqrt{(\sigma_1r/2)^2+4/\pi}}\le \frac{2\sqrt{\pi}}{\sigma_1}e^{-\frac{\sigma_1^2r^2}{4}}.$$

Hence, 
\begin{align*}
p^t(x)-\nd^t(x)=&O\left(e^{-\beta t}\right)+\frac{1}{2\pi\sqrt{t}}\int_{r<|s|<\epsilon\sqrt{t}} e^{-\frac{\i xs}{\sqrt{t}}}e^{\i\sqrt{t}\wm s}e^{-\frac{\sigma_1^2s^2}{2}}\tilde{F}_t(s)ds\\
&\quad\quad+\frac{1}{2\pi \sqrt{t}}\int_{|s|\le r}e^{-\frac{\i xs}{\sqrt{t}}}e^{\i\sqrt{t}\wm s}e^{-\frac{\sigma_1^2s^2}{2}}\tilde{F}_t(s)ds\\
=&O\left(e^{-\beta t}\right)+O\left(t^{-1/2}e^{-\frac{\sigma_1^2r^2}{4}}\right)+\frac{1}{2\pi \sqrt{t}}\int_{|s|\le r}e^{-\frac{\i xs}{\sqrt{t}}}e^{\i\sqrt{t}\wm s}e^{-\frac{\sigma_1^2s^2}{2}}\tilde{F}_t(s)ds.
\end{align*}

We can choose $\zeta$ to be strictly less than $\frac{\beta}{\epsilon^2}\wedge \frac{\sigma_1^2}{4}$. And hence, the proof for Lemma \ref{lmmB-4} is complete. 
\end{proof}

Now let us prove the main theorem. Let us take $r=t^{1/12}$ in Lemma \ref{lmmB-4}, we have
\be p^t(x)=\nd^t(x)+O\left(t^{-1/2}e^{-\zeta t^{1/6}}\right)+\frac{1}{2\pi \sqrt{t}}\int_{|s|\le t^{1/12}}e^{-\frac{\i xs}{\sqrt{t}}}e^{\i\sqrt{t}\wm s}e^{-\frac{\sigma_1^2s^2}{2}}\tilde{F}_t(s)ds.\label{lclt-approx}\ee

Notice that from \eqref{nl2-3},
$$\left|\tilde{F}_t(\theta)\right|=\left| e^{\tilde{g}(\theta,t)}-1\right|\le C\left|\tilde{g}(\theta,t)\right|\le \frac{C|\theta|^3}{t^{1/2}},\quad \abs{\theta}\le t^{1/12}.$$
where the first inequality is because $e^{x}-1=O(x)$ for all $x$ in a bounded set.\par

Thus,
\begin{align*}
&\left|\frac{1}{2\pi \sqrt{t}}\int_{|s|\le t^{1/12}}e^{-\frac{\i xs}{\sqrt{t}}}e^{\i\sqrt{t}\wm s}e^{-\frac{\sigma_1^2s^2}{2}}\tilde{F}_t(s)ds\right|\le  \frac{C}{2\pi \sqrt{t}}\int_{|s|\le t^{1/12}}\frac{|s|^3}{t^{1/2}}e^{-\frac{\sigma_1^2s^2}{2}}ds\\
&\le \frac{C}{2\pi t}\int_{s\in\bR}|s|^3e^{-\frac{\sigma_1^2s^2}{2}}ds = O(t^{-1}).
\end{align*}

Therefore, 
$$p^t(x)-\nd^t(x)=O\left(t^{-1}\right).$$
And the proof of Theorem \ref{thmB-1} is complete.
\end{proof}

\begin{proof}[Proof of Corollary \ref{crllyB-0}]
From \eqref{lclt-approx} in the proof of Theorem \ref{thmB-1}, 
\begin{align}
\nabla p^t(x)=&\nabla \nd^t(x)+O\left(t^{-1/2}e^{-\zeta t^{1/6}}\right)\notag\\
                    &\quad\quad+\frac{1}{2\pi \sqrt{t}}\int_{|s|\le t^{1/12}}\left(e^{-\i (x+1)s/\sqrt{t}}-e^{-\i xs/\sqrt{t}}\right)e^{\i\sqrt{t}\wm s}e^{-\frac{\sigma_1^2s^2}{2}}\tilde{F}_t(s)ds.\label{lclt-diff-approx}
\end{align}
Notice that for $|s|\le t^{1/12}$,
$$\left|e^{-\i(x+1)s/\sqrt{t}}-e^{-\i xs/\sqrt{t}}\right|=\left|e^{-\i s/\sqrt{t}}-1\right|\le \frac{\abs{s}}{\sqrt{t}}.$$

Then,
\begin{align*}
&\left|\frac{1}{2\pi \sqrt{t}}\int_{|s|\le t^{1/12}}\left(e^{-\i (x+1)s/\sqrt{t}}-e^{-\i xs/\sqrt{t}}\right)e^{\i\sqrt{t}\wm s}e^{-\frac{\sigma_1^2s^2}{2}}\tilde{F}_t(s)ds\right|\\
&\le \frac{C}{2\pi \sqrt{t}}\int_{|s|\le t^{1/12}}\frac{|s|^4}{t}e^{-\frac{\sigma_1^2s^2}{2}}ds\le \frac{C}{2\pi t^{3/2}}\int_{s\in\bR}|s|^4e^{-\frac{\sigma_1^2s^2}{2}}ds=O(t^{-3/2}).
\end{align*}

Hence,
$$\nabla p^t(x)=\nabla \nd^t(x) + O(t^{-3/2}).$$
This proves Corollary \ref{crllyB-0}.
\end{proof}

\begin {proof}[Proof of Corollary \ref{crllyB-1}]
First, One can use LCLT to show that 
\be \lim_{m\to\infty}\sup_{x\in\bZ}\sqrt{m}\left|\bP(S_m=x)-\frac{1}{\sqrt{2\pi m\sigma^2}}\exp\left\{-\frac{x^2}{2m\sigma^2}\right\}\right|=0.\label{locapp1}\ee

Then, 
\begin{align}
&\frac{1}{\sqrt{n}}\sum_{k=0}^{\lfloor nt\rfloor-1}\bP(S_k=a_n) = \frac{1}{\sqrt{n}}\sum_{k=1}^{\lfloor nt\rfloor-1}\left[\frac{1}{\sqrt{2\pi k\sigma^2}}\exp\left\{-\frac{a_n^2}{2k\sigma^2}\right\}+o(k^{-1/2})\right]\notag\\
&=\frac{1}{n}\sum_{k=1}^{\lfloor nt\rfloor-1}\frac{1}{\sqrt{2\pi (k/n)\sigma^2}}\exp\left\{-\frac{(a_n/\sqrt{n})^2}{2(k/n)\sigma^2}\right\}+o(1)\notag\\
&=\int_0^t\frac{\ind\{u\le (\lfloor nt\rfloor -1)/n\}}{\sqrt{2\pi (\lceil nu\rceil /n)\sigma^2}}\exp\left\{-\frac{(a_n/\sqrt{n})^2}{2(\lceil nu\rceil /n)\sigma^2}\right\}du+o(1).\label{locapp2}
\end{align}
Notice that the integrand in \eqref{locapp2} is bounded by $\frac{1}{\sqrt{2\pi u\sigma^2}}$.  By Dominated Convergence Theorem, we have
$$\lim_{n\to\infty}\frac{1}{\sqrt{n}}\sum_{k=0}^{\lfloor nt\rfloor-1}\bP(S_k=a_n)=\int_0^t\frac{1}{\sqrt{2\pi u\sigma^2}}\exp\left\{-\frac{a^2}{2u\sigma^2}\right\}du.$$
The proof is complete by substitution $v=u\sigma^2$.
\end {proof}

\bibliographystyle{chicago}
\bibliography{thesis_zhai}

\end{document}